    \newtheorem{theorem}{Theorem}
    \newtheorem{prop}{Proposition}
    \newtheorem{lemma}{Lemma}
    \newtheorem{cor}{Corollary}
    \theoremstyle{remark}
    \theoremstyle{definition}
    \newtheorem{defn}{Definition}
    \DeclareMathOperator{\cp}{Cap}
    \DeclareMathOperator{\var}{Var}
    \tikzset{vtx/.style={circle, fill, inner sep=1.5pt}}
    \tikzset{inlinevtx/.style={circle, fill, inner sep=1pt}}
    \tikzset{openvtx/.style={circle, draw, inner sep=1.5pt}}
    \title[Capacity of the range of random walk: Moderate deviations in dimensions $4$ and $5$]{Capacity of the range of random walk: \\ Moderate deviations in dimensions $4$ and $5$}
    \author{Arka Adhikari}
    \address{Arka Adhikari\hfill\break
    Department of Mathematics, University of Maryland-College Park, College Park, MD, USA}
    \email{arkaa@umd.edu}
    \author{Jiyun Park}
    \address{Jiyun Park\hfill\break
    Department of Mathematics, Stanford University, Stanford, CA, USA}
    \email{jiyunp@stanford.edu}
               \subjclass[2010]{60F15,60G50}
    \keywords{moderate deviation, random walk, Brownian motion, capacity }
\begin{document}
    
    \begin{abstract}
        We prove a moderate deviation principle for the capacity of the range of random walk in $\mathbb{Z}^5$. Depending on the scale of deviation, we get two different regimes. We observe Gaussian tails when the deviation scale is smaller than $n^{1/2} (\log n)^{3/4}$. Otherwise, we get non-Gaussian tails with a constant arising from a generalized Gagliardo-Nirenberg inequality. This is analogous to the behavior of the volume of the random walk range in $\mathbb{Z}^3$. Our methods can also be applied to the $d = 4$ case to prove the moderate deviation principle in almost the full range of interest. This extends the work of Okada and the first author \cite{AdhikariOkada2023}, where they showed moderate deviations up to a deviation scale of $\log \log n$ times the standard deviation.
    \end{abstract}
    
    \maketitle
    
    \section{Introduction} \label{sec:introduction}
    
    Given a finite set $A$ in $\mathbb{Z}^d$, let $\tau_A$ denote the first positive hitting time of $A$ by a simple random walk, hereafter denoted $(S_i)_{i \ge 0}$. When $d \ge 3$, the corresponding (Newtonian) capacity is defined by
    \[  
    \cp (A) := \sum_{x \in A} \mathbb{P}^x (\tau_A = \infty) = \lim_{|z| \to \infty} \frac{\mathbb{P}^z (\tau_A < \infty)}{G_D(z)}.
    \]
    Here, $G_D$ denotes the Green's function for the random walk, and $| \cdot |$ is the Euclidean distance. We note that $G_D (x) \asymp |x|_+^{-(d-2)}$, where $|x|_+ = \max (|x|, 1)$.
    
    In recent years, there have been much interest in the asymptotics of the capacity of the random walk range, $S[0, n] = \{ S_0 , S_1 , \dots , S_n \}$ \cite{AsselahSchapiraSousi2018, AsselahSchapiraSousi2019, Schapira2020, DemboOkada2024, AsselahSchapira2020, AdhikariOkada2023}. In addition to inherent interest, the capacity of $S[0, n]$ has many connections with intersections of random walks (e.g., \cite{Lawler2013, AsselahSchapira2021}). These results in turn play a key role in analyzing numerous self-interacting random walks. For instance, they are used in renormalization group methods in quantum field theory (e.g., \cite{FernandezFroehlichSokal1992}) and in random polymer models such as the self-avoiding walk (e.g., \cite{MadrasSlade1993}).
    
    \subsection{Main Results} \label{subset:results}
    
    Our main result is a moderate deviation principle for the capacity of the random walk range in $\mathbb{Z}^5$. This is the first example of such a theorem for $d = 5$; previous works only obtained the correct order, in a limited range of $b_n$ \cite{AsselahSchapira2020}. Depending on the size of  the deviation, we get two different regimes. When the deviation scale $b_n$ is relatively small, we see Gaussian tail behavior.
    
    \begin{theorem}
    \label{thm:gaussian}
    Suppose $1 \ll b_n \ll \sqrt{\log n}$ \footnote{$f \ll g$ means $f/g \to 0$ as $n \to \infty$. For a full explanation of our notation, see Section~\ref{subsec:notation}.}. Then, for any $\lambda > 0$,
    \[  
    \lim_{n \to \infty} \frac{1}{b_n} \log \mathbb{P} \left( \pm 
     \left\{ \cp (S[0, n]) - \mathbb{E}\cp ( S[0, n]) \right\} \ge \lambda \sqrt{n b_n \log n} \right) = -\frac{\lambda^2}{2\sigma^2}
    \]
    where $\sigma^2 n \log n$ is the variance of $\cp(S[0, n])$.
    \end{theorem}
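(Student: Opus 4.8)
The plan is to reduce the moderate deviation principle to one sharp estimate of the log-Laplace transform of $Y_n := \cp(S[0,n]) - \mathbb{E}\cp(S[0,n])$ at the relevant vanishing scale, and then invoke the G\"artner--Ellis theorem. Fix $t \in \mathbb{R}$ and set $\theta_n := t\sqrt{b_n/(n\log n)}$, so that $\theta_n \to 0$ while $\theta_n^2\var(Y_n) = t^2\sigma^2 b_n$. I would aim to prove
\[
\log \mathbb{E}\exp(\theta_n Y_n) \;=\; \tfrac12\theta_n^2\var(Y_n) + o(b_n) \;=\; \tfrac{t^2\sigma^2}{2}\,b_n + o(b_n)
\]
for every fixed $t$ (the $-$ tail being $t<0$, or simply $Y_n \mapsto -Y_n$). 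Granting this, $W_n := Y_n/\sqrt{nb_n\log n}$ has scaled cumulant generating function $\Lambda(t) = \lim_n b_n^{-1}\log\mathbb{E}\exp(tb_nW_n) = t^2\sigma^2/2$, which is finite and everywhere differentiable, so G\"artner--Ellis yields the moderate deviation principle for $W_n$ at speed $b_n$ with rate $\Lambda^*(\lambda) = \lambda^2/(2\sigma^2)$; evaluating at the half-lines $[\lambda,\infty)$ and $(\lambda,\infty)$ gives the stated limit. (Finiteness of $\mathbb{E}\exp(\theta_n Y_n)$ is automatic since $\cp(S[0,n])\le |S[0,n]|\le n+1$.)

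For the displayed estimate I would use a martingale decomposition of $Y_n$ over time blocks. Write $n = m\ell$ with $m = m_n\to\infty$ growing just slightly faster than $b_n$ (say $m = b_n\omega_n$, $\omega_n\to\infty$, $\omega_n\ll\log n$), put $\mathcal{G}_i := \sigma(S_0,\dots,S_{i\ell})$, $R_i := S[(i-1)\ell,i\ell]$, and
\[
D_i := \mathbb{E}[\cp(S[0,n])\mid\mathcal{G}_i] - \mathbb{E}[\cp(S[0,n])\mid\mathcal{G}_{i-1}], \qquad Y_n = \sum_{i=1}^m D_i .
\]
Martingale orthogonality already forces $\sum_i\mathbb{E}[D_i^2] = \var(Y_n) = \sigma^2 n\log n$. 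Conditioning successively through the filtration and using $\mathbb{E}[e^{\theta_nD_i}\mid\mathcal{G}_{i-1}]\le\exp(\tfrac12\theta_n^2V_i + R_i)$ with $V_i := \mathbb{E}[D_i^2\mid\mathcal{G}_{i-1}]$ and a higher-cumulant remainder $R_i$, the whole problem reduces to three ingredients: (i) a law-of-large-numbers / second-moment bound showing the quadratic variation $\sum_iV_i$ concentrates around its mean $\sigma^2 n\log n$ to precision $o(n\log n)$, so that $\theta_n^2\sum_iV_i = t^2\sigma^2b_n + o(b_n)$ with overwhelming probability; (ii) a remainder bound $\sum_i|R_i| = O\big(\theta_n^3\sum_i\mathbb{E}|D_i|^3\big) = O\big(b_n^{3/2}m^{-1/2}\big) = o(b_n)$ (after a routine truncation of the rare events where $\theta_n|D_i|$ is not small), which needs each $D_i$ to have third moment of the natural order $(\mathbb{E}D_i^2)^{3/2}$ and, quantitatively, to be sub-Gaussian up to $\asymp(\log\ell)^{1/4}$ of its standard deviations; and (iii) the subadditivity input $0\le\sum_i\cp(R_i) - \cp(S[0,n])\le\sum_{i<j}\chi_{ij}$, with $\chi_{ij}$ the mutual capacity of $R_i$ and $R_j$, which both lets one approximate $D_i$ by the centered block capacity $\cp(R_i) - \mathbb{E}\cp(R_i)$ (whence its moments) and is itself negligible: a Green's-function computation gives $\mathbb{E}\sum_{i<j}\chi_{ij} = O(n\ell^{-1/2})$ in $d=5$, so $\theta_n\mathbb{E}\sum_{i<j}\chi_{ij} = O\big(b_n(\omega_n/\log n)^{1/2}\big) = o(b_n)$, with a matching tail bound to control its exponential moment.

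To complete the G\"artner--Ellis hypothesis I also need the lower bound $\log\mathbb{E}\exp(\theta_nY_n)\ge\tfrac12\theta_n^2\var(Y_n) - o(b_n)$. The cleanest route is exponential tilting: under $\widetilde{\mathbb{P}}$ with $d\widetilde{\mathbb{P}}/d\mathbb{P} = \exp(\theta_nY_n)/\mathbb{E}\exp(\theta_nY_n)$, the second-moment control from (i)--(iii) shows $\widetilde{\mathbb{E}}Y_n = \theta_n\var(Y_n)(1+o(1))$ and that the variance of $Y_n$ under $\widetilde{\mathbb{P}}$ is $O(\var(Y_n))$; one then writes $\mathbb{P}(Y_n\ge\lambda\sqrt{nb_n\log n})$ back through $\widetilde{\mathbb{P}}$ of a bounded window about $\widetilde{\mathbb{E}}Y_n$, bounded below by Chebyshev. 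Equivalently, the martingale computation run as a two-sided estimate yields the Laplace lower bound directly.

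The main obstacle is the sharpness demanded in (i)--(iii): one must show that \emph{every} contribution past the Gaussian term $\tfrac12\theta_n^2\var(Y_n)$ is $o(b_n)$ uniformly over $b_n\ll\sqrt{\log n}$. This rests on a local concentration estimate for the capacity of a length-$\ell$ range --- sub-Gaussian behaviour up to $\asymp(\log\ell)^{1/4}$ standard deviations --- together with the corresponding stability of the quadratic variation, and it is exactly the mechanism behind the transition at $n^{1/2}(\log n)^{3/4}$: once $b_n\gg\sqrt{\log n}$ this Gaussian window is exhausted and the generalized Gagliardo--Nirenberg variational cost of the companion theorem takes over, so the present bounds must be tight enough to rule out an earlier crossover. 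I expect this local capacity concentration estimate, and the dimension-sensitive Green's-function bookkeeping for the mutual-capacity terms $\chi_{ij}$ (more delicate in $d=4$, where the interaction sums are logarithmically larger), to carry most of the technical weight.
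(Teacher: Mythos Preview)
Your overall architecture---block decomposition, control of interactions, Taylor expansion of the conditional Laplace transform, and G\"artner--Ellis---is the same as the paper's, but you have chosen a martingale decomposition where the paper uses an i.i.d.\ one. The paper writes $\cp(\mathcal S_n)=\sum_{j=1}^{2^L}\cp(\mathcal S_n^{(L,j)})-\sum_{l=1}^L\Lambda_l^{\mathcal C}$ via iterated dyadic splitting; by reversibility the $2^L$ capacity pieces are genuinely i.i.d., so the Laplace transform of their sum factorizes \emph{exactly} and no quadratic-variation concentration (your step (i)) is needed. The cross terms $\Lambda_l^{\mathcal C}$ are then shown to be negligible at scale $\sqrt{nb_n\log n}$ using the moment bound $\mathbb E\chi(\mathcal S_n,\tilde{\mathcal S}_n)^m\le C^m(m!)^{3/2}n^{m/2}$, which is derived in the paper via a graph-reduction method. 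With $L=\gamma^{-1}\log_2 b_n$ for some $\gamma\in(0,\tfrac12)$, the truncated i.i.d.\ sum has Laplace transform $\exp\{\tfrac12\theta^2\sigma^2 b_n(1+o(1))\}$ by a two-term Taylor expansion, and G\"artner--Ellis finishes.

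Two points deserve attention if you pursue the martingale route. First, your step (i) is not free: you need $\sum_iV_i$ to concentrate with exponentially small failure probability (of order $e^{-Cb_n}$), not merely in $L^2$; this can be recovered from the same moment machinery, but it is extra work that the i.i.d.\ decomposition sidesteps. Second, your stated input---``sub-Gaussian up to $(\log\ell)^{1/4}$ standard deviations'' for the block capacity---is precisely the Gaussian regime of the theorem you are proving, so invoking it is circular. What is actually provable (and what the paper uses) is the weaker stretched-exponential bound $\sup_n\mathbb E\exp\{\theta(n\log n)^{-1/3}|\cp(\mathcal S_n)-\mathbb E\cp(\mathcal S_n)|^{2/3}\}<\infty$ for all $\theta>0$, obtained by induction from the $(m!)^{3/2}$ moment growth of $\chi$. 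This $2/3$-exponent control is strictly weaker than sub-Gaussian but still suffices for the truncation in your step (ii); the heavier tail is exactly why the Gaussian window closes at $b_n\asymp\sqrt{\log n}$.
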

    
    The dynamics change when $b_n$ gets larger. Since $\cp (S[0, n])$ is non-negative and and $\mathbb{E} \cp S[0, n] \asymp n$, we can see that $1 \ll b_n \ll n^{1/3}$ is the full range of interest. We prove a large deviation principle in almost this full range, in the sense that our theorem holds for all $\lambda > 0$ and $b_n = n^{1-\epsilon}$, where $0 < \epsilon < 1$. This is optimal since when $\epsilon = 0$, such a theorem would not exist for large $\lambda$, and the case $\epsilon = 1$ is the central limit theorem. More precisely, our results hold as long as $\sqrt{\log n} \ll b_n \ll n^{1/3}(\log n)^{-8/3}$ (the case $1 \ll b_n \ll \sqrt{\log n}$ was dealt with in Theorem~\ref{thm:gaussian}).
    
    \begin{theorem}
    \label{thm:non-gaussian}
    Suppose $\sqrt{\log n} \ll b_n \ll n^{1/3} (\log n)^{-8/3}$. Then, for any $\lambda > 0$,
    \[  
    \lim_{n \to \infty} \frac{1}{b_n} \log \mathbb{P} \left( 
     \cp (S[0, n]) - \mathbb{E}\cp ( S[0, n]) \le -\lambda \sqrt{n b_n^3} \right) = - I_5(\lambda),
    \]
    where $I_5(\lambda) = \frac{1}{2} \left( 5^{-5/2}  \tilde{\gamma}_S^{-2} \tilde{\kappa}(5,2)^{-4} \lambda \right)^{2/3}$ and
    \[
    \tilde{\gamma}_S = \mathbb{P}(0 \notin S^1[1, \infty), \{S^1[0, \infty) \cup S^2[0, \infty)\} \cap S^3[1, \infty) = \emptyset)
    \]
    for three independent random walks $S^1, S^2$, and $S^3$, and $ \tilde{\kappa}(d, 2)$ is the smallest constant such that the following generalized Gagliardo--Nirenberg inequality holds.
    \begin{equation}
    \label{eq:general-GN}
        \left[ \int_{(\mathbb{R}^d)^2} g^2(x) G(x-y) g^2(y) \mathrm{d}x \mathrm{d}y \right]^{1/4} \le \tilde{\kappa}(d,2) \left( \int_{\mathbb{R}^5} g^2(x) \mathrm{d}x \right)^{\frac{6-d}{8}} \left( \int_{\mathbb{R}^d} |\nabla g(x)|^2 \mathrm{d}x \right)^{\frac{d-2}{8}}.
    \end{equation}
    $G(x)$ is the Green's function for the Brownian motion on $\mathbb{R}^d$. For $d = 5$, this would be $G(x) = \frac{1}{4\pi^2}|x|^{-3}$.
    \end{theorem}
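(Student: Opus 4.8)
The plan is to establish matching lower and upper bounds
\[
\lim_{n\to\infty}\tfrac{1}{b_n}\log\mathbb{P}\big(\cp(S[0,n])-\mathbb{E}\cp(S[0,n])\le-\lambda\sqrt{nb_n^3}\big)=-I_5(\lambda),
\]
guided by the following picture. The cheapest way to depress the capacity by $\lambda\sqrt{nb_n^3}$ is for the walk to fold itself into a ball of radius $r_n\asymp\sqrt{n/b_n}$ and acquire an occupation profile $\ell_n(x)\approx (n/r_n^{5})\,\phi(x/r_n)$ with $\int\phi=1$. Such self-overlap lowers the capacity by roughly $\tilde\gamma_S\int_{(\mathbb{R}^5)^2}\ell_n(x)G(x-y)\ell_n(y)\,\mathrm{d}x\,\mathrm{d}y\asymp\tilde\gamma_S\,n^2r_n^{-3}$, while the probabilistic cost of confining the walk with this profile over $n$ steps is $\exp(-c\,(n/r_n^2)\,\mathcal{D}(\sqrt\phi)+o(b_n))$ for the Dirichlet energy $\mathcal{D}$, by a Donsker--Varadhan-type estimate. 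Calibrating $n^2r_n^{-3}\asymp\lambda\sqrt{nb_n^3}$ forces $n/r_n^2\asymp b_n$, so the cost is of the correct order $e^{-\Theta(b_n)}$; and optimizing the overlap-versus-energy trade-off over all profiles $\phi=g^2/\|g\|_2^2$ is exactly the extremal problem solved by the generalized Gagliardo--Nirenberg inequality \eqref{eq:general-GN} with $d=5$, which produces the constant $\tilde\kappa(5,2)$ in $I_5(\lambda)$. The factor $\tilde\gamma_S$ is the conversion rate from units of self-overlap to units of lost capacity: where two stretches of the walk revisit a common region, the capacity drops by the number of escape routes destroyed, and the governing event is that a third, escaping walk avoids the union of two independent trajectories started there --- an event of probability $\tilde\gamma_S$.

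\textbf{Step 1: reduction to a cross-intersection functional.} Building on the decomposition used to prove Theorem~\ref{thm:gaussian}, I would first establish
\[
\cp(S[0,n])-\mathbb{E}\cp(S[0,n])=M_n-\tilde\gamma_S\,\Xi_n+\mathcal{E}_n,
\]
where $M_n$ is the centered martingale carrying the $\sqrt{n\log n}$ Gaussian fluctuations, $\Xi_n\ge 0$ is a renormalized cross-intersection local time of the schematic form $\sum\mathbbm{1}[\cdots]\,\hat G(S_i-S_j)$, and $\mathcal{E}_n$ is an error term. Since $b_n\gg\sqrt{\log n}$, a martingale concentration bound gives $\mathbb{P}(M_n\le-\lambda\sqrt{nb_n^3})=e^{-\Theta(b_n^3/\log n)}\ll e^{-b_n}$, and the exponential moments of $\mathcal{E}_n$ at the scale $\theta_n\asymp(nb_n)^{-1/2}$ are subexponential in $b_n$ provided $b_n\ll n^{1/3}(\log n)^{-8/3}$ --- this logarithmic power being precisely what the error estimates and the coarse-graining below cost. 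It therefore suffices to prove
\[
\lim_{n\to\infty}\tfrac{1}{b_n}\log\mathbb{P}\big(\Xi_n\ge\tilde\gamma_S^{-1}\lambda\sqrt{nb_n^3}\big)=-I_5(\lambda).
\]

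\textbf{Step 2: lower bound via a change of measure.} Fix $\varepsilon>0$ and a smooth, compactly supported near-minimizer $g$ of the variational problem attached to \eqref{eq:general-GN}. I would introduce a tilted walk confined to a ball of radius $r_n=\lfloor\sqrt{n/b_n}\rfloor$ with transition probabilities given by an $h$-transform modulated by $g(\cdot/r_n)$, designed so that with probability bounded below its occupation measure lies within $\varepsilon$ (in the relevant topology) of $(n/r_n^{5})\,g^2(\cdot/r_n)/\|g\|_2^2$. On that event, Brownian scaling of $\hat G$ and the value of $\int\!\!\int g^2Gg^2$ give $\Xi_n\ge(1-\varepsilon)\,\tilde\gamma_S^{-1}\lambda\sqrt{nb_n^3}$, while the relative entropy of the tilt over $n$ steps is $(c+o(1))\,b_n\,\mathcal{D}(g)$, a direct computation. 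Letting $\varepsilon\to0$ and then optimizing over $g$ delivers the lower bound.

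\textbf{Step 3: upper bound --- the main obstacle.} I would coarse-grain $[0,n]$ into $N_n\asymp b_n(\log n)^{O(1)}$ consecutive stretches and use subadditivity of capacity together with a matching lower bound carrying an interaction defect, so that a capacity defect of order $\lambda\sqrt{nb_n^3}$ forces the total excess cross-intersection $\Xi_n$ to be of that order. The real difficulty is to rule out the cheap production of such a large $\Xi_n$ from many weakly correlated small contributions; I expect this to require (i) discretizing space at scale $r_n$ and rewriting $\Xi_n$, up to lower order, as a quadratic form $\sum_{z,z'}L_z\,G_{r_n}(z-z')L_{z'}$ in the coarse occupation numbers $(L_z)$; (ii) a Donsker--Varadhan-type \emph{upper} bound $\mathbb{P}(L\approx\ell)\le\exp(-c\,(n/r_n^2)\,\mathcal{D}_{\mathrm{disc}}(\sqrt\ell)+o(b_n))$, obtained by cutting the walk into $O(b_n)$ independent stretches and taking a union bound over an $o(b_n)$-entropy net of profiles; and (iii) passing to the continuum limit and invoking \eqref{eq:general-GN}, with the compactness and concentration structure of its extremal problem, to identify the optimal exponent as exactly $I_5(\lambda)$. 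The interaction defects, the martingale part, and $\mathcal{E}_n$ must all be controlled uniformly at exponential scale, which is once more where $\sqrt{\log n}\ll b_n\ll n^{1/3}(\log n)^{-8/3}$ is used; extracting the sharp Gagliardo--Nirenberg constant from step (ii) --- that is, matching the upper-bound entropy rate to the lower-bound change of measure --- is the point I anticipate being the most technically demanding.
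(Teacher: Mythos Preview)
Your heuristic and the lower bound (Step~2) are sound and close in spirit to the paper, which implements the lower bound via a Feynman--Kac inequality applied to a localized version $\chi_{b_n}$ of the cross term (after a Cauchy--Schwarz step decoupling the two walks), rather than an explicit $h$-transform. The decomposition in Step~1 is also roughly right, though the paper organizes it differently: instead of ``martingale $+$ cross-term $+$ error,'' it introduces a smoothed self-cross-term $\chi^\epsilon(\mathcal{S}_n,\mathcal{S}_n)$ and shows, via the dyadic decomposition \eqref{eq:decomposition-L} and its analogue for $\chi^\epsilon$, that $\cp(\mathcal{S}_n)+\chi^\epsilon(\mathcal{S}_n,\mathcal{S}_n)-\mathbb{E}\cp(\mathcal{S}_n)$ has deviations negligible at scale $\sqrt{nb_n^3}$. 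Thus the lower tail of the capacity becomes exactly the upper tail of $\chi^\epsilon$, and the factor $\tilde\gamma_S$ emerges only at the weak-convergence stage (Lemma~\ref{lem:weak-conv}) rather than being extracted up front.

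The real divergence, and the gap in your proposal, is Step~3. The paper does not use Donsker--Varadhan occupation-time large deviations at all. Its upper bound runs as follows: (i) moment bounds $\mathbb{E}\chi(\mathcal{S}_n,\tilde{\mathcal{S}}_n)^m\le C^m(m!)^{3/2}n^{m/2}$ via a graph-reduction scheme (Section~\ref{sec:moments}); (ii) $\chi\approx\chi_{b_n}$ at the LDP scale by the more delicate moment bounds of Lemma~\ref{lem:aux-error-moment}, which is where the restriction $b_n\ll n^{1/3}(\log n)^{-8/3}$ actually enters; (iii) subadditivity of the exponential moments of $\chi_{b_n}$ (Proposition~\ref{prop:chibnuprbnd}) plus weak convergence of $n^{-1/2}\chi$ to the Brownian Riesz potential; (iv) the Bass--Chen--Rosen Brownian LDP as a black box, followed by G\"artner--Ellis. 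The sharp constant $\tilde\kappa(5,2)$ thus comes for free from the continuum result. Your route would instead require a Donsker--Varadhan upper bound for SRW occupation measures in non-compact $\mathbb{Z}^5$, with a localization step and a net of profiles of cardinality $e^{o(b_n)}$; none of this is standard, and matching the discrete entropy rate to the continuous Dirichlet form with the sharp constant is precisely the hard step you flag but do not resolve. The paper trades that difficulty for the already-proved Brownian LDP, at the cost of the moment-bound machinery of Section~\ref{sec:moments}; your approach, if completed, would be self-contained but substantially harder.
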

    
    Previously, Asselah and Schapira \cite{AsselahSchapira2020} showed that Theorem~\ref{thm:non-gaussian} holds up to constant factors in the range $n^{1/7} (\log n)^{2/3} \le b_n \le \epsilon n^{1/3}$. Our paper is the first to reach the Gaussian regime for $d = 5$, and hence also the first to observe the transition at $b_n \asymp \sqrt{\log n}$. Furthermore, we have determined the exact value of the constant determining the size of the moderate deviations.
    
    Along the way, we also show upper tail deviations of the \emph{cross term} $\chi(S[0, n], \tilde{S}[0, n])$ of two independent random walks, for all $\lambda > 0$ and $b_n = n^{1 - \epsilon}$, where $0 < \epsilon < 1$.(  Again, such a result is not reasonable for $\epsilon = 0$ when $\lambda$ is large, and $\epsilon = 1$ would fall into the weak convergence regime.)
    
       The cross term can be explicitly expressed as follows:
    \[  
    \chi(A, B) = \sum_{x \in A} \sum_{y \in B} \mathbb{P}^x(\tau_A = \infty) G_D(x, y) \mathbb{P}^y(\tau_B = \infty).
    \]
    This term plays a role similar to that of the intersection of sets when analyzing the volume. However, the cross-term involves long-term correlations between random walks, making it much harder to analyze. Furthermore, compared to similar work done in lower dimensions \cite{AdhikariOkada2023}, our results require much more precise control over the moments of highly singular random variables. Because there are many sources of blow-ups, we have to account for their specific combinatorial structure. To better manage this, we introduce a novel graph representation, which allows us to perform induction over a variety of possible states. This new technique is what enabled us to improve the results of \cite{AdhikariOkada2023} to nearly the full range as stated in Theorem~\ref{thm:dim-4}. We believe this representation may have widespread applications beyond our setting; see Section~\ref{subsec:graph-intro} for potential applications and Section~\ref{subsec:moments-sketch} for how they are used in this paper.    
  
    \begin{theorem}
        \label{thm:cross-term-LDP}
        Suppose $1 \ll b_n \ll n^{1/3} (\log n)^{-8/3}$. Then, for any $\lambda > 0$,
        \[  
        \lim_{n \to \infty} \frac{1}{b_n} \log \mathbb{P} \left( \chi(S[0, n], \tilde{S}[0, n]) \ge \lambda \sqrt{n b_n^3} \right) = -2I_5(\lambda),
        \]
        where $S$, $\tilde{S}$ are two independent random walks and
        \[  
        \chi(A, B) = \sum_{x \in A} \sum_{y \in B} \mathbb{P}^x(\tau_A = \infty) G_D(x - y) \mathbb{P}^y(\tau_B = \infty).
        \]
    \end{theorem}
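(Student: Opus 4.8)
The plan is to prove the two matching bounds
\[
\frac1{b_n}\log\mathbb{P}\big(\chi(S[0,n],\tilde S[0,n]) \ge \lambda\sqrt{nb_n^3}\big) \longrightarrow -2I_5(\lambda)
\]
by identifying the optimal strategy: both walks fold their occupation measures into a common region of radius $R_n \asymp \sqrt{n/b_n}$, with rescaled spatial profile proportional to the extremizer of the Gagliardo--Nirenberg inequality \eqref{eq:general-GN} with $d=5$. The radius $R_n$ is fixed by requiring that $\chi$ on such an event reach the target order $\lambda\sqrt{nb_n^3}$, while the cost of confining a length-$n$ walk to radius $R$ with prescribed rescaled occupation density $\rho$ on $\mathbb R^5$ is $\exp\big(-(1+o(1))\,(n/R^2)\,\mathcal I(\rho)\big)$, where $\mathcal I(\rho) = \tfrac1{10}\int_{\mathbb R^5}|\nabla\sqrt\rho|^2$ is the Donsker--Varadhan rate for simple random walk in $\mathbb Z^5$; these force $R_n^2\asymp n/b_n$ and an overall cost of order $b_n$, matching the claimed deviation scale.

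First I would reduce $\chi$ to a quadratic functional of the two local time fields. Using $\chi(A,B) = \sum_{x\in A}e_A(x)\,\mathbb P^x(\tau_B<\infty)$ with $e_A$ the equilibrium measure, the goal is to show that, up to corrections negligible on the exponential scale $b_n$, the equilibrium weights and hitting probabilities may be replaced by their mesoscopic averages, giving
\[
\chi(S[0,n],\tilde S[0,n]) \;\approx\; \tilde\gamma_S^{\,2}\sum_{x,y} L_n^{(1)}(x)\,G_D(x-y)\,L_n^{(2)}(y),
\]
where $L_n^{(1)},L_n^{(2)}$ are the local time fields of the two walks and $\tilde\gamma_S$ is exactly the three-walk probability of the statement (one walk for each of the two ranges through a contact point, one for escape to infinity). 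This is the combinatorial core of the argument: on scales well below $R_n$ the range still looks like that of an unconstrained walk, so the escape probabilities concentrate around $\tilde\gamma_S$ and decouple from the $G_D$-weights, and the confinement enters only through the macroscopic profiles of $L_n^{(i)}$. One then truncates the $x$-$y$ sum to a single localized region of size $\asymp R_n$, ruling out contributions from other scales and other locations by showing they cannot add up to $\lambda\sqrt{nb_n^3}$ except with probability $\exp(-\omega(b_n))$.

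Given the reduction, the upper bound follows from: (i) a coarse-graining of $[0,n]$ into $\asymp b_n$ blocks of length $\asymp R_n^2$, yielding by near-independence of the blocks the moderate-deviation bound $\mathbb P(L_n^{(i)}\approx n\rho_i) \le \exp(-(1-o(1))(n/R^2)\mathcal I(\rho_i))$ and, after a net argument over mesoscopic profiles and the reduction, the variational estimate
\[
\limsup \tfrac1{b_n}\log\mathbb P\big(\chi \ge \lambda\sqrt{nb_n^3}\big) \le -\inf_{R,\,\rho_1,\,\rho_2}\Big\{ \tfrac{n}{R^2 b_n}\big(\mathcal I(\rho_1)+\mathcal I(\rho_2)\big)\ :\ 5\,\tilde\gamma_S^{\,2} n^2 R^{-3}\langle\rho_1,G\rho_2\rangle \ge \lambda\sqrt{nb_n^3}\Big\},
\]
where $\langle\rho_1,G\rho_2\rangle = \int\!\int\rho_1(x)G(x-y)\rho_2(y)\,dx\,dy$ and the factor $5 = d$ comes from $G_D \sim 5\,G$ for the discrete Green's function; and (ii) solving this problem. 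Eliminating $R$ via the constraint and writing $\rho_i = g_i^2$ with $\|g_i\|_2=1$ reduces it to minimizing $(\|\nabla g_1\|_2^2+\|\nabla g_2\|_2^2)/\langle g_1^2,Gg_2^2\rangle^{2/3}$, and applying \eqref{eq:general-GN} with $d=5$ (exponents $1/8$ and $3/8$) to $g_1$ and $g_2$ separately — together with Cauchy--Schwarz for the positive-definite kernel $G$ — gives the minimum $2\,\tilde\kappa(5,2)^{-8/3}$; reassembling with the numerical factors $\tfrac1{10}$, $5$ and $\tilde\gamma_S$ produces exactly $2I_5(\lambda)$, which is where the $5^{-5/2}$ in the statement originates. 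The lower bound is the mirror construction: fix a smooth compactly supported near-extremizer $g$ of \eqref{eq:general-GN} and the optimal $R_n$, let $\mathcal E$ be the event that the rescaled occupation measures of both (independent) walks lie within $\varepsilon$ of $g^2$ at scale $R_n$, bound $\mathbb P(\mathcal E)$ below by the lower-bound half of the occupation-measure principle (forcing each walk through a prescribed schedule of mesoscopic boxes across the $\asymp b_n$ blocks), and check via the reduction that on $\mathcal E$ one has $\chi\ge\lambda\sqrt{nb_n^3}$ — for which one needs $e_{S[0,n]}(x)\ge\tilde\gamma_S-o(1)$ for a $1-o(1)$ fraction of the relevant sites (a second-moment estimate) and a lower bound on the $G_D$-weighted double sum surviving the positive correlations.

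I expect the reduction step to be the main obstacle, specifically showing that the equilibrium-measure factors $\mathbb P^x(\tau_{S[0,n]}=\infty)$ can be replaced by the constant $\tilde\gamma_S$ uniformly and tightly even after conditioning on the walks being confined. The conditioning distorts the local law of the range, and one must quantify that this distortion is negligible on the mesoscopic scales governing the escape probability while the large-deviation cost lives entirely on the macroscopic scale $R_n$; separating these two scales cleanly, and pushing all the decoupling and occupation-measure estimates down to $b_n$ as small as $\sqrt{\log n}$ (which replaces all soft compactness arguments with quantitative ones), is the source of the $(\log n)^{-8/3}$ restriction and the bulk of the technical work.
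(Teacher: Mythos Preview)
Your proposal follows a genuinely different route from the paper. You aim for a spatial, Donsker--Varadhan style argument: reduce $\chi$ to a quadratic functional of local-time fields, then run an occupation-measure moderate-deviation principle at scale $R_n\asymp\sqrt{n/b_n}$ with a net over profiles for the upper bound and an explicit change-of-measure for the lower bound. The paper instead works entirely through moments. It applies a G\"artner--Ellis variant (Theorem~\ref{thm:gartner-ellis}) that reduces the LDP to computing
\[
\lim_{n\to\infty}\frac{1}{b_n}\log\sum_{m\ge 0}\frac{\theta^m}{m!}\Big(\frac{b_n}{n}\Big)^{m/4}\big(\mathbb{E}\,\chi(\mathcal{S}_n,\tilde{\mathcal{S}}_n)^m\big)^{1/2},
\]
and then (i) replaces $\chi$ by a \emph{localized} version $\chi_{b_n}$ (cutting each walk into $b_n$ pieces so that the escape probabilities only see a single piece), with the error controlled by the graph-reduction moment bound of Lemma~\ref{lem:aux-error-moment}; (ii) proves the upper bound by subadditivity plus weak convergence to the Brownian Riesz potential (so the constant comes directly from the Bass--Chen--Rosen result, Theorem~\ref{thm:brownian-LDP}); and (iii) proves the lower bound by factoring $\chi_{b_n}=\sum_x\mathcal{F}^x(S)\mathcal{F}^x(\tilde S)$ through the convolutional square root $\tilde G_D$, applying Cauchy--Schwarz, and invoking a Feynman--Kac / Markov-operator argument.

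The advantage of the paper's route is that it avoids precisely the two steps you flag as hardest. First, your ``reduction'' $\mathbb{P}^x(\tau_{S[0,n]}=\infty)\to\tilde\gamma_S$ uniformly under conditioning is replaced by the deterministic inequality $\chi\le\chi_{b_n}$ together with a moment bound on $\chi_{b_n}-\chi$ (Proposition~\ref{prop:chiischibn}); the $(\log n)^{-8/3}$ restriction comes from this moment computation, not from any conditional decoupling. Second, your net argument over mesoscopic profiles---which would require a quantitative occupation-measure MDP valid down to $b_n\asymp\sqrt{\log n}$---is bypassed by subadditivity of the exponential-moment generating series, which transfers the problem to fixed-time weak convergence. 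Your heuristics for the variational problem and the origin of the constant are correct, but as written the upper bound has a real gap: an occupation-measure LDP for the random walk at these scales, with enough uniformity to run a net over $\rho_1,\rho_2$ and to survive the replacement of equilibrium weights by $\tilde\gamma_S$ \emph{on the conditioned event}, is not an available input and would be substantial to prove. The paper's moment/subadditivity machinery is what makes the argument go through without it.
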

    
    Furthermore, the techniques developed in this paper can also be applied to the $d = 4$ case to obtain the moderate deviation principle for a wide range of $b_n$. This is an improvement over the work of Okada and the first author \cite{AdhikariOkada2023}, in which they showed the same result but only for $1 \ll b_n =O( \log \log n)$. Since $\mathbb{E} \cp (S[0, n]) \asymp n / \log n$, the full range of interest is $1 \ll b_n \ll \log n$. Of this, the following theorem covers the cases where $b_n = (\log n)^{1- \epsilon}$ for $0 < \epsilon < 1$.
    
    \begin{theorem}
        \label{thm:dim-4}
        Let $d = 4$. For any $b_n$ satisfying $1 \ll b_n \ll \frac{\log n}{\log \log n}$ and $\lambda > 0$, the following are true.
        \begin{gather}      
            \lim_{n \to \infty} \frac{1}{b_n} \log \mathbb{P} \left( \cp (S[0, n]) - \mathbb{E} \cp (S[0, n]) \le - \frac{\lambda nb_n}{(\log n)^2} \right) = -I_4 (\lambda) \\         
            \lim_{n \to \infty} \frac{1}{b_n} \log \mathbb{P} \left( \chi(S[0,n], \tilde{S}[0,n]) \ge  \frac{\lambda nb_n}{(\log n)^2}\right) = -2I_4 (\lambda) \\
            \lim_{n \to \infty} \frac{1}{b_n} \log \mathbb{P} \left( \chi(S[0,n], S[0,n]) -\mathbb{E}\chi(S[0,n], S[0,n]) \ge \frac{\lambda nb_n}{(\log n)^2} \right) = -I_4 (\lambda)
        \end{gather}
        These theorems correspond to Theorem~1, Theorem~2, and Corollary~2 of \cite{AdhikariOkada2023}, respectively.  Here, $I_4(\lambda) = 2\pi^{-4} \tilde{\kappa}(4,2)^{-4} \lambda$.
    \end{theorem}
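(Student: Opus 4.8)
\noindent\textbf{Reduction.} As in the $d=5$ arguments behind Theorems~\ref{thm:non-gaussian} and~\ref{thm:cross-term-LDP}, the three displays are tied together and will be read off from a single master estimate --- an exponential-moment bound for a bilinear functional of random-walk occupation measures. The capacity is compared with its cross terms through the (near-)decomposition
\[
\cp(S[0,n]) \;=\; \sum_{i=1}^{k}\cp\big(S[t_{i-1},t_i]\big)\;-\;\sum_{1\le i<j\le k}\chi\big(S[t_{i-1},t_i],S[t_{j-1},t_j]\big)\;+\;(\text{error})
\]
over a partition $0=t_0<\dots<t_k=n$: the main term concentrates at scale $o\!\big(nb_n(\log n)^{-2}\big)$, and $\sum_{i<j}\chi(S[t_{i-1},t_i],S[t_{j-1},t_j])=\chi(S[0,n],S[0,n])-\sum_i\chi(S[t_{i-1},t_i],S[t_{i-1},t_i])+(\text{error})$ with the diagonal terms negligible, so the lower tail of $\cp(S[0,n])$ reduces to the upper tail of $\chi(S[0,n],S[0,n])$. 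For $\chi(S[0,n],S[0,n])$ and $\chi(S[0,n],\tilde S[0,n])$ one uses $\chi(A,B)=\sum_{x\in A}\sum_{y\in B}\mathrm{es}_A(x)\,G_D(x-y)\,\mathrm{es}_B(y)$ and replaces the equilibrium densities $\mathrm{es}_A(x)=\mathbb{P}^x(\tau_A=\infty)$ by their typical value, which in $d=4$ is $\asymp 1/\log n$ (this is the origin of the $(\log n)^2$ normalisation); a last-exit decomposition controls the replacement. It then remains to bound the upper tail of $(\log n)^{-2}\sum_{x\in S,\,y\in\tilde S}G_D(x-y)$ for one or two walks, the difference being that two independent walks must each pay the localisation cost, which is what turns $I_4$ into $2I_4$.

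\medskip
\noindent\textbf{Lower bound.} Fix $g\in C_c^\infty(\mathbb{R}^4)$ with $\int g^2=1$. Force the walk(s) of length $n$ to stay in a ball of radius $R_n\asymp\sqrt{n/b_n}$ with (rescaled, normalised) occupation profile close to $g^2$; a quantitative pre-limit Donsker--Varadhan estimate gives this probability $\exp\!\big(-(1+o(1))\tfrac12\|\nabla g\|_2^2\,b_n\big)$ per walk. Under this conditioning a second-moment computation shows that $\chi$ concentrates around $(1+o(1))\,c_4\,\frac{nb_n}{(\log n)^2}\iint_{(\mathbb{R}^4)^2}g^2(x)\,G(x-y)\,g^2(y)\,\mathrm{d}x\,\mathrm{d}y$ for an explicit dimensional constant $c_4$; the prefactor $nb_n/(\log n)^2$ is forced by the confinement scale $R_n^6$ against the squared mean occupation density $\asymp(b_n^2/n)^2$, together with the two $1/\log n$ escape factors. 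Choosing $g$ to saturate \eqref{eq:general-GN} at the prescribed value of the bilinear form and optimising $\|\nabla g\|_2^2\big/\big(\iint g^2 G g^2\big)^{1/2}$ gives the matching lower bound; the scale-invariance of the Gagliardo--Nirenberg ratio in $d=4$ is exactly why the rate is linear in $\lambda$, namely $I_4(\lambda)=2\pi^{-4}\tilde\kappa(4,2)^{-4}\lambda$ (and $2I_4$ for two walks).

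\medskip
\noindent\textbf{Upper bound.} From $\mathbb{P}(\chi\ge a)\le e^{-\theta a}\,\mathbb{E}\,e^{\theta\chi}$ with $\theta$ optimised at the end, the steps are: (a) replace $\chi$ by a truncated, de-randomised bilinear form $(\log n)^{-2}\sum_{x\in S,\,y\in\tilde S}G_D^{(M)}(x-y)$ with $G_D^{(M)}$ the Green's function cut off at range $M$, the discarded parts being controlled through the $|x|^{-2}$ decay in $d=4$ and the last-exit bound on the $\mathrm{es}$-replacement; (b) coarse-grain $\mathbb{Z}^4$ into boxes of an intermediate side $\ell_n$ so that the truncated form is comparable to a functional $F$ of the box-occupation measures; (c) expand $\mathbb{E}\,e^{\theta F}$ over time blocks by sub-multiplicativity and a principal-eigenvalue estimate, yielding
\[
\tfrac1{b_n}\log\mathbb{E}\,e^{\theta\chi}\;\le\;\sup_{g:\,\|g\|_2=1}\Big\{\theta\,c_4\!\iint g^2(x)\,G(x-y)\,g^2(y)\,\mathrm{d}x\,\mathrm{d}y\;-\;\alpha\,\|\nabla g\|_2^2\Big\}\;+\;o(1),
\]
with $\alpha=\tfrac12$ for $\chi(S,S)$ and $\alpha=1$ for $\chi(S,\tilde S)$ (the extra factor coming from the independence of the two walks); (d) recognise the supremum as a rescaling of the Legendre dual of the Gagliardo--Nirenberg functional in \eqref{eq:general-GN}, evaluate it, and optimise over $\theta$ to obtain $-I_4(\lambda)$, resp.\ $-2I_4(\lambda)$.

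\medskip
\noindent\textbf{Main obstacle.} The difficulty sits in step (c) together with the error control of (a)--(b): the exponential-moment bound must be sharp \emph{at the level of the constant} $\tilde\kappa(4,2)$, so the discrete box functional has to converge to the continuum functional in \eqref{eq:general-GN} with no loss, while every error --- Green's function truncation, box discretisation, the $\mathrm{es}$-replacement, and the passage from $\cp$ to cross terms --- must be $o(b_n)$ in the exponent. In the critical dimension $d=4$ each of these carries logarithmic corrections, and bookkeeping them is exactly what restricts the admissible range to $b_n\ll\log n/\log\log n$; this is where the argument improves on \cite{AdhikariOkada2023}, whose cruder intersection-of-ranges estimates only reached $b_n=O(\log\log n)$.
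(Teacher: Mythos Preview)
Your sketch is a plausible high-level outline of the overall strategy, and in broad strokes it matches the architecture of \cite{AdhikariOkada2023}: decompose the capacity, reduce to cross terms, prove upper bounds by subadditivity/eigenvalue methods and lower bounds by Feynman--Kac. But this misses what the paper's proof actually does. The paper does \emph{not} redo any of that analysis; it explicitly observes that the large-deviation machinery of \cite{AdhikariOkada2023} on the reduced quantity $TL'_n$ is already robust up to $b_n \ll \log n$, and that the only obstruction in \cite{AdhikariOkada2023} was a crude moment bound on a single error term, denoted $MT_n$, which controls the passage from $\chi$ to $TL_n$ (and $TL_n$ to $TL'_n$). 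The entire content of the paper's proof of Theorem~\ref{thm:dim-4} is therefore to sharpen that one estimate.

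Concretely, $MT_n$ is dominated by the triple sum $\sum_{i}\sum_{j_1,j_2} G_D(S_i-\tilde S_{j_1})G_D(S_i-\tilde S_{j_2})$, and the paper proves $\mathbb{E}[MT_n^m]\le C^m(m!)^2 n^m(\log n)^{-3m}$ for $m\le\log n$ via the graph-reduction technique of Section~\ref{sec:moments} (specifically the $d=4$ analogue of Proposition~\ref{prop:error-term-moment}). Plugging this into Markov's inequality with $m$ chosen so that $m\gg b_n$ and $m^2\ll b_n\log n$ gives the negligibility of $MT_n$ at scale $nb_n(\log n)^{-2}$; the need for such an $m$ to exist is precisely what yields the range $b_n\ll\log n/\log\log n$. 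Your ``main obstacle'' paragraph gestures at error control but never identifies this specific term or the mechanism that fixes it, so as written it does not explain the improvement over \cite{AdhikariOkada2023} --- which is the whole point of Theorem~\ref{thm:dim-4}.
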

    The constant term comes from the fact that, for a simple random walk in $d = 4$,
    \[
        \mathbb{P}(0 \notin S^1[1, n), \{S^1[0, n) \cup S^2[0, n)\} \cap S^3[1, \infty) = \emptyset) \sim \frac{\pi^2}{8} \log n.
    \]
    We note that our definition of $\chi$ differs from that of \cite{AdhikariOkada2023}, which is what leads to the change of constants from one paper to another. To be more precise, their definition of $\chi$ corresponds with $\chi_{\mathcal{C}}$ in this paper, and they used $\chi'$ to denote our definition of $\chi$.
    
    \subsection{Related Work} \label{subsec:prev-works}
    There is a long series of works devoted to understanding the capacity of the range of a random walk. A classical result of Jain and Orey \cite{JainOrey1968} proves a strong law of large numbers for $d \ge 3$: almost surely,
    \[  
    \lim_{n \to \infty} \frac{ \cp (S[0, n])}{n} = \gamma_d, \quad \gamma_d > 0 \text{ if and only if }  d \ge 5.
    \]
     More recently, Chang \cite{Chang2017} obtained the first order asymptotics for $d = 3$, in which $\cp(S[0, n])$ scales like $\sqrt{n}$. For $d = 4$, a non-Gaussian central limit theorem was shown by Asselah, Schapira, and Sousi \cite{AsselahSchapiraSousi2019}. In all higher dimensions, we have the usual central limit theorem, as shown by Schapira \cite{Schapira2020} for $d = 5$ and Asselah, Schapira, and Sousi \cite{AsselahSchapiraSousi2018} for $d \ge 6$. Dembo and Okada \cite{DemboOkada2024} proved the law of iterated logarithms for all $d \ge 3$. Okada and the first author \cite{AdhikariOkada2023} proved a moderate deviations principle in $d = 4$, as mentioned above. They also connected the constant to a generalized Gagliardo--Nirenberg inequality. In higher dimensions $d \ge 7$, Asselah and Schapira \cite{AsselahSchapira2020} proved a moderate deviation principle in the Gaussian regime, as well as finding the correct order in the non-Gaussian regime. For $d = 5$, however, they only obtain the correct order, and only in the non-Gaussian case.
    
    One remarkable property of the capacity is that it behaves very similarly to the \emph{volume} of $S[0, n]$ in dimension $d-2$. For instance, the strong law of large numbers for the volume of $S[0, n]$ is the following.
    \[  
    \lim_{n \to \infty} \frac{|S[0, n]|}{n}  = \alpha_d,\quad  \alpha_d > 0 \text{ if and only if }  d \ge 3.
    \]
    This is almost identical to the strong law of the capacity, except for a difference of $2$ in the dimensions. In fact, such a resemblance is present for \emph{all} of the results discussed in the previous paragraph. The only case where they differ by something other than a constant factor is in \cite{DemboOkada2024}, where the $\limsup$ for the capacity has an additional factor of $(\log \log \log n)^{-1}$. For more information on the asymptotics for the volume, see \cite{Chen2010} and the references therein. Our results strengthen this correspondence by showing that the capacity exhibits a similar transition from Gaussian tails to stretched exponential tails at the same deviation scale as the volume for $d = 3$. We also see a relation between the constant term and the generalized Gagliardo-Nirenberg inequality, similar to how the usual Gagliardo-Nirenberg inequality appears in the volume case.

    The cross term is also related to the Riesz potential of the Brownian motion $\int_0^1 \int_0^1 G(B_t - \tilde{B}_s) \mathrm{d}t \mathrm{d}s$, of which the large deviations were shown by Bass, Chen, and Rosen \cite{BassChenRosen2009} (stated here  as Theorem~\ref{thm:brownian-LDP}).  Indeed, we show that these two quantities behave the same way in the moderate deviation regime. However, when $d \ge 6$, this is no longer conjectured to be true. This fact, along with the exponents on the right-hand side of \eqref{eq:general-GN}, indicate that a fundamentally different approach has to be taken when $d \ge 6$.

    \subsection{Graphical Representation} \label{subsec:graph-intro}
    In order to resolve the difficulties mentioned in the previous section, we introduce a new graphical representation. For instance, the value $|x-y|_+^{-2}$ is represented by the graph $[\small
    \tikz[scale = 0.5, baseline=(current bounding box.center)]{
    \coordinate[inlinevtx] (x) at (0, 0);
    \coordinate[inlinevtx] (y) at (1.5, 0);
    \node at (x) [left] {$x$};
    \node at (y) [right] {$y$};
    \draw[thick] (x) -- (y) node [midway, above] { $2$};
    }]$. One example of a graph we study is the following, where $S$ and $\tilde{S}$ are independent random walks.
    \[ \mathbb{E} \left[ \prod_{k=1}^m |S_{i_k} - \tilde{S}_{j_{2k-1}}|_+^{-3} |S_{i_k} - \tilde{S}_{j_{2k}}|_+^{-3} \right] = \mathbb{E} \left[
    \begin{tikzpicture}[scale=0.7, baseline=(current bounding box.center)]
        \coordinate[vtx] (u0) at (1.5, 0);
        \node at (u0) [below] {$S_{i_1}$};
        \coordinate[vtx] (v0) at (1, 2);
        \node at (v0) [above] {$\tilde{S}_{j_1}$};
        \coordinate[vtx] (w0) at (2, 2);
        \node at (w0) [above] {$\tilde{S}_{j_2}$};
        \draw[thick] (v0) -- (u0) node [midway, left] {$3$};
        \draw[thick] (w0) -- (u0) node [midway, right] {$3$};
        \coordinate[vtx] (u1) at (3.5, 0);
        \node at (u1) [below] {$S_{i_2}$};
        \coordinate[vtx] (v1) at (3, 2);
        \node at (v1) [above] {$\tilde{S}_{j_3}$};
        \coordinate[vtx] (w1) at (4, 2);
        \node at (w1) [above] {$\tilde{S}_{j_4}$};
        \draw[thick] (v1) -- (u1) node [midway, left] {$3$};
        \draw[thick] (w1) -- (u1) node [midway, right] {$3$};
        \node at (5, 1) {$\dots$};
        \coordinate[vtx] (u) at (6.5, 0);
        \node at (u) [below] {$S_{i_{m}}$};
            \coordinate[vtx] (v) at (6, 2);
            \node at (v) [above] {$\tilde{S}_{j_{2m-1}}$};
            \coordinate[vtx] (w) at (7, 2);
            \node at (w) [above] {$\tilde{S}_{j_{2m}}$};
            \draw[thick] (v) -- (u) node [midway, left] {$3$};
            \draw[thick] (w) -- (u) node [midway, right] {$3$};
    \end{tikzpicture}\right]
    \]
    Such representations make it easier to handle complicated correlations between different parts of the random walks, and also to perform induction over a variety of possible states; see Section~\ref{subsec:moments-sketch} for more details. As such, this representation has the potential to be applicable to a large number of problems that require moment computations with nontrivial self-interactions. Indeed, Chen \cite{Chen2010} lists open questions in Conjecture 8.7.3 and 8.7.4 about the large deviation behavior of the self-intersections and volume of the random walk in dimensions $d \ge 5$ and $d=4$, respectively. As of yet, current methods were not able to obtain these conjectures in all regimes; in addition, the computational methods of Chen in \cite{Chen2010} were not yet able to adapt to these higher-dimensional cases. We believe that our graphical notation gives us a new method to improve computations of these moments in higher dimensions and analyze these asymptotics in a tractable fashion. We present two possible scenarios where we believe our representation may be useful. The first concerns branching random walks, and the second is related to the diagrammatic notation used in percolation theory.
    
    Beyond the context of intersections of simple random walks, we believe that this graphical representation is key to expressing and computing intersection events of  branching random walks involving multiple distinct points. In what follows, we will derive an explicit formula for a  relatively simple intersection event. We will observe that even this computation for the probability is associated with a complex expression that is better suited to our graphical model. Thus, our graphical representation becomes even more efficacious when considering intersection events that would appear in higher moment computations.
    
    Let us now turn to describing our simpler intersection event. Consider a branching random walk $B$, with branching probability $\rho$, run until time $n$ that branched only once at time $t$; we define $B^{1}_{[t,n]}$ to be one branch of $B$ after the branching time and $B^2_{[t,n]}$ to be the other branch. $B_{[0,t]}$ will denote the unique part of the walk before the branching. Let us suppose we want to compute the probability that this branching walk $B$ intersects the random walk $S$ at three points; with respect to $B$, we will have exactly one intersection on each of the parts $B_{[0,t]}, B^1_{[t,n]}$, and $B^2_{[t,n]}$. We let $t_1 <t$ be the time of the first intersection on $B_{[0,t]}$, $t_2$ and $t_3$ be the time of the intersections on $B^1$ and $B^2$, respectively, and $x$ be the value of the random walk $B$ at the branching point. Furthermore, let $S_{a}$ be the point in which $B_{[0,t]}$ intersects $S$, $S_{b}$ be the point in which $B^1_{[t,n]}$ intersects $S$, $S_c$ be the point in which $B^2_{[t,n]}$ intersects $S$. Conditionally on the random walk $S$, we can write the probability that the aforementioned intersection event happens as:
    \begin{equation}
    \begin{aligned}
    &\rho (1-\rho)^{n-1}\sum_{t_1,t,t_2,t_3,x} p_{t_1}(S_a) p_{t-t_1}(x- S_a) p_{t_2- t}(S_b-x) p_{t_3- t}(S_c-x) \\& \approx   \rho (1-\rho)^{n-1}\sum_x G(S_a) G(x-S_a) G(S_b-x) G(S_c-x),
    \end{aligned}
    \end{equation} where the approximation improves as $n$ gets larger.  Expressions such as those of the last line above cannot be computed without the great simplifications afforded by our novel graphical representation. This should compare to papers such as \cite{Angeletal2021} which analyzes the local time of a Branching random walk, but only at the origin, the paper \cite{AsselahSchapira24} that analyzes the local time in a ball around the origin, or \cite{Asselahetal24} on the branching capacity. We believe that our graphical method could obtain exact asymptotics for general expressions regarding intersections of branching random walks.
    
Furthermore, we believe that the tools we have developed here could lead to insights even in topics outside the domain of random walk intersections. For example, one can consider the diagrammatic notation of the lace expansion (see \cite{Slade2006} for a survey). In those settings, an unlabeled vertex $\tikz{\coordinate[vtx] (x) at (0, 0); }$ represents summation over all points, and edges corresponds to the \emph{two-point function} $T(x, y)$. For instance, the \emph{triangle diagram}, a widely studied object in percolation theory, is the following.
    \[
        \label{eq:triangle-diagram}
        \nabla := 
        \begin{tikzpicture}[scale=0.7, baseline=(current bounding box.center)]
            \coordinate[vtx] (0) at (0, 0);
            \coordinate[vtx] (x) at (-1, 1);
            \coordinate[vtx] (y) at (1, 1);
            \node at (0) [below] {$0$};
            \draw (0) -- (x);
            \draw (x) -- (y);
            \draw (0) -- (y);
        \end{tikzpicture}
        =
        \sum_{x, y \in \mathbb{Z}^d} T(0, x) T(x, y) T(y, 0)
    \]

    Compared to previous diagrammatic estimates, one novelty of our technique is that we consider \emph{random} vertices. This method can be used to simplify many diagrams beyond those we use in this paper. Namely, if we think of $T(x, y)$ as the (pre-normalized) transition probability from $x$ to $y$, we may remove an edge from this diagram in exchange for randomizing one of its endpoints. Thus,
    \[  
    \begin{tikzpicture}[scale=0.7, baseline=(current bounding box.center)]
            \coordinate[vtx] (0) at (0, 0);
            \coordinate[vtx] (x) at (-1, 1);
            \coordinate[vtx] (y) at (1, 1);
            \node at (0) [below] {$0$};
            \draw (0) -- (x);
            \draw (x) -- (y);
            \draw (0) -- (y);
        \end{tikzpicture} = \mathcal{X} \mathbb{E} \left[
        \begin{tikzpicture}[scale=0.7, baseline=(current bounding box.center)]
            \coordinate[vtx] (0) at (0, 0);
            \coordinate[vtx] (x) at (-1, 1);
            \coordinate[vtx] (y) at (1, 1);
            \node at (0) [below] {$0$};
            \node at (x) [above] {$T_1$};
            \draw (x) -- (y);
            \draw (0) -- (y);
        \end{tikzpicture} \right] = \mathcal{X}^2 \mathbb{E} \left[
        \begin{tikzpicture}[scale=0.7, baseline=(current bounding box.center)]
            \coordinate[vtx] (0) at (0, 0);
            \coordinate[vtx] (x) at (-1, 1);
            \coordinate[vtx] (y) at (1, 1);
            \node at (0) [below] {$0$};
            \node at (x) [above] {$T_1$};
            \node at (y) [above] {$T_2$};
            \draw (0) -- (y);
        \end{tikzpicture} \right] = \mathcal{X}^2 \mathbb{E} \left[
        \begin{tikzpicture}[scale=0.7, baseline=(current bounding box.center)]
            \coordinate[vtx] (0) at (0, 0);
            \coordinate[vtx] (y) at (2, 0);
            \node at (0) [left] {$0$};
            \node at (y) [right] {$T_2$};
            \draw (0) -- (y);
        \end{tikzpicture} \right],
    \]
    where $T_1$ has distribution $\mathcal{X}^{-1} T(0, \cdot)$ and $T_2$ has distribution $\mathcal{X}^{-1} T(T_1, \cdot)$. Here, the normalizing constant is the \emph{susceptibility}, $\mathcal{X} = \sum_{x \in \mathbb{Z}^d} T(0, x)$.
    For another example, the following inequality is one of the main results of a paper by Hutchcroft on mean-field percolation \cite[Proposition~1.8]{Hutchcroft2022}.
    \begin{equation}
    \label{eq:hutchcroft-inequality}
    \begin{tikzpicture}[scale=0.7, baseline=(current bounding box.center)]
        \coordinate[vtx] (a) at (0, 0);
        \coordinate[vtx] (b) at (-1, 0);
        \coordinate[vtx] (c) at (1, 0);
        \coordinate[vtx] (d) at (0, 1);
        \coordinate[vtx] (e) at (0, -1);
        \node at (a) [left] {$0$};
        \draw (a) -- (d);
        \draw (a) -- (e);
        \draw (b) -- (d);
        \draw (b) -- (e);
        \draw (c) -- (d);
        \draw (c) -- (e);
    \end{tikzpicture} \quad \le \quad
    \begin{tikzpicture}[scale=0.7, baseline=(current bounding box.center)]
        \coordinate[vtx] (a) at (0, 0);
        \coordinate[vtx] (b) at (-1, 0);
        \coordinate[vtx] (c) at (-1, 1);
        \coordinate[vtx] (d) at (0, 1);
        \coordinate[vtx] (e) at (1, 1);
        \node at (b) [left] {$0$};
        \draw (a) -- (b);
        \draw (b) -- (c);
        \draw (c) -- (d);
        \draw (d) -- (a);
        \draw (d) -- (e);
        \draw (a) -- (e);
    \end{tikzpicture} \quad \le \quad
    \begin{tikzpicture}[scale=0.7, baseline=(current bounding box.center)]
        \coordinate[vtx] (a) at (0, 0);
        \coordinate[vtx] (b) at (-1, 0);
        \coordinate[vtx] (c) at (-1, 1);
        \coordinate[vtx] (d) at (1, 0);
        \coordinate[vtx] (e) at (1, 1);
        \node at (b) [left] {$0$};
        \draw (a) -- (b);
        \draw (b) -- (c);
        \draw (c) -- (a);
        \draw (a) -- (d);
        \draw (d) -- (e);
        \draw (e) -- (a);
    \end{tikzpicture} 
    \end{equation}
    By choosing an appropriate path and proceeding as before, we can rewrite each diagram as follows.
    \begin{align*}
    \begin{tikzpicture}[scale=0.7, baseline=(current bounding box.center)]
        \coordinate[vtx] (a) at (0, 0);
        \coordinate[vtx] (b) at (-1, 0);
        \coordinate[vtx] (c) at (1, 0);
        \coordinate[vtx] (d) at (0, 1);
        \coordinate[vtx] (e) at (0, -1);
        \node at (a) [left] {$0$};
        \draw (a) -- (d);
        \draw (a) -- (e);
        \draw (b) -- (d);
        \draw (b) -- (e);
        \draw (c) -- (d);
        \draw (c) -- (e);
    \end{tikzpicture} &= \mathcal{X}^4 \mathbb{E} \left[ \begin{tikzpicture}[scale=0.7, baseline=(current bounding box.center)]
        \coordinate[vtx] (a) at (0, 0);
        \coordinate[vtx] (b) at (2, 0);
        \coordinate[vtx] (c) at (0, -1);
        \coordinate[vtx] (d) at (2, -1);
        \node at (a) [left] {$0$};
        \node at (b) [right] {$T_3$};
        \node at (c) [left] {$T_1$};
        \node at (d) [right] {$T_4$};
        \draw (a) -- (b);
        \draw (c) -- (d);
    \end{tikzpicture} \right] \\
    \begin{tikzpicture}[scale=0.7, baseline=(current bounding box.center)]
        \coordinate[vtx] (a) at (0, 0);
        \coordinate[vtx] (b) at (-1, 0);
        \coordinate[vtx] (c) at (-1, 1);
        \coordinate[vtx] (d) at (0, 1);
        \coordinate[vtx] (e) at (1, 1);
        \node at (b) [left] {$0$};
        \draw (a) -- (b);
        \draw (b) -- (c);
        \draw (c) -- (d);
        \draw (d) -- (a);
        \draw (d) -- (e);
        \draw (a) -- (e);
    \end{tikzpicture} &= \mathcal{X}^4 \mathbb{E} \left[ \begin{tikzpicture}[scale=0.7, baseline=(current bounding box.center)]
        \coordinate[vtx] (a) at (0, 0);
        \coordinate[vtx] (b) at (2, 0);
        \coordinate[vtx] (c) at (0, -1);
        \coordinate[vtx] (d) at (2, -1);
        \node at (a) [left] {$0$};
        \node at (b) [right] {$T_3$};
        \node at (c) [left] {$T_2$};
        \node at (d) [right] {$T_4$};
        \draw (a) -- (b);
        \draw (c) -- (d);
    \end{tikzpicture} \right] = \mathcal{X}^4 \mathbb{E} \left[ \begin{tikzpicture}[scale=0.7, baseline=(current bounding box.center)]
        \coordinate[vtx] (a) at (0, 0);
        \coordinate[vtx] (b) at (1.5, 0);
        \coordinate[vtx] (c) at (3, 0);
        \node at (a) [left] {$0$};
        \node at (b) [below] {$T_4$};
        \node at (c) [below] {$T_2$};
        \draw (a) -- (b) -- (c);
    \end{tikzpicture} \right] \\
    \begin{tikzpicture}[scale=0.7, baseline=(current bounding box.center)]
        \coordinate[vtx] (a) at (0, 0);
        \coordinate[vtx] (b) at (-1, 0);
        \coordinate[vtx] (c) at (-1, 1);
        \coordinate[vtx] (d) at (1, 0);
        \coordinate[vtx] (e) at (1, 1);
        \node at (b) [left] {$0$};
        \draw (a) -- (b);
        \draw (b) -- (c);
        \draw (c) -- (a);
        \draw (a) -- (d);
        \draw (d) -- (e);
        \draw (e) -- (a);
    \end{tikzpicture}  &= \mathcal{X}^4 \mathbb{E} \left[ \begin{tikzpicture}[scale=0.7, baseline=(current bounding box.center)]
        \coordinate[vtx] (a) at (0, 0);
        \coordinate[vtx] (b) at (1.5, 0);
        \coordinate[vtx] (c) at (3, 0);
        \node at (a) [left] {$0$};
        \node at (b) [below] {$T_2$};
        \node at (c) [below] {$T_4$};
        \draw (a) -- (b) -- (c);
    \end{tikzpicture} \right]  
    \end{align*}
    Thus, the inequality \eqref{eq:hutchcroft-inequality} is equivalent to the following.
    \[  
    \mathbb{E} \left[ \begin{tikzpicture}[scale=0.7, baseline=(current bounding box.center)]
        \coordinate[vtx] (a) at (0, 0);
        \coordinate[vtx] (b) at (2, 0);
        \coordinate[vtx] (c) at (0, -1);
        \coordinate[vtx] (d) at (2, -1);
        \node at (a) [left] {$0$};
        \node at (b) [right] {$T_3$};
        \node at (c) [left] {$T_1$};
        \node at (d) [right] {$T_4$};
        \draw (a) -- (b);
        \draw (c) -- (d);
    \end{tikzpicture} \right] \le \mathbb{E} \left[ \begin{tikzpicture}[scale=0.7, baseline=(current bounding box.center)]
        \coordinate[vtx] (a) at (0, 0);
        \coordinate[vtx] (b) at (2, 0);
        \coordinate[vtx] (c) at (0, -1);
        \coordinate[vtx] (d) at (2, -1);
        \node at (a) [left] {$0$};
        \node at (b) [right] {$T_3$};
        \node at (c) [left] {$T_2$};
        \node at (d) [right] {$T_4$};
        \draw (a) -- (b);
        \draw (c) -- (d);
    \end{tikzpicture} \right] = \mathbb{E} \left[ \begin{tikzpicture}[scale=0.7, baseline=(current bounding box.center)]
        \coordinate[vtx] (a) at (0, 0);
        \coordinate[vtx] (b) at (1.5, 0);
        \coordinate[vtx] (c) at (3, 0);
        \node at (a) [left] {$0$};
        \node at (b) [below] {$T_4$};
        \node at (c) [below] {$T_2$};
        \draw (a) -- (b) -- (c);
    \end{tikzpicture} \right] \le  \mathbb{E} \left[ \begin{tikzpicture}[scale=0.7, baseline=(current bounding box.center)]
        \coordinate[vtx] (a) at (0, 0);
        \coordinate[vtx] (b) at (1.5, 0);
        \coordinate[vtx] (c) at (3, 0);
        \node at (a) [left] {$0$};
        \node at (b) [below] {$T_2$};
        \node at (c) [below] {$T_4$};
        \draw (a) -- (b) -- (c);
    \end{tikzpicture} \right]  
    \]
    Combined with the fact that $T$ is a positive definite kernel, our representation gives a clear intuition for why this inequality holds. For instance, the first inequality replaces $T_1$ with $T_2$. On average, $T_2$ is closer to $T_4$ than $T_1$ is, so it is believable that the inequality holds true. The second inequality is even easier to prove. Namely, conditioned on $T_2$ and $T_4$, the inequality is a direct consequence of the positive definiteness of $T$. We believe that this idea will have many similar applications in areas where lace expansion and diagrammatic sums are used, such as percolation or self-avoiding walks (e.g., \cite{HaraHofstadSlade2003, Sakai2022, BolthausenHofstadKozma2018}).
    
    \subsection{Notation} \label{subsec:notation}
    We use $\mathcal{S}_n^{(l, j)}$ to denote the $j$-th piece of the random walk $S[0, n]$ that has been divided up into $2^l$ parts. We will also use $[n]^{(l, j)}$ to denote the $j$-th piece of the interval $[n] = \{ 0, \dots , n \}$. That is, 
    $\mathcal{S}_n^{(l, j)} = S[[n]^{(l, j)}] = S[2^{-l}n (j-1), 2^{-l}n j]$. In particular, $S[0, n] = \mathcal{S}_n^{(0, 1)} =: \mathcal{S}_n$. For convenience, we assume that $n$ is divisible by a large power of $2$, but this does not affect the proof in any significant way. $\tilde{S}$ will refer to an independent copy of $S$, with range $\tilde{\mathcal{S}}_n = \tilde{S}[0, n]$ and so on. $\mathbb{P}^x$ (resp. $\mathbb{E}^x$) be the probability (resp. expectation) of the simple random walk starting at $x$. $\tau_A$ is the first positive hitting time of the set $A$: $\tau_A = \min \{ i > 0 : S_i \in A \}$.
    
    We use $C$ and $c$ to denote universal constants that may change from line to line. $C_{\epsilon}$ denotes a constant which depends on $\epsilon$, and also may change from line to line. For positive $f$ and $g$ depending on $n$, We write $f = O(g)$, $f \lesssim g$, and $g \gtrsim f$ to mean $f \le Cg$. $f \asymp g$ means $f \lesssim g \lesssim f$. $f = o(g)$, $f \ll g$, and $g \gg f$ all mean $f/g \to 0$ as $n \to \infty$.
    
    $G_D (x, y) = G_D(x - y)$ is the discrete Green's function for the simple random on $\mathbb{Z}^5$, with convolutional square root $\tilde{G}_D$. $G(x,y) = G(x-y)$ is the continuous Green's function of the Brownian motion in $\mathbb{R}^5$, and $\tilde{G}$ is its convolutional square root. $p_t$ is the transition probability of the Brownian motion . $G^{\epsilon}( = p_{\epsilon} \ast G)$ and $\tilde{G}^{\epsilon}$ denote the convolution of $G$ and $\tilde{G}$ with $p_{\epsilon}$, respectively.
    
    \section{Proof Outline} \label{sec:sketch}
    
    \subsection{Splitting the Walk} \label{subsec:split}
    One common technique when analyzing $\mathcal{S}_n = S[0,n]$ is to think of it as a union of its two halves, $\mathcal{S}_n^{(1, 1)} = S[0,n/2]$ and $\mathcal{S}_n^{(1, 2)} = S[n/2, n]$. By the reversibility of random walks, we may view these parts as $\emph{two independent}$ random walks starting from $S_{n/2}$. Since $\mathbb{Z}^d$ is transitive, we have
    \begin{align*}
        \cp(\mathcal{S}_n) &= \cp(\mathcal{S}_n^{(1, 1)}) + \cp(\mathcal{S}_n^{(1, 2)}) - \chi_{\mathcal{C}}(\mathcal{S}_n^{(1, 1)}, \mathcal{S}_n^{(1, 2)}) \\
        &\overset{d}{=} \cp(\mathcal{S}_n^{(1, 1)}) + \cp(\tilde{\mathcal{S}}_n^{(1, 1)}) - \chi_{\mathcal{C}}(\mathcal{S}_n^{(1, 1)}, \tilde{\mathcal{S}}_n^{(1, 1)}),
    \end{align*}
    where $S$ and $\tilde{S}$ are two independent random walks and
    \[  
    \chi_{\mathcal{C}}(A, B) := \cp(A) + \cp(B) - \cp(A \cup B).
    \]
    Thus, we get a recursive structure for $\cp(\mathcal{S}_n)$, which more or less reduces our problem to studying $\chi_{\mathcal{C}}(\mathcal{S}_n, \tilde{\mathcal{S}}_n)$. As it turns out, $\chi_{\mathcal{C}}$ is very close to the following quantity.
    \begin{equation}
    \label{eq:cross-term}
        2\chi(A, B) := 2\sum_{x \in A} \sum_{y \in B} \mathbb{P}^x ( \tau_A = \infty) G_D(x, y) \mathbb{P}^y (\tau_B = \infty)
    \end{equation}
    In fact, their difference is negligible up to the large deviation regime. This is shown by proving
    \begin{equation}
    \label{eq:error-bound}
        0 \le \epsilon(\mathcal{S}_n, \tilde{\mathcal{S}}_n) := 2\chi(\mathcal{S}_n, \tilde{\mathcal{S}}_n) - \chi_{\mathcal{C}}(\mathcal{S}_n, \tilde{\mathcal{S}}_n) \le \sum_{i, j_1, j_2 = 1}^n G_D(S_i - \tilde{S}_{j_1}) G_D (S_i - \tilde{S}_{j_2})
    \end{equation}
    and then bounding the moments of the right-hand side. The inequality itself is straightforward via a last-passage path decomposition (see Section~\ref{sec:main} and also \cite{AsselahSchapira2020}). Bounding the moments, however, turns out to be much more challenging (see  the discussion in Section~\ref{subsec:moments-sketch}). This moment computation is the main novelty of our paper, and also how we improve the results of Okada and the first author \cite{AdhikariOkada2023}. The move from $\chi_{\mathcal{C}}$ to $\chi$ is done in order to reduce the dependence between $\mathcal{S}_n$ and $\tilde{\mathcal{S}}_n$; see Section~\ref{subsec:regimes-sketch} for more explanation or Proposition~\ref{prop:lower-bound} for where it is used in the proof.
    
    Once the error term is dealt with, we can turn our attention to understanding $\chi(\mathcal{S}_n, \tilde{\mathcal{S}}_n)$. The first result is a moment estimate,
    \begin{equation}
    \label{eq:cross-term-moment-1}
        \mathbb{E} \chi(\mathcal{S}_n, \tilde{\mathcal{S}}_n)^m  \le C^m (m!)^{3/2} n^{m/2}.
    \end{equation}
    By the above and an induction argument, we also get an exponential moment estimate for $\cp(\mathcal{S}_n)$. That is, for every $\theta > 0$,
    \begin{equation}
    \label{eq:cap-moment}
        \sup_n \mathbb{E} \exp \left[ \frac{\theta}{\sqrt[3]{n \log n}} | \cp(\mathcal{S}_n) - \mathbb{E} \cp (\mathcal{S}_n) |^{2/3} \right] < \infty.
    \end{equation}
    This is the correct scaling to consider since $\var(\cp(\mathcal{S}_n)) \asymp n \log n$. Moreover, we know from \cite{Schapira2020} that $\cp(\mathcal{S}_n)$ follows a central limit theorem.
    
    \subsection{The two regimes} \label{subsec:regimes-sketch}
    Here we describe, on a heuristic level, why the phase transition occurs and our strategy for proving Theorems~\ref{thm:gaussian} and \ref{thm:non-gaussian}. From now on, we will use the decomposition
    \begin{equation}
    \label{eq:decomposition}
    \begin{aligned}
        \cp(\mathcal{S}_n) &= \cp(\mathcal{S}_n^{(1, 1)}) + \cp(\mathcal{S}_n^{(1, 2)}) - \chi_{\mathcal{C}}(\mathcal{S}_n^{(1, 1)}, \mathcal{S}_n^{(1, 2)}) \\
        &=\cp(\mathcal{S}_n^{(1, 1)}) + \cp(\mathcal{S}_n^{(1, 2)}) - 2\chi(\mathcal{S}_n^{(1, 1)}, \mathcal{S}_n^{(1, 2)}) + \epsilon(\mathcal{S}_n^{(1, 1)}, \mathcal{S}_n^{(1, 2)}).
    \end{aligned}
    \end{equation}
    Again, the key idea is that $\mathcal{S}_n^{(1, 1)}$ and $\mathcal{S}_n^{(1, 2)}$ may be viewed as two independent random walks of length $n/2$, and that $\epsilon(\mathcal{S}_n^{(1, 1)} , \mathcal{S}_n^{(1,2)})$ is negligible. We may repeat this decomposition for $\mathcal{S}_n^{(1, 1)}$ and $\mathcal{S}_n^{(1, 2)}$. After doing so $L$ times (where $L$ may grow with $n$), we get the following.
    
    \begin{equation}
    \label{eq:decomposition-L}
        \cp(\mathcal{S}_n) = \sum_{j=1}^{2^L} \cp(\mathcal{S}_n^{(L, j)}) -\sum_{l=1}^L  \Lambda^{\mathcal{C}}_l = \sum_{j=1}^{2^L} \cp(\mathcal{S}_n^{(L, j)}) -\sum_{l=1}^L  \Lambda_l + \epsilon_L,
    \end{equation}
    where $\Lambda_l^{\mathcal{C}}$, $\Lambda_l$, and $\epsilon_L$ stand for
    \begin{align*}
        \Lambda_l^{\mathcal{C}} &= \sum_{j=1}^{2^{l-1}} \chi_{\mathcal{C}}(\mathcal{S}_n^{(l, 2j-1)}, \mathcal{S}_n^{(l, 2j)}) \\
        \Lambda_l &= 2\sum_{j=1}^{2^{l-1}} \chi(\mathcal{S}_n^{(l, 2j-1)}, \mathcal{S}_n^{(l, 2j)}) \\
        \epsilon_L &= \sum_{l=1}^L \sum_{j=1}^{2^{l-1}} \epsilon(\mathcal{S}_n^{(l, 2j-1)}, \mathcal{S}_n^{(l, 2j)}).
    \end{align*}  
    As mentioned previously, $\epsilon_L$ is negligible up to the large deviation regime. When $1 \ll b_n \ll \sqrt{\log n}$, \eqref{eq:cross-term-moment-1} implies that the deviations of $\Lambda_l$ are much smaller than those of $\cp(\mathcal{S}_n^{(L, j)})$. Since these form an i.i.d. sum, we can expect Gaussian behavior in this regime. This is indeed what we see in Theorem~\ref{thm:gaussian}. The proof is straightforward, relying on sufficient tail bounds on $\cp(\mathcal{S}_n^{(L, j)})$ as given by \eqref{eq:cap-moment}.
    
    On the other hand, when $\sqrt{\log n} \ll b_n$, it is the cross-term that dominates. Hence, computing the lower tail deviations of $\cp(\mathcal{S}_n)$ is closely related to computing the upper tail deviations of $\chi(\mathcal{S}_n, \tilde{\mathcal{S}}_n)$. The first step is showing weak convergence to its continuous counterpart (Lemma~\ref{lem:weak-conv}):
    \[  
        \frac{1}{\sqrt{n}}\chi(\mathcal{S}_n, \tilde{\mathcal{S}}_n) \xrightarrow{d} 5^{5/2} \tilde{\gamma}_S^2 \int_0^1 \int_0^1 G(B_t - \tilde{B}_s) \mathrm{d}t \mathrm{d}s,
    \]
    where $G(x) = \frac{1}{4\pi^2}|x|^{-3}$ is the continuous Green's function for the Brownian motion in $\mathbb{R}^5$. The large deviations for the right-hand side was given by Bass, Chen, and Rosen \cite{BassChenRosen2009} and stated in Theorem~\ref{thm:brownian-LDP} of this paper.
    
    We show that these two behave the same way even in the large deviations regime. This is done by showing the upper and lower bounds separately. The upper bound uses subadditivity. The lower bound is an application of the Feynman--Kac formula adapted to the discrete setting. Both of these techniques were used in previously in other papers \cite{AdhikariOkada2023, Chen2010}. In particular, the Feynman--Kac formula is why we move from $\chi_{\mathcal{C}}$ to $\chi$. Because each of the probability terms in $\chi(A, B)$ only depend on one of $A$ and $B$, we can separate it into two halves:
    \begin{align*}
    \chi(A, B) &= \sum_{x \in A} \sum_{y \in B} \mathbb{P}^x (\tau_A = \infty) G_D (x, y) \mathbb{P}^y (\tau_B = \infty) \\
    &= \sum_{a \in \mathbb{Z}^5} \left( \sum_{x \in A} \mathbb{P}^x (\tau_A = \infty) \tilde{G}_D (x, a) \right) \left( \sum_{y \in B} \mathbb{P}^y (\tau_B = \infty) \tilde{G}_D (y, a) \right),
    \end{align*}
    where $\tilde{G}_D$ is the convolutional square root of $G_D$, i.e., $G_D = \tilde{G}_D \ast \tilde{G}_D$. However, we still have the problem that $\mathbb{P}^{S_i}(\tau_{\mathcal{S}_n} =\infty)$ is not Markovian. To resolve this issue, we also introduce $\chi_{b_l}$ (defined in \eqref{eq:defchibn}), which a more localized version of $\chi$. Showing that $\chi$ and $\chi_{b_l}$ behave the same in the large deviation regime is also a key challenge, which we resolve via a moment computation similar to (but even more intricate than) that of $\epsilon_L$.
    
    \subsection{Moment estimates via graph reduction} \label{subsec:moments-sketch}
    In order to justify the claims of the previous subsections, we show an upper bound of the moments of $\chi(\mathcal{S}_n, \tilde{\mathcal{S}}_n)$ and $\epsilon(\mathcal{S}_n, \tilde{\mathcal{S}}_n)$. We first focus on $\chi(\mathcal{S}_n, \tilde{\mathcal{S}}_n)$, as it is more simple and helps motivate the innovations necessary for the latter term. Due to \eqref{eq:cross-term}, it suffices to bound
    \[
    \mathbb{E} \left[\sum_{i=1}^n \sum_{j=1}^n |S_i - \tilde{S}_j|_+^{-3}\right]^m = \mathbb{E} \left[\sum_{\substack{1 \le i_1 , \dots , i_m \le n \\ 1 \le j_1 , \dots , j_m \le n}}
     \prod_{k=1}^m |S_{i_k} - \tilde{S}_{j_k}|_+^{-3} \right].    
    \]
    Computations of this type were done in Lemma 4.1 of \cite{DemboOkada2024} for the random walk when $d = 4$ and follow analogous proofs for the Brownian motion \cite{LeGall1994}. The main strategy is to progressively exchange the randomness of $S_i$ for a deterministic factor of $|i|_+^{1/2}$. Indeed, for $m = 1$, we can use $\mathbb{E}|S_i - x|_+^{-3} \le C |i|_+^{-3/4} |x|_+^{-3/2}$ to get
     \begin{equation}
     \label{eq:ineq-example}
     \mathbb{E} |S_i - \tilde{S}_j|_+^{-3} \le C|i|_+^{-3/4} \mathbb{E}|\tilde{S}_j|_+^{-3/2} \le C |i|_+^{-3/4} |j|_+^{-3/4}.
     \end{equation}
     by first conditioning on $\tilde{S}_j$. Summing over all $1 \le i, j \le n$ concludes the proof. For higher orders $m$, the additional step is to condition over all but the last increment. Assuming $i_1 \le \dots \le i_m$, we may view $S_{i_m}$ as the $(i_m - i_{m-1})$-th position of a random walk starting at $S_{i_{m-1}}$. Using similar arguments as the one demonstrated and inducting over $m$, we get the desired result.
    
     Controlling the error term is much more involved. In view of \eqref{eq:error-bound}, we want to bound the moments of $\sum_{i=1}^n \sum_{j_1, j_2=1}^n |S_i - \tilde{S}_{j_1}|_+^{-3} |S_i - \tilde{S}_{j_2}|_+^{-3}$. This boils down to computing
     \begin{equation}
         \label{eq:error-bound-moment}
    \mathbb{E} \left[ \prod_{k=1}^m |S_{i_k} - \tilde{S}_{j_{2k-1}}|_+^{-3} |S_{i_k} - \tilde{S}_{j_{2k}}|_+^{-3} \right]
     \end{equation}
    for $1 \le i_k, j_k \le n$. This is harder than \eqref{eq:cross-term-moment} for multiple reasons. Most importantly, the expression is much more singular. From the view of $S_{i_k}$, it is of order $6$. This quickly leads to a problem, since $\mathbb{E}|S_i|_+^{-p} \asymp |i|_+^{-p/2}$ only holds up to $p < 5$. Furthermore, even if this exchange were possible, we still have the problem that $\sum_{i=1}^n |i|_+^{-p} \asymp n^{1-p}$ is only true for $p < 1$. If we want the best bounds possible, it is crucial that we do not have such singularities.
    
    Thus, it is natural to shift our focus to $\tilde{S}$. Here we see our second difficulty: the expression is not symmetric with respect to $j_1 , \dots , j_{2m}$. Since $\tilde{S}_{j_{2k-1}}$ and $\tilde{S}_{j_{2k}}$ are linked via $S_{i_k}$, we may no longer assume $j_1 \le \dots \le j_{2m}$ without loss of generality. Instead, we have to consider all possible relative orders. To ensure that we don't introduce new singularities, we need to proceed differently depending on how the $j_k$'s are ordered. In order to better manage this, we use a novel graphical notation to represent the relationship between points. For instance, the value $|x-y|_+^{-2}$ is represented by the graph $[\small
    \tikz[scale = 0.5, baseline=(current bounding box.center)]{
    \coordinate[inlinevtx] (x) at (0, 0);
    \coordinate[inlinevtx] (y) at (1.5, 0);
    \node at (x) [left] {$x$};
    \node at (y) [right] {$y$};
    \draw[thick] (x) -- (y) node [midway, above] { $2$};
    }]$. Using this notation, \eqref{eq:error-bound-moment} becomes the following graph.
    \[ \mathbb{E} \left[ \prod_{k=1}^m |S_{i_k} - \tilde{S}_{j_{2k-1}}|_+^{-3} |S_{i_k} - \tilde{S}_{j_{2k}}|_+^{-3} \right] = \mathbb{E} \left[
    \begin{tikzpicture}[scale=0.7, baseline=(current bounding box.center)]
        \coordinate[vtx] (u0) at (1.5, 0);
        \node at (u0) [below] {$S_{i_1}$};
        \coordinate[vtx] (v0) at (1, 2);
        \node at (v0) [above] {$\tilde{S}_{j_1}$};
        \coordinate[vtx] (w0) at (2, 2);
        \node at (w0) [above] {$\tilde{S}_{j_2}$};
        \draw[thick] (v0) -- (u0) node [midway, left] {$3$};
        \draw[thick] (w0) -- (u0) node [midway, right] {$3$};
        \coordinate[vtx] (u1) at (3.5, 0);
        \node at (u1) [below] {$S_{i_2}$};
        \coordinate[vtx] (v1) at (3, 2);
        \node at (v1) [above] {$\tilde{S}_{j_3}$};
        \coordinate[vtx] (w1) at (4, 2);
        \node at (w1) [above] {$\tilde{S}_{j_4}$};
        \draw[thick] (v1) -- (u1) node [midway, left] {$3$};
        \draw[thick] (w1) -- (u1) node [midway, right] {$3$};
        \node at (5, 1) {$\dots$};
        \coordinate[vtx] (u) at (6.5, 0);
        \node at (u) [below] {$S_{i_{m}}$};
            \coordinate[vtx] (v) at (6, 2);
            \node at (v) [above] {$\tilde{S}_{j_{2m-1}}$};
            \coordinate[vtx] (w) at (7, 2);
            \node at (w) [above] {$\tilde{S}_{j_{2m}}$};
            \draw[thick] (v) -- (u) node [midway, left] {$3$};
            \draw[thick] (w) -- (u) node [midway, right] {$3$};
    \end{tikzpicture}\right]
    \]
    Furthermore, inequalities such as \eqref{eq:ineq-example} can be interpreted as manipulations on graphs:
    \[
    \mathbb{E} \left[
    \tikz[scale = 0.7, baseline=(current bounding box.center)]{
    \coordinate[vtx] (i) at (2, 1) node at (i) [right] {$S_i$};
    \coordinate[vtx] (j) at (0, 0.5) node at (j) [left] {$\tilde{S}_j$};
    \coordinate[vtx] (0) at (2, 0) node at (0) [right] {$0$};
    
    \draw[thick] (i) -- (j) node [midway, above] {\small $3$};
    }
    \right] \le C |i|_+^{-3/4} \mathbb{E} \left[
    \tikz[scale = 0.7, baseline=-3]{
    \coordinate[vtx] (j) at (0,0) node at (j) [left] {$\tilde{S}_j$};
    \coordinate[vtx] (0) at (2,0) node at (0) [right] {$0$};
    
    \draw[thick] (0) -- (j) node [midway, above] {\small $3/2$};
    }
    \right] \le C |i|_+^{-3/4} |j|_+^{-3/4}.
    \]
    This means that we may use such inequalities to sequentially reduce a given graph. By doing so in a careful manner, we can prove our desired result. Note that there are $3m$ vertices in this graph, while sum of edge weights is $6m$. Hence, the optimal strategy is to remove two edges for every vertex. This will introduce a deterministic term of $\mathbb{E} |S_i|_+^{-2} \asymp |i|_+^{-1}$ at every step. After summing over all indices, this leads to a bound of order $(\log n)^{3m}$.
     
    The precise algorithm for achieving this result is described in Section~\ref{sec:moments}. The main challenge is finding a sequence of reductions that works inductively for all possible orderings of $j_1 , \dots , j_{2m}$ while not introducing loops or high-degree vertices; this is crucial if we want to obtain an optimal bound.
    
    \subsection{Outline} \label{subsec:outline}
    In Section~\ref{sec:moments}, we prove the key moment bounds for $\chi(\mathcal{S}_n, \tilde{\mathcal{S}}_n)$ and $\epsilon(\mathcal{S}_n, \tilde{\mathcal{S}}_n)$. This is the most novel part of our paper. Then, in Section~\ref{sec:main}, we prove the claims made in Section~\ref{subsec:split} and prove Theorem~\ref{thm:gaussian}. This includes justifying the decomposition of \eqref{eq:decomposition}, explaining the negligibility of $\epsilon(\mathcal{S}_n, \tilde{\mathcal{S}}_n$), and giving exponential moment bounds for $\cp(\mathcal{S}_n)$. These provide the necessary background for the proof of Theorems~\ref{thm:non-gaussian} and \ref{thm:cross-term-LDP}, which are given in Section~\ref{sec:cross-term}. Finally in Section~\ref{sec:4d}, we explain how our methods can be adapted to the $d = 4$ setting, leading to the proof of Theorem~\ref{thm:dim-4}.
    
    \section{Moment inequalities via graph representation}\label{sec:moments}
    
    In this section, we prove the following moment bounds.
    
    \begin{prop}
        \label{prop:cross-term-moment}
        Let $S$ and $\tilde{S}$ be independent random walks on $\mathbb{Z}^5$ starting from the origin. For any positive integers $n$ and $m$,
        \begin{equation}
        \label{eq:cross-term-moment}
        \mathbb{E} \left[ \sum_{1 \le i_1 , \dots , i_m \le n} \sum_{1 \le j_1 , \dots , j_{m} \le n} \prod_{k = 1}^{m} |S_{i_k} - \tilde{S}_{j_{k}}|_+^3 \right] \le C^m (m!)^{3/2} n^{m/2}.
        \end{equation}
    \end{prop}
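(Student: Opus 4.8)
The plan is as follows. Expanding the $m$-th power and using that the summand is invariant under permuting the pairs $(i_{k},j_{k})$,
\[
\mathbb{E}\Bigl[\sum_{i,j=1}^{n}|S_{i}-\tilde{S}_{j}|_{+}^{-3}\Bigr]^{m}
=\sum_{\substack{1\le i_{1},\dots,i_{m}\le n\\ 1\le j_{1},\dots,j_{m}\le n}}
\mathbb{E}\Bigl[\prod_{k=1}^{m}|S_{i_{k}}-\tilde{S}_{j_{k}}|_{+}^{-3}\Bigr]
\le m!\!\!\sum_{\substack{1\le i_{1}\le\dots\le i_{m}\le n\\ 1\le j_{1},\dots,j_{m}\le n}}\!\!
\mathbb{E}\Bigl[\prod_{k=1}^{m}|S_{i_{k}}-\tilde{S}_{j_{k}}|_{+}^{-3}\Bigr] ,
\]
so it suffices to bound the inner expectation sharply for a fixed tuple with $i_{1}\le\dots\le i_{m}$ and then carry out the remaining sum.

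For the inner expectation we condition on the whole $\tilde{S}$-trajectory and delete lattice points one at a time, always removing a surviving vertex of largest time within its color so that the Markov property applies cleanly. The two inputs are the one-point estimate $\mathbb{E}^{0}|S_{g}-y|_{+}^{-p}\le C\,g_{+}^{-q/2}|y|_{+}^{-(p-q)}$ (valid for $0\le q\le p$ and $q<5$) and its $d$-fold Hölder consequence at a vertex of degree $d$. Deleting a vertex of degree $d$, with time-gap $g$ to its surviving same-color predecessor, produces a deterministic factor $g_{+}^{-3/4}$ and re-attaches each incident edge, with its weight lowered by $\tfrac{3}{2d}$, to that predecessor (thus a weight-$3$ leaf hands a weight-$\tfrac32$ edge to its predecessor, while a leaf of weight exactly $\tfrac32$ is simply erased); this works provided every incident edge is at least this heavy. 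Starting from the original configuration — each of the $2m$ vertices a leaf of weight $3$, total weight $3m$ — and performing $2m$ deletions, interleaving the colors but taking each color in decreasing-time order, the target is
\[
\mathbb{E}\Bigl[\prod_{k=1}^{m}|S_{i_{k}}-\tilde{S}_{j_{k}}|_{+}^{-3}\Bigr]
\ \le\ C^{m}\prod_{k=1}^{m}(i_{k}-i_{k-1})_{+}^{-3/4}\ \prod_{k=1}^{m}\bigl(j_{(k)}-j_{(k-1)}\bigr)_{+}^{-3/4},
\]
where $i_{0}=j_{(0)}=0$ and $j_{(1)}\le\dots\le j_{(m)}$ is the increasing rearrangement of the $j_{k}$'s. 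Recording every intermediate expression as a weighted graph, and running the induction over graph types, is what keeps this bookkeeping tractable.

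Granting the displayed bound, the rest is routine. The Dirichlet (beta-integral) comparison gives
\[
\sum_{\substack{g_{1},\dots,g_{m}\ge0\\ g_{1}+\dots+g_{m}\le n}}\ \prod_{k=1}^{m}(g_{k})_{+}^{-3/4}\ \le\ \frac{C^{m}}{\Gamma(m/4+1)}\,n^{m/4} ,
\]
which we apply once with $g_{k}=i_{k}-i_{k-1}$; summing also over the \emph{unordered} tuples $(j_{1},\dots,j_{m})$ at the cost of a further factor $m!$ and using the comparison a second time for the $j$-gaps, the two factors $n^{m/4}$ combine to $n^{m/2}$. Collecting the two factors $m!$ together with $\Gamma(m/4+1)^{-2}$ and invoking Stirling's formula, $(m!)^{2}\,\Gamma(m/4+1)^{-2}\asymp C^{m}(m!)^{3/2}$, which is exactly the constant in \eqref{eq:cross-term-moment}.

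The one genuinely delicate point is the construction in the second paragraph: one must exhibit a \emph{single} deletion rule that is valid for every relative order of $j_{1},\dots,j_{m}$ and every bipartite pairing of the two colors, and that throughout the reduction never creates a multiple edge, never lets a vertex acquire too high a degree, and never leaves a vertex whose incident edges are too light to shave the required $\tfrac34$ in the time variable at deletion — any of these ultimately forces a deterministic factor $g_{+}^{-p}$ with $p<\tfrac34$, whose sum over $g\le n$ is $\gg n^{1/4}$, destroying the target power of $n$. Reconciling these constraints forces the rule to branch on the combinatorial type of the current graph, and it is the verification that this scheme never gets stuck — uniformly over all configurations — that is the technical heart of the section.
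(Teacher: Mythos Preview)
Your overall plan is exactly the paper's: reduce the graph to the pointwise bound
\[
\mathbb{E}\Bigl[\prod_{k=1}^{m}|S_{i_{k}}-\tilde{S}_{j_{k}}|_{+}^{-3}\Bigr]
\ \le\ C^{m}\prod_{k=1}^{m}(i_{k}-i_{k-1})_{+}^{-3/4}\ \prod_{k=1}^{m}(j_{(k)}-j_{(k-1)})_{+}^{-3/4},
\]
then sum via the Dirichlet/beta estimate and collect two factors of $m!$ against $\Gamma(m/4+1)^{-2}$ to produce $(m!)^{3/2}$. That display is precisely the paper's \eqref{eq:cross-term-1} with the roles of $i$ and $j$ swapped, and the summation step matches the paper's.

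Where your write-up diverges --- and becomes internally inconsistent --- is the reduction algorithm. You say to condition on the whole $\tilde S$-trajectory and then ``interleave the colors''; but once $\tilde S$ is frozen you cannot invoke its Markov property, so interleaving is not available. More seriously, your stated rule (re-attach \emph{every} incident edge to the same-color time-predecessor, each weight lowered by $3/(2d)$) is not the paper's rule, and if run within one color it piles degree onto the predecessor: after three deletions the next $S$-vertex already carries an edge of weight $1/4$, below the $3/8$ needed at degree four --- exactly the failure mode you flag in your last paragraph, triggered by your own scheme.

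The paper's actual reduction for this proposition is much tamer than you suggest. It does \emph{not} interleave: it first removes all $\tilde S$-vertices in decreasing $j$-order (Lemma~\ref{lem:cross-term-graph}), with the rule that the weight-$3$ edge passes to the $\tilde S$-predecessor at weight $3/2$ while any existing weight-$3/2$ edge is re-routed to the \emph{other endpoint} of the heavy edge, not to the predecessor. This keeps every surviving $\tilde S$-vertex at degree at most two throughout, and the output is simply a weight-$3/2$ path on $\{0,S_{i_1},\dots,S_{i_m}\}$, which is then reduced trivially. So for \emph{this} proposition the ``never gets stuck'' verification is immediate; the genuine combinatorial delicacy you anticipate belongs to Proposition~\ref{prop:error-term-moment}, where the starting graph already has degree-two vertices and the indices on one side are not symmetric.
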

    
    \begin{prop}
        \label{prop:error-term-moment}
        Let $S$ and $\tilde{S}$ be independent random walks on $\mathbb{Z}^5$ starting from the origin. For any positive integers $n$ and $m$,
        \begin{equation}
        \label{eq:error-term-moment}
        \mathbb{E} \left[ \sum_{1 \le i_1 , \dots , i_m \le n} \sum_{1 \le j_1 , \dots , j_{2m} \le n} \prod_{k = 1}^{m} |S_{i_k} - \tilde{S}_{j_{2k-1}}|_+^3 |S_{i_k} - \tilde{S}_{j_{2k}}|_+^3\right] \le C^m (m!)^3 (\log n)^{3m}.
        \end{equation}
    \end{prop}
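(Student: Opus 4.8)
\medskip
\noindent\textbf{Proof plan for Proposition~\ref{prop:error-term-moment}.}
Expanding the $m$-th power reduces the claim to a bound on the expectation \eqref{eq:error-bound-moment}, summed over all $i_1,\dots,i_m$ and $j_1,\dots,j_{2m}$ in $[1,n]$. The plan is to organize this sum by first fixing a linear order on the multiset of $S$-times and on that of $\tilde S$-times; since $S$ and $\tilde S$ are independent, this is exactly the data needed to run the Markov property along each walk, and there are at most $m!\,(2m)!\le 4^m(m!)^3$ such orders. Index tuples with a repeated $i$ or a repeated $j$ range over lower-dimensional sets and give a strictly smaller contribution, so they are absorbed by a crude separate estimate; one may then assume the indices distinct and, after relabeling, monotone within each walk. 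It then suffices to show that, for each fixed order, the sum of \eqref{eq:error-bound-moment} over the corresponding monotone index sequences is at most $C^m(\log n)^{3m}$.

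For this I would carry out the \emph{graph reduction} sketched in Section~\ref{subsec:moments-sketch}. Encode the integrand of \eqref{eq:error-bound-moment} by the weighted graph with one \emph{$S$-vertex} of weight $6$ for each sampled point $S_{i_k}$, one \emph{$\tilde S$-vertex} of weight $3$ for each $\tilde S_{j_\ell}$, an edge of weight $3$ joining $S_{i_k}$ to each of $\tilde S_{j_{2k-1}},\tilde S_{j_{2k}}$, and (as the reduction proceeds) some \emph{fixed vertices} pinned at the origin. The single elementary move is the conditional estimate
\[
\mathbb{E}\bigl[\,|W_{t+s}-x|_+^{-p}\ \big|\ \mathcal F_t\,\bigr]\ \le\ C_q\,|s|_+^{-q/2}\,|W_t-x|_+^{-(p-q)},\qquad 0\le q\le p,\ \ q<5,
\]
applied to a walk-endpoint vertex $v$ of current total weight in $[2,5)$ after conditioning on that walk up to its previously sampled time: this \emph{peels} $v$, extracts a deterministic factor $|s_v|_+^{-1}$ (choosing $q=2$), and re-attaches the residual weight $w_v-2$ as a single edge from the walk-predecessor of $v$ to one former neighbour of $v$ — with a branch over that neighbour, which over the $O(m)$ steps costs only a further $C^m$ provided degrees stay bounded. (A vertex with several incident edges is first collapsed via $\prod_i|W-x_i|_+^{-w_i}\le\min_i|W-x_i|_+^{-\sum_i w_i}\le\sum_i|W-x_i|_+^{-\sum_i w_i}$.) Since the graph has $3m$ vertices and total edge weight $6m$, and each peel removes exactly two units of weight, the reduction ends after exactly $3m$ peels, producing $3m$ deterministic factors each of which sums to $\sum_{s}|s|_+^{-1}\asymp\log n$; this is the origin of the $(\log n)^{3m}$.

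The genuinely delicate point is scheduling the peels. The $S$-vertices start with weight $6\ge 5$ and so cannot be peeled directly; one must first peel $\tilde S$-vertices (necessarily in decreasing $\tilde S$-time, for the Markov property) along an order that (i) never presents a vertex of weight $\ge 5$, (ii) introduces neither self-loops nor high-degree vertices (both of which would spoil the $C^m$ prefactor or force a loss of a $\log n$ factor), and (iii) routes enough residual weight onto each $S$-vertex to bring it below $5$ before it has to be peeled. Because $\tilde S_{j_{2k-1}}$ and $\tilde S_{j_{2k}}$ are tied together through $S_{i_k}$, the $\tilde S$-times cannot be assumed monotone, so the schedule must be constructed and verified for \emph{every} one of the $(2m)!$ relative orders of the $j$'s, with occasional steps forced to use $q\neq 2$ (costing a harmless $O(1)$ in place of a $\log n$) — which is exactly why the statement is only an inequality. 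I expect designing and validating this schedule for all orderings, without ever creating a loop or a high-degree vertex, to be the main obstacle; the rest is bookkeeping.

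To make it rigorous I would induct on the number of remaining vertices, under a strengthened hypothesis asserting the bound for an arbitrary weighted graph on $S$- and $\tilde S$-endpoint vertices together with origin-vertices, all vertex weights $<5$, so that each peel reduces to a strictly smaller instance; the base case is a single vertex joined only to origin-vertices, where $\mathbb{E}|W_s|_+^{-2}\le C|s|_+^{-1}$ sums to $\log n$. Multiplying the $C^m(m!)^3$ from the orderings and branchings by the $C^m(\log n)^{3m}$ from the reductions gives Proposition~\ref{prop:error-term-moment}. The same scheme without the neighbour-branching, run on the graph of $m$ disjoint weight-$3$ edges, reproves Proposition~\ref{prop:cross-term-moment}, the factor $n^{m/2}$ there coming from the steps that must use $q<2$.
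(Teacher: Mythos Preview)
Your overall plan is sound and matches the paper's in spirit: order the $i$'s and $j$'s ($m!\cdot(2m)!\le C^m(m!)^3$ choices), peel the $\tilde S$-vertices in decreasing time via a conditional moment estimate, then peel the $S$-vertices, harvesting a factor $|s|_+^{-1}$ per peel and summing to $(\log n)^{3m}$. But the specific peeling move you propose does not close. Your rule ``collapse all incident edges into a single edge of weight $w_v$, extract $|s|_+^{-1}$, and re-attach a single edge of weight $w_v-2$ to the walk-predecessor'' makes the predecessor's weight cascade: starting from $\tilde S_{j_{2m}}$ (weight $3$), the predecessor picks up weight $1$, so $\tilde S_{j_{2m-1}}$ has weight $4$; its predecessor then picks up weight $2$, giving $\tilde S_{j_{2m-2}}$ weight $5$; the next step would hand weight $3$ to $\tilde S_{j_{2m-3}}$, weight $6$, and so on. Switching to $q\ne 2$ at isolated steps does not break this growth in a way you have controlled. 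You correctly flag the scheduling as ``the main obstacle'' --- and indeed you have not solved it.

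The paper's missing idea is a two-coloring that prevents this accumulation. One does \emph{not} collapse edges; instead each $\tilde S_{j_k}$ carries its original black edge (weight $3$) plus at most one \emph{red} and one \emph{blue} residual edge (weight $1$ each), so its total weight never exceeds $5$, and the reduction (Lemma~\ref{lem:error-term-graph}, display \eqref{eq:error-term-graph-1}) re-routes the red residual along a red path and the blue residual along a blue path, handing the predecessor exactly one new edge of each color. The coloring rule is simple: walking the $\tilde S$-vertices from right to left, give each $S_{i_{\phi(k)}}$ the color opposite to its previous appearance (and an arbitrary color if it is new). Because $\phi$ is two-to-one, each $S_{i_k}$ ends up once red and once blue, so after all $\tilde S$-peels the $S$-vertices sit on two monochromatic paths joined at $0$, each a permutation of $\{S_{i_1},\dots,S_{i_m}\}$. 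This both caps every $S$-vertex at weight $\le 4$ and guarantees no $S$-vertex is ever adjacent to itself when you peel the $S$-side via \eqref{eq:error-term-graph-2}. That coloring is the crux; once you have it, the rest is exactly the bookkeeping you outline.
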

    
    These inequalities are used in Section~\ref{subsec:LDP-estimates} (Lemmas~\ref{lem:cross-term-moment} and \ref{lem:error-LDP}) to compute exponential moments, which in turn give rise to bounds on large deviation probabilities. In addition, we also prove Lemma~\ref{lem:aux-error-moment} which is used to prove Proposition~\ref{prop:chiischibn}. Our starting point is the following lemma.
    
    \begin{lemma}
        \label{lem:rw-moment}
        Let $S$ be a random walk in $\mathbb{Z}^5$ starting from the origin, and $x, y, z \in \mathbb{Z}^5$. Then, the following hold.
        \begin{align}
            \mathbb{E} |S_i - x|_+^{-4} &\le C \min (|i|_+^{-2}, |x|_+^{-4}) \label{eq:rw-moment}\\
            \mathbb{E} |S_i - x|_+^{-3} &\le C \min (|i|^{-3/2}, |x|_+^{-3})
        \end{align}
    \end{lemma}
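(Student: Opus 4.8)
The plan is to reduce both estimates to two standard facts about the simple random walk on $\mathbb{Z}^5$: the pointwise local central limit bound $\mathbb{P}(S_i = z) \le C i^{-5/2}\exp(-c|z|^2/i)$, valid for all $i \ge 1$ and $z \in \mathbb{Z}^5$ (see e.g.\ \cite{Lawler2013}; the cruder uniform bound $\mathbb{P}(S_i = z) \le C i^{-5/2}$ is the special case where one drops the exponential), and the elementary lattice-sum estimate $\sum_{1 \le |w| \le R} |w|^{-p} \le C_p R^{5-p}$, valid for every $0 \le p < 5$ and $R \ge 1$ (since the number of lattice points with $|w| \in [r, r+1)$ is $\asymp r^4$, the sum is $\asymp \sum_{r=1}^R r^{4-p}$). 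The case $i = 0$ is trivial, because then $\mathbb{E}|S_i - x|_+^{-q} = |x|_+^{-q} \le 1$ is dominated by both sides, so I assume $i \ge 1$ throughout. It is exactly because the exponents $q \in \{3,4\}$ appearing in the lemma are strictly below the critical value $5$ that the lattice sum behaves like a power of $R$ rather than like $\log R$ or a constant; this is what makes the sharp exponents come out.

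For the bounds of the form $\mathbb{E}|S_i - x|_+^{-q} \le C|i|_+^{-q/2}$ (taking $q = 4$ and $q = 3$ gives the two claimed inequalities) I would write $\mathbb{E}|S_i - x|_+^{-q} = \sum_z \mathbb{P}(S_i = z)|z - x|_+^{-q}$ and split the sum at $|z - x| = \sqrt i$. On $\{|z - x| \ge \sqrt i\}$ one has $|z-x|_+^{-q} \le i^{-q/2}$ and the probabilities sum to at most $1$, giving a contribution $\le i^{-q/2}$. On $\{|z-x| < \sqrt i\}$ the uniform bound $\mathbb{P}(S_i = z) \le Ci^{-5/2}$ together with the lattice-sum estimate at radius $R = \sqrt i$ gives a contribution $\le Ci^{-5/2}\sum_{|w| \le \sqrt i}|w|_+^{-q} \le C i^{-5/2}\cdot i^{(5-q)/2} = C i^{-q/2}$. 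Adding the two pieces yields the claim.

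For the bounds $\mathbb{E}|S_i - x|_+^{-q} \le C|x|_+^{-q}$ I may assume $|x| \ge 2$, since otherwise $|x|_+ = 1$ and the inequality is trivial. Set $R = |x|/2 \ge 1$ and split the sum at $|z - x| = R$. On $\{|z-x| \ge R\}$ one has $|z-x|_+^{-q} \le C|x|^{-q}$ and the probabilities again sum to at most $1$. On $\{|z-x| < R\}$ every such $z$ satisfies $|z| \ge |x| - R = R$, so the sub-Gaussian bound gives $\mathbb{P}(S_i = z) \le C i^{-5/2}\exp(-cR^2/i)$; combined with $\sum_{|w| < R}|w|_+^{-q} \le CR^{5-q} \le C|x|^{5-q}$ the contribution is at most $C i^{-5/2}|x|^{5-q}\exp(-c|x|^2/i) = C|x|^{-q}\,u^{5/2}e^{-cu}$ where $u = |x|^2/i$, which is $\le C|x|^{-q}$ because $\sup_{u \ge 0} u^{5/2}e^{-cu} < \infty$. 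Specializing to $q = 4$ and $q = 3$ finishes the proof.

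I do not expect a deep obstacle here: the lemma is essentially a packaging of standard heat-kernel and Green's-function estimates, and the only real subtlety lies in the bookkeeping. The points that require attention are choosing the two splitting radii so that lattice-sum growth and Gaussian decay combine to exactly the stated powers ($\sqrt i$ for the time-decay bound, $|x|/2$ for the space-decay bound), and noting that when $|x| > 2i$ the ``near'' part of the second split is empty, so the sub-Gaussian bound is never invoked outside the range where it is meaningful (and the ``far'' part alone already gives the needed $C|x|^{-q}$). Everything else is the routine splitting and summation described above, and the same scheme extends without change to any further variants of this estimate that appear later.
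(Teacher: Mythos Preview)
Your argument is correct and is in the same spirit as the paper's, but you take a more direct route. The paper first proves the Brownian analogue $\int_{\mathbb{R}^5}|y-x|^{-4}\,i^{-5/2}e^{-|y|^2/i}\,\mathrm{d}y \le C\min(i^{-2},|x|^{-4})$ by splitting at $|y-x|=|x|/2$, and then transfers the result to the random walk via the local CLT (valid for $|y|\ll i^{2/3}$) together with a separate sub-Gaussian tail bound for the complementary range. You bypass the continuous detour by invoking the discrete Gaussian upper bound $\mathbb{P}(S_i=z)\le Ci^{-5/2}e^{-c|z|^2/i}$ from the outset and doing the splitting directly on the lattice, using two different radii ($\sqrt i$ for the time bound, $|x|/2$ for the space bound) instead of the paper's single split. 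This is slightly more elementary and avoids the bookkeeping of the continuous-to-discrete transfer; the paper's route has the mild advantage that the Brownian calculation makes the scaling completely transparent and that the LCLT step is reusable elsewhere. Either way, the substance is the same heat-kernel splitting, and your version goes through without issue.
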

    
    \begin{proof}
    By H\"{o}lder's inequality, it suffices to show \eqref{eq:rw-moment}. First consider the corresponding computation for the Brownian motion.
    Namely, consider the integral,
    \begin{equation}
    \frac{1}{i^{5/2}} \int_{\mathbb{R}^5} \frac{1}{|y-x|^4} \exp[-|y|^2/i] \text{d}y
    \end{equation}
    
    We divide the integral above into two regions 1: $|x-y| \le |x|/2$ and 2: $|x-y| > |x|/2$. Note that in the second region, we have that $|y| \le |x-y| +|x| < 3 |x-y|$ and in the first region, we have $|y| \ge |x|- |x-y| \ge |x|/2$
    
    Now, consider the integral over the first region,
    \begin{equation}
    \begin{aligned}
    \frac{1}{i^{5/2}}\int_{|x-y| \le |x|/2} \frac{1}{|x-y|^4} \exp[-|y|^2/i] \text{d}y &\le \frac{1}{i^{5/2}} \int_{|x-y| \le|x|/2} \frac{1}{|x-y|^4} \exp[-|x|^2/4i] \text{d}y \\
    & \le \frac{\exp[-|x|^2/4i]}{i^{5/2}} \int_{|y| \le  |x|/2} \frac{1}{|y|^4} \text{d}y\\
    &= \text{O}\left(\frac{\exp[-|x|^2/4i] |x|}{i^{5/2}}\right)
    \end{aligned}
    \end{equation}
    
    In the first line, we used the fact that $|y| \ge |x|/2$ in the first region. In the second line, we changed variable from $x-y$ to $y$ and the third line is a direct integration.
    
    Notice also that there is a constant such that,$
    \frac{\exp[-\frac{|x|^2}{4i}] |x|}{i^{5/2}} \le  C\frac{1}{i^2},$
    since this is equivalent to saying,
    $
    \exp\left[-\frac{|x|^2}{4i}\right] \left(\frac{|x|^2}{i}\right)^{1/2}$
    is bounded for all values for $\frac{|x|^2}{i}$. This is clearly true since the term $\exp[- \frac{|x|^2}{4i}]$ will be small enough to compensate for the increase in $\left(\frac{|x|^2}{i}\right)^{1/2}$ as $\frac{|x|^2}{i}$ goes to infinity. 
    
    Similarly, we have that,$
    \frac{\exp\left[-\frac{|x|^2}{4i}\right] |x|}{i^{5/2}}  \le C \frac{1}{|x|^4},
    $
    since this is equivalent to saying that,
    $
    \exp\left[-\frac{|x|^2}{4i}\right]  \left(\frac{|x|^2}{i}\right)^{5/2}
    $
    is bounded for all values of $\frac{|x|^2}{i}$. We can use the same logic as we have done previously.
    
    Now, consider the integral over the second region. By one computation, we have that,
    
    \begin{equation}
    \begin{aligned}
    \frac{1}{i^{5/2}} \int_{|x-y| >|x|/2} \frac{1}{|x-y|^4} \exp[-|y|^2/i] \text{d}y &\le \frac{1}{i^{5/2}}   \int_{|x-y| >|x|/2} \frac{16}{|x|^4} \exp[-|y|^2/i] \text{d}y\\
    &\le  \frac{16}{|x|^4} \frac{1}{i^{5/2}}\int_{\mathbb{R}^5}  \exp[-|y|^2/i] \text{d}y = O(|x|^{-4})
    \end{aligned}
    \end{equation}
    
    By another computation, we have that,
    \begin{equation}
    \begin{aligned}
    \frac{1}{i^{5/2}} \int_{|x-y| >|x|/2} \frac{1}{|x-y|^4} \exp[-|y|^2/i] \text{d}y & \le \frac{1}{i^{5/2}}   \int_{|x-y| >|x|/2} \frac{81}{|y|^4} \exp[-|y|^2/i] \text{d}y\\
    & \le \frac{1}{i^{5/2}} \int_{\mathbb{R}^5} \frac{81}{|y|^4} \exp[-|y|^2/i] \text{d}y\\
    & = \frac{81}{i^2}\int_{\mathbb{R}^5} \frac{1}{|y|^4} \exp[-|y|^2] \text{d}y = O(i^{-2}).
    \end{aligned}
    \end{equation}
    
    In the first line, we used the fact that in region 2 we have, $3|x-y|> |y| $. In the last line, we used the rescaling $y \to \sqrt{i}y$. 
    
    This establishes the result for the probability distribution given by the Brownian motion. Now, if we consider the random walk, note that we have by Theorem 2.3.11 in \cite{LawlerLimic2010} that for $|y|\ll i^{2/3}$ that,
    \begin{equation} \label{eq:LCLT}
    p_i(y) = \overline{p}_{i}(y) \exp \left\{ O \left(\frac{1}{\sqrt{i}}  +\frac{|y|^3}{i^2}\right)\right\},
    \end{equation}
    where $p_i$ is the probability distribution for the random walk, and $\overline{p}_i$ corresponds to the density of the Brownian motion with appropriate covariance structure.
    
    Thus, all the computations for the Brownian motion will match those of the random walk as long as we consider the region $|y| \ll i^{2/3}$. Furthermore, exponential moments  for the random walk, show that $\mathbb{P}(|S_i| \ge i^{1/2 + \epsilon}) \le \exp[-i^{\epsilon}]$.  Therefore, we have,
    \begin{equation}
    \mathbb{E}[|S_i - y|^{-4}] \le \mathbb{P}(|S_i| \ge i^{1/2 + \epsilon}) + \mathbb{E}[|S_i - y|^{-4} \mathbf{1}[|S_i| \le i^{1/2 + \epsilon} ]]
    \end{equation}
    For the latter integral, we can perform the same computations done for the Brownian motion by appealing to equation \eqref{eq:LCLT}. $\mathbb{P}(|S_i| \ge i^{1/2 + \epsilon})$  is exponentially small and should be smaller than $i^{-2}$ or $|y|^{-4}$, unless $|y|$ is exponentially large. In the case that $|y| $ is exponentially large, one could simply use the fact that $|S_i| \le i$ deterministically, and assert $\frac{1}{|S_i - y|^4} = O(|y|^{-4})$ deterministically.
    \end{proof}
    
    \subsection{Representation by graphs}
    
    As explained in Section~\ref{subsec:moments-sketch}, we will be using graphs to represent our expressions.
    
    \begin{defn}
        Let $G$ be a finite edge-weighted graph with vertex set $V \subseteq \mathbb{Z}^d$. We use brackets $[G]$ to denote
        \[  
        [G] := \prod_{x, y\in V} |x - y|_+^{-w(x, y)},
        \]
        where $w(x, y)$ is the weight of the edge connecting $x$ and $y$. In particular, if there is no edge between $x$ and $y$, then $w(x, y) = 0$.
    \end{defn}
    
    In particular, our two main graphs of interest are those arising from \eqref{eq:cross-term-moment} and \eqref{eq:error-term-moment}. Black edges always have edge weight $3$.
    \begin{equation}
    \label{eq:cross-term-graph}
       \mathbb{E} \left[ \prod_{k = 1}^{m} |S_{i_k} - \tilde{S}_{j_{k}}|_+^3 \right] = \mathbb{E} \left[ 
        \begin{tikzpicture}[scale=0.7, baseline=(current bounding box.center)]
            \coordinate[vtx] (u) at (0, 0);
            \node at (u) [below] {$S_{i_1}$};
            \coordinate[vtx] (v) at (0, 2);
            \node at (v) [above] {$\tilde{S}_{j_1}$};
            \draw[thick] (v) -- (u);
            \coordinate[vtx] (u) at (1.5, 0);
            \node at (u) [below] {$S_{i_2}$};
            \coordinate[vtx] (v) at (1.5, 2);
            \node at (v) [above] {$\tilde{S}_{j_2}$};
            \draw[thick] (v) -- (u);
            \node at (3, 1) {$\dots$};
            \coordinate[vtx] (u) at (4.5, 0);
            \node at (u) [below] {$S_{i_{m-2}}$};
            \coordinate[vtx] (v) at (4.5, 2);
            \node at (v) [above] {$\tilde{S}_{j_{m-2}}$};
            \draw[thick] (v) -- (u);
            \coordinate[vtx] (u) at (6, 0);
            \node at (u) [below] {$S_{i_{m-1}}$};
            \coordinate[vtx] (v) at (6, 2);
            \node at (v) [above] {$\tilde{S}_{j_{m-1}}$};
            \draw[thick] (v) -- (u);
            \coordinate[vtx] (u) at (7.5, 0);
            \node at (u) [below] {$S_{i_{m}}$};
            \coordinate[vtx] (v) at (7.5, 2);
            \node at (v) [above] {$\tilde{S}_{j_{m}}$};
            \draw[thick] (v) -- (u);
    \end{tikzpicture}\right]
    \end{equation}
    \begin{equation}
        \label{eq:error-term-graph}
        \mathbb{E} \left[ \prod_{k=1}^m |S_{i_k} - \tilde{S}_{j_{2k-1}}|^{-3} |S_{i_k} - \tilde{S}_{j_{2k}}|^{-3} \right] = \mathbb{E} \left[
    \begin{tikzpicture}[scale=0.7, baseline=(current bounding box.center)]
        \coordinate[vtx] (u0) at (1.5, 0);
        \node at (u0) [below] {$S_{i_1}$};
        \coordinate[vtx] (v0) at (1, 2);
        \node at (v0) [above] {$\tilde{S}_{j_1}$};
        \coordinate[vtx] (w0) at (2, 2);
        \node at (w0) [above] {$\tilde{S}_{j_2}$};
        \draw[thick] (v0) -- (u0);
        \draw[thick] (w0) -- (u0);
        \coordinate[vtx] (u1) at (3.5, 0);
        \node at (u1) [below] {$S_{i_2}$};
        \coordinate[vtx] (v1) at (3, 2);
        \node at (v1) [above] {$\tilde{S}_{j_3}$};
        \coordinate[vtx] (w1) at (4, 2);
        \node at (w1) [above] {$\tilde{S}_{j_4}$};
        \draw[thick] (v1) -- (u1);
        \draw[thick] (w1) -- (u1);
        \node at (5, 1) {$\dots$};
        \coordinate[vtx] (u) at (6.5, 0);
        \node at (u) [below] {$S_{i_{m}}$};
            \coordinate[vtx] (v) at (6, 2);
            \node at (v) [above] {$\tilde{S}_{j_{2m-1}}$};
            \coordinate[vtx] (w) at (7, 2);
            \node at (w) [above] {$\tilde{S}_{j_{2m}}$};
            \draw[thick] (v) -- (u);
            \draw[thick] (w) -- (u);
    \end{tikzpicture}\right]
    \end{equation}
    We begin with the moment bounds for the cross term. As mentioned in Section~\ref{subsec:moments-sketch}, this is essentially identical to the proof of the $d = 4$ case done by Dembo and Okada (\cite{DemboOkada2024}, Lemma 4.1). However, we repeat it here using our new notation, both for completeness and to serve as a simpler example of how our graph reduction is done.
    
    We begin with the following lemma, which allow us to gradually reduce our graph $G$. A neighborhood refers to a point and all its adjacent vertices.
    
    \begin{lemma}
    \label{lem:cross-term-graph}
    Let $S$ be a random walk on $\mathbb{Z}^5$ starting at $S_0$. Assume all vertices except $S_i$ are deterministic, and that $G$ and $G'$ agree outside the neighborhood of $S_0$ and $S_i$ which are as in the figure below. Then, $\mathbb{E}[G] \le C |i|_+^{-3/4} [G']$.
    \[
    \begin{array}{|c|c|c|c|c|c|}
    \hline
       G  & \begin{tikzpicture}[scale=0.7, baseline=(current bounding box.center)]
            \coordinate[vtx] (u1) at (0, 0);
            \node at (u1) [below] {$x$};
            \coordinate[vtx] (v1) at (0, 2);
            \node at (v1) [above] {$S_0$};
            \draw[thick] (v1) -- (u1) node [left, midway] {\small $3$};
            
            \coordinate[vtx] (u2) at (1.5, 0);
            \node at (u2) [below] {$y$};
            \coordinate[vtx] (v2) at (1.5, 2);
            \node at (v2) [above] {$S_i$};
            \draw[thick] (v2) -- (u2) node [left, midway] {\small $3$};
        \end{tikzpicture} & \begin{tikzpicture}[scale=0.7, baseline=(current bounding box.center)]
            \coordinate[vtx] (u1) at (0, 0);
            \node at (u1) [below] {$x$};
            \coordinate[vtx] (v1) at (0, 2);
            \node at (v1) [above] {$S_0$};
            \draw[thick] (v1) -- (u1) node [left, midway] {\small $3$};
            
            \coordinate[vtx] (u2) at (1.5, 0);
            \node at (u2) [below] {$y$};
            \coordinate[vtx] (v2) at (1.5, 2);
            \node at (v2) [above] {$S_i$};
            \draw[thick] (v2) -- (u2) node [left, midway] {\small $3$};
    
            \coordinate[vtx] (u3) at (3, 0);
            \node at (u3) [below] {$z$};
            \draw[thick, red] (u3) -- (v2) node [right, midway] {\small $\frac{3}{2}$};
        \end{tikzpicture} & \begin{tikzpicture}[scale=0.7, baseline=(current bounding box.center)]
            \coordinate[vtx] (v1) at (0, 2);
            \node at (v1) [above] {$S_0$};
            
            \coordinate[vtx] (u2) at (1.5, 0);
            \node at (u2) [below] {$y$};
            \coordinate[vtx] (v2) at (1.5, 2);
            \node at (v2) [above] {$S_i$};
            \draw[thick] (v2) -- (u2) node [left, midway] {\small $3$};
    
            \coordinate[vtx] (u3) at (3, 0);
            \node at (u3) [below] {$z$};
            \draw[thick, red] (u3) -- (v2) node [right, midway] {\small $\frac{3}{2}$};
        \end{tikzpicture} & \begin{tikzpicture}[scale=0.7, baseline=(current bounding box.center)]
            \coordinate[vtx] (u2) at (1.5, 0);
            \node at (u2) [below] {$y$};
            \coordinate[vtx] (v2) at (1.5, 2);
            \node at (v2) [above] {$S_i$};
            \draw[thick, red] (v2) -- (u2) node [left, midway] {\small $\frac{3}{2}$};
    
            \coordinate[vtx] (u3) at (3, 0);
            \node at (u3) [below] {$z$};
            \draw[thick, red] (u3) -- (v2) node [right, midway] {\small $\frac{3}{2}$};
        \end{tikzpicture} & \begin{tikzpicture}[scale=0.7, baseline=(current bounding box.center)]        
            \coordinate[vtx] (u2) at (1.5, 0);
            \node at (u2) [below] {$y$};
            \coordinate[vtx] (v2) at (1.5, 2);
            \node at (v2) [above] {$S_i$};
            \draw[thick, red] (v2) -- (u2) node [left, midway] {\small $\frac{3}{2}$};
        \end{tikzpicture} \\
        \hline
       G' & \begin{tikzpicture}[scale=0.7, baseline=(current bounding box.center)]
            \coordinate[vtx] (u1) at (0, 0);
            \node at (u1) [below] {$x$};
            \coordinate[vtx] (v1) at (0, 2);
            \node at (v1) [above] {$S_0$};
            \draw[thick] (v1) -- (u1) node [left, midway] {\small $3$};
            
            \coordinate[vtx] (u2) at (1.5, 0);
            \node at (u2) [below] {$y$};
            \draw[thick, color=red] (v1) -- (u2) node [right, midway] {\small $\frac{3}{2}$};
        \end{tikzpicture} & \begin{tikzpicture}[scale=0.7, baseline=(current bounding box.center)]
            \coordinate[vtx] (u1) at (0, 0);
            \node at (u1) [below] {$x$};
            \coordinate[vtx] (v1) at (0, 2);
            \node at (v1) [above] {$S_0$};
            \draw[thick] (v1) -- (u1) node [left, midway] {\small $3$};
            
            \coordinate[vtx] (u2) at (1.5, 0);
            \node at (u2) [below] {$y$};
            \draw[thick, red] (v1) -- (u2) node [right, midway] {\small $\frac{3}{2}$};
    
            \coordinate[vtx] (u3) at (3, 0);
            \node at (u3) [below] {$z$};
            \draw[thick, red] (u3) -- (u2) node [below, midway] {\small $\frac{3}{2}$};
        \end{tikzpicture} & \begin{tikzpicture}[scale=0.7, baseline=(current bounding box.center)]
            \coordinate[vtx] (v1) at (0, 2);
            \node at (v1) [above] {$S_0$};
            
            \coordinate[vtx] (u2) at (1.5, 0);
            \node at (u2) [below] {$y$};
            \draw[thick, red] (v1) -- (u2) node [left, midway] {\small $\frac{3}{2}$};
    
            \coordinate[vtx] (u3) at (3, 0);
            \node at (u3) [below] {$z$};
            \draw[thick, red] (u3) -- (u2) node [below, midway] {\small $\frac{3}{2}$};
        \end{tikzpicture} & \begin{tikzpicture}[scale=0.7, baseline=(current bounding box.center)]
            \coordinate[vtx] (u2) at (1.5, 0);
            \node at (u2) [below] {$y$};
            \node[white] at (v2) [above] {$S_i$};
            \draw[thick, white] (v2) -- (u2) node [left, midway] {\small $3$};
    
            \coordinate[vtx] (u3) at (3, 0);
            \node at (u3) [below] {$z$};
            \draw[thick, red] (u3) -- (u2) node [below, midway] {\small $\frac{3}{2}$};
        \end{tikzpicture} & \begin{tikzpicture}[scale=0.7, baseline=(current bounding box.center)]        
            \coordinate[vtx] (u2) at (1.5, 0);
            \node at (u2) [below] {$y$};
            \node[white] at (v2) [above] {$S_i$};
            \draw[thick, white] (v2) -- (u2) node [left, midway] {\small $3$};
        \end{tikzpicture} \\
        \hline
    \end{array}
    \]
    \end{lemma}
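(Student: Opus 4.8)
\emph{Proof proposal.} The plan is to see each of the five columns in the table as, after discarding the deterministic ``spectator'' edges on which $G$ and $G'$ already coincide (and noting that in columns~4 and~5 the vertex $S_i$ simply disappears from $G'$), an instance of one of two atomic estimates for the displacement of $S_i$ away from its deterministic starting point $S_0$:
\[
\mathbb{E}\bigl[|S_i-y|_+^{-3}\,|S_i-z|_+^{-3/2}\bigr]\le C\,|i|_+^{-3/4}\,|S_0-y|_+^{-3/2}\,|y-z|_+^{-3/2},\qquad \mathbb{E}\bigl[|S_i-y|_+^{-3/2}\,|S_i-z|_+^{-3/2}\bigr]\le C\,|i|_+^{-3/4}\,|y-z|_+^{-3/2},
\]
together with the one-variable bounds $\mathbb{E}|S_i-y|_+^{-3}\le C\min(|i|_+^{-3/2},|S_0-y|_+^{-3})\le C|i|_+^{-3/4}|S_0-y|_+^{-3/2}$ and $\mathbb{E}|S_i-y|_+^{-3/2}\le(\mathbb{E}|S_i-y|_+^{-3})^{1/2}\le C|i|_+^{-3/4}$, all of which follow from Lemma~\ref{lem:rw-moment} and the elementary inequality $\min(a,b)\le\sqrt{ab}$. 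Columns~1 and~5 are then immediate from the one-variable bounds (in column~1 the factor $|x-S_0|_+^{-3}$ is a deterministic spectator); columns~2 and~3 are the first atomic estimate (in column~2, $|x-S_0|_+^{-3}$ is again a spectator); column~4 is the second.

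For the two two-variable estimates I would use a triangle-inequality dichotomy based on $|y-z|\le|S_i-y|+|S_i-z|$, so that at least one of $|S_i-y|$, $|S_i-z|$ is $\ge|y-z|/2$. For the second estimate, on $\{|S_i-y|\ge|y-z|/2\}$ bound $|S_i-y|_+^{-3/2}\le C|y-z|_+^{-3/2}$ and take the expectation of the remaining $|S_i-z|_+^{-3/2}$; treat the complement symmetrically; this yields $C|y-z|_+^{-3/2}\bigl(\mathbb{E}|S_i-y|_+^{-3/2}+\mathbb{E}|S_i-z|_+^{-3/2}\bigr)\le C|i|_+^{-3/4}|y-z|_+^{-3/2}$. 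For the first estimate I would instead split on whether $|S_i-z|\ge|y-z|/2$: on this ``far'' event, replace $|S_i-z|_+^{-3/2}$ by $C|y-z|_+^{-3/2}$ and apply $\mathbb{E}|S_i-y|_+^{-3}\le C|i|_+^{-3/4}|S_0-y|_+^{-3/2}$; on the ``near'' event $|S_i-z|<|y-z|/2$ one has $|S_i-y|\asymp|y-z|$, hence $|S_i-y|_+^{-3}\asymp|y-z|_+^{-3}$, and it remains to show $|y-z|_+^{-3}\,\mathbb{E}\bigl[|S_i-z|_+^{-3/2}\mathbf{1}\{|S_i-z|<|y-z|/2\}\bigr]\le C|i|_+^{-3/4}|y-z|_+^{-3/2}|S_0-y|_+^{-3/2}$.

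This last bound on the ``near'' event is the only delicate point, and it is delicate for precisely the reason that forces the case analysis in Lemma~\ref{lem:rw-moment}: when $S_0$ is far from $\{y,z\}$ the estimate is saved not by juggling $\min$-bounds but by the Gaussian decay of the heat kernel. If $|S_0-y|\lesssim|y-z|$ one simply uses $\mathbb{E}|S_i-z|_+^{-3/2}\le C|i|_+^{-3/4}$ and absorbs the extra $|y-z|_+^{-3/2}$ into $|S_0-y|_+^{-3/2}$; otherwise $|S_0-z|\asymp|S_0-y|$, and using the local central limit theorem \eqref{eq:LCLT} to pass to $\overline{p}_i$ and the tail bound $\mathbb{P}(|S_i-S_0|\ge i^{1/2+\epsilon})\le e^{-i^{\epsilon}}$ to discard atypically large displacements, one estimates $\mathbb{E}\bigl[|S_i-z|_+^{-3/2}\mathbf{1}\{|S_i-z|<|y-z|/2\}\bigr]\lesssim \sup_{|w-z|<|y-z|/2}\overline{p}_i(w-S_0)\int_{|u|<|y-z|/2}|u|_+^{-3/2}\,\mathrm{d}u\lesssim i^{-5/2}e^{-c|S_0-z|^2/i}|y-z|^{7/2}$, after which the scalar inequality $t^{a}e^{-ct}\le C_a$ absorbs the surviving powers of $|S_0-z|^2/i$ and yields the claim. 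I expect this bookkeeping of the Gaussian factor in the ``near'' region of columns~2 and~3 to be the main obstacle; all the other columns are a direct transcription of Lemma~\ref{lem:rw-moment}.
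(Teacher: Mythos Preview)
Your proposal is correct, and the reduction to the two ``atomic'' estimates is exactly right. However, your treatment of the first atomic estimate (columns~2 and~3) takes a harder route than the paper. You split on the event $\{|S_i-z|\gtrless|y-z|/2\}$; the ``near'' event then forces you back into the heat-kernel analysis of Lemma~\ref{lem:rw-moment}, redoing the Gaussian-tail bookkeeping you correctly identify as the main obstacle. This works, but it is avoidable.

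The paper instead multiplies through by $|y-z|_+^{3/2}$ \emph{before} taking expectations, using the pointwise bound $|y-z|_+^{3/2}\le C\bigl(|S_i-y|_+^{3/2}+|S_i-z|_+^{3/2}\bigr)$. This cancels cleanly against the singular factors:
\[
|y-z|_+^{3/2}\,\mathbb{E}\bigl[|S_i-y|_+^{-3}|S_i-z|_+^{-3/2}\bigr]
\le C\bigl(\mathbb{E}|S_i-y|_+^{-3/2}|S_i-z|_+^{-3/2}+\mathbb{E}|S_i-y|_+^{-3}\bigr),
\]
and now H\"older plus the one-variable bounds from Lemma~\ref{lem:rw-moment} finish it in one line: the first term is $\le(\mathbb{E}|S_i-y|_+^{-3})^{1/2}(\mathbb{E}|S_i-z|_+^{-3})^{1/2}$, and pulling out a common factor of $(\mathbb{E}|S_i-y|_+^{-3})^{1/2}\le C|S_0-y|_+^{-3/2}$ while bounding the remaining bracket by $C|i|_+^{-3/4}$ gives the claim. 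The advantage of this algebraic trick is that it never leaves the realm of the moment bounds already packaged in Lemma~\ref{lem:rw-moment}; your dichotomy, by contrast, effectively reproves a localized version of that lemma on the ``near'' event. Both arrive at the same place, but the paper's route is shorter and generalizes more mechanically to the analogous reductions in Lemma~\ref{lem:error-term-graph}.
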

    
    \begin{proof}
        Since all proofs are similar, we will only show the third reduction. This equates to proving
        \[  
        \mathbb{E}|S_i - y|_+^{-3} |S_i - z|_+^{-3/2} \le C |i|_+^{-3/4} |y|_+^{-3/2} |y-z|_+^{-3/2}.
        \]
        By the triangle inequality and Holder's inequality, $|y-z|_+^{3/2} \le C(|S_i - y|_+^{3/2} + |S_i - z|_+^{3/2})$. Therefore,
        \begin{align*}
            |y-z|_+^{3/2} \mathbb{E}|S_i - y|_+^{-3} |S_i - z|_+^{-3/2} &\le C\left( \mathbb{E}|S_i - y|_+^{-3/2} |S_i-z|_+^{-3/2} + \mathbb{E} |S_i - y|_+^{-3} \right) \\
            &\le C \left(\mathbb{E}|S_i-y|_+^{-3} \right)^{1/2} \left\{\left(\mathbb{E}|S_i-y|_+^{-3} \right)^{1/2} + \left(\mathbb{E}|S_i-z|_+^{-3} \right)^{1/2} \right\} \\
            &\le C |y|_+^{-3/2} |i|_+^{-3/4}
        \end{align*}
        By Lemma~\ref{lem:rw-moment}.
    \end{proof}
    
    For most applications of this lemma, the vertices will be random points $S_{i_1}, \dots S_{i_k}$. However, we will condition on the set $S[0, i_{k-1}]$, so that we may view $S_{i_k}$ as the only random variable. From this perspective, $S_{i_k}$ is the $(i_k - i_{k-1})$-th step of a random walk starting at $S_{i_{k-1}}$ (assuming $i_k \ge i_{k-1}$). That is, we may replace $i$ and $S_0$ in Lemma~\ref{lem:cross-term-graph} with $i_k - i_{k-1}$ and $S_{i_{k-1}}$, respectively. By repeating this process, we may gradually bound $\mathbb{E}[G]$ by a deterministic value. In order to do this, it is important to know the local structure around $S_0$ after the operation. This is why we have included $x$ in the graphs, even though it does not play any role in the inequalities. Note that all outcomes for $S_0$ in $G'$ fall under one of the cases considered in $G$.
    
    \begin{proof}[Proof of Proposition~\ref{prop:cross-term-moment}]
        Throughout this proof, a black edge will always have weight $3$ and a red edge will always have weight $3/2$. Using our graphical notation, we can write
        \[
        \mathbb{E} \left[ \prod_{k = 1}^{m} |S_{i_k} - \tilde{S}_{j_{k}}|_+^3 \right] = \mathbb{E} \left[ 
        \begin{tikzpicture}[scale=0.7, baseline=(current bounding box.center)]
            \coordinate[vtx] (u) at (0, 0);
            \node at (u) [below] {$S_{i_1}$};
            \coordinate[vtx] (v) at (0, 2);
            \node at (v) [above] {$\tilde{S}_{j_1}$};
            \draw[thick] (v) -- (u);
            
            \coordinate[vtx] (u) at (1.5, 0);
            \node at (u) [below] {$S_{i_2}$};
            \coordinate[vtx] (v) at (1.5, 2);
            \node at (v) [above] {$\tilde{S}_{j_2}$};
            \draw[thick] (v) -- (u);
            
            \node at (3, 1) {$\dots$};
            
            \coordinate[vtx] (u) at (4.5, 0);
            \node at (u) [below] {$S_{i_{m-2}}$};
            \coordinate[vtx] (v) at (4.5, 2);
            \node at (v) [above] {$\tilde{S}_{j_{m-2}}$};
            \draw[thick] (v) -- (u);
            
            \coordinate[vtx] (u) at (6, 0);
            \node at (u) [below] {$S_{i_{m-1}}$};
            \coordinate[vtx] (v) at (6, 2);
            \node at (v) [above] {$\tilde{S}_{j_{m-1}}$};
            \draw[thick] (v) -- (u);
    
            \coordinate[vtx] (u) at (7.5, 0);
            \node at (u) [below] {$S_{i_{m}}$};
            \coordinate[vtx] (v) at (7.5, 2);
            \node at (v) [above] {$\tilde{S}_{j_{m}}$};
            \draw[thick] (v) -- (u);
    \end{tikzpicture}\right].
        \]
        By symmetry, we may assume $0 = j_0 \le j_1 \le \dots \le j_m$. By iteratively conditioning on $S[0, n] \cup \tilde{S}[0, j_{k-1}]$ for $k = m, m-1, \dots , 1$, Lemma~\ref{lem:cross-term-graph} gives us a unique procedure at every step. More explicitly, we get the following inequality.
        \begin{align*}
            \mathbb{E} \left[ \begin{tikzpicture}[scale=0.7, baseline=(current bounding box.center)]
            \coordinate[vtx] (u1) at (0, 0);
            \node at (u1) [below] {$S_{i_1}$};
            \coordinate[vtx] (v1) at (0, 1);
            \node at (v1) [above] {$\tilde{S}_{j_1}$};
            \draw[thick] (v1) -- (u1);
            \node at (1, 0.5) {$\dots$};        
            \coordinate[vtx] (u2) at (2, 0);
            \coordinate[vtx] (v2) at (2, 1);
            \draw[thick] (v2) -- (u2);        
            \coordinate[vtx] (u3) at (3, 0);
            \node at (u3) [below] {$S_{i_{m-1}}$};
            \coordinate[vtx] (v3) at (3, 1);
            \node at (v3) [above] {$\tilde{S}_{j_{m-1}}$};
            \draw[thick] (v3) -- (u3);
            \coordinate[vtx] (u4) at (4, 0);
            \node at (u4) [below] {$S_{i_{m}}$};
            \coordinate[vtx] (v4) at (4, 1);
            \node at (v4) [above] {$\tilde{S}_{j_{m}}$};
            \draw[thick] (v4) -- (u4);
        \end{tikzpicture} \right] &\le \mathbb{E} \left[ \begin{tikzpicture}[scale=0.7, baseline=(current bounding box.center)]
            \coordinate[vtx] (u1) at (0, 0);
            \node at (u1) [below] {$S_{i_1}$};
            \coordinate[vtx] (v1) at (0, 1);
            \node at (v1) [above] {$\tilde{S}_{j_1}$};
            \draw[thick] (v1) -- (u1);        
            \node at (1, 0.5) {$\dots$};        
            \coordinate[vtx] (u2) at (2, 0);
            \coordinate[vtx] (v2) at (2, 1);
            \draw[thick] (v2) -- (u2);        
            \coordinate[vtx] (u3) at (3, 0);
            \node at (u3) [below] {$S_{i_{m-1}}$};
            \coordinate[vtx] (v3) at (3, 1);
            \node at (v3) [above] {$\tilde{S}_{j_{m-1}}$};
            \draw[thick] (v3) -- (u3);
            \coordinate[vtx] (u4) at (4, 0);
            \node at (u4) [below] {$S_{i_{m}}$};
            \draw[thick, red] (v3) -- (u4);
    \end{tikzpicture} \right] C |j_m - j_{m-1}|_+^{-3/4} \\
    &\le \mathbb{E} \left[ \begin{tikzpicture}[scale=0.7, baseline=(current bounding box.center)]
            \coordinate[vtx] (u1) at (0, 0);
            \node at (u1) [below] {$S_{i_1}$};
            \coordinate[vtx] (v1) at (0, 1);
            \node at (v1) [above] {$\tilde{S}_{j_1}$};
            \draw[thick] (v1) -- (u1);        
            \node at (1, 0.5) {$\dots$};        
            \coordinate[vtx] (u2) at (2, 0);
            \coordinate[vtx] (v2) at (2, 1);
            \node at (v2) [above] {$\tilde{S}_{j_{m-2}}$};
            \draw[thick] (v2) -- (u2);        
            \coordinate[vtx] (u3) at (3, 0);
            \node at (u3) [below] {$S_{i_{m-1}}$};
            \draw[thick, red] (v2) -- (u3);
            \coordinate[vtx] (u4) at (4, 0);
            \node at (u4) [below] {$S_{i_{m}}$};
            \draw[thick, red] (u3) -- (u4);
    \end{tikzpicture} \right] C^2 |j_m - j_{m-1}|_+^{-3/4} |j_{m-1} - j_{m-2}|_+^{-3/4} \\
    &\vdots \\
    &\le \mathbb{E} \left[ \begin{tikzpicture}[scale=0.7, baseline=(current bounding box.center)]
            \coordinate[vtx] (u1) at (0, 0);
            \node at (u1) [below] {$S_{i_1}$};
            \coordinate[vtx] (v1) at (0, 1);
            \node at (v1) [above] {$\tilde{S}_{j_1}$};
            \draw[thick] (v1) -- (u1);        
            \coordinate[vtx] (u2) at (1, 0);
            \node at (u2) [below] {$S_{i_2}$};
            \draw[thick, red] (v1) -- (u2);       
            \coordinate[vtx] (u3) at (3, 0);
            \node at (u3) [below] {$S_{i_{m-1}}$};
            \draw[thick, red] (u3) -- (u2) node [below, midway, black] {$\dots$};
            \coordinate[vtx] (u4) at (4, 0);
            \node at (u4) [below] {$S_{i_{m}}$};
            \draw[thick, red] (u3) -- (u4);
    \end{tikzpicture} \right] C^{m-1} \prod_{k=2}^m |j_k - j_{k-1}|_+^{-3/4} \\
    &\le \mathbb{E} \left[ \begin{tikzpicture}[scale=0.7, baseline=(current bounding box.center)]
            \coordinate[vtx] (u0) at (-1, 0);
            \node at (u0) [below] {$0$};
            \coordinate[vtx] (u1) at (0, 0);
            \node at (u1) [below] {$S_{i_1}$};  
            \draw[thick, red] (u1) -- (u0); 
            \coordinate[vtx] (u2) at (1, 0);
            \node at (u2) [below] {$S_{i_2}$};
            \draw[thick, red] (u1) -- (u2);     
            \coordinate[vtx] (u3) at (3, 0);
            \node at (u3) [below] {$S_{i_{m-1}}$};
            \draw[thick, red] (u3) -- (u2) node [below, midway, black] {$\dots$};
            \coordinate[vtx] (u4) at (4, 0);
            \node at (u4) [below] {$S_{i_{m}}$};
            \draw[thick, red] (u3) -- (u4);
    \end{tikzpicture} \right] C^m \prod_{k=1}^m |j_k - j_{k-1}|_+^{-3/4}.
        \end{align*}
    This gives us a path on $\{ 0, S_{i_1}, \dots , S_{i_{m}} \}$. There is some permutation $\sigma$ of $\{ 1, \dots , m \}$ such that $i_{\sigma(1)} \le i_{\sigma(2)} \le \dots \le i_{\sigma(m)}$. Conditioning on $S[0, i_{\sigma(m-1)}]$ and using one of the last two inequalities of Lemma~\ref{lem:cross-term-graph}, we may reduce this to a path on $\{ 0, S_{i_{\sigma(1)}}, \dots , S_{i_{\sigma(m-1)}} \}$. That is, we get one of the following inequalities depending on whether $\sigma(m) = m$.
    \begin{align*}
    \mathbb{E} \left[ \begin{tikzpicture}[scale=0.7, baseline=(current bounding box.center)]
            \coordinate[vtx] (u0) at (0.5, 0);
            \node at (u0) [below] {$0$};
            \coordinate[vtx] (u2) at (2, 0);
            \node at (u2) [above] {$S_{i_{\sigma(m) - 1}}$};
            \draw[thick, red] (u0) -- (u2) node [below, midway, black] {$\dots$};
            \coordinate[vtx] (u3) at (3, 0);
            \node at (u3) [below] {$S_{i_{\sigma(m)}}$};
            \draw[thick, red] (u2) -- (u3);     
            \coordinate[vtx] (u4) at (4, 0);
            \node at (u4) [above] {$S_{i_{\sigma(m) +1}}$};
            \draw[thick, red] (u3) -- (u4);
            \coordinate[vtx] (u6) at (5.5, 0);
            \node at (u6) [below] {$S_{i_{m}}$};
            \draw[thick, red] (u4) -- (u6) node [below, midway, black] {$\dots$};
    \end{tikzpicture} \right] &\le \mathbb{E} \left[ \begin{tikzpicture}[scale=0.7, baseline=(current bounding box.center)]
             \coordinate[vtx] (u0) at (0.5, 0);
            \node at (u0) [below] {$0$};
            \coordinate[vtx] (u2) at (2, 0);
            \node at (u2) [above] {$S_{i_{\sigma(m) - 1}}$};
            \draw[thick, red] (u0) -- (u2) node [below, midway, black] {$\dots$};
            \draw[thick, red] (u2) -- (u4);     
            \coordinate[vtx] (u4) at (4, 0);
            \node at (u4) [above] {$S_{i_{\sigma(m) +1}}$};
            \draw[thick, red] (u3) -- (u4);
            \coordinate[vtx] (u6) at (5.5, 0);
            \node at (u6) [below] {$S_{i_{m}}$};
            \draw[thick, red] (u4) -- (u6) node [below, midway, black] {$\dots$};
    \end{tikzpicture} \right] C |i_{\sigma(m)} - i_{\sigma(m-1)}|_+^{-3/4} \\
    \mathbb{E} \left[ \begin{tikzpicture}[scale=0.7, baseline=(current bounding box.center)]
            \coordinate[vtx] (u0) at (-1, 0);
            \node at (u0) [below] {$0$};
            \coordinate[vtx] (u1) at (0, 0);
            \node at (u1) [below] {$S_{i_1}$};  
            \draw[thick, red] (u1) -- (u0); 
            \coordinate[vtx] (u2) at (1, 0);
            \node at (u2) [below] {$S_{i_2}$};
            \draw[thick, red] (u1) -- (u2);     
            \coordinate[vtx] (u3) at (3, 0);
            \node at (u3) [below] {$S_{i_{m-1}}$};
            \draw[thick, red] (u3) -- (u2) node [below, midway, black] {$\dots$};
            \coordinate[vtx] (u4) at (4, 0);
            \node at (u4) [below] {$S_{i_{m}}$};
            \draw[thick, red] (u3) -- (u4);
    \end{tikzpicture} \right] &\le \mathbb{E} \left[ \begin{tikzpicture}[scale=0.7, baseline=(current bounding box.center)]
            \coordinate[vtx] (u0) at (-1, 0);
            \node at (u0) [below] {$0$};
            \coordinate[vtx] (u1) at (0, 0);
            \node at (u1) [below] {$S_{i_1}$};  
            \draw[thick, red] (u1) -- (u0); 
            \coordinate[vtx] (u2) at (1, 0);
            \node at (u2) [below] {$S_{i_2}$};
            \draw[thick, red] (u1) -- (u2);     
            \coordinate[vtx] (u3) at (3, 0);
            \node at (u3) [below] {$S_{i_{m-2}}$};
            \draw[thick, red] (u3) -- (u2) node [below, midway, black] {$\dots$};
            \coordinate[vtx] (u4) at (4, 0);
            \node at (u4) [below] {$S_{i_{m-1}}$};
            \draw[thick, red] (u3) -- (u4);
    \end{tikzpicture} \right] C |i_{\sigma(m)} - i_{\sigma(m-1)}|_+^{-3/4}
    \end{align*}
    After repeating this process $m$ times and combining with the above, we get
    \begin{equation}
        \label{eq:cross-term-1}
        \mathbb{E} \left[ \prod_{k = 1}^{m} |S_{i_k} - \tilde{S}_{j_{k}}|_+^3 \right] \le C^m \prod_{k=1}^m |j_k - j_{k-1}|_+^{-3/4} \prod_{k=1}^m |i_{\sigma(k)} - i_{\sigma(k-1)}|_+^{-3/4}.
    \end{equation}
    Further, by following methods similar to \cite{DemboOkada2024}, Lemma 4.1, we have
    \begin{align*}
        \sum_{1 \le j_1 \le \dots \le j_m \le n} \prod_{k=1}^m |j_k - j_{k-1}|_+^{-3/4} &\le C n^{m/4} \sum_{1 \le j_1 \le \dots \le j_m \le n} \prod_{k=1}^m \left[ \frac{1}{n} \left| \frac{j_k}{n} - \frac{j_{k-1}}{n} \right|^{-3/4} \right] \\
        &\le Cn^{m/4} \int_{0 \le x_1 \le \dots \le x_m \le 1} |x_k - x_{k-1}|^{-3/4} \mathrm{d} x_1 \dots \mathrm{d} x_m \\
        &\le C^m (m!)^{-1/4} n^{m/4}.
    \end{align*}
    The last inequality comes from \cite{Kono1977}, Lemma. Now, summing over all $(m!)^2$ orderings of $j_1, \dots , j_m$ and $i_1, \dots , i_m$ completes the proof.
    \begin{align*}
        \sum_{1 \le i_1,\ldots i_m,j_1,\ldots, j_m \le n}\mathbb{E} \left[ \prod_{k = 1}^{m} |S_{i_k} - \tilde{S}_{j_{k}}|_+^3 \right] &\le C^m (m!)^2 \left( \sum_{1 \le j_1 \le \dots \le j_m \le n}\prod_{k=1}^m |j_k - j_{k-1}|_+^{-3/4} \right)^2 \\
        &\le C^m (m!)^{3/2} n^{m/2}.
    \end{align*}
    \end{proof}
    
    \subsection{Moments of \texorpdfstring{$\epsilon(\mathcal{S}_n, \tilde{\mathcal{S}}_n)$}{E(S[0,n], S'[0,n])}.}
    
    We begin the proof of Proposition~\ref{prop:error-term-moment}. Unlike Proposition~\ref{prop:cross-term-moment}, we will remove $2$ edges for every vertex. A white vertex $\tikz{\coordinate[openvtx] (x)}$ denotes a vertex colored either red or blue. In Lemma~\ref{lem:error-term-graph} itself, the vertex colors are irrelevant; their role will be explained in the proof of Proposition~\ref{prop:error-term-moment}.
    
    \begin{lemma}
    \label{lem:error-term-graph}
    Let $S$ be a random walk on $\mathbb{Z}^5$ starting at $S_0$. Assume all vertices except $S_i$ are deterministic, and that $G$ and $G'$ agree outside the neighborhood of $S_0$ and $S_i$ which are as in the figure below. Then, $\mathbb{E}[G] \le C |i|_+^{-1} [G']$. Needless to say, the roles of red and blue may be reversed.
    \begin{equation}
    \label{eq:error-term-graph-1}
    \begin{array}{|c|c|c|c|c|c|c}
    \hline
       G  & \begin{tikzpicture}[scale=0.7, baseline=(current bounding box.center)]
            \coordinate[openvtx] (u1) at (0, 0);
            \node at (u1) [below] {$x$};
            \coordinate[vtx] (v1) at (0, 2);
            \node at (v1) [above] {$S_0$};
            \draw[thick] (v1) -- (u1) node [left, midway] {\small $3$};
            
            \coordinate[vtx, blue] (u2) at (1, 0);
            \node at (u2) [below] {$y$};
            \coordinate[vtx] (v2) at (1, 2);
            \node at (v2) [above] {$S_i$};
            \draw[thick] (v2) -- (u2) node [left, midway] {\small $3$};
        \end{tikzpicture} & \begin{tikzpicture}[scale=0.7, baseline=(current bounding box.center)]
            \coordinate[openvtx] (u1) at (0, 0);
            \node at (u1) [below] {$x$};
            \coordinate[vtx] (v1) at (0, 2);
            \node at (v1) [above] {$S_0$};
            \draw[thick] (v1) -- (u1) node [left, midway] {\small $3$};
            
            \coordinate[vtx, red] (u2) at (1, 0);
            \node at (u2) [below] {$y$};
            \coordinate[vtx] (v2) at (1, 2);
            \node at (v2) [above] {$S_i$};
            \draw[thick] (v2) -- (u2) node [left, midway] {\small $3$};
    
            \coordinate[vtx, blue] (u3) at (2, 0);
            \node at (u3) [below] {$z$};
            \draw[thick, blue] (u3) -- (v2) node [right, midway] {\small $1$};
        \end{tikzpicture} & \begin{tikzpicture}[scale=0.7, baseline=(current bounding box.center)]
            \coordinate[vtx] (v1) at (0, 2);
            \node at (v1) [above] {$S_0$};
            \coordinate[openvtx] (u1) at (0, 0);
            \node at (u1) [below] {$x$};
            \draw[thick] (v1) -- (u1) node [left, midway] {\small $3$};
            
            \coordinate[vtx, red] (u2) at (1, 0);
            \node at (u2) [below] {$y$};
            \coordinate[vtx] (v2) at (1, 2);
            \node at (v2) [above] {$S_i$};
            \draw[thick] (v2) -- (u2) node [left, midway] {\small $3$};
    
            \coordinate[vtx, red] (u3) at (2, 0);
            \node at (u3) [below] {$z$};
            \draw[thick, red] (u3) -- (v2) node [below, midway] {\small $1$};
    
            \coordinate[vtx, blue] (u4) at (3, 0);
            \node at (u4) [below] {$w$};
            \draw[thick, blue] (u4) -- (v2) node [right, midway] {\small $1$};
        \end{tikzpicture} & \begin{tikzpicture}[scale=0.7, baseline=(current bounding box.center)]
            \coordinate[vtx] (v1) at (0, 2);
            \node at (v1) [above] {$S_0$};
            
            \coordinate[vtx, red] (u2) at (1, 0);
            \node at (u2) [below] {$y$};
            \coordinate[vtx] (v2) at (1, 2);
            \node at (v2) [above] {$S_i$};
            \draw[thick] (v2) -- (u2) node [left, midway] {\small $3$};
    
            \coordinate[vtx, red] (u3) at (2, 0);
            \node at (u3) [below] {$z$};
            \draw[thick, red] (u3) -- (v2) node [below, midway] {\small $1$};
    
            \coordinate[vtx, blue] (u4) at (3, 0);
            \node at (u4) [below] {$w$};
            \draw[thick, blue] (u4) -- (v2) node [right, midway] {\small $1$};
        \end{tikzpicture} \\
        \hline
       G' & \begin{tikzpicture}[scale=0.7, baseline=(current bounding box.center)]
            \coordinate[openvtx] (u1) at (0, 0);
            \node at (u1) [below] {$x$};
            \coordinate[vtx] (v1) at (0, 2);
            \node at (v1) [above] {$S_0$};
            \draw[thick] (v1) -- (u1) node [left, midway] {\small $3$};
            
            \coordinate[vtx, blue] (u2) at (1, 0);
            \node at (u2) [below] {$y$};
            \draw[thick, blue] (v1) -- (u2) node [right, midway] {\small $1$};
        \end{tikzpicture} & \begin{tikzpicture}[scale=0.7, baseline=(current bounding box.center)]
            \coordinate[openvtx] (u1) at (0, 0);
            \node at (u1) [below] {$x$};
            \coordinate[vtx] (v1) at (0, 2);
            \node at (v1) [above] {$S_0$};
            \draw[thick] (v1) -- (u1) node [left, midway] {\small $3$};
            
            \coordinate[vtx, red] (u2) at (1, 0);
            \node at (u2) [below] {$y$};
            \draw[thick, red] (v1) -- (u2) node [below, midway] {\small $1$};
    
            \coordinate[vtx, blue] (u3) at (2, 0);
            \node at (u3) [below] {$z$};
            \draw[thick, blue] (u3) -- (v1) node [right, midway] {\small $1$};
        \end{tikzpicture} & \begin{tikzpicture}[scale=0.7, baseline=(current bounding box.center)]
            \coordinate[openvtx] (u1) at (0, 0);
            \node at (u1) [below] {$x$};
            \coordinate[vtx] (v1) at (0, 2);
            \node at (v1) [above] {$S_0$};
            \draw[thick] (v1) -- (u1) node [left, midway] {\small $3$};
            
            \coordinate[vtx, red] (u2) at (1, 0);
            \node at (u2) [below] {$y$};
            \draw[thick, red] (v1) -- (u2) node [left, midway] {\small $1$};
    
            \coordinate[vtx, red] (u3) at (2, 0);
            \node at (u3) [below] {$z$};
            \draw[thick, red] (u3) -- (u2) node [below, midway] {\small $1$};
            
            \coordinate[vtx, blue] (u4) at (3, 0);
            \node at (u4) [below] {$w$};
            \draw[thick, blue] (u4) -- (v1) node [right, midway] {\small $1$};
        \end{tikzpicture} & \begin{tikzpicture}[scale=0.7, baseline=(current bounding box.center)]
            \coordinate[vtx] (v1) at (0, 2);
            \node at (v1) [above] {$S_0$};
            
            \coordinate[vtx, red] (u2) at (1, 0);
            \node at (u2) [below] {$y$};
            \draw[thick, red] (v1) -- (u2) node [left, midway] {\small $1$};
    
            \coordinate[vtx, red] (u3) at (2, 0);
            \node at (u3) [below] {$z$};
            \draw[thick, red] (u3) -- (u2) node [below, midway] {\small $1$};
            
            \coordinate[vtx, blue] (u4) at (3, 0);
            \node at (u4) [below] {$w$};
            \draw[thick, blue] (u4) -- (v1) node [right, midway] {\small $1$};
        \end{tikzpicture} \\
        \hline
    \end{array}
    \end{equation}
    \begin{equation}
        \label{eq:error-term-graph-2}
        \begin{array}{|c|c|c|c|}
        \hline
       G  & \begin{tikzpicture}[scale=0.7, baseline=(current bounding box.center)]
            \coordinate[vtx, red] (u1) at (0, 2);
            \coordinate[vtx, red] (u2) at (2, 2);
            \coordinate[vtx, blue] (u3) at (0, 0);
            \coordinate[vtx, blue] (u4) at (2, 0);
            \node at (u1) [above] {$x$};
            \node at (u2) [above] {$z$};
            \node at (u3) [below] {$y$};
            \node at (u4) [below] {$w$};
            \coordinate[vtx] (v) at (1, 1);
            \node at (v) [above] {$S_i$};
            
            \draw[thick, red] (v) -- (u1) node [left, midway] {\small $1$};
            \draw[thick, red] (v) -- (u2) node [right, midway] {\small $1$};
            \draw[thick, blue] (v) -- (u3) node [left, midway] {\small $1$};
            \draw[thick, blue] (v) -- (u4) node [right, midway] {\small $1$};
        \end{tikzpicture} & \begin{tikzpicture}[scale=0.7, baseline=(current bounding box.center)]
            \coordinate[vtx, red] (u1) at (0, 2);
            \coordinate[vtx, red] (u2) at (2, 2);
            \coordinate[vtx, blue] (u3) at (0, 0);
            \node at (u1) [above] {$x$};
            \node at (u2) [above] {$z$};
            \node at (u3) [below] {$y$};
            \coordinate[vtx] (v) at (1, 1);
            \node at (v) [below right] {$S_i$};
            
            \draw[thick, red] (v) -- (u1) node [left, midway] {\small $1$};
            \draw[thick, red] (v) -- (u2) node [right, midway] {\small $1$};
            \draw[thick, blue] (v) -- (u3) node [left, midway] {\small $1$};
        \end{tikzpicture} & \begin{tikzpicture}[scale=0.7, baseline=(current bounding box.center)]
            \coordinate[vtx, red] (u1) at (0, 2);
            \coordinate[vtx, blue] (u3) at (0, 0);
            \node at (u1) [above] {$x$};
            \node at (u3) [below] {$y$};
            \coordinate[vtx] (v) at (1, 1);
            \node at (v) [right] {$S_i$};
            
            \draw[thick, red] (v) -- (u1) node [left, midway] {\small $1$};
            \draw[thick, blue] (v) -- (u3) node [left, midway] {\small $1$};
        \end{tikzpicture}  \\
        \hline
       G' & \begin{tikzpicture}[scale=0.7, baseline=(current bounding box.center)]
            \coordinate[vtx, red] (u1) at (0, 2);
            \coordinate[vtx, red] (u2) at (2, 2);
            \coordinate[vtx, blue] (u3) at (0, 0);
            \coordinate[vtx, blue] (u4) at (2, 0);
            \node at (u1) [above] {$x$};
            \node at (u2) [above] {$z$};
            \node at (u3) [below] {$y$};
            \node at (u4) [below] {$w$};
            
            \draw[thick, red] (u2) -- (u1) node [below, midway] {\small $1$};
            \draw[thick, blue] (u4) -- (u3) node [above, midway] {\small $1$};
        \end{tikzpicture} & \begin{tikzpicture}[scale=0.7, baseline=(current bounding box.center)]
            \coordinate[vtx, red] (u1) at (0, 2);
            \coordinate[vtx, red] (u2) at (2, 2);
            \coordinate[vtx, blue] (u3) at (0, 0);
            \node at (u1) [above] {$x$};
            \node at (u2) [above] {$z$};
            \node at (u3) [below] {$y$};
            
            \draw[thick, red] (u2) -- (u1) node [below, midway] {\small $1$};
        \end{tikzpicture} & \begin{tikzpicture}[scale=0.7, baseline=(current bounding box.center)]
            \coordinate[vtx, red] (u1) at (0, 2);
            \coordinate[vtx, blue] (u3) at (0, 0);
            \node at (u1) [above] {$x$};
            \node at (u3) [below] {$y$};
            
            \coordinate[vtx, white] (v) at (1, 1);
            \node[white] at (v) [right] {$S_i$};
        \end{tikzpicture} \\
        \hline
    \end{array}
    \end{equation}
    \end{lemma}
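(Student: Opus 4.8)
The plan is to verify each row of the two tables separately; in every case, after pulling out the edges that do not touch $S_i$ (which are deterministic by hypothesis), the claimed inequality $\mathbb{E}[G]\le C|i|_+^{-1}[G']$ reduces to a moment bound of the shape
\[
\mathbb{E}\Big[\prod_{a}|S_i-a|_+^{-w_a}\Big]\le C\,|i|_+^{-1}\prod_{\{b,c\}}|b-c|_+^{-w'_{bc}},
\]
where the left product runs over the neighbours $a$ of $S_i$ in $G$ and the right product runs over the edges $\{b,c\}$ of $G'$ that are not already present in $G$ (these are always incident to $S_0$, or, in the critical rows, to one of the $a$'s). Since $S_i$ is the $i$-th step of a walk started at $S_0$, translating so that the walk starts at the origin and invoking Lemma~\ref{lem:rw-moment} gives $\mathbb{E}|S_i-a|_+^{-4}\le C\min(|i|_+^{-2},|S_0-a|_+^{-4})$ and $\mathbb{E}|S_i-a|_+^{-3}\le C\min(|i|_+^{-3/2},|S_0-a|_+^{-3})$, and by H\"older $\mathbb{E}|S_i-a|_+^{-2}\le C|i|_+^{-1}$. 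The only other elementary input is the interpolation $\min(u,v)^{\theta}\le u^{\theta\alpha}v^{\theta(1-\alpha)}$ for $0\le\alpha\le1$, which we use to split each $\min(|i|_+^{-2},|S_0-a|_+^{-4})$ into a power of $|i|_+$ and a power of $|S_0-a|_+$.

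For the rows of the first table where $S_i$ carries total edge weight $3$ or $4$, the bound is immediate. In the weight-$3$ row, $\mathbb{E}|S_i-y|_+^{-3}\le C\min(|i|_+^{-3/2},|S_0-y|_+^{-3})\le C|i|_+^{-1}|S_0-y|_+^{-1}$ by interpolation with $\alpha=2/3$. In the weight-$4$ row, H\"older with exponents $(4/3,4)$ followed by the same interpolation yields $\mathbb{E}|S_i-y|_+^{-3}|S_i-z|_+^{-1}\le C|i|_+^{-1}|S_0-y|_+^{-1}|S_0-z|_+^{-1}$. For the last two rows of the first table the edge weight at $S_i$ is $5$, which is exactly where $\mathbb{E}|S_i-a|_+^{-p}$ ceases to be finite, so a direct H\"older estimate fails. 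There we first apply the triangle inequality $|y-z|\le|S_i-y|+|S_i-z|$ to trade the would-be $|S_0-z|_+^{-1}$ factor for the edge $|y-z|_+^{-1}$ appearing in $G'$, at the cost of lowering the exponent on $|S_i-y|$ by one; this produces $\mathbb{E}|S_i-y|_+^{-2}|S_i-z|_+^{-1}|S_i-w|_+^{-1}$ and $\mathbb{E}|S_i-y|_+^{-3}|S_i-w|_+^{-1}$, each of order $4$, and H\"older together with the interpolation—distributing the powers of $|i|_+$ so they sum to exactly $1$ and leaving a single factor $|S_0-a|_+^{-1}$ on each edge of $G'$—bounds both by $C|i|_+^{-1}|S_0-y|_+^{-1}|S_0-w|_+^{-1}$.

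The three rows of the second table are handled the same way but are simpler, since $S_i$ now carries only weight-$1$ edges. In each row we apply the triangle inequality to a monochromatic pair (e.g. $|x-z|\le|S_i-x|+|S_i-z|$ for the red pair); expanding, this leaves products of the form $|S_i-a|_+^{-1}|S_i-b|_+^{-1}$, and Cauchy--Schwarz together with $\mathbb{E}|S_i-a|_+^{-2}\le C|i|_+^{-1}$ bounds each such expectation by $C|i|_+^{-1}$. A weight-$1$ edge at $S_i$ that is not part of such a pair (the lone blue edge in the middle configuration) is absorbed by the same Cauchy--Schwarz step, which is why the corresponding vertex carries no new edge in $G'$. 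The roles of red and blue being symmetric throughout, it suffices to treat the configurations drawn.

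The main obstacle is the critical rows of the first table (edge weight $5$ at $S_i$) and the bookkeeping they force: one must choose the triangle-inequality split so that the new edge is exactly the one drawn in $G'$—in particular so that $S_0$ does not acquire extra incident edges—and then arrange the H\"older exponents and the interpolation parameters so that the total power of $|i|_+$ collected is precisely $1$ while every remaining factor is a single inverse distance $|S_0-a|_+^{-1}$ (or $|y-z|_+^{-1}$), with no residual factor that could later generate a divergent sum. Obtaining exactly $|i|_+^{-1}$, rather than a weaker power, is what makes the $(\log n)^{3m}$ bound of Proposition~\ref{prop:error-term-moment} sharp.
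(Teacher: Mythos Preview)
Your proposal is correct and follows essentially the same approach as the paper: both proofs combine the triangle inequality (to peel off the edge $|y-z|_+^{-1}$ in the weight-$5$ configurations and the monochromatic edges in the second table), H\"older's inequality to separate the remaining factors, and Lemma~\ref{lem:rw-moment} to convert $\mathbb{E}|S_i-a|_+^{-4}$ into $\min(|i|_+^{-2},|S_0-a|_+^{-4})$. The paper writes out only the third column of \eqref{eq:error-term-graph-1} explicitly and declares the rest similar; your interpolation $\min(u,v)^\theta\le u^{\theta\alpha}v^{\theta(1-\alpha)}$ is just a cleaner way of saying what the paper does when it chooses, for each H\"older factor, whether to use the $|i|_+^{-2}$ or the $|S_0-a|_+^{-4}$ branch of the minimum.
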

    
    \begin{proof}
        The proof is similar to Lemma~\ref{lem:cross-term-graph}. Since all proofs are similar, we will only explain the third operation in \eqref{eq:error-term-graph-1}. This equates to showing
        \[
        \mathbb{E} |S_i - y|_+^{-3} |S_i - z|^{-1} |S_i - w|_+^{-1} \le C |i|_+^{-1} |y|_+^{-1} |y-z|_+^{-1} |w|_+^{-1}.
        \]
        First note that $|y-z|_+ \le |S_i -y|_+ + |S_i - z|_+$ by the triangle inequality. Thus,
        \begin{align*}
            |y-z|_+ &\mathbb{E} |S_i - y|_+^{-3} |S_i - z|^{-1} |S_i - w|_+^{-1} \\
            &\le \mathbb{E}|S_i - y|_+^{-2} |S_i - z|^{-1} |S_i - w|_+^{-1} + \mathbb{E}|S_i - y|_+^{-3} |S_i - w|_+^{-1} \\
            &\le \left(\mathbb{E}|S_i-y|_+^{-4} \right)^{1/4}\left(\mathbb{E}|S_i - w|_+^{-4} \right)^{1/4} \left\{\left(\mathbb{E}|S_i-y|_+^{-4} \right)^{1/4}\left(\mathbb{E}|S_i - z|_+^{-4} \right)^{1/4} +  \left(\mathbb{E} |S_i - y|_+^{-4} \right)^{1/2} \right\} \\
            &\le C |y|_+^{-1} |w|_+^{-1} |i|_+^{-1}
        \end{align*}
        by Lemma~\ref{lem:rw-moment} and H\"{o}lder's inequality.
    \end{proof}
    
    \begin{proof}[Proof of Proposition~\ref{prop:error-term-moment}]
        Similarly to Proposition~\ref{prop:cross-term-moment}, we will progressively reduce this graph through a series of operations. Black edges have weight $3$ and red or blue edges have weight $1$. The key challenge here is that the graph is not symmetric with respect to $j_1 , \dots , j_{2m}$. In order to better manage this, we will draw the graph as
        \[
        \left[
    \begin{tikzpicture}[scale=0.7, baseline=(current bounding box.center)]
        \coordinate[vtx] (u0) at (1.5, 0);
        \node at (u0) [below] {$S_{i_{1}}$};
        \coordinate[vtx] (v0) at (1, 2);
        \node at (v0) [above] {$\tilde{S}_{j_1}$};
        \coordinate[vtx] (w0) at (2, 2);
        \node at (w0) [above] {$\tilde{S}_{j_2}$};
        \draw[thick] (v0) -- (u0);
        \draw[thick] (w0) -- (u0);
        \coordinate[vtx] (u1) at (3.5, 0);
        \node at (u1) [below] {$S_{i_2}$};
        \coordinate[vtx] (v1) at (3, 2);
        \node at (v1) [above] {$\tilde{S}_{j_3}$};
        \coordinate[vtx] (w1) at (4, 2);
        \node at (w1) [above] {$\tilde{S}_{j_4}$};
        \draw[thick] (v1) -- (u1);
        \draw[thick] (w1) -- (u1);
        \node at (5, 1) {$\dots$};
        \coordinate[vtx] (u) at (6.5, 0);
        \node at (u) [below] {$S_{i_{m}}$};
            \coordinate[vtx] (v) at (6, 2);
            \node at (v) [above] {$\tilde{S}_{j_{2m-1}}$};
            \coordinate[vtx] (w) at (7, 2);
            \node at (w) [above] {$\tilde{S}_{j_{2m}}$};
            \draw[thick] (v) -- (u);
            \draw[thick] (w) -- (u);
    \end{tikzpicture}\right] \\
    = \left[
    \begin{tikzpicture}[scale=0.7, baseline=(current bounding box.center)]
        \coordinate[vtx] (v0) at (1, 2);
        \node at (v0) [above] {$\tilde{S}_{j_1}$};
        \coordinate[vtx] (v1) at (2, 2);
        \node at (v1) [above] {$\tilde{S}_{j_2}$};
        \coordinate[vtx] (v2) at (3, 2);
        \node at (v2) [above] {$\tilde{S}_{j_3}$};
        \coordinate[vtx] (v3) at (4, 2);
        \node at (v3) [above] {$\tilde{S}_{j_4}$};
        \coordinate[vtx] (u0) at (1, 0);
        \node at (u0) [below] {$S_{i_1}$};
        \coordinate[vtx] (u1) at (2, 0);
        \node at (u1) [below] {$S_{i_1}$};
        \coordinate[vtx] (u2) at (3, 0);
        \node at (u2) [below] {$S_{i_2}$};
        \coordinate[vtx] (u3) at (4, 0);
        \node at (u3) [below] {$S_{i_2}$};
        \node at (5, 1) {$\dots$};
        \coordinate[vtx] (v4) at (6, 2);
        \node at (v4) [above] {$\tilde{S}_{j_{2m-1}}$};
        \coordinate[vtx] (v5) at (7, 2);
        \node at (v5) [above] {$\tilde{S}_{j_{2m}}$};
        \node at (5, 1) {$\dots$};
        \coordinate[vtx] (u4) at (6, 0);
        \node at (u4) [below] {$S_{i_m}$};
        \coordinate[vtx] (u5) at (7, 0);
        \node at (u5) [below] {$S_{i_m}$};
        \draw[thick] (u0) -- (v0);
        \draw[thick] (u1) -- (v1);
        \draw[thick] (u2) -- (v2);
        \draw[thick] (u3) -- (v3);
        \draw[thick] (u4) -- (v4);
        \draw[thick] (u5) -- (v5);
    \end{tikzpicture}\right].
    \]
        
    In this way, we may assume $j_1 \le j_2 \dots \le j_{2m}$ in exchange for reordering $S_{i_1}, \dots, S_{i_m}$. That is, if suffices to consider graphs of the form
    \[
    \begin{tikzpicture}[scale=0.7, baseline=(current bounding box.center)]
        \coordinate[vtx] (v0) at (0, 2);
        \node at (v0) [above] {$\tilde{S}_{j_1}$};
        \coordinate[vtx] (v1) at (1.5, 2);
        \node at (v1) [above] {$\tilde{S}_{j_2}$};
        \coordinate[vtx] (u0) at (0, 0);
        \node at (u0) [below] {$S_{i_{\phi(1)}}$};
        \coordinate[vtx] (u1) at (1.5, 0);
        \node at (u1) [below] {$S_{i_{\phi(2)}}$};
        \coordinate[vtx] (v2) at (3, 2);
        \node at (v2) [above] {$\tilde{S}_{j_3}$};
        \coordinate[vtx] (v3) at (4.5, 2);
        \node at (v3) [above] {$\tilde{S}_{j_4}$};
        \coordinate[vtx] (u2) at (3, 0);
        \node at (u2) [below] {$S_{i_{\phi(3)}}$};
        \coordinate[vtx] (u3) at (4.5, 0);
        \node at (u3) [below] {$S_{i_{\phi(4)}}$};
        \node at (6, 1) {$\dots$};
        \coordinate[vtx] (v4) at (7.5, 2);
        \node at (v4) [above] {$\tilde{S}_{j_{2m-1}}$};
        \coordinate[vtx] (v5) at (9, 2);
        \node at (v5) [above] {$\tilde{S}_{j_{2m}}$};
        \coordinate[vtx] (u4) at (7.5, 0);
        \node at (u4) [below] {$S_{i_{\phi(2m-1)}}$};
        \coordinate[vtx] (u5) at (9, 0);
        \node at (u5) [below] {$S_{i_{\phi(2m)}}$};
        \draw[thick] (u0) -- (v0);
        \draw[thick] (u1) -- (v1);
        \draw[thick] (u2) -- (v2);
        \draw[thick] (u3) -- (v3);
        \draw[thick] (u4) -- (v4);
        \draw[thick] (u5) -- (v5);
    \end{tikzpicture}
    \]
    for arbitrary two-to-one maps $\phi : \{ 1, 2, \dots , 2m \} \to \{ 1, \dots, m \}$. Since we are considering all such maps, we can also assume $i_1 \le \dots \le i_m$ without loss of generality. 
    
    Now, we provide an algorithm that works for all such graphs. Consider the following coloring on the multi-set $\{S_{i_{\phi(1)}}, \dots , S_{i_{\phi(2m)}}\}$. First, color $S_{i_{\phi(2m)}}$ blue and $S_{i_{\phi(2m-1)}}$ red. Now, going from right to left, if $S_{i_{\phi(k)}}$ has not already appeared, give it an arbitrary color. If it has appeared, color it the opposite of what it was colored previously. Since $S_{i_{\phi(2m)}}$ and $S_{i_{\phi(2m-1)}}$ have different colors, \eqref{eq:error-term-graph-1} gives a unique way of reducing this graph. Further, this process gives two paths, one red and one blue, that are joined at $0$. Since each vertex $S_{i_k}$ appears once as red and once as blue, each monochromatic path is a permutation of $\{ S_{i_1} , \dots , S_{i_m} \}$. In summary, we get the following inequality after reduction.
    \begin{equation}
    \label{eq:error-term-1}
    \mathbb{E} \left[ \begin{tikzpicture}[scale=0.7, baseline=(current bounding box.center)]
        \coordinate[vtx] (v1) at (1, 2);
        \node at (v1) [above] {$\tilde{S}_{j_1}$};
        \coordinate[vtx] (v2) at (2, 2);
        \node at (v2) [above] {$\tilde{S}_{j_2}$};
        \coordinate[vtx] (v4) at (4, 2);
        \node at (v4) [above] {$\tilde{S}_{j_{2m}}$};
        \coordinate[openvtx] (u1) at (1, 0);
        \node at (u1) [below] {$S_{i_{\phi(1)}}$};
        \coordinate[openvtx] (u2) at (2, 0);
        \node at (u2) [below] {$S_{i_{\phi(2)}}$};
        \coordinate[openvtx] (u4) at (4, 0);
        \node at (u4) [below] {$S_{i_{\phi(2m)}}$};
        \draw[thick] (v1) -- (u1);
        \draw[thick] (v2) -- (u2);
        \draw[thick] (v4) -- (u4);
        \node at (3, 1) {$\dots$};
    \end{tikzpicture}
    \right] \le \mathbb{E} \left[
    \begin{tikzpicture}[scale=0.7, baseline=(current bounding box.center)]
        \coordinate[vtx] (w) at (0, 1);
        \node at (w) [left] {$0$};
        \coordinate[vtx, red] (v1) at (1, 2);
        \node at (v1) [above] {$S_{i_{\sigma(1)}}$};
        \coordinate[vtx, red] (v2) at (2.5, 2);
        \node at (v2) [above] {$S_{i_{\sigma(2)}}$};
        \coordinate[vtx, red] (v4) at (5, 2);
        \node at (v4) [above] {$S_{i_{\sigma(m)}}$};
        \coordinate[vtx, blue] (u1) at (1, 0);
        \node at (u1) [below] {$S_{i_{\tau(1)}}$};
        \coordinate[vtx, blue] (u2) at (2.5, 0);
        \node at (u2) [below] {$S_{i_{\tau(2)}}$};
        \coordinate[vtx, blue] (u4) at (5, 0);
        \node at (u4) [below] {$S_{i_{\tau(m)}}$};
        \draw[thick, red] (w) -- (v1) -- (v2) -- (v4);
        \draw[thick, blue] (w) -- (u1) -- (u2) -- (u4);
        \node at (3.75, 2.5) {$\dots$};
        \node at (3.75, -0.5) {$\dots$};
    \end{tikzpicture}
    \right] C^{2m} \prod_{k=1}^{2m} |j_k - j_{k-1}|_+^{-1}.
    \end{equation}
    Here, $\sigma$ and $\tau$ are permutations of $\{ 1, \dots , m \}$ such that $S_{i_1} , \dots , S_{i_m}$ appear in the same order on both sides.That is, $\{\sigma(i)\}_{i=1}^m$ and $\{ \tau(i)\}_{i=1}^m$ are disjoint subsequences of $\{ \phi(i)\}_{i=1}^{2m}$. Now we can use \eqref{eq:error-term-graph-2} to reduce this graph further. This gives
    \[  
    \mathbb{E} \left[
    \begin{tikzpicture}[scale=0.7, baseline=(current bounding box.center)]
        \coordinate[vtx] (w) at (0, 1);
        \node at (w) [left] {$0$};
        \coordinate[vtx, red] (v1) at (1, 2);
        \node at (v1) [above] {$S_{i_{\sigma(1)}}$};
        \coordinate[vtx, red] (v2) at (2.5, 2);
        \node at (v2) [above] {$S_{i_{\sigma(2)}}$};
        \coordinate[vtx, red] (v4) at (5, 2);
        \node at (v4) [above] {$S_{i_{\sigma(m)}}$};
        \coordinate[vtx, blue] (u1) at (1, 0);
        \node at (u1) [below] {$S_{i_{\tau(1)}}$};
        \coordinate[vtx, blue] (u2) at (2.5, 0);
        \node at (u2) [below] {$S_{i_{\tau(2)}}$};
        \coordinate[vtx, blue] (u4) at (5, 0);
        \node at (u4) [below] {$S_{i_{\tau(m)}}$};
        \draw[thick, red] (w) -- (v1) -- (v2) -- (v4);
        \draw[thick, blue] (w) -- (u1) -- (u2) -- (u4);
        \node at (3.75, 2.5) {$\dots$};
        \node at (3.75, -0.5) {$\dots$};
    \end{tikzpicture}
    \right] \le C^m \prod_{k=1}^{m} |i_k - i_{k-1}|_+^{-1}.
    \]
    One key point here is that a vertex never neighbors itself, since they are on opposite sides of $0$. Combined with \eqref{eq:error-term-1} and summing over $i_1 \le \dots \le i_m$, $j_1 \le \dots \le j_{2m}$ and their $m! \times (2m)!$ possible reorderings, we have
    \begin{align*}
    \mathbb{E} \left[ \sum_{1 \le i_1 , \dots , i_m \le n} \right. & \left. \sum_{1 \le j_1 , \dots , j_{2m} \le n} \prod_{k = 1}^{m} |S_{i_k} - \tilde{S}_{j_{2k-1}}|_+^3 |S_{i_k} - \tilde{S}_{j_{2k}}|_+^3 \right] \\
    &\le C^{m} (m!)(2m)! \left( \prod_{k=1}^{2m} \sum_{j_k = j_{k-1}}^n |j_k - j_{k-1}|_+^{-1} \right) \left( \prod_{k=1}^{m} \sum_{i_k = i_{k-1}}^n |i_k - i_{k-1}|_+^{-1} \right) \\
    &\le C^m (m!)(2m)! \left( \sum_{j=1}^n |j|_+^{-1} \right)^{3m} \\
    &\le C^m (m!)^{3} (\log n)^{3m}.
    \end{align*}
    \end{proof}
    
    \begin{lemma}
    \label{lem:aux-error-moment}
    Recall that $[n]^{(l, p)} = [2^{-l} n(p-1), 2^{-l}n p]$ denotes the $p$-th piece of the interval $\{ 0, 1, \dots ,n \}$. For any $l$, let
    \begin{equation}
    \label{eq:aux-error-def}
    \begin{aligned}
    X_l := \sum_{p = 1}^{2^{l-1}} \sum_{i =1}^n \sum_{j_1 \in [0,n]^{(l, 2p-1)}} \sum_{j_2 \in [0,n]^{(l, 2p)}} G_D (S_i - \tilde{S}_{j_1}) G_D(\tilde{S}_{j_1} - \tilde{S}_{j_2}) \\
    Y_l := \sum_{p = 1}^{2^{l-1}} \sum_{i =1}^n \sum_{j_1 \in [0,n]^{(l, 2p-1)}} \sum_{j_2 \in [0,n]^{(l, 2p)}} G_D (S_i - \tilde{S}_{j_2}) G_D(\tilde{S}_{j_1} - \tilde{S}_{j_2}) \\
    Z_l := \sum_{p = 1}^{2^{l-1}}  \sum_{i_1 \in [0,n]^{(l, 2p-1)}} \sum_{i_2 \in [0,n]^{(l, 2p)}}\sum_{j=1}^n G_D (S_{i_1} - \tilde{S}_j) G_D(S_{i_1} - S_{i_2}) \\
    W_l := \sum_{p = 1}^{2^{l-1}}  \sum_{i_1 \in [0,n]^{(l, 2p-1)}} \sum_{i_2 \in [0,n]^{(l, 2p)}}\sum_{j=1}^n  G_D (S_{i_2} - \tilde{S}_j) G_D(S_{i_1} - S_{i_2}) \\
    \end{aligned}
    \end{equation}
    Then,
    \begin{equation}
    \label{eq:aux-error-moment}
    \mathbb{E}[X_l^m], \mathbb{E}[Y_l^m], \mathbb{E}[Z_l^m], \mathbb{E}[W_l^m] \le  C^m 2^l e^{2^l / m} (m!)^3 (\log n)^{3m}.
    \end{equation}
    \end{lemma}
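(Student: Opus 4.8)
The plan is to run the graph-reduction machinery of Proposition~\ref{prop:error-term-moment} on these localized quantities. I describe the argument for $X_l$; the estimates for $Y_l,Z_l,W_l$ follow by the same reasoning after interchanging the roles of $S$ and $\tilde{S}$ and of the two legs of each cherry (for $Z_l,W_l$ it is $S$, not $\tilde{S}$, that is cut into pieces). First I would write $X_l=\sum_{p=1}^{2^{l-1}}V_p$, with $V_p$ the $p$-th summand in \eqref{eq:aux-error-def}, expand
\[
\mathbb{E}[X_l^m]=\sum_{\vec p\in\{1,\dots,2^{l-1}\}^m}\mathbb{E}\Bigl[\prod_{k=1}^m V_{p_k}\Bigr],
\]
and, using $G_D(x)\le C|x|_+^{-3}$, reduce to bounding for each fixed $\vec p$ the expectation of $\prod_{k=1}^m|S_{i_k}-\tilde{S}_{j_{2k-1}}|_+^{-3}|\tilde{S}_{j_{2k-1}}-\tilde{S}_{j_{2k}}|_+^{-3}$, summed over $i_1,\dots,i_m\in[n]$ and over $j_{2k-1}\in[n]^{(l,2p_k-1)}$, $j_{2k}\in[n]^{(l,2p_k)}$. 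Graphically this is a disjoint union of $m$ ``path-cherries'' through $S_{i_k}$, $\tilde{S}_{j_{2k-1}}$, $\tilde{S}_{j_{2k}}$ (in that order) with edge weights $3,3$; the differences from \eqref{eq:error-term-graph} are that the middle (degree-$2$) vertex now sits on the $\tilde{S}$-side and that the two $\tilde{S}$-endpoints of a cherry lie in consecutive pieces of the level-$l$ decomposition, so $j_{2k-1}\le j_{2k}$.

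Next I would carry out the reduction as in Proposition~\ref{prop:error-term-moment}. Sorting the $\tilde{S}$-indices only reindexes the $S$-vertices, recorded by summing over the compatible two-to-one maps; then one peels off the $2m$ sorted $\tilde{S}$-vertices in decreasing time order. Since $j_{2k-1}\le j_{2k}$, the latest surviving $\tilde{S}$-index can always be taken to be a leaf, and each removal is an instance of the reductions of Lemmas~\ref{lem:cross-term-graph} and \ref{lem:error-term-graph}, slightly extended to cover configurations where the conditioning vertex is adjacent to the vertex removed; a case check is needed here, and one must make sure every step stays of the ``$|i|_+^{-1}$'' (weight-$1$) type rather than the ``$|i|_+^{-3/4}$'' type. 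Every removal then produces a deterministic factor $\le C|j_{(a)}-j_{(a-1)}|_+^{-1}$ with $0=j_{(0)}\le j_{(1)}\le\cdots\le j_{(2m)}$ the sorted indices, and the leftover object is a graph on $\{0,S_{i_1},\dots,S_{i_m}\}$ whose components pass through the origin; peeling its $m$ vertices gives a further product $\prod_{b=1}^m|i_{(b)}-i_{(b-1)}|_+^{-1}$. Because $X_l$ keeps all of $[n]$ for the $S$-walk, the $i$-sums are unrestricted and contribute at most $(C\log n)^m$, while within a single piece $\sum_j|j-j'|_+^{-1}\le C\log n$, so the ``intra-piece'' $j$-gaps contribute at most $(C\log n)^{2m}$; together this accounts for the $(\log n)^{3m}$.

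The new feature is that each ``inter-piece'' $j$-gap — from the origin to the first index in the lowest occupied piece-pair, or from the last index in one occupied piece-pair to the first index in the next — carries an extra decay of order $1/(\text{separation of the pieces})$, obtained by summing $|j-j'|_+^{-1}$ over a piece lying several pieces away from $j'$; it is this decay that renders the sum over $\vec p$ summable. Concretely, after bounding $\mathbb{E}[\prod_k V_{p_k}]$ by $C^m m!(\log n)^{3m}$ times $\prod_s(\mu_s!)^2$ (the number of two-to-one maps compatible with the piece pattern) times the product of inter-piece factors, one groups the $\vec p$'s by their set of occupied pieces $q_1<\cdots<q_r\le 2^{l-1}$ and multiplicities $\mu_1,\dots,\mu_r$ (so $\sum_s\mu_s=m$), uses $\binom{m}{\mu_1,\dots,\mu_r}\prod_s(\mu_s!)^2=m!\prod_s\mu_s!$, sums the inter-piece decays over the positions $q_1<\cdots<q_r$ (a harmonic-type sum), and applies a short generating-function estimate for $\sum_{\mu_1+\cdots+\mu_r=m}\prod_s\mu_s!$ and for the resulting series in $r$. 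This is designed to yield $\mathbb{E}[X_l^m]\le C^m 2^l e^{2^l/m}(m!)^3(\log n)^{3m}$.

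The hard part is twofold. First, one must verify that the $\tilde{S}$-side reduction of the path-cherries terminates cleanly — no loops, no high-degree vertices, and only weight-$1$ factors at each step — uniformly over every ordering of the $j$-indices and every assignment of cherries to pieces; the path-cherry is more delicate than the symmetric cherry of Proposition~\ref{prop:error-term-moment}, so one has to check that the colouring/ordering scheme used there still selects a legitimate reduction at every stage, including when the conditioning vertex coincides with a neighbour. Second, the final combinatorial bookkeeping must be carried out precisely enough that the localization gain is converted into exactly the factor $2^l e^{2^l/m}$ while the power of $\log n$ stays at $3m$ — in particular one must carefully separate the inter-piece gaps that genuinely decay from those between adjacent pieces, which contribute a further $\log n$.
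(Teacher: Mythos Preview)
Your plan is in the right spirit but makes two choices that are each the harder of two available options, and the paper takes the easier one both times. First, you peel the $\tilde S$-side first in globally sorted order, which forces you to cope with the degree-$2$ middle vertex $\tilde S_{j_{2k-1}}$ of each path-cherry during the reduction and to verify, case by case, that the colouring scheme of Proposition~\ref{prop:error-term-moment} still routes all weight-$1$ edges into a clean two-path structure on the $S$-side; you correctly flag this as delicate. The paper instead conditions on $\tilde S$ and removes the $S_{i_k}$'s first --- trivial, since in $X_l$ every $S_{i_k}$ is a leaf --- producing $\prod_k|i_k-i_{k-1}|_+^{-1}$ together with a red weight-$1$ path through $0,\tilde S_{j_1},\dots,\tilde S_{j_m}$, and only then integrates over $\tilde S$ piece by piece from the last piece down, so that the leaf/middle distinction on the $\tilde S$-side is handled automatically by the piece ordering. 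Second, you plan to recover the factor $2^l e^{2^l/m}$ via inter-piece decay of the $|j_{(a)}-j_{(a-1)}|_+^{-1}$ factors and a harmonic sum over the occupied positions $q_1<\cdots<q_r$; the paper uses no inter-piece decay whatsoever, simply bounding every gap factor by $C\log n$ and absorbing the entire piece-dependence into the pure ordering count
\[
\sum_{m_1+\cdots+m_{2^{l-1}}=m}\frac{m!}{m_1!\cdots m_{2^{l-1}}!}(m_1!)^2\cdots(m_{2^{l-1}}!)^2
=m!\sum_{m_1+\cdots+m_{2^{l-1}}=m}(m_1!)\cdots(m_{2^{l-1}}!),
\]
which is bounded by $2^l(m!)^2 e^{2^l/m}$ via \cite{BenderRichmond1984}. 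So both of your ``hard parts'' are artefacts of the route chosen; the paper's argument sidesteps them entirely.
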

    
    \begin{proof}
        We only show this estimate for $X_l$, as the proof for the rest are almost identical. First assume $l = 1$. In this case, it is clear that
        \[  
        \mathbb{E}[X_1^m] \le \sum_{0 \le i_1 , \dots , i_m \le n} \sum_{0 \le j_1 , \dots , j_m \le n/2} \sum_{n/2 \le j_{m+1} , \dots , j_{2m} \le n} C^m \mathbb{E} \left[ \prod_{k=1}^m |S_{i_k} - \tilde{S}_{j_k}|_+^{-3} |\tilde{S}_{j_k} - \tilde{S}_{j_{m+k}}|_+^{-3} \right].
        \]
        Similarly to before, we may use the graph inequalities $\mathbb{E}[G] \le C|i|_+^{-1} \mathbb{E}[G']$, where
        \[
    \begin{array}{|c|c|c|c|c|c|c}
    \hline
       G  & \begin{tikzpicture}[scale=0.7, baseline=(current bounding box.center)]
            \coordinate[vtx] (u1) at (0, 0);
            \node at (u1) [below] {$x$};
            \coordinate[vtx] (v1) at (0, 2);
            \node at (v1) [above] {$S_0$};
            \draw[thick] (v1) -- (u1) node [left, midway] {\small $3$};
            
            \coordinate[vtx] (u2) at (1, 0);
            \node at (u2) [below] {$y$};
            \coordinate[vtx] (v2) at (1, 2);
            \node at (v2) [above] {$S_i$};
            \draw[thick] (v2) -- (u2) node [left, midway] {\small $3$};
        \end{tikzpicture} & \begin{tikzpicture}[scale=0.7, baseline=(current bounding box.center)]
            \coordinate[vtx] (v1) at (0, 2);
            \node at (v1) [above] {$S_0$};
            \coordinate[vtx] (u1) at (0, 0);
            \node at (u1) [below] {$x$};
            \draw[thick] (v1) -- (u1) node [left, midway] {\small $3$};
            
            \coordinate[vtx] (u2) at (1, 0);
            \node at (u2) [below] {$y$};
            \coordinate[vtx] (v2) at (1, 2);
            \node at (v2) [above] {$S_i$};
            \draw[thick] (v2) -- (u2) node [left, midway] {\small $3$};
    
            \coordinate[vtx] (u3) at (2, 0);
            \node at (u3) [below] {$z$};
            \draw[thick, red] (u3) -- (v2) node [below, midway] {\small $1$};
        \end{tikzpicture}\\
        \hline
       G' & \begin{tikzpicture}[scale=0.7, baseline=(current bounding box.center)]
            \coordinate[vtx] (u1) at (0, 0);
            \node at (u1) [below] {$x$};
            \coordinate[vtx] (v1) at (0, 2);
            \node at (v1) [above] {$S_0$};
            \draw[thick] (v1) -- (u1) node [left, midway] {\small $3$};
            
            \coordinate[vtx] (u2) at (1, 0);
            \node at (u2) [below] {$y$};
            \draw[thick] (v1) -- (u2) node [right, midway] {\small $1$};
        \end{tikzpicture} & \begin{tikzpicture}[scale=0.7, baseline=(current bounding box.center)]
            \coordinate[vtx] (u1) at (0, 0);
            \node at (u1) [below] {$x$};
            \coordinate[vtx] (v1) at (0, 2);
            \node at (v1) [above] {$S_0$};
            \draw[thick] (v1) -- (u1) node [left, midway] {\small $3$};
            
            \coordinate[vtx] (u2) at (1, 0);
            \node at (u2) [below] {$y$};
            \draw[thick, red] (v1) -- (u2) node [left, midway] {\small $1$};
    
            \coordinate[vtx] (u3) at (2, 0);
            \node at (u3) [below] {$z$};
            \draw[thick, red] (u3) -- (u2) node [below, midway] {\small $1$};
        \end{tikzpicture}\\
        \hline
    \end{array}
    \]
    and condition on $\tilde{S}$ to get 
    \[  
        \mathbb{E}\left[
    \begin{tikzpicture}[scale=0.7, baseline=(current bounding box.center)]
        \coordinate[vtx] (u1) at (3.5, 0);
        \node at (u1) [below] {$\tilde{S}_{j_{1}}$};
        \coordinate[vtx] (v1) at (3, 2);
        \node at (v1) [above] {$S_{i_1}$};
        \coordinate[vtx] (w1) at (4, 2);
        \node at (w1) [above] {$\tilde{S}_{j_{m+1}}$};
        \draw[thick] (v1) -- (u1);
        \draw[thick] (w1) -- (u1);
        \node at (5, 1) {$\dots$};
        \coordinate[vtx] (u) at (6.5, 0);
        \node at (u) [below] {$\tilde{S}_{j_{m}}$};
            \coordinate[vtx] (v) at (6, 2);
            \node at (v) [above] {$S_{i_{m}}$};
            \coordinate[vtx] (w) at (7, 2);
            \node at (w) [above] {$\tilde{S}_{j_{2m}}$};
            \draw[thick] (v) -- (u);
            \draw[thick] (w) -- (u);
    \end{tikzpicture}\right]  \le \mathbb{E} \left[ 
    \begin{tikzpicture}[scale=0.7, baseline=(current bounding box.center)]
        \coordinate[vtx] (u1) at (3, 0);
        \coordinate[vtx] (u0) at (2, 0);
        \node at (u0) [below] {$0$};
        \draw[thick, red] (u0) -- (u1);
        \coordinate[vtx] (u2) at (4, 0);
        \node at (u1) [below] {$\tilde{S}_{j_{1}}$};
        \node at (u2) [below] {$\tilde{S}_{j_{2}}$};
        \coordinate[vtx] (v1) at (3, 2);
        \node at (v1) [above] {$\tilde{S}_{j_{m+1}}$};
        \coordinate[vtx] (w1) at (4, 2);
        \node at (w1) [above] {$\tilde{S}_{j_{m+2}}$};
        \draw[thick] (v1) -- (u1);
        \draw[thick] (w1) -- (u2);
        \draw[thick, red] (u1) -- (u2);
        \node at (5, 1) {$\dots$};
        \coordinate[vtx] (u) at (6, 0);
        \node at (u) [below] {$\tilde{S}_{j_{m}}$};
            \coordinate[vtx] (w) at (6, 2);
            \node at (w) [above] {$\tilde{S}_{j_{2m}}$};
            \draw[thick] (w) -- (u);
            \draw[thick, red] (u2) -- (u);
    \end{tikzpicture}
    \right] C^m \prod_{k=1}^m |i_k - i_{k-1}|_+^{-1}.
    \]
    Here we've assumed that $i_1 \le \dots \le i_m$ without loss of generality Now since $j_1 , \dots, j_m \le n/2 \le j_{m+1} \le \dots \le j_{2m}$, we can condition on $\tilde{S}[0, n/2]$ and repeat a similar process (except with blue lines). Assume $j_{\tau(1)} \le j_{\tau(2)} \le \dots \le j_{\tau(2m)}$ for some permutation $\tau$ (there are $(m!)^2$ possible such permutations). This gives
    \begin{multline*}
    \mathbb{E} \left[ 
    \begin{tikzpicture}[scale=0.7, baseline=(current bounding box.center)]
        \coordinate[vtx] (u1) at (3, 0);
        \coordinate[vtx] (u0) at (2, 0);
        \node at (u0) [below] {$0$};
        \draw[thick, red] (u0) -- (u1);
        \coordinate[vtx] (u2) at (4, 0);
        \node at (u1) [below] {$\tilde{S}_{j_{1}}$};
        \node at (u2) [below] {$\tilde{S}_{j_{2}}$};
        \coordinate[vtx] (v1) at (3, 2);
        \node at (v1) [above] {$\tilde{S}_{j_{m+1}}$};
        \coordinate[vtx] (w1) at (4, 2);
        \node at (w1) [above] {$\tilde{S}_{j_{m+2}}$};
        \draw[thick] (v1) -- (u1);
        \draw[thick] (w1) -- (u2);
        \draw[thick, red] (u1) -- (u2);
        \node at (5, 1) {$\dots$};
        \coordinate[vtx] (u) at (6, 0);
        \node at (u) [below] {$\tilde{S}_{j_{m}}$};
            \coordinate[vtx] (w) at (6, 2);
            \node at (w) [above] {$\tilde{S}_{j_{2m}}$};
            \draw[thick] (w) -- (u);
            \draw[thick, red] (u2) -- (u);
    \end{tikzpicture}
    \right] \\
    \le \mathbb{E} \left[
    \begin{tikzpicture}[scale=0.7, baseline=(current bounding box.center)]
        \coordinate[vtx] (w1) at (0, 1);
        \node at (w1) [left] {$0$};
        \coordinate[vtx] (w2) at (6, 1);
        \node at (w2) [left] {$\tilde{S}_{n/2}$};
        \coordinate[vtx, blue] (v1) at (1, 2);
        \node at (v1) [above] {$\tilde{S}_{j_{\sigma(1)}}$};
        \coordinate[vtx, blue] (v2) at (2.5, 2);
        \node at (v2) [above] {$\tilde{S}_{j_{\sigma(2)}}$};
        \coordinate[vtx, blue] (v4) at (5, 2);
        \node at (v4) [above] {$\tilde{S}_{j_{\sigma(m)}}$};
        \coordinate[vtx, red] (u1) at (1, 0);
        \node at (u1) [below] {$\tilde{S}_{j_1}$};
        \coordinate[vtx, red] (u2) at (2.5, 0);
        \node at (u2) [below] {$\tilde{S}_{j_2}$};
        \coordinate[vtx, red] (u4) at (5, 0);
        \node at (u4) [below] {$\tilde{S}_{j_{m}}$};
        \draw[thick, blue] (v1) -- (v2) -- (v4) -- (w2);
        \draw[thick, red] (w1) -- (u1) -- (u2) -- (u4);
        \node at (3.75, 2.5) {$\dots$};
        \node at (3.75, -0.5) {$\dots$};
    \end{tikzpicture}
    \right] C^{2m} |j_{\tau(m+1)} - n/2|_+^{-1} \prod_{k=m+2}^{2m} |j_{\tau(k)} - j_{\tau(k-1)}|_+^{-1}.
    \end{multline*}
    for some permutation $\sigma$ of $\{ 1, \dots , m \}$. Now we use \eqref{eq:error-term-graph-2} to obtain
    \[  
    \mathbb{E} \left[
    \begin{tikzpicture}[scale=0.7, baseline=(current bounding box.center)]
        \coordinate[vtx] (w1) at (0, 1);
        \node at (w1) [left] {$0$};
        \coordinate[vtx] (w2) at (6, 1);
        \node at (w2) [left] {$\tilde{S}_{n/2}$};
        \coordinate[vtx, blue] (v1) at (1, 2);
        \node at (v1) [above] {$\tilde{S}_{j_{\sigma(1)}}$};
        \coordinate[vtx, blue] (v2) at (2.5, 2);
        \node at (v2) [above] {$\tilde{S}_{j_{\sigma(2)}}$};
        \coordinate[vtx, blue] (v4) at (5, 2);
        \node at (v4) [above] {$\tilde{S}_{j_{\sigma(m)}}$};
        \coordinate[vtx, red] (u1) at (1, 0);
        \node at (u1) [below] {$\tilde{S}_{j_1}$};
        \coordinate[vtx, red] (u2) at (2.5, 0);
        \node at (u2) [below] {$\tilde{S}_{j_2}$};
        \coordinate[vtx, red] (u4) at (5, 0);
        \node at (u4) [below] {$\tilde{S}_{j_{m}}$};
        \draw[thick, blue] (v1) -- (v2) -- (v4) -- (w2);
        \draw[thick, red] (w1) -- (u1) -- (u2) -- (u4);
        \node at (3.75, 2.5) {$\dots$};
        \node at (3.75, -0.5) {$\dots$};
    \end{tikzpicture}
    \right] \le C^m |j_{\tau(1)}|_+^{-1/2} \prod_{k=2}^{m} |j_{\tau(k)} - j_{\tau(k-1)}|_+^{-1} |n/2 - j_{\tau(m)}|_+^{-1/2}
    \]
    
    The only additional reduction we need is when $\tilde{S}_j$ is a leaf, in which case we use $\mathbb{E}|S_i -x|_+^{-1} \le C|i|_+^{-1/2}$. Combining all above arguments gives \eqref{eq:aux-error-moment}.
    
    For general $l$, the proof is similar. The only change is that we have to repeat the above steps for every subinterval $[0,n]^{(l, p)}$ instead of just for $[0, n/2]$ and $[n/2, n]$. For instance, suppose you have $p_{2^{l-1}}$ points in $\mathcal{S}_n^{(l, 2^{l})}$. If you first condition on $\tilde{S}[0, n2^{-l}(2^l - 1)]$, then $p_{2^{l-1}}$ of the black edges will get replaced by a blue path going through the red endpoints. Then, if you integrate over $\mathcal{S}_n^{(l, 2^{l}-1)}$, the points you are integrating over will be exactly the points with red/blue (and no black) edges. Thus, once you integrate over these points, you are left with the red--black graph, except with $2p_{2^{l-1}}$ less vertices. As we repeat for all $p = 2^{l-1}, \dots, 1$, we have to consider all possible positions of $j_1 , \dots , j_m \in [0, n]^{(l, 1)}, \dots , [0, n]^{(l, 2^{l} - 1)}$. Accounting for all possible relative orders of $j_1 , \dots , j_{2m}$, this gives a combinatorial factor of
    \[  
    \sum_{m_1 + \dots + m_{2^{l-1}} = m} \frac{m!}{m_1! \dots m_{2^{l-1}}!} (m_1!)^2 \dots (m_{2^{l-1}}!)^2 = m! \sum_{m_1 + \dots + m_{2^{l-1}} = m}  (m_1!) \dots (m_{2^{l-1}}!).
    \]
    Following the proof of ~\cite{BenderRichmond1984}, this is bounded by $2^{l} (m!)^2 e^{2^l / m}$. In particular, when $2^l \ll m$, this converges to $2^l m!$. This can be explained by the fact that the sum is concentrated at the $2^l$ vertices of the simplex $m_1 + \dots + m_{2^l} = m$.
    
    \end{proof}
    
    \section{Preliminaries} \label{sec:main}
    
    \subsection{Decompositions} \label{subsec:decomposition}
    
    \begin{lemma}
    \label{lem:error-bound}
        Let $\epsilon(A, B) = 2 \chi(A, B) - \chi_{\mathcal{C}}(A, B)$. Then,      
        \[
        0 \le \epsilon(A, B) \le \sum_{x \in A} \sum_{y_1 , y_2 \in B} G_D(x-y_1)G_D(x-y_2) + \sum_{x_1, x_2 \in A} \sum_{y \in B} G_D(x_1-y)G_D(x_2 - y) + \cp (A \cap B).            
        \]
    \end{lemma}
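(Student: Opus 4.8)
The plan is to reduce everything to the representation of capacity as an escape probability from infinity, $\cp(C)=\lim_{|z|\to\infty}\mathbb{P}^z(\tau_C<\infty)/G_D(z)$, so that
\[
\chi_{\mathcal{C}}(A,B)=\cp(A)+\cp(B)-\cp(A\cup B)=\lim_{|z|\to\infty}\frac{\mathbb{P}^z(\tau_A<\infty,\ \tau_B<\infty)}{G_D(z)}.
\]
First I would decompose the event $\{\tau_A<\infty,\ \tau_B<\infty\}$ according to the order of the \emph{last} visits to $A$ and to $B$. On the event that the last visit to $A$ (at $x\in A$) precedes the last visit to $B$ (at $y\in B$), a last-passage decomposition — split the trajectory at these two times, apply the Markov property twice, and sum the intermediate Green's functions — gives the contribution $G_D(z,x)\,\bar g_A(x,y)\,e_{A\cup B}(y)$, where $e_C(\cdot)=\mathbb{P}^{\cdot}(\tau_C=\infty)$ and $\bar g_A(x,\cdot)$ is the Green's function of the walk from $x$ killed on re-entering $A$. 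Dividing by $G_D(z)$, letting $|z|\to\infty$ (so $G_D(z,x)/G_D(z)\to1$ uniformly over the finite set $A$), and using the identity $\sum_{y}\bar g_A(x,y)e_{A\cup B}(y)=\mathbb{P}^x(\tau_A=\infty,\ \tau_B<\infty)$, the three cases (last visit to $A$ before, after, or coinciding with that to $B$) assemble into
\[
\chi_{\mathcal{C}}(A,B)=\sum_{x\in A}\mathbb{P}^x(\tau_A=\infty,\ \tau_B<\infty)+\sum_{y\in B}\mathbb{P}^y(\tau_B=\infty,\ \tau_A<\infty)+\sum_{w\in A\cap B}e_{A\cup B}(w).
\]

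Next I would rewrite $2\chi$ using the last-exit identity $\sum_{y\in B}G_D(x-y)e_B(y)=\mathbb{P}^x(H_B<\infty)$, with $H_B$ the first hitting time of $B$ (allowing time $0$): symmetrizing the double sum defining $\chi$ yields $2\chi(A,B)=\sum_{x\in A}e_A(x)\mathbb{P}^x(H_B<\infty)+\sum_{y\in B}e_B(y)\mathbb{P}^y(H_A<\infty)$. Subtracting the two displays and splitting each index into $A\setminus B$, $B\setminus A$, $A\cap B$, one gets
\[
\epsilon(A,B)=\sum_{x\in A\setminus B}\delta_B(x)+\sum_{y\in B\setminus A}\delta_A(y)+\sum_{w\in A\cap B}e_{A\cup B}(w),
\]
where $\delta_B(x):=\mathbb{P}^x(\tau_A=\infty)\,\mathbb{P}^x(\tau_B<\infty)-\mathbb{P}^x(\tau_A=\infty,\ \tau_B<\infty)=e_{A\cup B}(x)-\mathbb{P}^x(\tau_A=\infty)\mathbb{P}^x(\tau_B=\infty)$ is the covariance between the escape events from $A$ and from $B$ for the walk started at $x$ (and symmetrically for $\delta_A$). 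Since $A\cap B\subseteq A\cup B$ forces $e_{A\cup B}\le e_{A\cap B}$ pointwise, the $A\cap B$–sum is nonnegative and bounded by $\sum_{w\in A\cap B}e_{A\cap B}(w)=\cp(A\cap B)$, which accounts for the last term on the right-hand side.

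It then remains to prove, for $x\in A\setminus B$, that $0\le\delta_B(x)\le\big(\sum_{y\in B}G_D(x-y)\big)^2$ (and symmetrically for $\delta_A$); summing over $x$ produces exactly the two remaining double sums, using $\mathbb{P}^x(\tau_B<\infty)=\sum_{y\in B}G_D(x-y)e_B(y)\le\sum_{y\in B}G_D(x-y)$. For the lower bound, $\delta_B(x)\ge0$ says that conditioning the walk from $x$ on hitting $B$ makes escaping $A$ no more likely; I would prove it by a further last-passage decomposition of $\mathbb{P}^x(\tau_A=\infty,\ \tau_B<\infty)=\sum_{v\in B}\bar g_A(x,v)e_{A\cup B}(v)$ and a comparison, along the re-entry excursions into $A$ (via $\bar g_A(x,v)=G_D(x,v)-\sum_{x'\in A}H_A(x,x')G_D(x',v)$ with $H_A$ the hitting kernel of $A$), against $\mathbb{P}^x(\tau_A=\infty)\mathbb{P}^x(\tau_B<\infty)=e_A(x)\sum_v G_D(x,v)e_B(v)$. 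For the quadratic upper bound one views $\delta_B(x)$ as (a bound on) the probability of a ``$B$–$A$–$B$'' type pattern for the walk from $x$ — it hits $B$, then makes an excursion back towards $A$, and that return is exactly what decorrelates the two escape events — and dominates this pattern by two independent excursions to $B$, each worth at most $\sum_{y\in B}G_D(x-y)$.

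The main obstacle is precisely this last step. The naive estimates $\delta_B(x)\le\mathbb{P}^x(\tau_A<\infty,\tau_B<\infty)\le\mathbb{P}^x(\tau_B<\infty)$ are only \emph{linear} in $\sum_{y\in B}G_D(x-y)$ and hence too weak when this sum is $<1$; obtaining the quadratic bound forces one to genuinely exploit the cancellation between $\mathbb{P}^x(\tau_A=\infty)\mathbb{P}^x(\tau_B<\infty)$ and $\mathbb{P}^x(\tau_A=\infty,\tau_B<\infty)$ rather than bounding the two terms in isolation, and carefully tracking the re-entry excursion structure is what makes both the correct sign and the quadratic gain come out simultaneously.
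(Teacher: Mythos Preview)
Your decomposition is correct: the identity
\[
\epsilon(A,B)=\sum_{x\in A\setminus B}\delta_B(x)+\sum_{y\in B\setminus A}\delta_A(y)+\sum_{w\in A\cap B}e_{A\cup B}(w),
\qquad \delta_B(x)=e_{A\cup B}(x)-e_A(x)\,e_B(x),
\]
checks out, and the $A\cap B$ term is handled as you say. But the gap you flag is real and your proposal does not close it. You need the quadratic bound $\delta_B(x)\le\big(\sum_{y\in B}G_D(x-y)\big)^2$; the ``$B$--$A$--$B$'' heuristic is not a proof, and the positive-correlation statement $\delta_B(x)\ge0$ is likewise only asserted. By summing out the $y$-variable via last exit you have discarded the explicit factor $G_D(x-y)$ that was sitting inside $\chi$, and you are then forced to recover it from a covariance estimate on escape events, which is genuinely nontrivial.

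The paper's route avoids this entirely by never collapsing that factor. It quotes the decomposition $\chi_{\mathcal C}(A,B)=\bar\chi(A,B)+\bar\chi(B,A)-\bar\epsilon(A,B)$ from Asselah--Schapira--Sousi, with $0\le\bar\epsilon\le\cp(A\cap B)$ and
\[
\bar\chi(A,B)=\sum_{x\in A}\sum_{y\in B}\mathbb P^x(\tau_{A\cup B}=\infty)\,G_D(x-y)\,\mathbb P^y(\tau_B=\infty),
\]
which keeps the double sum over $A\times B$ and the factor $G_D(x-y)$ intact. Then $\chi-\bar\chi$ is again a double sum with the same $G_D(x-y)\,\mathbb P^y(\tau_B=\infty)$, and the remaining difference is the elementary
\[
0\le\mathbb P^x(\tau_A=\infty)-\mathbb P^x(\tau_{A\cup B}=\infty)\le\mathbb P^x(\tau_B<\infty)\le\sum_{y'\in B}G_D(x-y'),
\]
so both the sign and the quadratic upper bound drop out in one line. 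The moral is that one of the two Green's function factors in the target bound is already present in the definition of $\chi$; your approach pays a price for summing it out prematurely.
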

    
    \begin{proof}
    By Proposition 1.6 of \cite{AsselahSchapiraSousi2019}, we know that
    \[  
    \chi_{\mathcal{C}}(A, B) = \bar{\chi}(A, B) + \bar{\chi}(B, A) - \bar{\epsilon}(A, B),
    \]
    where
    \[  
    \bar{\chi}(A, B) = \sum_{x \in A}\sum_{y \in B} \mathbb{P}^x(\tau_{A \cup B} = \infty)G_D(x-y)\mathbb{P}^y(\tau_B = \infty)
    \]
    and $0 \le \bar{\epsilon}(A, B) \le \cp(A \cap B)$. Further, since
    \begin{equation}
    \label{eq:last-pass}
    0 \le \mathbb{P}^x(\tau_A = \infty) - \mathbb{P}^x(\tau_{A \cup B} = \infty) \le \mathbb{P}^x(\tau_B < \infty) \le \sum_{y \in B} G_D(x-y),
    \end{equation}
    we see that
    \[  
    0 \le \chi(A, B) - \bar{\chi}(A, B) \le \sum_{x \in A}\sum_{y_1, y_2 \in B} G_D(x - y_1)G_D(x - y_2).
    \]
    Therefore, writing
    \[  
    0 \le 2\chi(A, B) - \chi_{\mathcal{C}}(A, B) = (\chi(A, B) - \bar{\chi}(A, B)) + (\chi(B, A) - \bar{\chi}(B, A)) + \bar{\epsilon}(A, B)
    \]
    completes the proof.
    \end{proof}
    
    \begin{lemma}
        \label{lem:aux}
        \begin{multline*}
        0 \le \chi(A, C) + \chi(B, C) -\chi(A \cup B , C) \\ \le C \left(\sum_{x \in A}\sum_{y \in B}\sum_{z \in C} G_D(x-y)G_D(y-z) + \sum_{x \in A}\sum_{y \in B}\sum_{z \in C} G_D(x-y)G_D(x-z) \right)
        \end{multline*}
    \end{lemma}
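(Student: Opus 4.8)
The plan is to expand $\chi(A\cup B,C)$ using the set identity $\mathbbm{1}_{A\cup B}=\mathbbm{1}_A+\mathbbm{1}_B-\mathbbm{1}_{A\cap B}$ in the sum over its first argument. Since each probability factor appearing in $\chi(\cdot,C)$ depends on only one of the two arguments, collecting terms gives the exact decomposition
\[
\chi(A,C)+\chi(B,C)-\chi(A\cup B,C)=\Sigma_A+\Sigma_B+\Sigma_0,
\]
where $\Sigma_A:=\sum_{x\in A}\sum_{z\in C}\bigl(\mathbb{P}^x(\tau_A=\infty)-\mathbb{P}^x(\tau_{A\cup B}=\infty)\bigr)G_D(x-z)\mathbb{P}^z(\tau_C=\infty)$, the term $\Sigma_B$ is defined analogously with the roles of $A$ and $B$ interchanged, and $\Sigma_0:=\sum_{u\in A\cap B}\sum_{z\in C}\mathbb{P}^u(\tau_{A\cup B}=\infty)G_D(u-z)\mathbb{P}^z(\tau_C=\infty)$.

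The lower bound is then immediate: enlarging the obstacle set can only make escape harder, so $\{\tau_{A\cup B}=\infty\}\subseteq\{\tau_A=\infty\}$ and the parenthesised difference in $\Sigma_A$ is non-negative, and likewise in $\Sigma_B$; meanwhile $\Sigma_0$ is a sum of products of non-negative quantities. Hence the whole expression is $\ge 0$.

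For the upper bound I would estimate each piece separately. In $\Sigma_A$, bound $\mathbb{P}^z(\tau_C=\infty)\le 1$ and apply the last-passage inequality \eqref{eq:last-pass}, $\mathbb{P}^x(\tau_A=\infty)-\mathbb{P}^x(\tau_{A\cup B}=\infty)\le\mathbb{P}^x(\tau_B<\infty)\le\sum_{y\in B}G_D(x-y)$, which gives $\Sigma_A\le\sum_{x\in A}\sum_{y\in B}\sum_{z\in C}G_D(x-y)G_D(x-z)$; the symmetric argument gives $\Sigma_B\le\sum_{x\in A}\sum_{y\in B}\sum_{z\in C}G_D(x-y)G_D(y-z)$. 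The one step needing a little care is $\Sigma_0$: merely bounding $\mathbb{P}^u(\tau_{A\cup B}=\infty)\mathbb{P}^z(\tau_C=\infty)\le 1$ leaves $\sum_{u\in A\cap B}\sum_{z\in C}G_D(u-z)$, which is not one of the two permitted sums. Instead, since $u\in B$ we have $\sum_{y\in B}G_D(u-y)\ge G_D(0)\ge 1$, so that $\Sigma_0\le\sum_{u\in A\cap B}\sum_{y\in B}\sum_{z\in C}G_D(u-y)G_D(u-z)$, and enlarging the $u$-sum from $A\cap B$ to $A$ (all summands being non-negative) absorbs this into the first of the two target sums. Adding the three bounds yields the claim, and tracking the constants one may take $C=2$.

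The only genuinely non-routine point is the treatment of $\Sigma_0$: the natural estimate produces a cross term of the wrong combinatorial shape, and the fix is to inflate the trivial bound $1$ to $\sum_{y\in B}G_D(u-y)$ using $u\in B$ together with $G_D(0)\ge 1$, and then relax $A\cap B$ to $A$. Everything else is inclusion--exclusion together with \eqref{eq:last-pass}, which has already been established. This estimate is of the same subadditivity flavour as Lemma~\ref{lem:error-bound} and will later be used to pass between $\chi$ and its localized counterpart.
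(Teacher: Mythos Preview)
Your proof is correct and follows essentially the same approach as the paper's: the same three-term decomposition $\Sigma_A+\Sigma_B+\Sigma_0$, the same use of \eqref{eq:last-pass} for $\Sigma_A$ and $\Sigma_B$, and the same trick of inflating the $A\cap B$ sum via $G_D(0)>0$ (the paper phrases it as $\mathbf{1}_{\{x=y\}}\le C\,G_D(x-y)$, which is your observation $\sum_{y\in B}G_D(u-y)\ge G_D(0)\ge 1$ in disguise). Your write-up is in fact a bit more explicit about the lower bound and the constant than the paper's.
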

    
    \begin{proof}
        The proof is similar to Lemma~\ref{lem:error-bound}. Indeed, note that
        \begin{align*}
            \chi(A, C) &+ \chi(B, C) -\chi(A \cup B , C) \\
            &= \sum_{z \in C} \left( \sum_{x \in A} \{\mathbb{P}(\tau_A = \infty)  - \mathbb{P}(\tau_{A \cup B} = \infty) \} G_D(x - z)\right) \mathbb{P}(\tau_C = \infty) \\
            &\quad + \sum_{z \in C} \left( \sum_{y \in B}\{ \mathbb{P}(\tau_B = \infty)  - \mathbb{P}(\tau_{A \cup B} = \infty) \} G_D(x - z)\right) \mathbb{P}(\tau_C = \infty) \\
            &\quad + \sum_{x \in A \cap B} \sum_{z \in C} \mathbb{P} (\tau_{A \cup B = \infty}) G_D(x-z) \mathbb{P}(\tau_C = \infty).
        \end{align*}
        Apply \eqref{eq:last-pass} to the first two summands and note that since $G_D ( 0) > 0$,
        \[
        \sum_{x \in A \cap B} \sum_{z \in C} G_D (x - z) = \sum_{x \in A} \sum_{y \in B}\sum_{z \in C} \mathbf{1}_{\{ x = y \}} G_D (x - z) \le C \sum_{x \in A} \sum_{y \in B} \sum_{z \in C} G_D(x-y)G_D(x-z).
        \]
        
    \end{proof}
    
    \subsection{A Priori Estimates} \label{subsec:LDP-estimates}
    
    \begin{lemma}
    \label{lem:cross-term-moment}
    The following are true.
    \begin{enumerate}
        \item For any $m \ge 1$,
         \begin{equation}
         \label{eq:chi-moment}
        \mathbb{E} \left[ \chi (\mathcal{S}_n, \tilde{\mathcal{S}}_n)^m \right] \le C^m (m!)^{3/2} n^{m/2}.
        \end{equation}
        \item There exists some $\theta > 0$ such that
        \begin{equation} 
        \label{eq:chi-exp-moment}
        \sup_n \mathbb{E} \exp \left\{\theta  n^{-1/3} \chi(\mathcal{S}_n, \tilde{\mathcal{S}}_n)^{2/3} \right\} < \infty.
        \end{equation}
        \item For any $1 \ll b_n \ll n^{1/3}$,
        \begin{equation}  
        \label{eq:chi-C-LDP}
        \limsup_{n \to \infty} \frac{1}{b_n} \log \mathbb{P} \left(\chi(\mathcal{S}_n, \tilde{\mathcal{S}}_n) \ge \lambda \sqrt{nb_n^3} \right) \le -C \lambda^{2/3}.
        \end{equation}
    \end{enumerate}
    Furthermore, all of the above are also true when $\chi(\mathcal{S}_n, \tilde{\mathcal{S}}_n)$ is replaced with $\chi_{\mathcal{C}}(\mathcal{S}_n, \tilde{\mathcal{S}}_n)$.
    \end{lemma}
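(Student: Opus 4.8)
The plan is to reduce all three parts to the moment bound of Proposition~\ref{prop:cross-term-moment}; given that proposition, the rest is a routine Chernoff-type argument, so the genuinely hard work is entirely upstream and assumed here. The only points requiring any care are a fractional-moment interpolation and the uniformity in $n$ of the exponential-moment bound.

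For \eqref{eq:chi-moment}, I would bound $\chi$ pointwise. Since $\mathbb{P}^x(\tau_A = \infty) \le 1$, $G_D(x-y) \le C|x-y|_+^{-3}$, and $\sum_{x \in \mathcal{S}_n} f(x) \le \sum_{i=0}^n f(S_i)$ for any nonnegative $f$,
\[
\chi(\mathcal{S}_n, \tilde{\mathcal{S}}_n) \;\le\; \sum_{x \in \mathcal{S}_n}\sum_{y \in \tilde{\mathcal{S}}_n} G_D(x - y) \;\le\; C\sum_{i=0}^n\sum_{j=0}^n |S_i - \tilde{S}_j|_+^{-3}.
\]
Raising to the $m$-th power, taking expectations, and applying Proposition~\ref{prop:cross-term-moment} (the extra $i=0$ or $j=0$ terms are trivially absorbed, e.g.\ by enlarging the index set or a crude deterministic bound) yields $\mathbb{E}[\chi(\mathcal{S}_n, \tilde{\mathcal{S}}_n)^m] \le C^m(m!)^{3/2} n^{m/2}$.

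For \eqref{eq:chi-exp-moment}, set $Y_n := n^{-1/3}\chi(\mathcal{S}_n, \tilde{\mathcal{S}}_n)^{2/3}$, so that $\mathbb{E}[Y_n^{3k/2}] = n^{-k/2}\,\mathbb{E}[\chi(\mathcal{S}_n,\tilde{\mathcal{S}}_n)^k] \le C^k(k!)^{3/2}$ for every integer $k$ by part (1). Jensen's inequality for the concave map $t \mapsto t^{2/3}$ gives $\mathbb{E}[Y_n^k] \le (\mathbb{E}[Y_n^{3k/2}])^{2/3} \le C^{2k/3}\,k!$, hence for $\theta > 0$
\[
\mathbb{E}\,e^{\theta Y_n} \;\le\; \sum_{k \ge 0} \frac{\theta^k}{k!}\,\mathbb{E}[Y_n^k] \;\le\; \sum_{k \ge 0} (\theta C^{2/3})^k,
\]
which is finite and, crucially, bounded uniformly in $n$ as soon as $\theta C^{2/3} < 1$; fix such a $\theta$.

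Finally, \eqref{eq:chi-C-LDP} follows by Markov's inequality: since $\{\chi(\mathcal{S}_n, \tilde{\mathcal{S}}_n) \ge \lambda\sqrt{nb_n^3}\} = \{Y_n \ge \lambda^{2/3} b_n\}$, we get
\[
\mathbb{P}\!\left(\chi(\mathcal{S}_n, \tilde{\mathcal{S}}_n) \ge \lambda\sqrt{nb_n^3}\right) \;\le\; e^{-\theta\lambda^{2/3} b_n}\,\sup_n \mathbb{E}\,e^{\theta Y_n},
\]
and taking logarithms, dividing by $b_n$, and letting $n \to \infty$ gives the claim with $C = \theta$. The statements for $\chi_{\mathcal{C}}$ are immediate from $0 \le \chi_{\mathcal{C}}(A, B) \le 2\chi(A, B)$: the lower bound is subadditivity of capacity and the upper bound is $\epsilon(A, B) = 2\chi(A,B) - \chi_{\mathcal{C}}(A,B) \ge 0$ from Lemma~\ref{lem:error-bound}, so the factor $2^m$ is absorbed into $C^m$ and all three conclusions transfer verbatim.
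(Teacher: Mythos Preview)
Your proof is correct and follows essentially the same route as the paper: part (1) is reduced to Proposition~\ref{prop:cross-term-moment} via the pointwise bound $\chi \le C\sum_{i,j}|S_i-\tilde S_j|_+^{-3}$, part (2) uses the fractional-moment interpolation $\mathbb{E}[\chi^{2m/3}] \le (\mathbb{E}[\chi^m])^{2/3}$ (you phrase it as Jensen, the paper as H\"older, but it is the same inequality), and part (3) is Markov's inequality on the exponential moment. The transfer to $\chi_{\mathcal{C}}$ via $0\le \chi_{\mathcal{C}}\le 2\chi$ is also exactly what the paper does.
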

    
    \begin{proof}
        Since $\chi_{\mathcal{C}}(\mathcal{S}_n, \tilde{\mathcal{S}}_n) \le 2\chi(\mathcal{S}_n, \tilde{\mathcal{S}}_n)$, it suffices to show the above claims for $\chi(\mathcal{S}_n, \tilde{\mathcal{S}}_n)$. \eqref{eq:chi-moment} is a direct corollary of Proposition~\ref{prop:cross-term-moment}. Then, \eqref{eq:chi-exp-moment} follows from Taylor expansion and H\"{o}lder's inequality, since
        \begin{align*}
            \mathbb{E} \exp \left\{ \theta n^{-1/3} \chi(\mathcal{S}_n, \tilde{\mathcal{S}}_n)^{2/3} \right\} &= \sum_{m=0}^{\infty} \frac{\theta^m}{m!}n^{-m/3} \mathbb{E}[\chi(\mathcal{S}_n, \tilde{\mathcal{S}}_n)^{2m/3}] \\
            &\le \sum_{m=0}^{\infty} \frac{\theta^m}{m!}n^{-m/3} \mathbb{E}[\chi(\mathcal{S}_n, \tilde{\mathcal{S}}_n)^m]^{2/3} \\
            &\le \sum_{m=0}^{\infty} (C\theta)^m.
        \end{align*}
        Lastly, \eqref{eq:chi-C-LDP} follows from Markov's inequality:
        \begin{align*}
            \limsup_{n \to \infty} \frac{1}{b_n} \log \mathbb{P} \left(\chi(\mathcal{S}_n, \tilde{\mathcal{S}}_n) \ge \lambda \sqrt{nb_n^3} \right) &\le \limsup_{n \to \infty} \frac{1}{b_n} \log \left\{ \frac{\mathbb{E} \exp\left\{ \theta n^{-1/3} \chi(\mathcal{S}_n, \tilde{\mathcal{S}}_n)^{2/3} \right\} }{ \exp ( \theta \lambda^{2/3} b_n ) }\right\} \\
            &\le \limsup_{n \to \infty} \frac{C - \theta \lambda^{2/3} b_n}{b_n} \\
            &= -\theta \lambda^{2/3}.
        \end{align*}
    \end{proof}
    
    \begin{lemma}
    \label{lem:error-LDP}
    For $1 \ll b_n \ll n^{1/3}(\log n)^{-2}$ and $\lambda > 0$,
    \[  
        \lim_{n \to \infty} \frac{1}{b_n} \log \mathbb{P} \left( \epsilon(\mathcal{S}_n, \tilde{\mathcal{S}_n}) \ge \lambda \sqrt{n b_n^3} \right) = - \infty.
    \]
    \end{lemma}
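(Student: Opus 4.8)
The plan is to reduce the statement, via Lemma~\ref{lem:error-bound}, to the moment estimate of Proposition~\ref{prop:error-term-moment}, and then to run a Markov-inequality argument with a stretched-exponential moment, exactly as in the derivation of \eqref{eq:chi-exp-moment}.

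By Lemma~\ref{lem:error-bound} applied to $A=\mathcal{S}_n$, $B=\tilde{\mathcal{S}}_n$,
\[
\epsilon(\mathcal{S}_n,\tilde{\mathcal{S}}_n)\ \le\ V_n+\tilde{V}_n+\cp(\mathcal{S}_n\cap\tilde{\mathcal{S}}_n),
\qquad
V_n:=\sum_{x\in\mathcal{S}_n}\ \sum_{y_1,y_2\in\tilde{\mathcal{S}}_n}G_D(x-y_1)\,G_D(x-y_2),
\]
with $\tilde{V}_n$ the term obtained by interchanging $\mathcal{S}_n$ and $\tilde{\mathcal{S}}_n$; since $(S,\tilde{S})\overset{d}{=}(\tilde{S},S)$ and $G_D$ is symmetric, $\tilde{V}_n\overset{d}{=}V_n$. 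First I would dispose of the intersection term: $\cp(\mathcal{S}_n\cap\tilde{\mathcal{S}}_n)\le|\mathcal{S}_n\cap\tilde{\mathcal{S}}_n|$, while taking $y_1=y_2=x$ for each $x\in\mathcal{S}_n\cap\tilde{\mathcal{S}}_n$ shows $V_n\ge G_D(0)^2\,|\mathcal{S}_n\cap\tilde{\mathcal{S}}_n|$, so that $\epsilon(\mathcal{S}_n,\tilde{\mathcal{S}}_n)\le C_0(V_n+\tilde{V}_n)$ for a universal constant $C_0$. Thus it suffices to bound $\mathbb{P}(V_n\ge t)$ at the relevant threshold $t$, the term $\tilde{V}_n$ being identical in law.

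Next I would record the polynomial moment bound. Using $G_D(x)\le C|x|_+^{-3}$ and $\sum_{x\in\mathcal{S}_n}(\cdot)\le\sum_{i=0}^n(\cdot)$, expanding $V_n^m$ and invoking Proposition~\ref{prop:error-term-moment} give
\[
\mathbb{E}[V_n^m]\ \le\ C^m\,\mathbb{E}\!\left[\sum_{0\le i_1,\dots,i_m\le n}\ \sum_{0\le j_1,\dots,j_{2m}\le n}\ \prod_{k=1}^m|S_{i_k}-\tilde{S}_{j_{2k-1}}|_+^{-3}\,|S_{i_k}-\tilde{S}_{j_{2k}}|_+^{-3}\right]\ \le\ C^m(m!)^3(\log n)^{3m}.
\]
Interpolating in the exponent (to cover non-integer powers) via $\mathbb{E}[V_n^{p/3}]\le\big(\mathbb{E}[V_n^{\lceil p/3\rceil}]\big)^{p/(3\lceil p/3\rceil)}$ yields $\mathbb{E}\big[\big((\log n)^{-1}V_n^{1/3}\big)^p\big]\le C^{p/3}p^p\le (C^{1/3}e)^p\,p!$ for every integer $p\ge1$, and summing the Taylor series exactly as in the proof of \eqref{eq:chi-exp-moment} gives, for some $\theta>0$,
\[
\sup_n\ \mathbb{E}\exp\!\left\{\theta\,(\log n)^{-1}\,V_n^{1/3}\right\}\ =:\ K\ <\ \infty,
\]
and the same for $\tilde{V}_n$. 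Finally, Markov's inequality gives $\mathbb{P}(V_n\ge t)\le K\exp\{-\theta(\log n)^{-1}t^{1/3}\}$, and likewise for $\tilde{V}_n$; taking $t=\tfrac{\lambda}{2C_0}\sqrt{nb_n^3}$ and combining with the first step,
\[
\frac1{b_n}\log\mathbb{P}\!\left(\epsilon(\mathcal{S}_n,\tilde{\mathcal{S}}_n)\ge\lambda\sqrt{nb_n^3}\right)\ \le\ \frac{\log(2K)}{b_n}\ -\ \theta\Big(\tfrac{\lambda}{2C_0}\Big)^{1/3}\frac{n^{1/6}}{b_n^{1/2}\log n}.
\]
The first term tends to $0$ since $b_n\to\infty$, and the hypothesis $b_n\ll n^{1/3}(\log n)^{-2}$ is precisely what forces $n^{1/6}/(b_n^{1/2}\log n)\to\infty$, so the right-hand side tends to $-\infty$.

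I do not anticipate a genuine obstacle here: all the difficulty is already packed into Proposition~\ref{prop:error-term-moment} (the graph-reduction moment bound), and what remains is the routine passage from polynomial moments to a stretched-exponential moment plus Markov. The one point to watch is the exponent arithmetic — the typical scale $(\log n)^3$ of $\epsilon$ competes against $\sqrt{nb_n^3}$, with break-even at $b_n\asymp n^{1/3}(\log n)^{-2}$ — so the range stated is the natural limit of this argument.
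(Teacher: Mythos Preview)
Your argument is correct and follows the same skeleton as the paper: decompose $\epsilon(\mathcal{S}_n,\tilde{\mathcal{S}}_n)$ via Lemma~\ref{lem:error-bound}, feed the two $G_D$--product sums into Proposition~\ref{prop:error-term-moment}, convert the bound $\mathbb{E}[V_n^m]\le C^m(m!)^3(\log n)^{3m}$ into a finite stretched-exponential moment $\sup_n\mathbb{E}\exp\{\theta(\log n)^{-1}V_n^{1/3}\}<\infty$, and finish with Markov. The arithmetic at the end --- $n^{1/6}/(b_n^{1/2}\log n)\to\infty$ exactly under $b_n\ll n^{1/3}(\log n)^{-2}$ --- is the same computation as in the paper.

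The one genuine difference is how you handle the term $\cp(\mathcal{S}_n\cap\tilde{\mathcal{S}}_n)$. The paper bounds it by $|\mathcal{S}_n\cap\tilde{\mathcal{S}}_n|$ and then invokes the external large-deviation result of Asselah--Schapira \cite{AsselahSchapira2023} for intersections. Your observation that $V_n\ge G_D(0)^2|\mathcal{S}_n\cap\tilde{\mathcal{S}}_n|$ (taking $y_1=y_2=x$ for each $x\in\mathcal{S}_n\cap\tilde{\mathcal{S}}_n$) absorbs this term into $V_n$ for free, making the proof self-contained. This is a small but clean improvement.
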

    
    \begin{proof}
        This lemma follows easily from Lemma~\ref{lem:error-bound} and Proposition~\ref{prop:error-term-moment}. More explicitly, by Lemma~\ref{lem:error-bound}, it suffices to show the following.
        \begin{gather}
            \lim_{n \to \infty} \frac{1}{b_n} \log \mathbb{P} \left( \sum_{i = 1}^n \sum_{j_1 , j_2 =1}^n G_D(S_i - \tilde{S}_{j_1})G_D(S_i - \tilde{S}_{j_2}) \ge \lambda \sqrt{n b_n^3} \right) = - \infty \\
            \lim_{n \to \infty} \frac{1}{b_n} \log \mathbb{P} \left( \sum_{i_1, i_2}^n \sum_{j=1}^n G_D(S_{i_1} - \tilde{S}_j)G_D(S_{i_2} - \tilde{S}_j) \ge \lambda \sqrt{n b_n^3} \right) = - \infty \\
            \label{eq:int-bound} \lim_{n \to \infty} \frac{1}{b_n} \log \mathbb{P} \left( \cp(\mathcal{S}_n \cap \tilde{\mathcal{S}}_n ) \ge \lambda \sqrt{nb_n^3} \right) = - \infty.
        \end{gather}
        We can use a Chernoff bound to get the first two inequalities. More specifically, we use Markov's inequality on 
        \[
        \exp \left\{ \frac{\theta\sum_{i = 1}^n \sum_{j_1 , j_2 =1}^n G_D(S_i - \tilde{S}_{j_1})G_D(S_i - \tilde{S}_{j_2})}{(\log n)^3}\right\}^{1/3}.
        \]
        By Proposition~\ref{prop:error-term-moment}, this is guaranteed to converge for some small $\theta > 0$. Thus,
        \begin{align*}
            \lim_{n \to \infty} \frac{1}{b_n} \log &\mathbb{P} \left( \sum_{i = 1}^n \sum_{j_1 , j_2 =1}^n G_D(S_i - \tilde{S}_{j_1})G_D(S_i - \tilde{S}_{j_2}) \ge \lambda \sqrt{n b_n^3} \right) \\
            &\le \lim_{n \to \infty} \frac{1}{b_n} \log \frac{C}{\exp \{\lambda^{1/3}n^{1/6} b_n^{1/2}/(\log n) \}} \\
            &=\lim_{n \to \infty} \frac{\log C - \lambda^{1/3} n^{1/6}b_n^{1/2}}{b_n \log n} \\
            &= - \infty
        \end{align*}
        whenever $b_n \ll n^{1/3} (\log n)^{-2}$. Equation~\eqref{eq:int-bound} is an easy consequence of intersection probabilities (\cite{AsselahSchapira2023}, Theorem 1.1):
        \[
        \lim_{n \to \infty}  \frac{1}{b_n} \log \mathbb{P} \left( \cp(\mathcal{S}_n \cap \tilde{\mathcal{S}}_n ) \ge \lambda \sqrt{nb_n^3} \right) \le \lim_{n \to \infty}  \frac{1}{b_n} \log \mathbb{P} \left(|\mathcal{S}_n \cap \tilde{\mathcal{S}}_n| \ge \lambda \sqrt{nb_n^3} \right) = - \infty.
        \]
    \end{proof}
    
    The next two lemmas give a priori estimates on the deviations of $\cp(\mathcal{S}_n)$.
    
    \begin{lemma}
        \label{lem:cap-moment}
    \[  
    \sup_n \mathbb{E} \exp \left\{ \frac{\theta}{\sqrt[3]{n \log n}} | \cp(\mathcal{S}_n) - \mathbb{E} \cp (\mathcal{S}_n) |^{2/3} \right\} < \infty \quad  \text{for every } \theta > 0. \]    
    \end{lemma}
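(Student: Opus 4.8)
Write $A_n:=\cp(\mathcal S_n)-\mathbb E\cp(\mathcal S_n)$. Since the expectation is finite for each fixed $n$, it suffices to bound it uniformly over large $n$, and the plan is to deduce from the cross--term moment bound of Lemma~\ref{lem:cross-term-moment} (valid for $\chi_{\mathcal C}$ as well) together with the decomposition \eqref{eq:decomposition-L} a two--regime tail estimate
\begin{equation}\label{eq:cap-target}
\mathbb P\bigl(|A_n|\ge t\bigr)\le C\exp\Bigl(-\tfrac{ct^2}{n\log n}\Bigr)+C\exp\Bigl(-c\,(t/\sqrt n)^{2/3}\Bigr),\qquad t>0,
\end{equation}
for absolute $C,c$. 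Granting \eqref{eq:cap-target}, the change of variables $t=(n\log n)^{1/2}y^{3/2}$ turns it into $\mathbb P\bigl((n\log n)^{-1/3}|A_n|^{2/3}\ge y\bigr)\le Ce^{-cy^3}+Ce^{-c(\log n)^{1/3}y}$, whence
\[
\mathbb E\exp\Bigl\{\tfrac{\theta}{\sqrt[3]{n\log n}}\,|A_n|^{2/3}\Bigr\}\ \le\ 1+C\theta\!\int_0^\infty\! e^{\theta y-cy^3}\,dy\ +\ \frac{C\theta}{c(\log n)^{1/3}-\theta},
\]
bounded uniformly over all $n$ with $(\log n)^{1/3}>2\theta/c$; the finitely many remaining $n$ give a finite maximum.

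For \eqref{eq:cap-target}, iterate \eqref{eq:decomposition-L} to a bounded scale: with a constant $n_0$ and $L=L(n):=\lceil\log_2(n/n_0)\rceil$ (so $2^{-L}n\in(n_0/2,n_0]$ and $L\asymp\log n$) one has $A_n=\sum_{j=1}^{2^L}A^{(L,j)}-\sum_{l=1}^L\bar\Lambda_l^{\mathcal C}$, where $A^{(L,j)}:=\cp(\mathcal S_n^{(L,j)})-\mathbb E\cp(\mathcal S_n^{(L,j)})$ are i.i.d., centered and bounded by $n_0$, and $\bar\Lambda_l^{\mathcal C}:=\Lambda_l^{\mathcal C}-\mathbb E\Lambda_l^{\mathcal C}$. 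By translation invariance of $\chi_{\mathcal C}$ the $2^{l-1}$ summands of $\Lambda_l^{\mathcal C}$ are i.i.d.\ copies of $\chi_{\mathcal C}(\mathcal S_{2^{-l}n},\tilde{\mathcal S}_{2^{-l}n})$, so Lemma~\ref{lem:cross-term-moment}, the one--block tail $\mathbb P(\chi_{\mathcal C}(\mathcal S_m,\tilde{\mathcal S}_m)\ge s)\le Ce^{-c(s/\sqrt m)^{2/3}}$ it yields, and integration of that tail give, for all $p\ge2$ and with $C$ independent of $l$,
\[
\mathbb E(\bar\Lambda_l^{\mathcal C})^2\le Cn,\qquad \mathbb E\bigl|\bar\Lambda_l^{\mathcal C}\bigr|^p\le C^p(p!)^{3/2}n^{p/2}
\]
(the $2^l$--factors cancel because $2^{l-1}(2^{-l}n)^{p/2}\le n^{p/2}$).

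The \emph{body} of \eqref{eq:cap-target}, $\mathbb P(|A_n|\ge t)\lesssim e^{-ct^2/(n\log n)}$ for $t\le\sqrt n(\log n)^{3/4}$, comes from a martingale--difference expansion of $A_n$ along the successive dyadic refinements of the walk: the $l$-th increment is, up to lower--order corrections handled via Lemmas~\ref{lem:error-bound} and \ref{lem:aux}, the centering of $\Lambda_l^{\mathcal C}$, hence has $\mathbb E|\cdot|^p\le C^p(p!)^{3/2}n^{p/2}$, while orthogonality of the increments makes the quadratic variation have mean $\var(A_n)\asymp n\log n$ (Schapira's CLT); truncating each increment at $\sqrt n(\log n)^{3/4}$ reduces this to Freedman's inequality for the truncated martingale plus the remainder bound $\sum_{l}2^{l-1}e^{-c2^{l/3}(\log n)^{1/2}}\lesssim e^{-c(\log n)^{1/2}}$. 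The \emph{far} part, $\mathbb P(|A_n|\ge t)\lesssim e^{-c(t/\sqrt n)^{2/3}}$ for $t\ge\sqrt n(\log n)^{3/4}$, must instead be argued structurally (routing it through the martingale would cost a factor $L$): such a $t$ forces either $|\sum_jA^{(L,j)}|\ge t/2$ (probability $\le2e^{-ct^2/n}$ by Hoeffding) or $\sum_{l,j}\chi_{\mathcal C}(\mathcal S_n^{(l,2j-1)},\mathcal S_n^{(l,2j)})$ to exceed its mean by $\ge t/2$, the latter bounded by a bulk Bernstein estimate (variance $\lesssim n\log n$) plus the ``one big block'' union bound $\sum_{l=1}^L2^{l-1}e^{-c2^{l/3}(t/\sqrt n)^{2/3}}$, which is $\le Ce^{-c(t/\sqrt n)^{2/3}}$ since for $t\ge\sqrt n(\log n)^{3/4}$ the exponents $c2^{l/3}(t/\sqrt n)^{2/3}\ge c2^{l/3}(\log n)^{1/2}$ grow doubly exponentially in $l$, so the sum collapses to a constant times its $l=1$ term. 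Combining the two estimates gives \eqref{eq:cap-target}.

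The delicate point throughout is that $L\asymp\log n\to\infty$ — this is both the source of the $\log n$ in the normalisation and the reason the bound holds for \emph{every} $\theta$ — and that no stray factor of $L$ may be allowed anywhere: after the $(n\log n)^{1/3}$--rescaling such a factor would become a $(\log n)^{2/3}$ and break uniformity in $n$. This dictates keeping the cross--term moments sharp (exponent $2/3$, obtained by tail integration rather than a Rosenthal bound with its $(Cp)^p$ constant, which would degrade $2/3$ to $2/5$ and destroy $y$-integrability) and handling the far tail by the structural union bound, whose doubly--exponential decay in $l$ is exactly what makes the $2^{l-1}$ block counts telescope away.
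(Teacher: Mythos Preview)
Your strategy is genuinely different from the paper's. The paper proceeds by a direct moment recursion: (1) show $\mathbb E|A_n|^m=O_m((n\log n)^{m/2})$ by induction on $m$ (base case from Schapira's CLT), (2) sharpen the constant to $C^m(m!)^{3/2}$ by a second induction on $m$, which gives the lemma for \emph{some} $\theta_0>0$, and then (3) boost to all $\theta$ by splitting the walk into a fixed number $l$ of independent pieces. The inductive engine in (1)--(2) is the single dyadic split $A_n=A_{n/2}^{(1)}+A_{n/2}^{(2)}-\bar\chi_{\mathcal C}$ together with the cross--term moment bound~\eqref{eq:chi-moment}; the independence used is only that of the two halves, never across different dyadic levels.

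Your route via a two--regime tail bound is attractive, and you have correctly identified the delicate constraint (no stray factor of $L$). The problem is that your proof of the Gaussian ``body'' does not actually meet this constraint. You invoke a martingale--difference expansion with increments $\bar\Lambda_l^{\mathcal C}$ and appeal to Freedman's inequality plus ``orthogonality of the increments''; but the $\bar\Lambda_l^{\mathcal C}$ for different $l$ all depend on the entire walk and are neither independent nor martingale differences in any filtration you have specified, and no orthogonality is proved. The same issue recurs in the ``bulk Bernstein'' step of the far tail: Bernstein needs independence, and the blocks $\chi_{\mathcal C}(\mathcal S_n^{(l,2j-1)},\mathcal S_n^{(l,2j)})$ are dependent across $l$. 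Without some genuine decoupling, the only honest route is a union bound over the $L$ levels (as in the paper's Lemma~\ref{lem:cap-C-LDP}), or H\"older across levels, and both cost exactly the factor of $L\asymp\log n$ in the variance proxy that you warn against: one gets $\mathbb P(|A_n|\ge t)\lesssim e^{-ct^2/(n(\log n)^2)}$ rather than $e^{-ct^2/(n\log n)}$, and after your change of variables the first term of \eqref{eq:cap-target} becomes $e^{-cy^3/\log n}\to 1$, so the integration fails.

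In short, you have diagnosed the obstruction precisely but not removed it. The paper's moment induction avoids the cross--level dependence altogether because each inductive step uses only one split; the $\log n$ then emerges naturally as the depth of the recursion in $n$, not as a number of dependent summands. If you want to salvage the tail--bound route, you would need either a genuine martingale representation of $A_n$ with controlled quadratic variation, or a direct proof that the $\bar\Lambda_l^{\mathcal C}$ are approximately uncorrelated at all moment orders---neither of which is provided.
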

    
    \begin{proof}
    The proof is almost identical to that of \cite[Theorem~6.4.1]{Chen2010}, so we only provide a brief sketch here. The key steps are the following.
        \begin{enumerate}
            \item Show that $\mathbb{E} |\cp (\mathcal{S}_n) - \mathbb{E} \cp(\mathcal{S}_n)|^m = O_m( (n \log n)^{m/2})$.
            \item Show that $\mathbb{E} |\cp (\mathcal{S}_n) - \mathbb{E} \cp(\mathcal{S}_n)|^m \le C^m (m!)^{3/2} (n \log n)^{m/2}$. This implies Lemma~\ref{lem:cap-moment} for some (but not all) $\theta$.
            \item Extend to all $\theta > 0$. This is done by splitting the walk into $l$ parts and using the independence between parts.
        \end{enumerate}
        Steps $(1)$ and $(2)$ use induction on $m$. The base cases $m = 1, 2$ for $(1)$ are a corollary of the central limit theorem shown in \cite{AsselahSchapiraSousi2018}.
    \end{proof}
    
    \begin{lemma}
        \label{lem:cap-C-LDP}
        For any $\sqrt{\log n} \ll b_n \ll n^{1/3}$,
        \[
        \limsup_{n \to \infty} \frac{1}{b_n} \log \mathbb{P} \left\{ | \cp(\mathcal{S}_n) - \mathbb{E} \cp(\mathcal{S}_n)| \ge \lambda \sqrt{n b_n^3} \right\} \le - C \lambda^{2/3}.
        \]
    \end{lemma}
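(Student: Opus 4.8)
The plan is to combine the reversibility-based splitting of Section~\ref{subsec:split} with the moment estimates of Section~\ref{sec:moments} and the exponential-moment bound of Lemma~\ref{lem:cap-moment}. I would iterate the decomposition \eqref{eq:decomposition} $L$ times, obtaining \eqref{eq:decomposition-L}, with $L$ chosen so that $2^{L}\asymp\log n$; this is the decisive choice, for three reasons: each piece $\mathcal{S}_n^{(L,j)}$ then has $(n2^{-L})\log(n2^{-L})\asymp n$, so the $\log$-factor disappears from the variance scale of a single piece; $L\asymp\log\log n\ll b_n$, so factors of $L$ (or of $2^L$) are harmless after dividing by $b_n$; and $2^{L}\asymp\log n\ll b_n^3$, which makes $\mathbb{E}R_L$ negligible. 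By the transitivity/reversibility argument recalled in Section~\ref{subsec:split} the $2^L$ pieces may be taken genuinely independent, so
\[
\cp(\mathcal{S}_n)-\mathbb{E}\cp(\mathcal{S}_n)\;\overset{d}{=}\;\sum_{j=1}^{2^L}Y_j-\bigl(R_L-\mathbb{E}R_L\bigr),\qquad Y_j:=\cp(\mathcal{S}_n^{(L,j)})-\mathbb{E}\cp(\mathcal{S}_n^{(L,j)}),\quad R_L:=\sum_{l=1}^{L}\Lambda^{\mathcal{C}}_l,
\]
the $Y_j$ being i.i.d. A union bound reduces matters to estimating the i.i.d.\ sum and the cross-term remainder separately, each at scale $\tfrac\lambda2\sqrt{nb_n^3}$.

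For the i.i.d.\ sum, Lemma~\ref{lem:cap-moment} (interpolated from the exponential moment to all polynomial moments) gives $\mathbb{E}|Y_j|^k\le C^k(k!)^{3/2}(n2^{-L}\log n)^{k/2}\le C^k(k!)^{3/2}n^{k/2}$. Because \emph{all} moments are controlled, the moments of $\sum_j Y_j$ obey a Fuk--Nagaev-type bound of the correct shape $\mathbb{E}|\sum_j Y_j|^k\le C^k\bigl[(k!)^{1/2}(n\log n)^{k/2}+2^{L}(k!)^{3/2}n^{k/2}\bigr]$ --- crucially the big-jump term carries only the factor $2^L$, not $2^{Lk}$. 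Chernoff's inequality then finishes: the first (Gaussian) piece gives a rate $\gtrsim \lambda^2 b_n^2/\log n\to\infty$ (this is exactly where $b_n\gg\sqrt{\log n}$ enters), while optimizing $k\asymp\lambda^{2/3}b_n$ in the second piece gives $2^{L}e^{-c\lambda^{2/3}b_n}$, whence $\limsup_n b_n^{-1}\log\mathbb{P}(|\sum_j Y_j|\ge\tfrac\lambda2\sqrt{nb_n^3})\le-c\lambda^{2/3}$ since $\log 2^L=O(\log\log n)\ll b_n$.

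For the cross-term remainder, first note $\mathbb{E}R_L=\sum_l\mathbb{E}\Lambda^{\mathcal{C}}_l\lesssim\sqrt{n}\,2^{L/2}=\sqrt{n\,2^L}\ll\sqrt{nb_n^3}$, so for large $n$ the event $R_L-\mathbb{E}R_L\le-\tfrac\lambda2\sqrt{nb_n^3}$ is empty (as $R_L\ge0$) and it suffices to bound $\mathbb{P}(R_L-\mathbb{E}R_L\ge\tfrac\lambda4\sqrt{nb_n^3})$. Using $\chi_{\mathcal{C}}\le2\chi$, the within-level independence of the $2^{l-1}$ summands of each $\Lambda^{\mathcal{C}}_l$ (disjoint blocks of independent pieces), and the moment inequalities of Section~\ref{sec:moments} (Proposition~\ref{prop:cross-term-moment} together with the combinatorial bookkeeping behind Lemma~\ref{lem:aux-error-moment}), one establishes $\mathbb{E}|R_L-\mathbb{E}R_L|^k\le C^k\bigl[(k!)^{1/2}(L^2n)^{k/2}+2^{L}(k!)^{3/2}n^{k/2}\bigr]$, again with only a polynomial-in-$2^L$ factor on the big-jump term. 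Chernoff as above --- Gaussian piece rate $\gtrsim \lambda^2 b_n^2/L^2\to\infty$, big-jump piece optimized at $k\asymp\lambda^{2/3}b_n$ --- yields $\limsup_n b_n^{-1}\log\mathbb{P}(R_L-\mathbb{E}R_L\ge\tfrac\lambda4\sqrt{nb_n^3})\le-c\lambda^{2/3}$. Combining the two estimates proves the lemma with $C$ the minimum of the two constants.

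The main obstacle is the cross-term moment bound of the previous paragraph: the $L$ hierarchical levels are not independent, so a naive triangle/Rosenthal bound would introduce a factor $L^{k}$ (that is, $(\log\log n)^{k}$), which after optimization in $k$ would dilute the rate to $-c\lambda^{2/3}/(\log\log n)^{2/3}\to0$. Avoiding this --- i.e.\ showing that passing from a single pair $\chi_{\mathcal{C}}(\mathcal{S}_n^{(l,2j-1)},\mathcal{S}_n^{(l,2j)})$ to the whole telescoped sum $R_L$ costs only a multiplicative $2^L$ on the big-jump term rather than entering the exponent --- is precisely what the graph-reduction moment machinery of Section~\ref{sec:moments} (with its subtle combinatorial factor $2^{l}(m!)^{2}e^{2^{l}/m}$, see Lemma~\ref{lem:aux-error-moment}) is designed to deliver.
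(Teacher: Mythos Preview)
Your overall architecture matches the paper exactly: the decomposition \eqref{eq:decomposition-L} with $2^{L}\asymp\log n$, the split into the i.i.d.\ capacity sum and the telescoped cross-term $R_L=\sum_{l}\Lambda_l^{\mathcal{C}}$, and the use of Lemma~\ref{lem:cap-moment} for the pieces. For the i.i.d.\ sum your Fuk--Nagaev/Rosenthal formulation is just a repackaging of what the paper does via explicit truncation at level $\lambda\sqrt{nb_n^3}$ followed by the exponential Markov inequality on each part; the two are equivalent.

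The gap is in your treatment of $R_L$. You correctly diagnose that a naive Minkowski bound across the $L$ dependent levels inserts a factor $L^{k}$, which kills the rate. But your proposed cure --- invoking Lemma~\ref{lem:aux-error-moment} and the graph-reduction machinery --- is misplaced: that lemma bounds the moments of the error terms $X_l,Y_l,Z_l,W_l$ arising in the comparison $\chi$ versus $\chi_{b_n}$ for \emph{two independent} walks, and its combinatorial factor $2^{l}e^{2^{l}/m}$ has nothing to do with the one-walk hierarchical sum $R_L$. No result in Section~\ref{sec:moments} delivers the joint moment bound on $R_L$ that you claim.

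The paper sidesteps this difficulty entirely by never seeking a moment bound on $R_L$. Instead it works at the probability level with a geometric split over levels:
\[
\mathbb{P}\Bigl(\textstyle\sum_{l=1}^{L}\Lambda_l^{\mathcal{C}}\ge\lambda\sqrt{nb_n^3}\Bigr)\le\sum_{l=1}^{L}\mathbb{P}\Bigl(\Lambda_l^{\mathcal{C}}\ge(\sqrt{2}-1)\lambda\sqrt{n2^{-l}b_n^3}\Bigr),
\]
using that $\sum_{l\ge1}2^{-l/2}<\infty$. This reduces everything to a single level $l$, where the $2^{l-1}$ summands of $\Lambda_l^{\mathcal{C}}$ \emph{are} independent, and one then repeats the truncation argument with \eqref{eq:chi-exp-moment} in place of \eqref{eq:gamma-exp-moment}. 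The extra $\sum_{l=1}^{L}$ out front is harmless since $L/b_n\to0$. This is both simpler and complete; you should replace your $R_L$ paragraph with this device.
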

    
    \begin{proof}
        This proof is similar to that of \cite{Chen2010}, Lemma 8.4.1. By the decomposition \eqref{eq:decomposition-L} with $L = \log_2 (\log n)$ (i.e., $2^L = \log n$), it suffices to show the following.
        \begin{align}
        \label{eq:gamma-sum-LDP}
            \limsup_{n \to \infty} \frac{1}{b_n} \log \mathbb{P} \left\{ \left| \sum_{j=1}^{2^L} \Gamma_n^{(L, j)} \right| \ge \lambda \sqrt{nb_n^3} \right\} \le - C\lambda^{2/3} \\
        \label{eq:cap-logn-cross-LDP}
            \limsup_{n \to \infty} \frac{1}{b_n} \log \mathbb{P} \left\{ \left|\sum_{l=1}^{L} \Lambda_l^{\mathcal{C}} \right| \ge \lambda \sqrt{nb_n^3} \right\} \le - C\lambda^{2/3},
        \end{align}
        where $\Gamma_{n}^{(L, j)} = \cp(\mathcal{S}_n^{(L, j)}) - \mathbb{E} \cp(\mathcal{S}_n^{(L, j)})$.
        By Lemma~\ref{lem:cap-moment},
        \begin{equation}
            \label{eq:gamma-exp-moment}
            \sup_{n, j} \mathbb{E} \exp \left\{ \frac{\theta}{\sqrt[3]{n}} |\Gamma_{n}^{(L, j)}|^{2/3} \right\} < \infty.
        \end{equation}
        To show \eqref{eq:gamma-sum-LDP}, write $\Gamma_n^{(L, j)}$ as a sum of the following two terms.
        \begin{gather}
        \label{eq:gamma-truncate}
        \hat{\Gamma}_n^{(L, j)} = \Gamma_n^{(L, j)} \mathbf 1 \left\{ | \Gamma_n^{(L, j)}| \le \lambda\sqrt{nb_n^3} \right\} - \mathbb{E} \Gamma_n^{(L, j)} \mathbf 1 \left\{ | \Gamma_n^{(L, j)}| \le \lambda\sqrt{nb_n^3} \right\} \\
        \tilde{\Gamma}_n^{(L, j)} = \Gamma_n^{(L, j)} \mathbf 1 \left\{ | \Gamma_n^{(L, j)}| > \lambda\sqrt{nb_n^3} \right\} - \mathbb{E} \Gamma_n^{(L, j)} \mathbf 1 \left\{ | \Gamma_n^{(L, j)}| > \lambda\sqrt{nb_n^3} \right\}.
        \end{gather}
        Then,
        \begin{align*}
            \mathbb{P} \left\{ \left| \sum_{j=1}^{2^L} \tilde{\Gamma}_n^{(L, j)} \right| \ge \lambda \sqrt{nb_n^3} \right\} &\le \sum_{j=1}^{2^L} \mathbb{P} \left\{ \left| \Gamma_n^{(L, j)} \right| \ge \lambda \sqrt{nb_n^3} \right\}  \\
            &\le 2^L \sup_{n, j} \mathbb{E} \exp \left\{ \frac{\theta}{\sqrt[3]{n}} |\Gamma_{n}^{(L, j)}|^{2/3} \right\} \exp(-\theta \lambda^{2/3} b_n)
        \end{align*}
        by Markov's inequality. Using \eqref{eq:gamma-exp-moment} and taking logarithms, we see that
        \begin{equation}
            \label{eq:gamma-tilde-LDP}
        \limsup_{n \to \infty} \frac{1}{b_n} \log \mathbb{P} \left\{ \left|\sum_{j=1}^{2^L} \tilde{\Gamma}_n^{(L, j)} \right| \ge \lambda \sqrt{nb_n^3} \right\} \le - C\lambda^{2/3}.
        \end{equation}
        Further, since $|\hat{\Gamma}_n^{(L, j)}| \le \lambda \sqrt{nb_n^3}$,
        \begin{align*}  
        \mathbb{E} \exp \left\{ \frac{\theta}{\lambda^{1/3} \sqrt{nb_n}}\left|\sum_{j=1}^{2^L} \hat{\Gamma}_n^{(L, j)}\right| \right\} &= \prod_{j=1}^{2^L} \mathbb{E} \exp\left\{ \frac{\theta}{\lambda^{1/3} \sqrt{nb_n}}\left| \hat{\Gamma}_n^{(L, j)}\right| \right\} \\
        &\le \prod_{j=1}^{2^L} \mathbb{E} \exp
        \left\{ \frac{\theta}{\lambda^{1/3} \sqrt{nb_n}}\left| \hat{\Gamma}_n^{(L, j)}\right|^{2/3} \left(\lambda \sqrt{n b_n^3}\right)^{1/3} \right\} \\
        &= \prod_{j=1}^{2^L} \mathbb{E} \exp \left\{ \frac{\theta}{\sqrt[3]{n}} |\hat{\Gamma}_n^{(L, j)}|^{2/3}\right\} \\
        &\le \left(  \sup_{n, j} \mathbb{E} \exp \left\{ \frac{\theta}{\sqrt[3]{n}} |\Gamma_{n}^{(L, j)}|^{2/3} \right\} \right)^{2^L}.
        \end{align*}
        Again, we can use Markov's inequality and \eqref{eq:gamma-exp-moment} to deduce 
        \begin{equation*}
            \limsup_{n \to \infty} \frac{1}{b_n} \log \mathbb{P} \left\{ \left|\sum_{j=1}^{2^L} \hat{\Gamma}_n^{(L, j)} \right| \ge \lambda \sqrt{nb_n^3} \right\} \le - C\lambda^{2/3}.
        \end{equation*}
        Combined with \eqref{eq:gamma-tilde-LDP}, this shows \eqref{eq:gamma-sum-LDP}.
    
        The proof for \eqref{eq:cap-logn-cross-LDP} is similar. Recalling that $\Lambda_l^{\mathcal{C}} = \sum_{j=1}^{2^{l-1}} \chi_{\mathcal{C}}(\mathcal{S}_n^{(l, 2j-1)}, \mathcal{S}_n^{(l, 2j)})$,
        \[  
        \mathbb{P} \left\{ \left|\sum_{l=1}^{L} \Lambda_l^{\mathcal{C}} \right| \ge \lambda \sqrt{nb_n^3} \right\} \le \sum_{l=1}^L \mathbb{P} \left\{ \sum_{j=1}^{2^l} \left| \chi_{\mathcal{C}}(\mathcal{S}_n^{(l, 2j-1)}, \mathcal{S}_n^{(l, 2j)})\right| \ge(\sqrt{2}-1) \lambda \sqrt{n 2^{-l} b_n^3}\right\}.
        \]
        Now truncate $\chi_{\mathcal{C}}(\mathcal{S}_n^{(l, j)}, \tilde{\mathcal{S}}_n^{(l, j)})$ in a similar fashion as \eqref{eq:gamma-truncate}, except with threshold $\lambda \sqrt{n2^{-l} b_n^3}$. Then, the same argument as above can be repeated (with \eqref{eq:chi-exp-moment} taking the place of \eqref{eq:gamma-exp-moment}) to deduce \eqref{eq:cap-logn-cross-LDP}.
    \end{proof}
    \subsection{The Gaussian Regime} \label{subsec:gaussian}
    
    In this section, we prove Theorem~\ref{thm:gaussian}. This proof is very similar to that of Lemma~\ref{lem:cap-C-LDP}, except with a different choice of $L$.
    \begin{proof}[Proof of Theorem~\ref{thm:gaussian}]
        Take $L = \gamma^{-1} \log_2 b_n$, where $0 <\gamma < 1/2$. We first show that for any $M > 0$, there exists some $\lambda$ such that
        \begin{equation}
        \label{eq:gaussian-cross-bound}
        \lim_{n \to \infty} \frac{1}{b_n} \log \mathbb{P}\left\{ \left| \sum_{l = 1}^L \Lambda_l^{\mathcal{C}} \right| \ge \lambda \sqrt{n b_n^3} \right\} \le - M.
        \end{equation}
        To this end, write
        \[  
        \mathbb{P}\left\{ \left| \sum_{l = 1}^L \Lambda_l^{\mathcal{C}} \right| \ge \lambda \sqrt{n b_n^3} \right\} \le \sum_{l=1}^L \mathbb{P}\left\{ \left| \sum_{j=1}^{2^{l-1}} \chi_{\mathcal{C}}(\mathcal{S}_n^{(l, 2j-1)}, \mathcal{S}_n^{(l, 2j)}) \right| \ge \lambda 2^{\gamma} (1 - 2^{-\gamma}) 2^{-\gamma l}\sqrt{n b_n^3} \right\}.
        \]
        Now let $C > 0$ be some constant and truncate $\bar{\chi}_n^{(l, j)} = \chi_{\mathcal{C}}(\mathcal{S}_n^{(l, 2j-1)}, \mathcal{S}_n^{(l, 2j)}) - \mathbb{E} \chi_{\mathcal{C}}(\mathcal{S}_n^{(l, 2j-1)}, \mathcal{S}_n^{(l, 2j)})$ into the sum of the following.
        \begin{gather*}
            \hat{\chi}_n^{(l, j)} = \bar{\chi}_n^{(l, j)} \mathbf{1} \left\{|\bar{\chi}_n^{(l, j)}| \le C \sqrt{n 2^{-l} b_n^3} \right\} -  \mathbb{E} \bar{\chi}_n^{(l, j)} \mathbf{1} \left\{|\bar{\chi}_n^{(l, j)}| \le C \sqrt{n 2^{-l} b_n^3} \right\} \\
            \tilde{\chi}_n^{(l, j)} = \bar{\chi}_n^{(l, j)} \mathbf{1} \left\{|\bar{\chi}_n^{(l, j)}| > C \sqrt{n 2^{-l} b_n^3} \right\} -  \mathbb{E} \bar{\chi}_n^{(l, j)} \mathbf{1} \left\{|\bar{\chi}_n^{(l, j)}| > C \sqrt{n 2^{-l} b_n^3} \right\}
        \end{gather*}
        By choosing $C$ to be sufficiently large, we can argue as in the proof of Lemma~\ref{lem:cap-C-LDP} to show \eqref{eq:gaussian-cross-bound} as long as $0 < \gamma < 1/2$. Since $\sqrt{n b_n^3} \ll \sqrt{n b_n \log n}$ when $b_n \ll \sqrt{\log n}$, this implies
        \[  
        \lim_{n \to \infty} \frac{1}{b_n} \log \mathbb{P}\left\{ \left| \sum_{l = 1}^L \Lambda_l^{\mathcal{C}} \right| \ge \epsilon \sqrt{n b_n \log n} \right\} = - \infty.
        \]
        for any $\epsilon > 0$. Now we truncate $\Gamma_n^{(L, j)}$ (as defined in the proof of Lemma~\ref{lem:cap-C-LDP}) with the threshold
        \begin{gather*}
        \hat{\Gamma}_n^{(L, j)} = \Gamma_n^{(L, j)} \mathbf 1 \left\{ | \Gamma_n^{(L, j)}| \le \sqrt{2^{-L} nb_n^3 \log n} \right\} - \mathbb{E} \Gamma_n^{(L, j)} \mathbf 1 \left\{ | \Gamma_n^{(L, j)}| \le \sqrt{2^{-L} nb_n^3 \log n} \right\}, \\
        \tilde{\Gamma}_n^{(L, j)} = \Gamma_n^{(L, j)} \mathbf 1 \left\{ | \Gamma_n^{(L, j)}| > \sqrt{2^{-L} nb_n^3 \log n} \right\} - \mathbb{E} \Gamma_n^{(L, j)} \mathbf 1 \left\{ | \Gamma_n^{(L, j)}| > \sqrt{2^{-L} nb_n^3 \log n} \right\} .
        \end{gather*}
        Similarly to before, we can deduce that for any $M > 0$, there exists some $\lambda > 0$ such that
        \[  
        \lim_{n \to \infty} \frac{1}{b_n} \log \mathbb{P} \left( \sum_{j=1}^{2^L} \tilde{\Gamma}_n^{(L, j)}\ge \lambda \sqrt{2^{-L} nb_n^3 
        \log n} \right) = - M.
        \]
        If we take $0 < \gamma < 1/2$, then $\sqrt{2^{-L} nb_n^3 
        \log n} \ll \sqrt{n b_n \log n}$ and so
        \[  
        \lim_{n \to \infty} \frac{1}{b_n} \log \mathbb{P} \left( \sum_{j=1}^{2^L} \tilde{\Gamma}_n^{(L, j)}\ge \epsilon \sqrt{n b_n \log n } \right) = - \infty
        \] for any $\epsilon > 0$. Therefore, it only remains to  show
        \[
            \frac{1}{b_n} \lim_{n \to \infty} \log \mathbb{E} \exp \left\{ \theta \sqrt{\frac{b_n}{n \log n} }\sum_{j=1}^{2^L} \hat{\Gamma}_n^{(L, j)} \right\} = \frac{\theta^2 \sigma^2}{2}
        \]
        for every real number $\theta$, since this implies Theorem~\ref{thm:gaussian} by the G\"{a}rtner--Ellis theorem. By independence and Taylor expansion,
       \begin{align*}
           \mathbb{E} \exp \left\{ \theta \sqrt{\frac{b_n}{n \log n} }\sum_{j=1}^{2^L} \hat{\Gamma}_n^{(L, j)} \right\} &= \prod_{j=1}^{2^L} \mathbb{E} \exp \left\{ \theta \sqrt{\frac{b_n}{n \log n} }\hat{\Gamma}_n^{(L, j)} \right\} \\
           &=   \exp \left\{(1 + o(1)) 2^L \frac{\theta^2 b_n}{2n \log n} \mathbb{E} |\hat{\Gamma}_n^{(L, 1)}|^2 \right\}.
       \end{align*}
       Since $\mathbb{E}|\hat{\Gamma}_n^{(L, 1)}|^2 \sim \mathbb{E} |\Gamma_n^{(L, 1)}|^2 \sim \sigma^2 2^{-L} n \log n$, the proof is complete.
    \end{proof}
    
    \section{Deviations of the cross term} \label{sec:cross-term}
    
    \subsection{Preliminaries}
    In this section, we prove Theorem~\ref{thm:cross-term-LDP}. Our main tool is the following version of the G\"{a}rtner--Ellis theorem.
    \begin{theorem}[\cite{Chen2010}, Theorem 1.2.6]
    \label{thm:gartner-ellis}
        Assume that for each $\theta > 0$, the limit
        \[  
        \Psi(\theta) := \lim_{n \to \infty} \frac{1}{b_n} \log \sum_{m=0}^{\infty} \frac{\theta^m}{m!} b_n^m \left( \mathbb{E} Y_n^m \right)^{1/p}
        \]
        exists as an extended real number. Assume that the function $\Psi(|\theta|)$ is essentially smooth. For each $\lambda > 0$,
        \[  
        \lim_{n \to \infty} \frac{1}{b_n} \log \mathbb{P} (Y_n \ge \lambda) = -I_{\Psi}(\lambda),
        \]
        where the rate function $I_{\Psi}$ is defined by
        \[
        I_{\Psi}(\lambda) = p \sup_{\theta > 0} \{ \theta \lambda^{1/p} - \Psi(\theta) \}. \quad \lambda > 0.
        \]
    \end{theorem}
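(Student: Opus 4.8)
The plan is to establish the matching bounds $\limsup_n\frac1{b_n}\log\mathbb P(Y_n\ge\lambda)\le-I_\Psi(\lambda)$ and $\liminf_n\frac1{b_n}\log\mathbb P(Y_n\ge\lambda)\ge-I_\Psi(\lambda)$, using only the moment information carried by $\Psi$ together with $Y_n\ge0$, $p\ge1$, and the convexity and essential smoothness of $\theta\mapsto\Psi(|\theta|)$. Write $M_n(m):=\mathbb E[Y_n^m]$ and $H_n(\theta):=\sum_{m\ge0}\frac{(\theta b_n)^m}{m!}M_n(m)^{1/p}$, so $\frac1{b_n}\log H_n(\theta)\to\Psi(\theta)$ and $\Psi(0)=0$; let $\theta^\ast>0$ be the maximiser in $I_\Psi(\lambda)=p\,(\theta^\ast\lambda^{1/p}-\Psi(\theta^\ast))$, which essential smoothness (steepness) places in the interior of $\{\Psi<\infty\}$ with $\Psi'(\theta^\ast)=\lambda^{1/p}$, and set $\kappa^\ast:=\theta^\ast\lambda^{1/p}$.

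\emph{The upper bound} is the routine part. Keeping a single term of the series gives $M_n(m)^{1/p}\le\frac{m!}{(\theta b_n)^m}H_n(\theta)$ for every $\theta>0$ and $m\ge1$, so Markov's inequality yields
\[
\mathbb P(Y_n\ge\lambda)\;\le\;\lambda^{-m}M_n(m)\;\le\;\Bigl(\frac{m!}{\lambda^{m/p}(\theta b_n)^m}\Bigr)^{p}H_n(\theta)^{p}.
\]
Taking $m=\lfloor\theta\lambda^{1/p}b_n\rfloor$ --- exactly the minimiser of the Stirling-evaluated right side --- reduces it to $\exp\!\big(-p\,b_n(\theta\lambda^{1/p}-\Psi(\theta))+o(b_n)\big)$; letting $n\to\infty$ and optimising over $\theta$ gives $\limsup_n\frac1{b_n}\log\mathbb P(Y_n\ge\lambda)\le-I_\Psi(\lambda)$.

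\emph{For the lower bound}, I first turn $\Psi$ into sharp moment asymptotics. Writing $H_n(\theta)=\sum_m c_m(\theta)$ with $c_m(\theta)\ge0$, a Laplace/saddle-point analysis shows the series concentrates, up to $e^{o(b_n)}$, on indices near $b_n\theta\Psi'(\theta)$: indeed $s\mapsto\frac1{b_n}\log H_n(e^s)$ is convex in $s$, which forces the exponentially tilted index-laws to concentrate. Together with the one-term upper bound this gives, for $\kappa$ near $\kappa^\ast$,
\[
\tfrac1{b_n}\log M_n(\lfloor\kappa b_n\rfloor)=\Lambda^\ast(\kappa)+o(1),\qquad \Lambda^\ast(\kappa):=\sup_{t>0}\big(\kappa\log t-I_\Psi(t)\big),
\]
the two sides being matched by the double-Legendre identity $\Lambda^\ast(\kappa)=p\inf_{\theta>0}\big(\kappa\log\tfrac{\kappa}{e\theta}+\Psi(\theta)\big)$. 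At $\kappa^\ast$ the supremum defining $\Lambda^\ast$ is attained at $t=\lambda$, so $M_n(\lfloor\kappa^\ast b_n\rfloor)=\lambda^{\lfloor\kappa^\ast b_n\rfloor}e^{-b_nI_\Psi(\lambda)+o(b_n)}$. Now fix $\omega_n$ with $\sqrt{b_n}\ll\omega_n\ll b_n$, put $m_n:=\lfloor\kappa^\ast b_n\rfloor+\omega_n$, and choose $\lambda_1=\lambda+o(1)$ just above the maximiser $t^\ast_n$ of $t\mapsto m_n\log t-b_nI_\Psi(t)$ (one has $t^\ast_n-\lambda\asymp\omega_n/b_n$). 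Splitting $M_n(m_n)$ over $\{Y_n<\lambda\}$, $\{\lambda\le Y_n\le\lambda_1\}$ and $\{Y_n>\lambda_1\}$: since $t\mapsto m_n\log t-b_nI_\Psi(t)$ is unimodal with peak at $t^\ast_n$, its values at $\lambda$ and at $\lambda_1$ lie below the peak by $\gg\omega_n^2/b_n\to\infty$, so bounding the first extreme term by the contribution of $\{Y_n\le\lambda\}$ and the last by the already-proven upper tail bound $\mathbb P(Y_n>t)\le e^{-b_nI_\Psi(t)+o(b_n)}$ integrated against $m_nt^{m_n-1}$, both are $\le\frac14M_n(m_n)$ eventually. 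Hence $\mathbb E[Y_n^{m_n};\,\lambda\le Y_n\le\lambda_1]\ge\frac12M_n(m_n)$, and since this is $\le\lambda_1^{m_n}\mathbb P(Y_n\ge\lambda)$,
\[
\mathbb P(Y_n\ge\lambda)\;\ge\;\frac{M_n(m_n)}{2\,\lambda_1^{m_n}}\;=\;\exp\!\Big(m_n\log\tfrac{\lambda}{\lambda_1}-b_nI_\Psi(\lambda)+o(b_n)\Big);
\]
as $m_n/b_n\to\kappa^\ast$ and $\lambda_1/\lambda\to1$ the exponent is $-b_nI_\Psi(\lambda)+o(b_n)$, giving the lower bound.

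\emph{The main obstacle} is the lower bound, and within it the moment-asymptotics step. One needs the \emph{lower} bound $M_n(\lfloor\kappa b_n\rfloor)\ge e^{b_n\Lambda^\ast(\kappa)+o(b_n)}$ at exactly the required index, with an error small enough ($o(\omega_n^2/b_n)$, not merely $o(b_n)$) to outrun the $\omega_n^2/b_n$ ``gain'' used above; this demands quantitative control of the spread of the series $H_n$ --- that the tilted index-laws have fluctuations $o(\omega_n)$ --- extracted from the bare convergence $\frac1{b_n}\log H_n(\theta)\to\Psi(\theta)$. This is precisely where convexity and essential smoothness of $\Psi$ (finiteness and differentiability of $\Psi$ and of $\Lambda^\ast$ near the relevant point, and interiority of $\theta^\ast$) are indispensable; the remaining choices of $\omega_n$ and $\lambda_1$ are then elementary calculus.
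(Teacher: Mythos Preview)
The paper does not prove this statement: it is quoted verbatim from \cite{Chen2010}, Theorem~1.2.6, and used as a black box. There is therefore no ``paper's own proof'' to compare your proposal against.

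That said, a brief comment on your argument. Your upper bound is the standard one and is fine. For the lower bound, the approach via sharp moment asymptotics is in the right spirit, but the step you yourself flag as the main obstacle is genuinely incomplete: extracting the two-sided estimate
\[
\tfrac{1}{b_n}\log M_n(\lfloor \kappa b_n\rfloor)=\Lambda^\ast(\kappa)+o(1)
\]
with error $o(\omega_n^2/b_n)$ from the bare pointwise convergence $\tfrac{1}{b_n}\log H_n(\theta)\to\Psi(\theta)$ requires more than convexity and essential smoothness of $\Psi$ --- one needs some uniformity in $n$ (or at least local uniform convergence of the log-moment generating functionals), which you have not established. Chen's proof in \cite{Chen2010} proceeds instead by the classical Gärtner--Ellis route: an exponential change of measure (tilting by $\theta^\ast$) combined with essential smoothness to guarantee exponential tightness and to place the tilted law near $\lambda^{1/p}$, after which the lower bound follows from a law-of-large-numbers type localisation under the tilted measure. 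That argument sidesteps the need for the quantitative moment asymptotics you are trying to prove directly.
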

    
    This reduces Theorem~\ref{thm:cross-term-LDP} to showing
    \begin{equation}
    \label{eq:cross-exp-moment}
    \lim_{n \to \infty} \frac{1}{b_n} \log \sum_{m = 0}^{\infty} \frac{\theta^m}{m!} \left( \frac{b_n}{n} \right)^{m/4} (\mathbb{E}  \chi(\mathcal{S}_n, \tilde{\mathcal{S}}_n)^m )^{1/2} = \frac{27 \cdot 5^5}{32} \theta^4 \tilde{\gamma}_S^4 \tilde{\kappa}^8(5,2).
    \end{equation}
    
    Indeed, we can check that $\theta \lambda^{1/2} - C \theta^4$ obtains its minimum at $\theta = (\lambda^{1/2} / 4C)^{1/3}$, and so
    \begin{align*}
        \sup_{\theta > 0} \left\{ \theta \lambda^{1/2} - C \theta^4 \right\} &= \left(\frac{\lambda^{1/2}}{4C} \right)^{1/3} \lambda^{1/2} - C \left(\frac{\lambda^{1/2}}{4C} \right)^{4/3} \\
        &= 3 \cdot 4^{-4/3} \lambda^{2/3} C^{-1/3}.
    \end{align*}
    Plugging in the value from \eqref{eq:cross-exp-moment} for $C$ gives
    \[  
    2I_5(\lambda) = 5^{-5/3} \tilde{\gamma}_S^{-4/3} \tilde{\kappa}(5,2)^{-8/3} \lambda^{2/3}.
    \]
    
    In order to show \eqref{eq:cross-exp-moment}, we will first show that $\chi(\mathcal{S}_n, \tilde{\mathcal{S}}_n)$ converges weakly to the limit
        \[  
        \frac{1}{\sqrt{n}}\chi(\mathcal{S}_n , \tilde{\mathcal{S}}_n) \xrightarrow{d} 5^{5/2} \tilde{\gamma}_S^2 \int_0^1 \int_0^1 G(B_t - \tilde{B}_s) \mathrm{d}t \mathrm{d}s
        \]
    and then show that each of the moments in \eqref{eq:cross-exp-moment} may be replaced with those of its continuous limit. Then \eqref{eq:cross-exp-moment} follows from the results of Bass, Chen, and Rosen.
    \begin{theorem}[\cite{BassChenRosen2009}, Theorem 1.2 restated]
    \label{thm:brownian-LDP}
        \[  
        \lim_{\lambda \to \infty} \frac{1}{\lambda^{2/3}} \log \mathbb{P} \left\{ \int_0^1 \int_0^1 G(B_t - \tilde{B}_s) \mathrm{d} t \mathrm{d} s \ge \lambda \right\} =  \tilde{\kappa}(5,2)^{-8/3}.
        \]
    \end{theorem}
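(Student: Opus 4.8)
The plan is to obtain Theorem~\ref{thm:brownian-LDP} from the high--moment asymptotics of
\[
Y:=\int_0^1\!\!\int_0^1 G(B_t-\tilde B_s)\,\mathrm{d}t\,\mathrm{d}s,
\]
inverted into tail asymptotics by the exponential Tauberian correspondence. First one notes that $Y$ has all moments finite with $\mathbb{E}[Y^m]^{1/m}\asymp m^{3/2}$: this is the Brownian version of Proposition~\ref{prop:cross-term-moment}, proved by the same graph reduction with the heat--kernel bounds $\mathbb{E}\,|B_t-x|^{-p}\le C_p|t|^{-p/2}$ ($p<5$) replacing Lemma~\ref{lem:rw-moment}, or by passing $n\to\infty$ through Lemma~\ref{lem:weak-conv}. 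The theorem is then equivalent to evaluating $B:=\lim_{m\to\infty}(\mathbb{E}[Y^m])^{1/m}/m^{3/2}$. Indeed, if $\mathbb{P}(Y\ge\lambda)=\exp(-(c+o(1))\lambda^{2/3})$ then $\mathbb{E}[Y^m]=\int_0^\infty m\lambda^{m-1}\mathbb{P}(Y>\lambda)\,\mathrm{d}\lambda\sim\tfrac{3m}{2}\Gamma(3m/2)c^{-3m/2}$ (substitute $u=\lambda^{2/3}$ and apply Laplace's method), so $B=(3/(2ec))^{3/2}$; thus proving $c=\tilde\kappa(5,2)^{-8/3}$ amounts to showing $B=(3/(2e))^{3/2}\tilde\kappa(5,2)^{4}$. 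The rigorous form of this last step is de Bruijn's exponential Tauberian theorem, available in \cite{Chen2010}; the same bookkeeping shows that the value of $B$ is also exactly what is needed to verify \eqref{eq:cross-exp-moment}, so this route feeds directly into Section~\ref{sec:cross-term}.

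Next, to compute $B$ I would expand $\mathbb{E}[Y^m]$ as a $2m$--fold time integral of $\mathbb{E}\prod_{k=1}^m G(B_{t_k}-\tilde B_{s_k})$, symmetrize over the $(m!)^2$ orderings of the $t_k$ and $s_k$, and reduce to the contribution of configurations in which both paths repeatedly revisit a common small ball. Writing $G=\tilde G\ast\tilde G$ turns $Y$ into the $L^2(\mathbb{R}^5)$--pairing $\langle\mu_B,\mu_{\tilde B}\rangle$ of the smoothed occupation densities $\mu_B(a)=\int_0^1\tilde G(B_t-a)\,\mathrm{d}t$, equivalently $Y=\int_0^1 V_B(\tilde B_s)\,\mathrm{d}s$ with the Feynman--Kac potential $V_B(x)=\int_0^1 G(B_t-x)\,\mathrm{d}t$; an eigenvalue / $\Gamma$--convergence analysis of the associated Schr\"odinger operator then identifies $\lim_m(\cdot)$ with a variational quantity of the shape
\[
\mathcal M=\sup\Big\{\big(\textstyle\int_{(\mathbb{R}^5)^2}g^2(x)G(x-y)g^2(y)\,\mathrm{d}x\,\mathrm{d}y\big)^{1/4}\ :\ \|g\|_{L^2}=\|\nabla g\|_{L^2}=1\Big\},
\]
up to explicit dimensional factors coming from optimizing over dilations of $g$ and from the Laplace asymptotics above. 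By definition of the best constant in \eqref{eq:general-GN} one has $\mathcal M=\tilde\kappa(5,2)$ (the exponents in \eqref{eq:general-GN} are forced by the Brownian scaling $Y_t\overset{d}{=}t^{1/2}Y_1$), and following the constants through gives $B=(3/(2e))^{3/2}\tilde\kappa(5,2)^{4}$.

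The value of $B$ itself — or directly the rate in Theorem~\ref{thm:brownian-LDP} — is obtained by matching an upper and a lower bound. For the upper bound I would use a subadditivity (Hammersley--type) argument: by the Brownian scaling and the Markov property, after resetting the two paths near a common point at negligible cost, a suitable normalization of $\log\mathbb{P}(Y_t\ge x)$ is (approximately) superadditive in $t$, so the limiting rate exists and is controlled from above by the one--block value, which is $\mathcal M$ by the Gagliardo--Nirenberg inequality \eqref{eq:general-GN}. For the lower bound I would perform an exponential change of measure (a Cameron--Martin tilt) that forces both $B$ and $\tilde B$ to spend order--one time near a dilated near--extremizer of \eqref{eq:general-GN}, and then a second--moment / localization estimate showing this strategy is not wasteful.

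The step I expect to be the main obstacle is pinning down the constant exactly. On the combinatorial side one must control all $(m!)^2$ orderings in the moment expansion and show that only the fully interleaved ones contribute to leading exponential order — the same ``no new singularities'' phenomenon that drives Section~\ref{sec:moments}, but now at the level of precise constants rather than orders. On the analytic side one must know that \eqref{eq:general-GN} is attained and that its extremizer is the right profile for the tilt; this is the delicate part of \cite{BassChenRosen2009}, and also the reason the case $d\ge6$ — where the exponents in \eqref{eq:general-GN} degenerate and the governing variational problem changes character — genuinely requires a different approach.
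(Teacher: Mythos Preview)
The paper does not prove Theorem~\ref{thm:brownian-LDP}: it is quoted verbatim from \cite{BassChenRosen2009} (Theorem~1.2) and used as a black box to identify the constant in \eqref{eq:cross-exp-moment}. So there is no ``paper's own proof'' to compare against; your proposal is an attempt to re-derive a cited result.

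That said, your sketch is broadly faithful to the strategy actually used in \cite{BassChenRosen2009} and in \cite{Chen2010}: moment asymptotics plus an exponential Tauberian/G\"artner--Ellis conversion, with the upper bound coming from subadditivity and the inequality \eqref{eq:general-GN}, and the lower bound from a Feynman--Kac tilt towards a near-extremizer. Two cautions. First, several of your steps are genuine theorems, not bookkeeping: the existence of the limit $B=\lim_m(\mathbb{E}[Y^m])^{1/m}/m^{3/2}$ requires a subadditivity argument in its own right (it does not follow from the graph-reduction bound, which only gives $B<\infty$), and the identification of $B$ with the variational quantity needs the full Feynman--Kac machinery, not just ``an eigenvalue/$\Gamma$-convergence analysis''. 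Second, your combinatorial remark that ``only the fully interleaved orderings contribute to leading exponential order'' is not how the constant is actually pinned down in \cite{BassChenRosen2009}; the sharp constant comes from the variational problem on both sides (upper via the Gagliardo--Nirenberg inequality, lower via an explicit tilt), and attainment of the supremum in \eqref{eq:general-GN} is indeed the delicate input you flag at the end. As written, your proposal is a plausible outline but not a proof; for the purposes of the present paper, citing \cite{BassChenRosen2009} is the correct move.
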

    This is also reminiscent of a similar theorem for mutual intersection local times of Brownian motions in $\mathbb{Z}^3$ (\cite{Chen2010}, Theorem 3.3.2; see reference for relevant definitions):
        \[  
        \lim_{ \lambda \to \infty} \lambda^{-2/3} \log \mathbb{P} \left\{ \alpha([0,1]^2) \ge \lambda \right\} = \kappa(3,2)^{-8/3}.
        \]
    In its original form, Theorem~\ref{thm:brownian-LDP} was written as an optimization problem. To get to the form stated above, we need the following lemma. This will be used once more in the proof of the lower bound.
    \begin{lemma}
        \label{lem:gagliardo-nirenberg-constant}
        \begin{equation}
        \label{eq:optimization}
        \sup_{g \in \mathcal{F}} \left\{ \theta \left[ \int_{\mathbb{R}^5} g^2(x) G(x - \tilde{x}) g^2(\tilde{x}) \mathrm{d}x \mathrm{d} \tilde{x} \right]^{1/2} - \frac{1}{10} \int_{\mathbb{R}^5} |\nabla g(x)|^2 \mathrm{d} x \right\} = \frac{27 \cdot 5^3}{32} \theta^4 \tilde{\kappa}(5,2)^8
        \end{equation}
        where $\mathcal{F} = \{g : \| g \|_2 =1, \int_{\mathbb{R}^5}|\nabla g(x)|^2 \text{d}x< \infty \}$.
    \end{lemma}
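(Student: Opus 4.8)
The plan is to recognize \eqref{eq:optimization} as a Legendre-type dual of the generalized Gagliardo--Nirenberg inequality \eqref{eq:general-GN}, and thereby to reduce the optimization over the infinite-dimensional class $\mathcal F$ to a one-variable calculus problem. Abbreviate $A(g) := \big[\int_{(\mathbb R^5)^2} g^2(x) G(x-\tilde x) g^2(\tilde x)\,\mathrm dx\,\mathrm d\tilde x\big]^{1/2}$ and $B(g) := \int_{\mathbb R^5}|\nabla g(x)|^2\,\mathrm dx$. For $d = 5$ the exponents in \eqref{eq:general-GN} are $\tfrac{6-d}{8} = \tfrac18$ and $\tfrac{d-2}{8} = \tfrac38$, so for every $g \in \mathcal F$ (on which $\|g\|_2 = 1$), raising \eqref{eq:general-GN} to the fourth power gives $A(g) \le \tilde\kappa(5,2)^2\,B(g)^{3/4}$; in particular $A(g) < \infty$.

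The crucial step I would carry out is to show that this inequality is \emph{saturated at every energy level}: for each $b > 0$,
\[
\sup\{A(g) : g \in \mathcal F,\ B(g) = b\} = \tilde\kappa(5,2)^2\, b^{3/4}.
\]
The bound $\le$ is just \eqref{eq:general-GN}. For $\ge$ I would use the scaling $g_\sigma(x) := \sigma^{5/2} g(\sigma x)$, which preserves $\|g_\sigma\|_2 = \|g\|_2 = 1$, scales the Dirichlet energy as $B(g_\sigma) = \sigma^2 B(g)$, and --- because $G(x) = (4\pi^2)^{-1}|x|^{-3}$ is $(-3)$-homogeneous in dimension five --- scales the double integral as $A(g_\sigma) = \sigma^{3/2} A(g)$. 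Thus the ratio $A(g)/B(g)^{3/4}$ is scale-invariant, and by the definition of $\tilde\kappa(5,2)$ as the optimal constant in \eqref{eq:general-GN} restricted to $\|g\|_2 = 1$, one has $\sup_{g \in \mathcal F} A(g)/B(g)^{3/4} = \tilde\kappa(5,2)^2$. Given $\eta > 0$, pick $g \in \mathcal F$ with $A(g)/B(g)^{3/4} > \tilde\kappa(5,2)^2 - \eta$, then choose $\sigma$ so that $\sigma^2 B(g) = b$; then $g_\sigma \in \mathcal F$, $B(g_\sigma) = b$, and $A(g_\sigma) = \big(A(g)/B(g)^{3/4}\big)\,b^{3/4} > (\tilde\kappa(5,2)^2 - \eta)\,b^{3/4}$, and letting $\eta \downarrow 0$ finishes it. This needs no compactness or existence of an extremal function; only suprema are manipulated.

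Granting the saturation identity, the supremum in \eqref{eq:optimization} becomes the one-dimensional problem
\[
\sup_{g \in \mathcal F}\Big\{\theta A(g) - \tfrac1{10}B(g)\Big\} = \sup_{b > 0}\Big\{\theta\,\tilde\kappa(5,2)^2\,b^{3/4} - \tfrac{b}{10}\Big\},
\]
which I would solve by elementary calculus: the right-hand function is concave in $b^{1/4}$, its derivative in $b$ vanishes at $b_\ast = \big(\tfrac{15}{2}\theta\,\tilde\kappa(5,2)^2\big)^4$, and substituting back gives $\theta\,\tilde\kappa(5,2)^2 b_\ast^{3/4} - b_\ast/10 = 15^3\,\theta^4\,\tilde\kappa(5,2)^8\big(\tfrac18 - \tfrac{15}{160}\big) = \tfrac{15^3}{32}\theta^4\tilde\kappa(5,2)^8 = \tfrac{27\cdot 5^3}{32}\theta^4\tilde\kappa(5,2)^8$, which is exactly \eqref{eq:optimization}. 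I expect the main obstacle to be the saturation step: one must transport the optimal Gagliardo--Nirenberg ratio to an arbitrary prescribed value of the Dirichlet energy, and the whole point is that the spatial rescaling accomplishes precisely this thanks to the homogeneity of $G$ in $d = 5$. This parallels the analogous identities for intersection local times in \cite{Chen2010} and \cite{BassChenRosen2009}.
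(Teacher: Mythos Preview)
Your proposal is correct and follows essentially the same approach as the paper's own proof: both use the generalized Gagliardo--Nirenberg inequality to obtain the upper bound, invoke the scaling $g\mapsto \sigma^{5/2}g(\sigma\,\cdot\,)$ to see that the ratio $A(g)/B(g)^{3/4}$ is scale-invariant so that any value of the Dirichlet energy can be attained at (or arbitrarily near) the optimal ratio, and then perform the resulting one-variable optimization. Your write-up is arguably a bit more careful in that you work only with suprema and explicitly avoid assuming the existence of an extremal $g_{\mathrm{opt}}$, whereas the paper speaks of ``a function that attains the optimal value''; but the substance of the argument is the same.
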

    \begin{proof}
         Recall that $\tilde{\kappa}(5,2)$ is the optimal constant such that the following generalized Gagliardo--Nirenberg inequality holds.
        \begin{equation} \label{eq:GenGagNir}
    \left[\int_{\mathbb{R}^5} g^2(x)G(x-\tilde{x}) g^2(\tilde{x}) \text{d}x \text{d}\tilde{x} \right]^{1/4} \le \tilde{\kappa}(5,2) \left(\int_{\mathbb{R}^5} g^2(x) \text{d}x\right)^{1/8} \left(\int_{\mathbb{R}^5} |\nabla g(x)|^2 \text{d}x \right)^{3/8}
    \end{equation}
    If $g_{opt}$ is a function that attains the optimal value, then we see that the scaling transformation $g_{opt}(x) \to t^{5/2} g_{opt}(tx)$ will also attain the optimal value in the Gagliardo--Nirenberg value with a modified value for $\int_{\mathbb{R}^5} |\nabla g(x)|^2 \text{d}x$ without changing $\int_{\mathbb{R}^5} g^2(x) \text{d}x.$
    
     If we then substitute the inequality in equation \eqref{eq:GenGagNir} into the left-hand side of \eqref{eq:optimization} and bound it by,
     \begin{equation}
    \theta \tilde{\kappa}^2(5,2) \left[ \int_{\mathbb{R}^5}|\nabla g(x)|^2 \text{d}x \right]^{3/4} - \frac{1}{10} \int_{\mathbb{R}^5}|\nabla g(x)|^2 \text{d}x,
     \end{equation}
     where the supremum is taken over all functions $g$ whose $L^2$ norm is 1 and the square integral of the gradient is finite. Now, recall that by our scaling transformation, we can set $\int_{\mathbb{R}^5}|\nabla g(x)|^2 \text{d}x$ to attain any value we want while still satisfying the Gagliardo--Nirenberg inequality. We optimize over the value of $\int_{\mathbb{R}^5}|\nabla g(x)|^2 \text{d}x$ and see that the maximum is given by $\frac{27\cdot 5^3}{32} \theta^4 \tilde{\kappa}(5,2)^8$.
    \end{proof}
    
    In the next two sections, we will show upper and lower bounds of \eqref{eq:cross-exp-moment}, thus proving Theorem~\ref{thm:cross-term-LDP}. The main obstacle in handling $\chi(\mathcal{S}_n, \tilde{\mathcal{S}}_n)$ are the probability terms $\mathbb{P}(\tau_{\mathcal{S}_n} = \infty)$ and $\mathbb{P}(\tau_{\tilde{\mathcal{S}}_n} = \infty)$ which correlate the entire sample path. This makes it difficult to compute high moments or to use tools such as Markov operators. To combat this, we consider a localized modification. If we divide the walk $\mathcal{S}_n$ into $b_n$ subparts, $S[0, n/b_n], S[n/b_n, 2n/b_n], \dots S[n - n/b_n, n]$, then we have the upper bound,
    \begin{equation} \label{eq:defchibn}
    \chi_{b_n}(\mathcal{S}_n, \tilde{\mathcal{S}}_n) := \sum_{i, j = 1}^{b_n} \chi ( S[(i-1)n/b_n, in/b_n], \tilde{S}[(j-1)n/b_n, jn/b_n]) \ge \chi(\mathcal{S}_n, \tilde{\mathcal{S}}_n).
    \end{equation}
     Further, by the moment bounds of Lemma~\ref{lem:aux-error-moment}, we can deduce that their difference is negligible in the moderate deviation regime.
    
    \begin{prop} \label{prop:chiischibn}
    Let $1 \ll b_n \ll n^{1/3}(\log n)^{-8/3}$ and consider $\chi(\mathcal{S}_n, \tilde{\mathcal{S}}_n)$ as well as $\chi_{b_n}$. We have that,
    \begin{equation}
    \lim_{n \to \infty}\frac{1}{b_n} \log \mathbb{P}(\chi(\mathcal{S}_n, \tilde{\mathcal{S}}_n) \ge \lambda \sqrt{n b_n^3}) =\lim_{n \to \infty} \frac{1}{b_n} \log \mathbb{P}(\chi_{b_n} \ge \lambda \sqrt{n b_n^3})\end{equation}
    
    \end{prop}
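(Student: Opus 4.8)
The plan is to show that the gap $\chi_{b_n}(\mathcal{S}_n,\tilde{\mathcal{S}}_n)-\chi(\mathcal{S}_n,\tilde{\mathcal{S}}_n)$ is super-exponentially negligible on the scale $\sqrt{nb_n^3}$, and then conclude by a routine approximation. Throughout I assume $b_n=2^L$ is a power of $2$; the general case costs only an adjustment of $L$ by a bounded amount, as with the divisibility assumption on $n$. By \eqref{eq:defchibn} we already have $\chi_{b_n}\ge\chi$ (subadditivity of capacity), so the content is the reverse comparison.

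\emph{Step 1 (error decomposition).} Write $\chi_{b_n}-\chi=\big(\chi_{b_n}-\sum_{p=1}^{2^L}\chi(\mathcal{S}_n^{(L,p)},\tilde{\mathcal{S}}_n)\big)+\big(\sum_{p=1}^{2^L}\chi(\mathcal{S}_n^{(L,p)},\tilde{\mathcal{S}}_n)-\chi\big)$. For the second bracket, telescope over the dyadic merges of the $\mathcal{S}$-pieces and apply Lemma~\ref{lem:aux} with $A=\mathcal{S}_n^{(l,2p-1)}$, $B=\mathcal{S}_n^{(l,2p)}$, $C=\tilde{\mathcal{S}}_n$: each merge term is nonnegative and bounded by a constant times $\sum G_D(S_{i_1}-S_{i_2})G_D(S_{i_2}-\tilde S_j)+\sum G_D(S_{i_1}-S_{i_2})G_D(S_{i_1}-\tilde S_j)$ over the appropriate index ranges, which upon summing in $p$ is exactly $C(W_l+Z_l)$ in the notation of \eqref{eq:aux-error-def}. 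For the first bracket, fix each $\mathcal{S}$-piece, telescope over the dyadic merges of the $\tilde S$-pieces, use $\chi(A,B)=\chi(B,A)$ and Lemma~\ref{lem:aux} again; summing over the $\mathcal{S}$-pieces rebuilds the sum $\sum_{i=1}^n$ and yields $C\sum_{l=1}^L(X_l+Y_l)$. Hence
$$0\le\chi_{b_n}(\mathcal{S}_n,\tilde{\mathcal{S}}_n)-\chi(\mathcal{S}_n,\tilde{\mathcal{S}}_n)\le C\sum_{l=1}^{L}\big(X_l+Y_l+Z_l+W_l\big).$$

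\emph{Step 2 (negligibility).} It suffices to show $\frac1{b_n}\log\mathbb{P}\big(\sum_{l=1}^L(X_l+Y_l+Z_l+W_l)\ge\delta\sqrt{nb_n^3}\big)\to-\infty$ for every $\delta>0$. Union-bounding over the $4L$ summands and applying Markov's inequality at order $m$ together with Lemma~\ref{lem:aux-error-moment} and $(m!)^3\le m^{3m}$, one gets, for $t\asymp\delta\sqrt{nb_n^3}/L$,
$$\mathbb{P}(X_l\ge t)\le t^{-m}\,\mathbb{E}[X_l^m]\le 2^l\,e^{2^l/m}\Big(\tfrac{Cm^3(\log n)^3}{t}\Big)^{m},$$
and the same for $Y_l,Z_l,W_l$. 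Choosing $m=m_n\asymp t^{1/3}/\log n\asymp n^{1/6}b_n^{1/2}/(L^{1/3}\log n)$ with a small enough constant that $Cm_n^3(\log n)^3/t\le 1/2$, the bound becomes $2^{l+1}2^{-m_n}$ provided (i) $m_n\gtrsim 2^L=b_n$ (so $e^{2^l/m_n}\le e$) and (ii) $m_n/b_n\to\infty$; both reduce to $n^{1/6}\gg b_n^{1/2}L^{1/3}\log n$, which holds in the advertised range because $L=\log_2 b_n\lesssim\log n$, so $L^{1/3}\log n\lesssim(\log n)^{4/3}$ and the requirement is $b_n\ll n^{1/3}(\log n)^{-8/3}$. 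Then $\frac1{b_n}\log\mathbb{P}(X_l\ge t)\le\frac{(l+1-m_n)\log2}{b_n}\to-\infty$, and summing the $4L$ contributions (with $\log(4L)/b_n\to0$) finishes Step 2. This is the analogue, uniformly over all dyadic scales, of Lemma~\ref{lem:error-LDP} and Proposition~\ref{prop:error-term-moment}.

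\emph{Step 3 (conclusion).} From $\chi\le\chi_{b_n}\le\chi+C\sum_l(X_l+Y_l+Z_l+W_l)$ and Step 2, for each $\delta>0$
$$\mathbb{P}(\chi\ge\lambda\sqrt{nb_n^3})\le\mathbb{P}(\chi_{b_n}\ge\lambda\sqrt{nb_n^3})\le\mathbb{P}\big(\chi\ge(1-\delta)\lambda\sqrt{nb_n^3}\big)+\mathbb{P}\Big(C\sum_{l=1}^L(X_l+Y_l+Z_l+W_l)\ge\delta\lambda\sqrt{nb_n^3}\Big),$$
and the last probability has $\frac1{b_n}\log(\cdot)\to-\infty$. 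Thus $\limsup_n$ and $\liminf_n$ of $\frac1{b_n}\log\mathbb{P}(\chi_{b_n}\ge\lambda\sqrt{nb_n^3})$ are squeezed between the corresponding quantities for $\chi$ at $\lambda$ and at $(1-\delta)\lambda$; letting $\delta\downarrow0$ and using continuity of the limiting rate (established when proving the cross-term LDP, where it equals $\lambda\mapsto2I_5(\lambda)=c\lambda^{2/3}$) gives the claimed equality of limits. The main obstacle is Step 2: Lemma~\ref{lem:aux-error-moment} carries the combinatorial blow-up $e^{2^l/m}$ with $2^L=b_n$, which forces the Markov moment order $m$ up to size $\gtrsim b_n$, whereas the polylogarithmic moment growth $(m!)^3(\log n)^{3m}$ can be absorbed by $(\sqrt{nb_n^3})^m$ only for $m\lesssim n^{1/6}b_n^{1/2}/\mathrm{polylog}$; reconciling the two windows is exactly what pins the range $b_n\ll n^{1/3}(\log n)^{-8/3}$.
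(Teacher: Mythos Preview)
Your proof is correct and follows essentially the same route as the paper: the telescoping decomposition via Lemma~\ref{lem:aux} into $C\sum_{l=1}^L(X_l+Y_l+Z_l+W_l)$, followed by Markov's inequality with the moment bound of Lemma~\ref{lem:aux-error-moment} at a well-chosen order (your $m\asymp n^{1/6}b_n^{1/2}L^{-1/3}(\log n)^{-1}$ differs from the paper's $m=n^{1/12}b_n^{3/4}(\log n)^{-2/3}$, but both choices work and pin the same range for $b_n$). The only caveat is your Step~3, which appeals to continuity of the rate $\lambda\mapsto 2I_5(\lambda)$; since the cross-term LDP and hence $I_5$ are established only \emph{after} this proposition, it is cleaner to stop at the exponential equivalence proved in Step~2 and invoke the standard approximation lemma, which is what the paper's ``so we are done'' implicitly does.
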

    
    \begin{proof}
    
    For convenience, we shall assume $b_n = 2^L$ for some integer $L$. The general case can be done with a similar method. We first show
    \begin{equation}
    \label{eq:aux-chi-decomposition}
    0 \le \chi_{2^L} - \chi(\mathcal{S}_n, \tilde{\mathcal{S}}_n) \le C \sum_{l=1}^L (X_l + Y_l + Z_l + W_l).
    \end{equation}
    Recalling that $\mathcal{S}_n = \mathcal{S}_n^{(1, 1)} \cup \mathcal{S}_n^{(1, 2)}$ and $\tilde{\mathcal{S}}_n = \tilde{\mathcal{S}}_n^{(1, 1)} \cup \tilde{\mathcal{S}}_n^{(1, 2)}$, we can write
    \begin{align*}
        \chi_2 - \chi(\mathcal{S}_n, \tilde{\mathcal{S}}_n) &= \{\chi(\mathcal{S}_n^{(1, 1)}, \tilde{\mathcal{S}}_n^{(1, 1)}) + \chi(\mathcal{S}_n^{(1, 2)}, \tilde{\mathcal{S}}_n^{(1, 1)}) - \chi(\mathcal{S}_n, \tilde{\mathcal{S}}_n^{(1, 1)}) \} \\ 
        &\quad + \{\chi(\mathcal{S}_n^{(1, 1)}, \tilde{\mathcal{S}}_n^{(1, 2)}) + \chi(\mathcal{S}_n^{(1, 1)}, \tilde{\mathcal{S}}_n^{(1, 2)}) - \chi(\mathcal{S}_n, \tilde{\mathcal{S}}_n^{(1, 2)}) \} \\
        &\quad + \{\chi(\mathcal{S}_n, \tilde{\mathcal{S}}_n^{(1, 1)}) + \chi(\mathcal{S}_n, \tilde{\mathcal{S}}_n^{(1, 2)}) - \chi(\mathcal{S}_n, \tilde{\mathcal{S}}_n) \}.
    \end{align*}
    By Lemma~\ref{lem:aux}, the sum of the first two terms are bounded by $C(X_1 + Y_1)$, and the second term is bounded by $C(Z_1 + W_1)$. Similarly,
    \[  
    0 \le \chi_{2^l} - \chi_{2^{l-1}} \le C(X_l + Y_l + Z_l + W_l).
    \]
    Summing over $l = 1, \dots , L$ gives \eqref{eq:aux-chi-decomposition}. Further,
    \begin{align*}
    &\frac{1}{b_n} \log \mathbb{P}\left(\sum_{l= 1}^{L} X_l\ge \epsilon \sqrt{nb_n^3}\right) \le \frac{1}{b_n} \log L + \frac{1}{b_n } \log \mathbb{P}\left(X_l \ge \epsilon L^{-1} \sqrt{n b_n^3}\right) \\
    &\le \frac{\log L}{b_n}+ \frac{1}{b_n} \log \frac{\mathbb{E}[X_l^m]}{(\epsilon L^{-1} \sqrt{nb_n^3})^m} \\
    &\le \frac{\log L}{b_n} + \frac{1}{b_n} \log \frac{C^m 2^l \exp[2^l/m] (m!)^3 (\log n)^{3m}  (\log b_n)^m }{\epsilon^m n^{m/2} b_n^{3m/2}}
    \end{align*}
    by a union bound and Lemma~\ref{lem:aux-error-moment}. Recall that $b_n = 2^L$ and choose $m = n^{1/12}b_n^{3/4}(\log n)^{-2/3}$. This ensures
    \[  
    \frac{2^l}{m} \le \frac{b_n}{m} = \frac{b_n^{1/4}}{n^{1/12}(\log n)^{-2/3}} = \left( \frac{b_n}{n^{1/3} (\log n)^{-8/3}} \right)^{1/4} \ll 1
    \]
    and
    \[
    \frac{C m^3 (\log n)^4}{n^{1/2} b_n^{3/2}} = \frac{Cb_n^{3/4}(\log n)^{2}}{n^{1/4}} = C \left( \frac{b_n}{n^{1/3}(\log n)^{-8/3}}\right)^{3/4} \ll 1.
    \]
    Therefore,
    \begin{align*}
        \frac{1}{b_n} \log \frac{C^m 2^l \exp[2^l/m] (m!)^3 (\log n)^{3m}(\log b_n)^m}{\epsilon^m n^{m/2} b_n^{3m/2}} &\le \frac{1}{b_n} \log b_n \left(\frac{C m^{3} (\log n)^{4}}{n^{1/2} b_n^{3/2}} \right)^m \\
        &= \frac{m}{b_n} \left\{ \log b_n + \log \left(\frac{C m^{3} (\log n)^{4}}{n^{1/2} b_n^{3/2}} \right) \right\} \\
        &= - \infty.
    \end{align*}
    In other words,
    \[
    \lim_{n \to \infty} \frac{1}{b_n} \log \mathbb{P} \left( |\chi_{b_n} - \chi(\mathcal{S}_n, \tilde{\mathcal{S}}_n )| \ge \epsilon \sqrt{n b_n^3} \right) = - \infty
    \]
    so we are done.
    \end{proof}
    
    The goal of the next few subsections is to derive appropriate upper and lower bounds for the moments of $\chi_{b_n}$. This will allow us to derive a large deviation principle for $\chi_{b_n}$, and by Proposition \ref{prop:chiischibn}, we would derive the appropriate large deviation principle for $\chi(\mathcal{S}_n, \tilde{\mathcal{S}}_n)$.
    
    \subsection{Moment Upper Bounds for $\chi(\mathcal{S}_n, \tilde{\mathcal{S}}_n)$}
    
    In what proceeds, we will derive upper bounds for the moments of $\chi_{b_n}$. In order to do this, we first show that $\chi_{b_n}$ converges in distribution to a continuous limit. Then we prove that the moments of $\chi_{b_n}$ in \eqref{eq:cross-exp-moment} can be replaced with its limit to obtain an upper bound.
    
    \begin{lemma}
    \label{lem:weak-conv}
        As $n \to \infty$,
        \[  
        \frac{1}{\sqrt{n}}\chi(\mathcal{S}_n , \tilde{\mathcal{S}}_n) \xrightarrow{d} 5^{5/2} \tilde{\gamma}_S^2 \int_0^1 \int_0^1 G(B_t - \tilde{B}_s) dt ds.
        \]
    \end{lemma}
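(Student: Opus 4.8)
The plan is to prove the convergence by the method of moments, which is available because Proposition~\ref{prop:cross-term-moment} gives $\mathbb{E}\bigl[(n^{-1/2}\chi(\mathcal{S}_n,\tilde{\mathcal{S}}_n))^m\bigr]\le C^m(m!)^{3/2}$ uniformly in $n$. This bound does double duty: it supplies the uniform integrability needed to upgrade convergence of moments to weak convergence, and, passed to the limit, it shows the limiting law $Y$ satisfies $\mathbb{E}[Y^m]\le C^m(m!)^{3/2}$, so that $\sum_m(\mathbb{E}[Y^m])^{-1/2m}=\infty$ and the Stieltjes moment problem for $Y$ is determinate (this could also be read off from the $e^{-c\lambda^{2/3}}$ tail of Theorem~\ref{thm:brownian-LDP}). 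It therefore suffices to show, for every fixed $m\ge1$,
\[
\mathbb{E}\Bigl[\bigl(n^{-1/2}\chi(\mathcal{S}_n,\tilde{\mathcal{S}}_n)\bigr)^m\Bigr]\ \longrightarrow\ \mathbb{E}\Bigl[\Bigl(5^{5/2}\tilde\gamma_S^2\!\int_0^1\!\!\int_0^1 G(B_t-\tilde B_s)\,\mathrm{d}t\,\mathrm{d}s\Bigr)^m\Bigr].
\]

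I would carry this out in two reductions. First, write each site as a first visit, so that $\chi(\mathcal{S}_n,\tilde{\mathcal{S}}_n)=\sum_{i,j=0}^n\varphi_i\tilde\varphi_j\,G_D(S_i-\tilde S_j)$ with $\varphi_i=\mathbf{1}\{S_i\notin S[0,i-1]\}\,\mathbb{P}^{S_i}(\tau_{\mathcal{S}_n}=\infty)$ and $\tilde\varphi_j$ its analogue. For $i$ and $n-i$ both large, the event defining $\varphi_i$, read in a window around $S_i$ (reversed past $=S^1$, future $=S^2$, an independent escaping walk $=S^3$, with the first-visit condition $0\notin S^1[1,\infty)$), matches --- after replacing the finite two-sided range by a two-sided infinite walk, which costs $o(1)$ by transience in $d=5$ --- the event of probability $\tilde\gamma_S$; moreover this window decouples asymptotically from $S_i$ itself, from $\tilde S$, and from the portions of $S$ far from time $i$. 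Expanding the $m$-th moment as a column sum $\sum_{\vec i,\vec j}\mathbb{E}\bigl[\prod_k\varphi_{i_k}\tilde\varphi_{j_k}G_D(S_{i_k}-\tilde S_{j_k})\bigr]$, I discard the tuples in which two of the $i_k$, or an $i_k$ and an endpoint $0,n$, or a coincidence across the two walks, lie within $n^{1/2-\delta}$ of one another: their contribution is $o(n^{m/2})$, which is exactly what the moment estimates of Propositions~\ref{prop:cross-term-moment} and~\ref{prop:error-term-moment} give for the corresponding sub-sums. On the remaining well-separated tuples the $m$ windows are disjoint, the local events factor, and each $\varphi_{i_k}$ is replaced by $\tilde\gamma_S(1+o(1))$; hence
\[
\mathbb{E}\Bigl[\bigl(n^{-1/2}\chi(\mathcal{S}_n,\tilde{\mathcal{S}}_n)\bigr)^m\Bigr]=\tilde\gamma_S^{2m}\,\mathbb{E}\Bigl[\bigl(n^{-1/2}\textstyle\sum_{i,j=0}^n G_D(S_i-\tilde S_j)\bigr)^m\Bigr]+o(1).
\]

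The second reduction is the convergence $n^{-1/2}\sum_{i,j=0}^n G_D(S_i-\tilde S_j)\Rightarrow 5^{5/2}\int_0^1\int_0^1 G(B_t-\tilde B_s)\,\mathrm{d}t\,\mathrm{d}s$, again with all moments (the $C^m(m!)^{3/2}n^{m/2}$ bound controls tightness of moments as before). By Donsker, $(S_{\lfloor nt\rfloor}/\sqrt n,\tilde S_{\lfloor ns\rfloor}/\sqrt n)\Rightarrow(5^{-1/2}B_t,5^{-1/2}\tilde B_s)$ in $C([0,1];\mathbb{R}^5)^2$; together with $G_D(x)\sim\tfrac{5}{4\pi^2}|x|_+^{-3}$ and $G(x)=\tfrac1{4\pi^2}|x|^{-3}$ this gives $G_D(S_i-\tilde S_j)\approx 5^{5/2}n^{-3/2}G(B_{i/n}-\tilde B_{j/n})$ away from the near-diagonal block, so that $n^{-1/2}\sum_{i,j}G_D(S_i-\tilde S_j)\approx 5^{5/2}n^{-2}\sum_{i,j}G(B_{i/n}-\tilde B_{j/n})$, a Riemann sum for $5^{5/2}\int_0^1\int_0^1 G$. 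The only delicate point is the near-diagonal block, handled by truncating $G_D$ at spatial scale $\varepsilon\sqrt n$, passing to the limit, and letting $\varepsilon\to0$ using the routine local-CLT estimate $\mathbb{E}\sum_{i,j=0}^n G_D(S_i-\tilde S_j)\mathbf{1}\{|S_i-\tilde S_j|\le\varepsilon\sqrt n\}\le C\varepsilon^2\sqrt n$ and the analogous bound for the continuous integral. Assembling the two reductions with moment-determinacy completes the proof.

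The principal obstacle is the weight replacement in the first reduction: converting the heuristic that $\mathbb{P}^{S_i}(\tau_{\mathcal{S}_n}=\infty)$ is close to $\tilde\gamma_S$ and asymptotically independent of everything else into a statement that survives being raised to the $m$-th power. This needs a local-CLT/decoupling argument for a single window and, for the joint version, the sharp bounds of Section~\ref{sec:moments}: only because the coincidence sub-sums carry a strictly smaller power of $n$ (resp.\ of $\log n$) can they be discarded, so anything weaker than Propositions~\ref{prop:cross-term-moment} and~\ref{prop:error-term-moment} would not suffice. The remaining ingredients --- Donsker, the Riesz-potential Riemann sum, the near-diagonal truncation, and moment-determinacy --- are routine.
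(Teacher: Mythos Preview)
Your overall approach matches the paper's: method of moments with Carleman's condition to reduce to moment convergence, replacement of the escape weights by $\tilde\gamma_S$, then Donsker plus a truncation for the singularity of $G$. The difference --- and the place where your argument has a genuine gap --- is in how the weight replacement is carried out.

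You write $\varphi_i = \mathbf{1}\{S_i \notin S[0,i-1]\}\,\mathbb{P}^{S_i}(\tau_{\mathcal{S}_n} = \infty)$ and then claim that on well-separated tuples ``the local events factor, and each $\varphi_{i_k}$ is replaced by $\tilde\gamma_S(1+o(1))$''. But $\varphi_{i_k}$ depends on the \emph{entire} range $\mathcal{S}_n$, not just on a window around time $i_k$: the escaping walk must avoid all of $S[0,n]$, including the portions near the other $i_j$'s. So even when the $i_k$ are far apart the events do not factor, and the error in replacing $\mathbb{P}^{S_{i_k}}(\tau_{\mathcal{S}_n} = \infty)$ by a local-window version is governed by $\sum_{k' \notin \text{window}} G_D(S_{i_k} - S_{k'})$ --- a self-interaction of $S$, not a cross-term between $S$ and $\tilde S$. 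Propositions~\ref{prop:cross-term-moment} and~\ref{prop:error-term-moment} bound moments of cross-terms only; they do not control this localization error.

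The paper resolves this by first passing from $\chi(\mathcal{S}_n,\tilde{\mathcal{S}}_n)$ to the localized $\chi_{2^L}$ of \eqref{eq:defchibn}, in which each escape probability sees only a segment of length $n/2^L$. This replacement is justified by Proposition~\ref{prop:chiischibn}, whose proof rests on Lemma~\ref{lem:aux-error-moment} --- precisely the moment bound for the self-interaction terms $X_l, Y_l, Z_l, W_l$ that your argument is missing. Once the weights are local, the factorization and replacement by $\tilde\gamma_S$ proceed as you describe (this is the paper's event $A_n^{(L,j)}$). So your plan is correct in spirit, but the tool you need at the crucial step is Lemma~\ref{lem:aux-error-moment} (equivalently, the $\chi \to \chi_{2^L}$ reduction), not the propositions you cite.
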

    
    \begin{proof}
        The proof is almost identical to Proposition 6.1 of \cite{AsselahSchapiraSousi2019}, so we only present a sketch here. By Carleman's condition and the moment estimates of Lemma~\ref{lem:cross-term-moment}, it suffices to show that all finite moments converge. Furthermore, by Proposition~\ref{prop:chiischibn}, we may replace $\chi(\mathcal{S}_n, \tilde{\mathcal{S}}_n)$ with $\chi_{2^L}$ where $1 \ll 2^L \ll n^{1/4}$. We define the event
        \[  
        A_n^{(L, j)}(\mathcal{S}_n, i) = \{ S_i \notin S[2^{-L}nj, i-1]\} \cap \{ (S_i + \mathcal{S}') \cap \mathcal{S}_n^{(L, j)} = \emptyset \}, \quad i \in [n]^{(L, j)}
        \]    
        From the viewpoint of $S_i$, $\mathcal{S}_n^{(L, j)}$ is a two-sided random walk. As such, it is reasonable that $\lim_{n \to \infty} \mathbb{P}(A_n^{(L, j)}(\mathcal{S}_n, i)) = \tilde{\gamma}_S$ for most values of $i$ and $j$ (that is to say, outside of a negligible proportion). With some more work, we can also show that this convergence holds even after conditioning on the position of $S_i$. That is, under some conditions on $i, j$, and $x$,
        \[  
        \lim_{n \to \infty} \mathbb{P}(A_n^{(L, j)}(\mathcal{S}_n, i) | S_i = x) = \tilde{\gamma}_S.
        \]
        Using this definition, we can rewrite $\chi(\mathcal{S}_n, \tilde{\mathcal{S}}_n)$ as
        \begin{align*}
        \chi_{2^L} &= \sum_{i, j = 1}^{2^L} \sum_{x \in \mathcal{S}_n^{(L, j)}} \sum_{y \in \tilde{S}_n^{(L, j)}} \mathbb{P}^x(\tau_{\mathcal{S}_n} = \infty) G_D(x - y) \mathbb{P}^y (\tau_{\tilde{S}_n} = \infty) \\
        &= \sum_{i, j = 1}^{2^L} \sum_{i_1 \in [n]^{(L, i_1)}}\sum_{j_1 \in [n]^{(L, j)}} \mathbb{P}(A_n^{(L, i)}(\mathcal{S}_n, i_1)) G_D(S_{i_1} - \tilde{S}_{j_1}) \mathbb{P}(A_n^{(L, j)}(\tilde{\mathcal{S}}_n, j_1)).
        \end{align*}
        Taking expectations, we get
        \[
            \mathbb{E} \chi_{2^L} =  \sum_{i, j = 1}^{2^L} \sum_{i' \in [n]^{(L, i)}}\sum_{j' \in [n]^{(L, j)}} \mathbb{E}\left[ \mathbb{P}(A_n^{(L, i)}(\mathcal{S}_n, i')|S_{i'}) G_D(S_{i'} - \tilde{S}_{j'}) \mathbb{P}(A_n^{(L, j)}(\tilde{\mathcal{S}}_n, j')|\tilde{S}_{j'}) \right] \\.
        \]
        By the arguments given above, we may replace the probability terms with $\tilde{\gamma}_S$ to get
        \begin{align*} 
        \mathbb{E} \left[\frac{1}{\sqrt{n}} \chi_{2^L} \right]  &\sim \frac{1}{\sqrt{n}}\sum_{i, j = 1}^{2^L} \sum_{i' \in [n]^{(L, i)}}\sum_{j' \in [n]^{(L, j)}}  \mathbb{E} [\tilde{\gamma}_S^2 G_D(S_{i'} - \tilde{S}_{j'})] \\
        &= \tilde{\gamma}_S^2 \mathbb{E} \left[ \frac{1}{\sqrt{n}} \sum_{i, j=1}^n G_D(S_i - \tilde{S}_j)\right].
        \end{align*}
        From here, convergence to the Brownian integral is a easy consequence of Donsker's invariance principle and the fact that $G_D(x) = 5G(x) + O(|x|^{-5})$. However, there is one problem in that $G(x)$ is singular at $x = 0$. This can be resolved by a smooth approximation, following methods similar to either Lemma~\ref{lem:smooth-error} of this paper or \cite{AsselahSchapiraSousi2019}, Lemma 6.8.
        
        The proof for higher moments is similar. First write
        \[  
        |\chi_{2^L}|^m= \sum \prod_{k=1}^m \mathbb{P}(A_n^{(L, i_k)} (\mathcal{S}_n, i_k')) G_D(S_{i_k'} - \tilde{S}_{j_k'}) \mathbb{P}(A_n^{(L, i_k)} (\tilde{\mathcal{S}}_n, j_k')),
        \]
        where the summation is over all $1 \le i_k, j_k \le 2^L$ and $i_k' \in [n]^{(L, i_k)}, j_k' \in [n]^{(L, j_k)}$ ($1 \le k \le m$). By the moment bounds of Lemma~\ref{lem:cross-term-moment}, we may only consider the cases where $i_k, j_k$ are sufficiently far apart. In these cases, we may similarly replace $\mathbb{P}(A_n^{(L, i_k)} (\mathcal{S}_n, i_k')), \mathbb{P}(A_n^{(L, j_k)} (\tilde{\mathcal{S}}_n, j_k'))$ with $\tilde{\gamma}_S$ to get
        \[  
        \mathbb{E} \left| \frac{1}{\sqrt{n}} \chi_{2^L} \right|^m \sim \tilde{\gamma}_S^{2m} \mathbb{E} \left[ \left| \frac{1}{\sqrt{n}} \sum_{i, j=1}^n G_D(S_i - \tilde{S}_j)\right|^m \right].
        \]
    \end{proof}
    
    \begin{prop}\label{prop:chibnuprbnd}
    Let $b_n = o(n^{1/3} (\log n)^{-8/3})$ and recall the definition of $\chi_{b_n}$ from equation \eqref{eq:defchibn}. We have the moment upper bound,
    \begin{equation}
    \begin{aligned}
    &\limsup_{n \to \infty}\frac{1}{b_n} \log \sum_{m=0}^{\infty} \frac{ \theta^m \left(\frac{b_n}{n}\right)^{\frac{m}{4}}}{m!} \left\{ \mathbb{E}[ \chi_{b_n}^m]  \right\}^{1/2} \le 
    \\& \lim_{t \to \infty} \frac{1}{t}\log \sum_{m=0}^{\infty} \frac{\theta^m}{m!} t^{\frac{m}{4}}\mathbb{E}\left[\left(5^{5/2} \tilde{\gamma}_S^2 \int_0^1 \int_0^1 G(B_t - \tilde{B}_s) dt ds\right)^m \right]^{1/2}.
    \end{aligned}
    \end{equation}
    
    \end{prop}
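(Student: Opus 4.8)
The plan is to transfer the moderate–deviation estimate for the continuous functional $Y := 5^{5/2}\tilde{\gamma}_S^2\int_0^1\int_0^1 G(B_t - \tilde{B}_s)\,dt\,ds$ to $\chi_{b_n}$ via a \emph{sharp} asymptotic comparison of moments. The starting point is that $\chi_{b_n}/\sqrt n \xrightarrow{d} Y$ by Lemma~\ref{lem:weak-conv} together with Proposition~\ref{prop:chiischibn}, while the uniform bound $\mathbb{E}[\chi_{b_n}^m]\le C^m (m!)^{3/2} n^{m/2}$ — which follows from Proposition~\ref{prop:cross-term-moment}, since $\chi_{b_n}$ is, up to a bounded factor, at most $\sum_{i,j\le n} G_D(S_i-\tilde S_j)$ — gives the uniform integrability needed for $\mathbb{E}[(\chi_{b_n}/\sqrt n)^m]\to\mathbb{E}[Y^m]$ for each fixed $m$. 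This already shows the series in the statement converges and is of the correct order $e^{O(\theta^4 b_n)}$; the whole difficulty is that the dominant terms of the series sit at $m\asymp b_n\to\infty$, so fixed-$m$ convergence is not enough: one needs $\mathbb{E}[\chi_{b_n}^m]\le (1+o(1))^m\, n^{m/2}\,\mathbb{E}[Y^m]$ uniformly over $m$ up to order $b_n$.

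To obtain this uniform bound I would run a subadditivity argument over the $b_n$ time–blocks, in the spirit of the proof of \cite{Chen2010}, Theorem~8.5.3. Write $\Xi_N(\ell)$ for the analogue of $\chi_{b_n}$ built from two independent walks of length $N\ell$ cut into $N$ blocks of length $\ell$, so that $\chi_{b_n}=\Xi_{b_n}(n/b_n)$. Conditioning on the block endpoints $\{S_{k\ell}\}$ and $\{\tilde S_{k\ell}\}$ turns the blocks into independent bridges, and — because the probability weights in $\chi$ localize to single blocks — makes the $N^2$ block-pair contributions jointly product-like. This lets one set up
\[
\Phi(N):=\lim_{\ell\to\infty}\log\sum_{m=0}^{\infty}\frac{\theta^m N^{m/4}}{m!}\Big(\mathbb{E}\big[(\Xi_N(\ell)/\sqrt{N\ell})^m\big]\Big)^{1/2}
\]
and prove the approximate subadditivity $\Phi(N_1+N_2)\le\Phi(N_1)+\Phi(N_2)+\epsilon(N_1+N_2)$ for $N_1,N_2$ large and any fixed $\epsilon>0$. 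The error term absorbs the cross-block interactions between blocks of $S$ and temporally distant blocks of $\tilde S$: although these are of the same typical size as the diagonal contributions, their high exponential moments are of strictly smaller exponential order, which is precisely what the refined moment inequalities of Section~\ref{sec:moments} (Proposition~\ref{prop:cross-term-moment} together with the $(\log n)^{3m}$-type estimates of Lemma~\ref{lem:aux-error-moment}, as already used in Proposition~\ref{prop:chiischibn}) are designed to quantify. Fekete's lemma then gives $\Phi(N)/N\to\inf_N\Phi(N)/N$, and a routine argument controlling the dependence on $\ell$ yields $\limsup_{n}\frac{1}{b_n}\log\sum_m\frac{\theta^m(b_n/n)^{m/4}}{m!}(\mathbb{E}[\chi_{b_n}^m])^{1/2}\le\lim_N\Phi(N)/N$.

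Finally I would identify the limit: letting $\ell\to\infty$ with $N$ fixed and then $N\to\infty$, Lemma~\ref{lem:weak-conv} applied blockwise together with Brownian scaling gives $\Xi_N(\ell)/\sqrt{N\ell}\xrightarrow{d} Y$ with convergence of all moments, so $\lim_N\Phi(N)/N$ equals $\lim_{t\to\infty}\frac1t\log\sum_m\frac{\theta^m t^{m/4}}{m!}(\mathbb{E}[Y^m])^{1/2}$, which is exactly the claimed right-hand side. (If desired, this quantity is then evaluated as $\tfrac{27\cdot 5^5}{32}\theta^4\tilde{\gamma}_S^4\tilde{\kappa}(5,2)^8$ by applying the $p=2$ Gärtner–Ellis theorem~\ref{thm:gartner-ellis} to $Y$ together with the Brownian LDP of Theorem~\ref{thm:brownian-LDP} and the variational identity of Lemma~\ref{lem:gagliardo-nirenberg-constant}, which is what is needed to match \eqref{eq:cross-exp-moment}, but is not needed for the proposition itself.) The main obstacle is the subadditivity step: one must untangle the dependence created by the shared block endpoints and control the cross-block terms while keeping track of the sharp constant $5^{5/2}\tilde{\gamma}_S^2$ and of the optimal Gagliardo–Nirenberg constant hidden inside $\mathbb{E}[Y^m]$. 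A crude bound such as Proposition~\ref{prop:cross-term-moment} gives the right power of $\theta$ but the wrong constant, so the bookkeeping of how the $m$-th power distributes among the $b_n$ blocks, and of the error contributions, must be carried out carefully.
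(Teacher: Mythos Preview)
Your outline has the right high-level shape --- subadditivity in the number of blocks followed by weak convergence on a fixed number of blocks --- and this is exactly what the paper does. However, there is a genuine gap in the subadditivity step as you describe it, and the paper's implementation is different in a way that matters.

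You propose splitting $\Xi_{N_1+N_2}$ into $\Xi_{N_1}+\Xi_{N_2}$ plus ``cross-block interactions between blocks of $S$ and temporally distant blocks of $\tilde S$'', and you claim these cross terms have ``strictly smaller exponential order''. This is false. A term like $\chi(S[\text{block }a],\tilde S[\text{block }b])$ with $a\le N_1<b$ is, in law, identical to $\chi(S[\text{block }1],\tilde S[\text{block }1])$: the two walks are independent regardless, and the time offset is irrelevant. So each cross term has exactly the same moment/LDP behaviour as a diagonal term, and there are $\Theta(N_1N_2)$ of them. Lemma~\ref{lem:aux-error-moment} does not help here: that lemma bounds expressions of the form $\sum G_D(S_i-\tilde S_{j_1})G_D(\tilde S_{j_1}-\tilde S_{j_2})$, which arise when comparing $\chi$ to $\chi_{b_n}$ and are genuinely lower order; the cross-block pieces in your decomposition are honest $\chi$-terms of full size. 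With that route you cannot get $\Phi(N_1+N_2)\le\Phi(N_1)+\Phi(N_2)+o(N_1+N_2)$.

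What the paper does instead is exploit the bilinear structure of $\chi_{b_n}$. Writing
\[
\chi_{b_n}=\sum_{x\in\mathbb{Z}^5}\mathcal{F}^x(S)\,\mathcal{F}^x(\tilde S),\qquad \mathcal{F}^x(S)=\sum_{k=0}^{b_n-1}\mathcal{G}^x_k(\theta^{kn/b_n}S),
\]
one has $\mathbb{E}[\chi_{b_n}^m]=\sum_{x_1,\dots,x_m}\big(\mathbb{E}\prod_i\mathcal{F}^{x_i}(S)\big)^2$, so the square root $(\mathbb{E}[\chi_{b_n}^m])^{1/2}$ is an $\ell^2$-norm in $(x_1,\dots,x_m)$ of a quantity that is \emph{additive} (not merely subadditive) in the block index. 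This is precisely the input to Theorem~6.1.1 of \cite{Chen2010}, which yields the exact multiplicative bound
\[
\sum_{m\ge0}\frac{u^m}{m!}\big(\mathbb{E}[\chi_{b_n}^m]\big)^{1/2}\ \le\ \prod_{i=0}^{b_n/t-1}\sum_{m\ge0}\frac{u^m}{m!}\big(\mathbb{E}[(\text{$t$-block piece})^m]\big)^{1/2}
\]
with \emph{no} cross-term error at all. After taking $\log$ and dividing by $b_n$, one is left with the $t$-block quantity, to which Lemma~\ref{lem:weak-conv} applies, and then one sends $t\to\infty$. The square root in the series is not cosmetic; it is what makes the Minkowski/Cauchy--Schwarz mechanism behind Chen's theorem applicable and lets the cross terms be absorbed for free.
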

    
    \begin{proof}
    
    Let $t$ be an integer. We will introduce appropriate notation so that we can apply the subadditivity arguments of \cite{Chen2010}, Chapter 6. In his notation, we will set the values $n_1,n_2,\ldots, n_{\frac{b_n}{t}}$ as $n_i  = i \frac{tn}{b_n}$. We can also define,
    \begin{equation}
    \begin{aligned}
    &\mathcal{F}^x_k(S) = \sum_{i=0}^{tk-1} \sum_{z \in S\left[i \frac{n}{b_n}, (i+1) \frac{n}{b_n} \right]} \mathbb{P}^z(\tau_{S\left[i \frac{n}{b_n}, (i+1) \frac{n}{b_n} \right]} = \infty) \tilde{G}_D(x-z),\\
    & \mathcal{G}^x_k(S) =\sum_{i=0}^{t-1} \sum_{z \in S\left[i \frac{n}{b_n},(i+1) \frac{n}{b_n} \right]} \mathbb{P}^z(\tau_{S\left[i\frac{n}{b_n},(i+1)  \frac{n}{b_n} \right]} = \infty) \tilde{G}_D(x-z).
    \end{aligned}
    \end{equation}
    
    It is clear to see that $\mathcal{F}$ is adapted to $S[0, i \frac{tn}{b_n}]$ and that $\mathcal{G}^{x+c}_k(S+c) = \mathcal{G}^x_k(S)$. Furthermore, we have the subadditivity relation $\mathcal{F}^x_{k+1}(S) = \mathcal{F}^x_k(S) + \mathcal{G}^x_{k+1}(\theta^{n_k} S)$, where $\theta$ is the time shift operator. Thus, we can apply Theorem 6.1.1 of \cite{Chen2010} to assert that for any $u$, we must have that,
    \begin{equation}
    \begin{aligned}
    &\sum_{m=0}^{\infty} \frac{u^m}{ m!} \left\{ \mathbb{E}[ \chi_{b_n}^m]  \right\}^{1/2} \\&\le \prod_{i=0}^{\frac{b_n}{t} -1} \sum_{m=0}^{\infty}  \frac{u^m}{m!} \Bigg\{ \mathbb{E}\Bigg[ \sum_{j_1,j_2=it }^{(i+1)t-1} \sum_{z_1 \in S\left[ j_1 \frac{n}{b_n}, (j_1+1) \frac{n}{b_n} \right]} \sum_{z_2 \in \tilde{S}\left[ j_2 \frac{n}{b_n}, (j_2+1) \frac{n}{b_n}  \right]} \mathbb{P}^{z_1}(\tau_{S\left[ j_1 \frac{n}{b_n}, (j_1+1) \frac{n}{b_n} \right]} = \infty)\\
    &\hspace{2cm} \times \mathbb{P}^{z_2}(\tau_{\tilde{S}\left[ j_2 \frac{n}{b_n}, (j_2+1) \frac{n}{b_n}  \right]} = \infty) G_D(z_1 - z_2)\Bigg]\Bigg\}^{1/2}.
    \end{aligned}
    \end{equation}
    
    Thus, we have,
    \begin{equation}
    \begin{aligned}
    &\frac{1}{b_n} \log \sum_{m=0}^{\infty} \frac{ \theta^m \left(\frac{b_n}{n}\right)^{\frac{m}{4}}}{m!} \left\{ \mathbb{E}[ \chi_{b_n}^m]  \right\}^{1/2}\\
    & \le \frac{1}{t}\log\sum_{m=0}^{\infty}  \frac{\theta^m \left(\frac{b_n}{n}\right)^{\frac{m}{4}}}{m!} \Bigg\{ \mathbb{E}\Bigg[ \sum_{j_1,j_2=0 }^{t-1} \sum_{z_1 \in S\left[ j_1 \frac{n}{b_n}, (j_1+1) \frac{n}{b_n} \right]} \sum_{z_2 \in \tilde{S}\left[ j_2 \frac{n}{b_n}, (j_2+1) \frac{n}{b_n}  \right]} \mathbb{P}^{z_1}(\tau_{S\left[ j_1 \frac{n}{b_n}, (j_1+1) \frac{n}{b_n} \right]} = \infty)\\
    &\hspace{2cm} \times \mathbb{P}^{z_2}(\tau_{\tilde{S}\left[ j_2 \frac{n}{b_n}, (j_2+1) \frac{n}{b_n}  \right]} = \infty) G_D(z_1 - z_2)\Bigg]\Bigg\}^{1/2}.
    \end{aligned}
    \end{equation}
    
    For the latter quantity, we can apply weak convergence from Lemma \ref{lem:weak-conv} to assert that, 
    \begin{equation}
    \begin{aligned}
    &\lim_{n \to \infty}\frac{1}{t}\log\sum_{m=0}^{\infty}  \frac{\theta^m \left(\frac{b_n}{n}\right)^{\frac{m}{4}}}{m!} \Bigg\{ \mathbb{E}\Bigg[ \sum_{j_1,j_2=0 }^{t-1} \sum_{z_1 \in S\left[ j_1 \frac{n}{b_n}, (j_1+1) \frac{n}{b_n} \right]} \sum_{z_2 \in \tilde{S}\left[ j_2 \frac{n}{b_n}, (j_2+1) \frac{n}{b_n}  \right]} \mathbb{P}^{z_1}(\tau_{S\left[ j_1 \frac{n}{b_n}, (j_1+1) \frac{n}{b_n} \right]} = \infty)\\
    &\hspace{2cm} \times \mathbb{P}^{z_2}(\tau_{\tilde{S}\left[ j_2 \frac{n}{b_n}, (j_2+1) \frac{n}{b_n}  \right]} = \infty) G_D(z_1 - z_2)\Bigg]\Bigg\}^{1/2}\\
    &= \frac{1}{t} \log \sum_{m=0}^{\infty} \frac{\theta^m}{m!} t^{\frac{m}{4}}\mathbb{E}\left[\left(5^{5/2} \tilde{\gamma}_S^2 \int_0^1 \int_0^1 G(B_t - \tilde{B}_s) dt ds\right)^m \right]^{1/2}
    \end{aligned}
    \end{equation}
    
    \end{proof}
    
    \subsection{Moment Lower Bounds for Large Deviations}
    
    Using a Feynman--Kac decomposition, we can derive appropriate lower bounds on $\chi_{b_n}$.
    \begin{prop}
    \label{prop:lower-bound}
    Let $b_n=o( n^{1/3} (\log n)^{-8/3})$Recall the definition of $\chi_{b_n}$ from equation \eqref{eq:defchibn}. We have the moment lower bound,
    \begin{equation}
    \begin{aligned}
    &\limsup_{n \to \infty}\frac{1}{b_n} \log \sum_{m=0}^{\infty} \frac{ \theta^m \left(\frac{b_n}{n}\right)^{\frac{m}{4}}}{m!} \left\{ \mathbb{E}[ \chi_{b_n}^m]  \right\}^{1/2} \ge 
    \\& \sup_{f \in L^2(\mathbb{R}^5),g \in \mathcal{F}}\left\{\int_{\mathbb{R}^5} \sqrt{5} \theta \tilde{\gamma}_S  f*\tilde{G}(x) g^2(x) \text{d}x - \frac{1}{10} \int_{\mathbb{R}^5} |\nabla g(x)|^2 \right\},
    \end{aligned}
    \end{equation}
    where $\mathcal{F}$ is the collection of all function such that $\{\int_{\mathbb{R}^5} g(x)^2 \text{d}x=1, \int_{\mathbb{R}^5}|\nabla g(x)|^2 \text{d}x< \infty \}$.
    
    \end{prop}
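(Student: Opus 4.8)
The plan is to adapt the Feynman--Kac / Donsker--Varadhan lower bound for intersection-type functionals, as developed in \cite{Chen2010} (Chapter~8) and \cite{AdhikariOkada2023}, to the localized cross term $\chi_{b_n}$. Write $u := \theta(b_n/n)^{1/4}$; since $\limsup\ge\liminf$, it suffices to prove that for every $g\in\mathcal{F}$,
\[
\liminf_{n\to\infty}\frac{1}{b_n}\log\sum_{m=0}^{\infty}\frac{u^m}{m!}\bigl(\mathbb{E}\,\chi_{b_n}^m\bigr)^{1/2}\ \ge\ \sqrt{5}\,\theta\,\tilde\gamma_S\Bigl(\int_{(\mathbb{R}^5)^2}g^2(x)G(x-\tilde x)g^2(\tilde x)\,dx\,d\tilde x\Bigr)^{1/2}-\frac{1}{10}\int_{\mathbb{R}^5}|\nabla g|^2,
\]
since $\bigl(\int g^2 G g^2\bigr)^{1/2}=\|\tilde G * g^2\|_{L^2}=\sup_{\|f\|_{L^2}\le1}\int_{\mathbb{R}^5}(f*\tilde G)(x)g^2(x)\,dx$ by $\tilde G*\tilde G=G$. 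To get a lower bound we build an event $\Omega_n$ of exponentially small probability (rate computed in Step~2) on which $S$ and $\tilde S$ are simultaneously ``squeezed'' into a common ball of radius $R:=\sqrt{n/b_n}$ with rescaled occupation density close to $g^2$, and on which $\chi_{b_n}$ is essentially deterministic and of the exact order $\sqrt{n b_n^3}$.

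\textit{Step 1: replacing escape probabilities.} On $\Omega_n$ each sub-block of length $n/b_n$ is, from the viewpoint of a typical site, asymptotically a two-sided walk, so the escape probabilities in $\chi_{b_n}$ can be replaced block by block by $\tilde\gamma_S$ with uniformly $o(1)$ relative error; this is exactly the computation behind Lemma~\ref{lem:weak-conv} (cf. \cite{AsselahSchapiraSousi2019}), which one checks remains valid under the conditioned law. Thus on $\Omega_n$,
\[
\chi_{b_n}=(\tilde\gamma_S^2+o(1))\sum_{a\in\mathbb{Z}^5}\Bigl(\sum_{i'=1}^{n}\tilde G_D(S_{i'}-a)\Bigr)\Bigl(\sum_{j'=1}^{n}\tilde G_D(\tilde S_{j'}-a)\Bigr)=(\tilde\gamma_S^2+o(1))\sum_{i',j'=1}^{n}G_D(S_{i'}-\tilde S_{j'}),
\]
using $\tilde G_D*\tilde G_D=G_D$. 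As in the proof of Lemma~\ref{lem:weak-conv}, throughout we work with a mollified kernel (so that nearby pairs do not dominate) and only remove the mollification at the end.

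\textit{Step 2: cost and resummation.} By the small-ball / Donsker--Varadhan estimates for the simple random walk in $\mathbb{Z}^5$ — with Dirichlet-form constant $\tfrac{1}{2d}=\tfrac{1}{10}$ — the probability that each walk has rescaled occupation density $x\mapsto(R^5/n)L_n(\lfloor Rx\rfloor)$ close to $g^2$ (here $L_n$ is the occupation measure of the walk on $[0,n]$) is $\exp\{-b_n(\tfrac{1}{10}\int|\nabla g|^2+\tfrac{1}{10}\int|\nabla g|^2+o(1))\}$, one factor per walk. On this event, rescaling $z=Ru$ and using $G_D(z)=5G(z)+O(|z|^{-5})$ with $G(R\zeta)=R^{-3}G(\zeta)$,
\[
\sum_{i',j'=1}^{n}G_D(S_{i'}-\tilde S_{j'})=(1+o(1))\,5\,n^2R^{-3}\!\int_{(\mathbb{R}^5)^2}\!g^2(u)G(u-v)g^2(v)\,du\,dv=(1+o(1))\,5\sqrt{n b_n^3}\!\int_{(\mathbb{R}^5)^2}\!g^2 G g^2.
\]
Consequently $\mathbb{E}[\chi_{b_n}^m]\ge(1-o(1))^m\bigl(5\tilde\gamma_S^2\sqrt{n b_n^3}\int g^2 G g^2\bigr)^m\mathbb{P}(\Omega_n)$, and since $(b_n/n)^{m/4}(\sqrt{n b_n^3})^{m/2}=b_n^m$ and $\sum_m x^m/m!=e^x$,
\[
\sum_{m=0}^{\infty}\frac{u^m}{m!}\bigl(\mathbb{E}\,\chi_{b_n}^m\bigr)^{1/2}\ \ge\ \exp\Bigl\{-\tfrac{b_n}{10}\!\int|\nabla g|^2(1+o(1))\Bigr\}\,\exp\Bigl\{(1-o(1))\sqrt{5}\,\theta\,\tilde\gamma_S\,b_n\bigl(\textstyle\int g^2 G g^2\bigr)^{1/2}\Bigr\}.
\]
Taking $\tfrac{1}{b_n}\log$, letting $n\to\infty$, removing the mollification, and then taking $\sup_{g\in\mathcal{F}}$ gives the claim.

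\textit{Main obstacle.} The crux is the sharp Donsker--Varadhan lower bound of Step~2 with the exact constant $\tfrac{1}{10}$: one must reconcile the three scales $n$ (time), $R=\sqrt{n/b_n}$ (space) and $b_n$ ($\asymp$ the number of quasi-independent diffusive increments), and — crucially — run this squeezing estimate \emph{simultaneously} with the block-by-block replacement of escape probabilities by $\tilde\gamma_S$, with errors uniform enough that the product over the $m\asymp b_n$ dominant indices stays harmless. A secondary point is upgrading the weak-convergence input of Step~1 from the free law to the tilted/conditioned law; this is expected to be routine but must be verified.
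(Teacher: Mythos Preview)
Your approach is genuinely different from the paper's and, while the arithmetic is consistent, there is a real gap in Step~1 and an avoidable detour overall.

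\textbf{The gap.} You want to replace the escape probabilities $\mathbb{P}^z(\tau_{\text{block}}=\infty)$ by $\tilde\gamma_S$ \emph{pathwise} on the confinement event $\Omega_n$, citing Lemma~\ref{lem:weak-conv}. But that lemma establishes convergence of \emph{moments under the free law}; it says nothing about pathwise behaviour under the tilted/conditioned law. On $\Omega_n$ the walk is forced into a ball of radius $R=\sqrt{n/b_n}$, precisely the diffusive scale of a single block, so the block is not ``locally free'' in any obvious sense---its law is a bridge-type object, and whether its escape statistics still concentrate at $\tilde\gamma_S$ is exactly what would need proof. Moreover you need this uniformly enough that the compounded error $(1-o(1))^m$ with $m\asymp b_n$ does not kill the exponential rate; a mere $o(1)$ does not suffice. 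You flag this honestly in your ``Main obstacle'' paragraph, but it is not a secondary technicality: it is the whole difficulty, and the route you propose gives no mechanism to resolve it.

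\textbf{What the paper does instead.} The paper sidesteps both issues with a single algebraic move. Write $\chi_{b_n}=\sum_{x}\mathcal{F}^x(S)\mathcal{F}^x(\tilde S)$, so that by independence $\mathbb{E}[\chi_{b_n}^m]=\sum_{x_1,\dots,x_m}\bigl(\mathbb{E}\prod_i\mathcal{F}^{x_i}(S)\bigr)^2$. Cauchy--Schwarz against $\prod_i f_{b_n}(x_i)$ (with $\sum_x f_{b_n}^2=1$) gives
\[
\bigl(\mathbb{E}[\chi_{b_n}^m]\bigr)^{1/2}\ \ge\ \mathbb{E}\Bigl[\Bigl(\sum_x \mathcal{F}^x(S)f_{b_n}(x)\Bigr)^m\Bigr],
\]
so that $\sum_m \frac{u^m}{m!}(\mathbb{E}\chi_{b_n}^m)^{1/2}\ge \mathbb{E}\exp\bigl[u\sum_x\mathcal{F}^x(S)f_{b_n}(x)\bigr]$, an exponential moment of a \emph{single-walk} additive functional. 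This is exactly the setting for the Markov-operator/Feynman--Kac lower bound: one tests against a discretized $g$, takes $n\to\infty$ via the invariance principle (under the \emph{free} law, so Lemma~\ref{lem:weak-conv} applies as stated), and reads off the constant $\tfrac{1}{10}$ from the variational formula. There is no conditioning, no pathwise replacement of escape probabilities, and no error compounding over $m$. The localization to blocks of length $n/b_n$ in the definition of $\chi_{b_n}$ was engineered precisely so that this functional is Markovian in the block index, which is what makes the operator argument run; your event-building approach does not use this structure.
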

    
    \begin{proof}
    
    Let $f$ be any function such that $\int_{\mathbb{R}^5} f^2(x) \text{d}x = 1$. We can consider the discretization,
    \begin{equation}
    f_{b_n}(x):= \frac{1}{\sum_{z \in \mathbb{Z}^5} f^2\left( z \sqrt{\frac{b_n}{n}} \right)} f\left( x \sqrt{\frac{b_n}{n}} \right).
    \end{equation}
    One can observe that $\sum_{x \in \mathbb{Z}^5} f^2_{b_n}(x) =1$ and, in addition, we also have that the normalizing factor,
    \begin{equation}
    \sum_{x \in \mathbb{Z}^5} f^2 \left( x \sqrt{\frac{b_n}{n}}\right) \asymp \sqrt{\frac{n^5}{b_n^5}}.
    \end{equation}
    
    Define 
    $$
    \mathcal{F}^x(S):= \sum_{i=0}^{b_n} \sum_{z \in S[i \frac{n}{b_n}, (i+1) \frac{n}{b_n}]} \mathbb{P}^z(\tau_{ S[i \frac{n}{b_n}, (i+1) \frac{n}{b_n}]} = \infty) \tilde{G}_D(x-z).
    $$
    We have that $\chi_{b_n} = \sum_{x \in \mathbb{Z}^5}\mathcal{F}^x(S) \mathcal{F}^x(\tilde{S})$. Notice that we can write,
    $$
    \mathbb{E}[\chi_{b_n}^m] = \sum_{x_1,\ldots,x_m} \mathbb{E}\left[\prod_{i=1}^m \mathcal{F}^{x_i}(S)\right]^2.
    $$
    As such, we can apply the Cauchy--Schwartz inequality to say that,
    \begin{equation}
    \begin{aligned}
    &(\mathbb{E}[\chi^m_{b_n}])^{1/2} = \left( \sum_{x_1,\ldots,x_m} \mathbb{E}\left[\prod_{i=1}^m \mathcal{F}^{x_i}(S)\right]^2\right)^{1/2} \left( \sum_{x_1,\ldots,x_m} \prod_{i=1}^m f_{b_n}(x_i)^2 \right)^{1/2}\\
    & \ge \mathbb{E}\left[\sum_{x} \mathcal{F}^x(S) f_{b_n}(x) \right]^m = \mathbb{E} \left( \sum_{i=0}^{b_n} \sum_{z \in S\left[i\frac{n}{b_n},(i+1) \frac{n}{b_n} \right]}\mathbb{P}^z(\tau_{S\left[i \frac{n}{b_n}, (i+1) \frac{n}{b_n} \right]} = \infty) \tilde{f}_{b_n}(z) \right)^m.
    \end{aligned}
    \end{equation}
    Here,
    $$
    \tilde{f}_{b_n}(z) := \sum_{x \in \mathbb{Z}^5} f_{b_n}(x) \tilde{G}_D(z-x).
    $$
    As such, we see that,
    \begin{equation}\label{eq:Exponential}
    \sum_{m=0}^{\infty} \frac{u^m}{m!} \left(\mathbb{E}[\chi_{b_n}^m]\right)^{1/2} \ge \mathbb{E}\exp\left[ u \sum_{i=0}^{b_n} \sum_{z \in S\left[i\frac{n}{b_n},(i+1) \frac{n}{b_n} \right]}\mathbb{P}^z(\tau_{S\left[i \frac{n}{b_n}, (i+1) \frac{n}{b_n} \right]} = \infty) \tilde{f}_{b_n}(z)  \right].
    \end{equation}
    
    We can introduce the Markov operator $\mathcal{M}^u_{\tilde{f}_{b_n}}$ that acts on functions  $g$ as follows:
    \begin{equation}
    \mathcal{M}^u_{\tilde{f}_{b_n}}g(x):= g\left(x+ S\left(\frac{n}{b_n}\right) \right) \mathbb{E}\exp \left[ u \sum_{z\in S\left[0, \frac{n}{b_n} \right]} \mathbb{P}^z(\tau_{S[0,  \frac{n}{b_n}]} = \infty)  \tilde{f}_{b_n}(z)\right].
    \end{equation}
    
    Roughly speaking, the right-hand side of equation \eqref{eq:Exponential} can be written as $\langle 1, (\mathcal{M}^{u}_{b_n})^{b_n} \delta_0 \rangle.$ (We remark here that repeated applications of the operator $\mathcal{M}^u_{b_n} $ introduce summation over the later times of the random walk inside the exponential.) Since we cannot directly evaluate the inner product on the constant function $1$ or the function $\delta_0$, we do have to make slight adjustments to our analysis.
    
    First, notice that, by Holder's inequality, we have
    \begin{equation}
    \begin{aligned}
    &\log \mathbb{E}\exp\left[ u \sum_{i=0}^{b_n} \sum_{z \in S\left[i\frac{n}{b_n},(i+1) \frac{n}{b_n} \right]}\mathbb{P}^z(\tau_{S\left[i \frac{n}{b_n}, (i+1) \frac{n}{b_n} \right]} = \infty) \tilde{f}_{b_n}(z)  \right] \\
    & \ge (1+\epsilon)\log \mathbb{E}\exp \left[ \frac{u}{1+\epsilon} \sum_{i=1}^{b_n}\sum_{z \in S\left[i\frac{n}{b_n},(i+1) \frac{n}{b_n} \right]}\mathbb{P}^z(\tau_{S\left[i \frac{n}{b_n}, (i+1) \frac{n}{b_n} \right]} = \infty) \tilde{f}_{b_n}(z) \right] \\&- \epsilon \log \mathbb{E} \exp \left[ - \frac{1+\epsilon}{\epsilon} u\sum_{z \in S[0, \frac{n}{b_n}]} \mathbb{P}^z(\tau_{S[0, \frac{n}{b_n}]} = \infty) \tilde{f}_{b_n}(z)\right]
    \end{aligned}
    \end{equation}
    
    We now set $u = \frac{\theta b_n^{1/4}}{n^{1/4}}$, to see that,
    by the analysis of the upper bound of $\chi_{b_n}$ from Proposition \ref{prop:chibnuprbnd}, one can bound,
    \begin{equation}
    \lim_{n \to \infty} \frac{1}{b_n} \log \mathbb{E} \exp \left[- \frac{1+\epsilon}{\epsilon}  \frac{\theta b_n^{1/4}}{n^{1/4}} \sum_{z \in S[0, \frac{n}{b_n}]} \mathbb{P}^z(\tau_{S\left[0, \frac{n}{b_n}\right]} = \infty) \tilde{f}_{b_n}(z)\right]  = 0.
    \end{equation}
    
    We first take $n \to \infty$ and then $\epsilon \to 0$ in order to show,
    \begin{equation}
    \begin{aligned}
    &\liminf_{n \to \infty}  \frac{1}{b_n}\log \mathbb{E}\exp\left[ \frac{\theta b_n^{1/4}}{n^{1/4}} \sum_{i=0}^{b_n} \sum_{z \in S\left[i\frac{n}{b_n},(i+1) \frac{n}{b_n} \right]}\mathbb{P}^z(\tau_{S\left[i \frac{n}{b_n}, (i+1) \frac{n}{b_n} \right]}) \tilde{f}_{b_n}(z) \right]\\
    &\ge \limsup_{\epsilon \to 0} \limsup_{n \to \infty}\frac{(1+\epsilon)}{b_n}\log \mathbb{E}\exp \left[ \frac{\theta b_n^{1/4}}{(1+\epsilon)n^{1/4}} \sum_{i=1}^{b_n}\sum_{z \in S\left[i\frac{n}{b_n},(i+1) \frac{n}{b_n} \right]}\mathbb{P}^z(\tau_{S\left[i \frac{n}{b_n}, (i+1) \frac{n}{b_n} \right]} = \infty) \tilde{f}_{b_n}(z) \right] .
    \end{aligned}
    \end{equation}
    
    Now, notice that for any function $\xi$ that is bounded from above by some constant $C$, we have that 
    \begin{equation}
    \begin{aligned}
    \mathbb{E}\exp \left[ \frac{\theta b_n^{1/4}}{n^{1/4}} \sum_{i=1}^{b_n}\sum_{z \in S\left[i\frac{n}{b_n},(i+1) \frac{n}{b_n} \right]}\mathbb{P}^z(\tau_{S\left[i \frac{n}{b_n}, (i+1) \frac{n}{b_n} \right]} = \infty) \tilde{f}_{b_n}(z) \right] &\ge C^{-2} \Bigg\langle \xi, \left(\mathcal{M}^{\frac{\theta b_n^{1/4}}{n^{1/4}}}_{\tilde{f}_{b_n}}\right)^{b_n -1} \xi\Bigg\rangle\\
    & \ge C^{-2} \bigg\langle \xi,\mathcal{M}^{\frac{\theta b_n^{1/4}}{n^{1/4}}}_{\tilde{f}_{b_n}} \xi \bigg\rangle^{b_n-1} .
    \end{aligned}
    \end{equation}
    Let $g$ be any smooth, bounded function with $\int_{\mathbb{R}^5} g^2(x) \text{d}x =1$, and we define
    $$
    g_{b_n}(x):= \frac{1}{\sum_{z \in \mathbb{Z}^5} g^2(z)} g\left(x \sqrt{\frac{b_n}{n}} \right). 
    $$
    With this choice of $\xi(x)$,we have that,
    \begin{equation}
    \lim_{n \to \infty} \langle g_{b_n}, \mathcal{M}^{\frac{\theta b_n^{1/4}}{n^{1/4}}}_{\tilde{f}_{b_n}} g_{b_n} \rangle =
    \int_{\mathbb{R}^5} g(x)\exp\left[\theta \int_0^1 \sqrt{5} \tilde{\gamma}_S\tilde{f}\left(x + B\left(\frac{t}{5} \right)\right) dt \right] g\left(x + B\left(\frac{1}{5}\right)\right),
    \end{equation}
    by the invariance principle. By the Feynman--Kac formula as in \cite{Chen2010}, equation 4.1.25, we see that if we take the $\log$, then the right-hand side above will be greater than,
    \begin{equation} \label{eq:optimizationprob}
     \int_{\mathbb{R}^5} \sqrt{5} \theta\tilde{\gamma}_S  \tilde{f}(x) g^2(x) \text{d}x - \frac{1}{10} \int_{\mathbb{R}^5} |\nabla g(x)|^2. 
    \end{equation}
     To get an optimal lower bound, one can take the supremum over all $f$ and $g$ satisfying $\int_{\mathbb{R}^5}f^2(x) \text{d}x=1, \int_{\mathbb{R}^5} g^2(x) \text{d}x=1, \int_{\mathbb{R}^5}|\nabla g(x)|^2 \text{d}x < \infty.$
    
    Observe that we have for any function $f$ such that $\int_{\mathbb{R}^5}f^2(x) \text{d}x.$
    \begin{equation}
    \begin{aligned}
    &\int_{\mathbb{R}^5} \tilde{f}(x)g^2(x) \text{d}x = \int_{\mathbb{R}^5} f(y) \left[ \int_{\mathbb{R}^5}\tilde{G}(x-y) g^2(x) \text{d}x \right] \text{d} y \\
    &\le \left(\int_{\mathbb{R}^5} f^2(y) \text{d}y \right)^{1/2} \left(\int_{\mathbb{R}^5} \left[\int_{\mathbb{R}^5}g^2(x) \tilde{G}(y-x)\text{d}x \right]^2 \text{d} y \right)^{1/2}\\
    & \le \left(\int_{\mathbb{R}^5}g^2(x) \tilde{G}(y-x) \tilde{G}(y - \tilde{x}) g^2(\tilde{x}) \text{d}x \text{d}y \text{d} \tilde{x} \right)^{1/2} = \left(\int_{\mathbb{R}^5} g^2(x) G(x-\tilde{x})g^2(\tilde{x})\text{d}x \text{d}\tilde{x} \right)^{1/2}
    \end{aligned}
    \end{equation}
    One can also obtain equality in this expression, by setting $f = \int_{\mathbb{R}^5}\tilde{G}(x-y) g^2(x) \text{d}x.$
    
    As such, we see that the optimization problem in equation \eqref{eq:optimizationprob}, we have that the supremum can be expressed as the supremum of 
    \begin{equation} \label{eq:modopt}
    \sqrt{5}\theta \tilde{\gamma}_S \left[\int_{\mathbb{R}^5} g^2(x)G(x-\tilde{x}) g^2(\tilde{x}) \text{d}x \text{d}\tilde{x} \right]^{1/2} - \frac{1}{10} \int_{\mathbb{R}^5}|\nabla g(x)|^2 \text{d}x,
    \end{equation}
    over all functions $g$ such that $\int_{\mathbb{R}^5}g^2(x) \text{d}x=1$ and $\int_{\mathbb{R}^5} |\nabla g(x)|^2 \text{d}x< \infty$.
    \end{proof}
    
    \subsection{Proof of Theorem~\ref{thm:non-gaussian}}\label{subsec:main-theorem}
    
    Here we prove Theorem~\ref{thm:non-gaussian}. The proof follows the same strategy as \cite{Chen2010}, Theorem 8.4.2. Because of the singularity around zero, we must first pass to a smoothed version of the cross-term defined as follows. Recall that $\tilde{G}(x) = \int_0^{\infty} \frac{1}{\sqrt{\pi t}} p_t(x) dt$ is the convolutional square root of the (continuous) Green's function, and $\tilde{G}^{\epsilon} = p_{\epsilon} \ast \tilde{G}$. Define
    \begin{gather*}
    \label{eq:smooth-def}
    L^{n, \epsilon}(A, x) :=  \left( \frac{b_n}{n} \right)^2 \sum_{y \in A} \mathbb{P}^y(\tau_A = \infty) \tilde{G}^{\epsilon} \left( \sqrt{\frac{b_n}{n}} (y -  x)\right) \\
     \chi^{n, \epsilon} (A, B) := 5 \sum_{x \in \mathbb{Z}^5} L^{n, \epsilon}(A, x) L^{n, \epsilon} (B, x) .
    \end{gather*}
    When obvious, we may omit $n$ and simply write $\chi^{\epsilon}(\mathcal{S}_n, \tilde{\mathcal{S}}_n) = \chi^{n, \epsilon}(\mathcal{S}_n, \tilde{\mathcal{S}}_n)$ and $\chi^{\epsilon}(\mathcal{S}_n, \mathcal{S}_n) = \chi^{n, \epsilon}(\mathcal{S}_n, \mathcal{S}_n)$. Following a similar method as those for the cross-term, we get the following corollary.
    \begin{cor}
        \label{cor:smooth-cap}
        \begin{multline}
        \label{eq:smooth-moment}
            \lim_{n \to \infty} \frac{1}{b_n} \log \mathbb{E} \exp \left\{ \theta \left( \frac{b_n}{n}\right)^{1/4} |\chi^{\epsilon}(\mathcal{S}_n, \mathcal{S}_n)|^{1/2} \right\} \\
            = \sup_{g} \left\{ \sqrt{5} \tilde{\gamma}_S \theta \left[ \int_{\mathbb{R}^5} |(\tilde{G}^{\epsilon} \ast g^2)(x)|^2 \mathrm dx \right]^{1/2} - \frac{1}{10} \int_{\mathbb{R}^5} |\nabla g(x)|^2 \mathrm dx \right\}
        \end{multline}
        Moreover, the right-hand side converges to
        \[  
        \sup_{g \in \mathcal{F}} \left\{ \sqrt{5} \tilde{\gamma}_S \theta \left[ \int_{\mathbb{R}^5} |(\tilde{G}\ast g^2)(x)|^2 \mathrm dx \right]^{1/2} - \frac{1}{10} \int_{\mathbb{R}^5} |\nabla g(x)|^2 \mathrm dx \right\} = \frac{27 \cdot 5^5}{32}\theta^4 \tilde{\gamma}_S^4 \tilde{\kappa}^8(5, 2)
        \]
        from below as $\epsilon \to 0$. This implies
        \begin{equation}
        \label{eq:smooth-LDP}
            \lim_{\epsilon \to 0} \lim_{n \to \infty} \frac{1}{b_n} \mathbb{P} \left\{ \chi^{\epsilon}(\mathcal{S}_n, \mathcal{S}_n) \ge \lambda \sqrt{n b_n^3} \right\} = -I_5(\lambda).
        \end{equation}
    \end{cor}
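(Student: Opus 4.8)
The plan is to establish Corollary~\ref{cor:smooth-cap} by re-running, for the single walk $\mathcal{S}_n$ and the mollified kernel $\tilde{G}^{\epsilon} = p_{\epsilon}\ast\tilde{G}$, the arguments behind Propositions~\ref{prop:chibnuprbnd} and~\ref{prop:lower-bound}, which are softer here than in Section~\ref{sec:cross-term} because for fixed $\epsilon > 0$ the kernel $\tilde{G}^{\epsilon}$ is bounded and continuous, so the singular moment estimates of Section~\ref{sec:moments} are unnecessary. The one preliminary ingredient is the mollified analogue of Lemma~\ref{lem:weak-conv}: running its proof with $\tilde{G}^{\epsilon}$ in place of $\tilde{G}$, together with a localization step in the spirit of Proposition~\ref{prop:chiischibn} (now routine, since mollification kills the near-diagonal contribution), one gets that $n^{-1/2}\chi^{n,\epsilon}(\mathcal{S}_n,\mathcal{S}_n)$---and each suitably rescaled single-block quantity---converges in distribution to a smoothed self-intersection functional of one Brownian motion; the crucial input is again the replacement of $\mathbb{P}^y(\tau_{\mathcal{S}_n}=\infty)$ by $\tilde{\gamma}_S$ for all but a negligible fraction of $y$, valid even after conditioning on the position of the walk, exactly as in the proof of Lemma~\ref{lem:weak-conv}. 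Convergence of all moments then follows from \eqref{eq:chi-moment} and $\chi^{n,\epsilon}(\mathcal{S}_n,\mathcal{S}_n) \le C_{\epsilon}\,\chi_{b_n}(\mathcal{S}_n,\mathcal{S}_n)$, so Carleman's condition applies.

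For the two bounds in \eqref{eq:smooth-moment} I would write $\chi^{n,\epsilon}(\mathcal{S}_n,\mathcal{S}_n) = 5\,\|L^{n,\epsilon}(\mathcal{S}_n,\cdot)\|_{\ell^2}^2$, so that $|\chi^{\epsilon}|^{1/2} = \sqrt{5}\,\|L^{n,\epsilon}(\mathcal{S}_n,\cdot)\|_{\ell^2}$. For the upper bound, the inequality $\tau_{\mathcal{S}_n} \le \tau_{S[in/b_n,(i+1)n/b_n]}$ gives $L^{n,\epsilon}(\mathcal{S}_n,x) \le \sum_i L^{n,\epsilon}(S[in/b_n,(i+1)n/b_n],x)$, so $\|L^{n,\epsilon}(\mathcal{S}_n,\cdot)\|_{\ell^2}$ is subadditive under concatenation of the walk; Theorem~6.1.1 of~\cite{Chen2010} then reduces $\frac{1}{b_n}\log \mathbb{E}\exp\{\theta(b_n/n)^{1/4}|\chi^{\epsilon}|^{1/2}\}$ to a single-block expression, which by the mollified weak convergence of the previous paragraph converges to the right-hand side of \eqref{eq:smooth-moment}. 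For the matching lower bound I would run the Feynman--Kac argument of Proposition~\ref{prop:lower-bound}: linearize $|\chi^{\epsilon}|^{1/2} = \sqrt{5}\,\sup_{\|f\|_{\ell^2}=1}\langle f, L^{n,\epsilon}(\mathcal{S}_n,\cdot)\rangle$, fix a smooth $f$ with $\|f\|_{L^2} = 1$, discretize it to $f_{b_n}$, and bound $\mathbb{E}\exp\{u\langle f_{b_n}, L^{n,\epsilon}\rangle\}$ from below by $\langle g_{b_n}, \mathcal{M}^u g_{b_n}\rangle^{b_n - 1}$ for a one-block Markov operator $\mathcal{M}^u$ of the type used in Proposition~\ref{prop:lower-bound}; the invariance principle and the Feynman--Kac formula (equation~(4.1.25) of~\cite{Chen2010}) produce $\sup_{g \in \mathcal{F}}\{\sqrt{5}\,\tilde{\gamma}_S\,\theta\,\langle \tilde{G}^{\epsilon} \ast f, g^2\rangle - \frac{1}{10}\int_{\mathbb{R}^5} |\nabla g|^2\,\mathrm{d}x\}$, and optimizing over $f$ with $\|f\|_{L^2} = 1$---with equality at $f \propto \tilde{G}^{\epsilon} \ast g^2$---recovers the right-hand side of \eqref{eq:smooth-moment}. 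This establishes \eqref{eq:smooth-moment}.

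It remains to let $\epsilon \to 0$. Since $\tilde{G}^{\epsilon} \ast g^2 = p_{\epsilon} \ast (\tilde{G} \ast g^2)$ and, on the Fourier side, $\|p_{\epsilon} \ast h\|_2^2 = \int e^{-\epsilon|\xi|^2}|\hat{h}(\xi)|^2\,\mathrm{d}\xi$ increases to $\|h\|_2^2$ as $\epsilon \downarrow 0$, the right-hand side of \eqref{eq:smooth-moment} increases to $\sup_{g \in \mathcal{F}}\{\sqrt{5}\,\tilde{\gamma}_S\,\theta\,\|\tilde{G} \ast g^2\|_2 - \frac{1}{10}\int_{\mathbb{R}^5}|\nabla g|^2\,\mathrm{d}x\}$. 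Using $\|\tilde{G} \ast g^2\|_2^2 = \int_{(\mathbb{R}^5)^2} g^2(x) G(x - \tilde{x}) g^2(\tilde{x})\,\mathrm{d}x\,\mathrm{d}\tilde{x}$ (because $\tilde{G} \ast \tilde{G} = G$ and $\tilde{G}$ is radial) and Lemma~\ref{lem:gagliardo-nirenberg-constant} with $\theta$ replaced by $\sqrt{5}\,\tilde{\gamma}_S\,\theta$, this supremum equals $\frac{27 \cdot 5^3}{32}(\sqrt{5}\,\tilde{\gamma}_S\,\theta)^4 \tilde{\kappa}(5,2)^8 = \frac{27 \cdot 5^5}{32}\theta^4 \tilde{\gamma}_S^4 \tilde{\kappa}(5,2)^8 =: C$. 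Finally, \eqref{eq:smooth-moment} is precisely the hypothesis of the G\"{a}rtner--Ellis theorem (Theorem~\ref{thm:gartner-ellis}, with $p = 1$ applied to $Y_n = (b_n/n)^{1/4}|\chi^{\epsilon}(\mathcal{S}_n,\mathcal{S}_n)|^{1/2}/b_n$, for which $\{\chi^{\epsilon}(\mathcal{S}_n,\mathcal{S}_n) \ge \lambda\sqrt{n b_n^3}\} = \{Y_n \ge \lambda^{1/2}\}$): for each fixed $\epsilon$ the limit $\lim_{n}\frac{1}{b_n}\log \mathbb{P}(\chi^{\epsilon}(\mathcal{S}_n,\mathcal{S}_n) \ge \lambda\sqrt{n b_n^3})$ exists and equals $-\sup_{\theta > 0}\{\theta\lambda^{1/2} - \Psi_{\epsilon}(\theta)\}$, where $\Psi_{\epsilon}(\theta)$ denotes the right-hand side of \eqref{eq:smooth-moment}; since $\Psi_{\epsilon} \uparrow C\theta^4$ pointwise, these Legendre duals decrease to $\sup_{\theta > 0}\{\theta\lambda^{1/2} - C\theta^4\}$, which the elementary computation of Section~\ref{sec:cross-term} (with this value of $C$) identifies with $I_5(\lambda)$, giving \eqref{eq:smooth-LDP}.

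I expect the main obstacle to lie in the first two paragraphs, not the last: transferring the subadditivity and the Feynman--Kac / Markov-operator apparatus---developed for the cross term of two \emph{independent} walks---to the single-walk object $\chi^{n,\epsilon}(\mathcal{S}_n,\mathcal{S}_n)$. The non-Markovian escape probability $\mathbb{P}^y(\tau_{\mathcal{S}_n} = \infty)$ forces a localization step for the lower bound (block escape probabilities only dominate $\mathbb{P}^y(\tau_{\mathcal{S}_n}=\infty)$, hence cannot be used there directly), and the heart of the matter is to make the replacement $\mathbb{P}^y(\tau_{\mathcal{S}_n} = \infty) \to \tilde{\gamma}_S$ uniform over blocks and stable under conditioning on block endpoints, so that the per-block free energy---and hence the constant $C$---is correctly identified in the limit. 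By contrast the $\epsilon \to 0$ passage is soft, relying only on the $L^2$-contractivity of mollification and on the scaling structure already exploited in Lemma~\ref{lem:gagliardo-nirenberg-constant}.
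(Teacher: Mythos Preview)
Your proposal is correct and follows essentially the same route as the paper's own proof, which is extremely terse: it simply says that \eqref{eq:smooth-moment} is proved ``almost identical[ly]'' to Propositions~\ref{prop:chibnuprbnd} and~\ref{prop:lower-bound}, that the $\epsilon\to 0$ convergence follows from ``Jensen's inequality and the fact that $\lim_{\epsilon\to 0}\int|(\tilde G^{\epsilon}\ast g^2)|^2=\int|(\tilde G\ast g^2)|^2$'', and that \eqref{eq:smooth-LDP} is a direct consequence of Theorem~\ref{thm:gartner-ellis}. Your write-up fills in exactly these steps: the subadditivity/Feynman--Kac machinery applied to $\|L^{n,\epsilon}(\mathcal{S}_n,\cdot)\|_{\ell^2}$, the localization needed for the lower bound, and the G\"artner--Ellis application with $p=1$ and $Y_n=(b_n/n)^{1/4}|\chi^{\epsilon}|^{1/2}/b_n$.

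The only place where you argue slightly differently is the $\epsilon\to 0$ step. The paper invokes Jensen's inequality (giving $\|p_{\epsilon}\ast h\|_2\le\|h\|_2$, hence $\Psi_{\epsilon}\le\Psi$) together with pointwise convergence of the integral; you instead use Plancherel to see directly that $\|p_{\epsilon}\ast h\|_2^2=\int |\widehat{p_{\epsilon}}(\xi)|^2|\hat h(\xi)|^2\,\mathrm{d}\xi$ is monotone in $\epsilon$, which is a cleaner route to the ``from below'' convergence and immediately gives the convergence of the Legendre duals. Both arguments are valid and lead to the same conclusion.
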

    \begin{proof}
        The proof of \eqref{eq:smooth-moment} is almost identical to that of Propositions~\ref{prop:chibnuprbnd} and ~\ref{prop:lower-bound}. The second claim is clear from Jensen's inequality and the fact that
        \[  
        \lim_{\epsilon \to 0} \int_{\mathbb{R}^5} |(\tilde{G}^{\epsilon} \ast g^2)(x)|^2 \text{d} x = \int_{\mathbb{R}^5} |(\tilde{G} \ast g^2)(x)|^2 \text{d} x.
        \]
        From the above, \eqref{eq:smooth-LDP} is a direct consequence of Theorem~\ref{thm:gartner-ellis}.
    \end{proof}
    
    Now repeat the decomposition of \eqref{eq:decomposition-L}, except with $\chi^{\epsilon}(\mathcal{S}_n, \mathcal{S}_n)$ instead of $\cp(\mathcal{S}_n)$.
    \begin{equation}
        \label{eq:decompsition-epsilon}
        \chi^{\epsilon}(\mathcal{S}_n, \mathcal{S}_n) = \sum_{j=1}^{2^L} \chi^{n, \epsilon}(\mathcal{S}_n^{(L,j)}, \mathcal{S}_n^{(L, j)}) + \sum_{l=1}^L \Lambda_l^{\epsilon} - \epsilon_L^{\epsilon},
    \end{equation}
    where $\Lambda_l^{\epsilon}$ and $\epsilon_L^{\epsilon}$ stand for
    \begin{gather*}
    \Lambda_l^{\epsilon} := 2\sum_{j=1}^{2^{l-1}} \chi^{n, \epsilon} (\mathcal{S}_n^{(l, 2j-1)}, \mathcal{S}_n^{(l, 2j)}), \quad \epsilon_L^{\epsilon} = \sum_{l=1}^L \sum_{j=1}^{2^{l-1}}  \epsilon^{n, \epsilon} (\mathcal{S}_n^{(l, 2j-1)}, \mathcal{S}_n^{(l, 2j)}), \\
    \epsilon^{\epsilon}(A, B) = \chi^{n, \epsilon}(A, A) + \chi^{n, \epsilon}(B,B) + 2\chi^{n, \epsilon}(A, B) - \chi^{n, \epsilon}(A \cup B, A\cup B).
    \end{gather*}
    This reveals a resemblance between $\chi^{\epsilon}(\mathcal{S}_n, \mathcal{S}_n)$ and $\cp(\mathcal{S}_n)$. Namely, if we sum the decompositions of \eqref{eq:decomposition-L} and \eqref{eq:decompsition-epsilon}, we get
    \begin{equation}
    \label{eq:decompostion-sum}
        \cp(\mathcal{S}_n) + \chi^{\epsilon}(\mathcal{S}_n, \mathcal{S}_n) 
        = \sum_{j=1}^{2^L} \left[ \cp(\mathcal{S}_n^{(L,j)}) + \chi^{\epsilon}(\mathcal{S}_n^{(L,j)}, \mathcal{S}_n^{(L,j)}) \right] + \sum_{l=1}^L (\Lambda_l^{\epsilon} - \Lambda_l) + \epsilon_L - \epsilon_L^{\epsilon}.
    \end{equation}
    In what follows, we show that the deviations of each of the above terms are small when $L$ and $\epsilon$ are large and small, respectively. In this way, the lower tail deviations of $\cp(\mathcal{S}_n)$ can be related to the upper tail deviations of $\chi^{\epsilon}(\mathcal{S}_n, \mathcal{S}_n)$ given by Corollary~\ref{cor:smooth-cap}. In what follows, we prove inequalities concerning each of the terms in \eqref{eq:decompostion-sum}. Assuming such lemmas, Theorem~\ref{thm:non-gaussian} follows from a routine argument taking $\epsilon$ to $0$ and $L$ to infinity.
    
    \begin{cor}
    \label{cor:L-sum-LDP}
    For fixed $L$,
    \begin{equation}
    \label{eq:cap-sum-LDP}
    \limsup_{n \to \infty} \frac{1}{b_n} \log \mathbb{P} \left\{ \left| \sum_{j=1}^{2^L} (\cp(\mathcal{S}_n^{(L, j)}) - \mathbb{E} \cp (\mathcal{S}_n^{(L, j)})) \right| \ge \lambda \sqrt{nb_n^3} \right\} \le - C 2^{L/3} \lambda^{2/3}.
    \end{equation}
    \begin{equation}
    \label{eq:smooth-sum-LDP}
    \limsup_{n \to \infty} \log \mathbb{P} \left\{ \left| \sum_{j=1}^{2^L} \chi^{n,\epsilon}(\mathcal{S}_n^{(L, j)}, \mathcal{S}_n^{(L, j)}) \right| \ge \lambda \sqrt{nb_n^3} \right\} \le - C 2^{L/3} \lambda^{2/3}.
    \end{equation}
    In particular, the constant does not depend on the choice of $\epsilon$.
    \end{cor}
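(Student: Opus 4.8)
The plan is to exploit that the blocks $\cp(\mathcal{S}_n^{(L,1)}),\dots,\cp(\mathcal{S}_n^{(L,2^L)})$ are i.i.d.\ (each is a translation‑invariant functional of a disjoint block of increments), every one distributed as $\cp(\mathcal{S}_{n/2^L})$, and similarly for the blocks $\chi^{n,\epsilon}(\mathcal{S}_n^{(L,j)},\mathcal{S}_n^{(L,j)})$. Writing $\Gamma_n^{(L,j)}=\cp(\mathcal{S}_n^{(L,j)})-\mathbb{E}\cp(\mathcal{S}_n^{(L,j)})$, the key observation is the rescaling $\lambda\sqrt{nb_n^3}=\lambda\,2^{L/2}\sqrt{(n/2^L)b_n^3}$: since $b_n\gg\sqrt{\log n}$, this places the deviation level, measured on the scale of a single block (a walk of length $n/2^L$), well inside the non‑Gaussian regime, with amplitude $\asymp\lambda 2^{L/2}$, and it is the exponent $(\lambda 2^{L/2})^{2/3}=\lambda^{2/3}2^{L/3}$ that eventually appears. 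I would then run the truncation scheme used in the proofs of Lemma~\ref{lem:cap-C-LDP} and Theorem~\ref{thm:gaussian}: split each $\Gamma_n^{(L,j)}$ into a centered truncation $\hat\Gamma_n^{(L,j)}$ at level $\lambda\sqrt{nb_n^3}$ and the complement $\tilde\Gamma_n^{(L,j)}$, and estimate $\sum_j\hat\Gamma_n^{(L,j)}$ and $\sum_j\tilde\Gamma_n^{(L,j)}$ separately.

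For the tail part, the event $\{|\sum_j\tilde\Gamma_n^{(L,j)}|\ge\lambda\sqrt{nb_n^3}\}$ forces, up to a super‑exponentially small mean correction, at least one block to exceed the truncation level, so a union bound gives $\mathbb{P}(|\sum_j\tilde\Gamma_n^{(L,j)}|\ge\lambda\sqrt{nb_n^3})\le 2^L\,\mathbb{P}(|\Gamma_n^{(L,1)}|\ge c\lambda\sqrt{nb_n^3})$. Applying Lemma~\ref{lem:cap-C-LDP} to a walk of length $n/2^L$ (its hypotheses $\sqrt{\log(n/2^L)}\ll b_n\ll(n/2^L)^{1/3}$ hold for fixed $L$ since $2^{L/3}\ll(\log n)^{8/3}$ and $b_n\ll n^{1/3}(\log n)^{-8/3}$) with amplitude $\asymp\lambda 2^{L/2}$ yields $\limsup_n b_n^{-1}\log\mathbb{P}(|\Gamma_n^{(L,1)}|\ge c\lambda\sqrt{nb_n^3})\le -C\lambda^{2/3}2^{L/3}$, and the constant prefactor $2^L$ disappears after dividing by $b_n\to\infty$. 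For the bulk part I would use a Chernoff bound and independence: $\mathbb{E}\exp\{s\sum_j\hat\Gamma_n^{(L,j)}\}=\prod_j\mathbb{E}\exp\{s\hat\Gamma_n^{(L,j)}\}$, followed by Markov at $\lambda\sqrt{nb_n^3}$. A single factor is controlled by separating the law of $\Gamma_n^{(L,j)}$ into its Gaussian core $|\Gamma_n^{(L,j)}|\lesssim(n/2^L)^{1/2}(\log n)^{3/4}$, where the truncated moment generating function behaves like that of a mean‑zero variable of variance $\asymp(n/2^L)\log n$, and its $2/3$‑stretched‑exponential far tail (Lemma~\ref{lem:cap-moment}), whose contribution to $\mathbb{E}\exp\{s\hat\Gamma_n^{(L,j)}\}$ is super‑polynomially small as soon as $s\lesssim\lambda^{-1/3}2^{L/3}n^{-1/2}b_n^{-1/2}$. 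Choosing $s$ at this threshold and using $b_n^2\gg\log n$, the Gaussian factor is $\exp\{o(b_n)\}$ while $s\lambda\sqrt{nb_n^3}\asymp\lambda^{2/3}2^{L/3}b_n$, so $\limsup_n b_n^{-1}\log\mathbb{P}(|\sum_j\hat\Gamma_n^{(L,j)}|\ge\lambda\sqrt{nb_n^3})\le -C\lambda^{2/3}2^{L/3}$. Combining the two cases gives \eqref{eq:cap-sum-LDP}.

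The proof of \eqref{eq:smooth-sum-LDP} is verbatim the same with $\chi^{n,\epsilon}(\mathcal{S}_n^{(L,j)},\mathcal{S}_n^{(L,j)})$ in the role of $\Gamma_n^{(L,j)}$, where only the upper tail is needed since $\chi^{n,\epsilon}\ge 0$; Corollary~\ref{cor:smooth-cap} (applied to a block of length $n/2^L$) replaces Lemma~\ref{lem:cap-C-LDP}, and \eqref{eq:smooth-moment} replaces Lemma~\ref{lem:cap-moment}. That the resulting $C$ does not depend on $\epsilon$ is because all of these inputs — the exponential moment bound, the admissible range of $\theta$, and the implied tail constants — are uniform in $\epsilon$, as established in the proof of Corollary~\ref{cor:smooth-cap} through Propositions~\ref{prop:chibnuprbnd} and~\ref{prop:lower-bound}.

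I expect the bulk part to be the main obstacle. A naive Chernoff bound that simply feeds $\hat\Gamma_n^{(L,j)}$ into the exponential moment of Lemma~\ref{lem:cap-moment} with its natural normalization $((n/2^L)\log n)^{1/3}$ loses a factor $(\log n)^{-1/3}$ in the exponent and only yields $\limsup_n b_n^{-1}\log(\cdots)\le 0$, while a crude sub‑Gaussian or Bernstein estimate is defeated by the stretched‑exponential tails of the blocks. What rescues the argument is the two‑scale structure of the tail of $\cp(\mathcal{S}_{n/2^L})$ — a Gaussian core of width $(n/2^L)^{1/2}(\log n)^{3/4}$ and a $2/3$‑stretched‑exponential far tail with the crossover at exactly that scale — together with the fact that $b_n\gg\sqrt{\log n}$ places the truncation level $\lambda\sqrt{nb_n^3}$ strictly above the crossover; making this bookkeeping deliver the sharp exponent $\lambda^{2/3}2^{L/3}b_n$ is where the work lies. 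For \eqref{eq:smooth-sum-LDP} there is the additional but routine task of carrying every constant uniformly in $\epsilon$.
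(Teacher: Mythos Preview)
Your approach is correct in spirit but substantially more laborious than the paper's, and the difficulty you flag in the bulk term is real but avoidable. The paper does not redo a truncation–Chernoff argument at all: it simply observes that each block, viewed as a walk of length $n/2^L$, satisfies
\[
\limsup_{n\to\infty}\frac{1}{b_n}\log\mathbb{P}\bigl(|\Gamma_n^{(L,j)}|\ge\lambda\sqrt{nb_n^3}\bigr)
=\limsup_{n\to\infty}\frac{1}{b_n}\log\mathbb{P}\bigl(|\Gamma_n^{(L,j)}|\ge(2^{L/2}\lambda)\sqrt{(n/2^L)b_n^3}\bigr)
\le -C\,2^{L/3}\lambda^{2/3}
\]
by Lemma~\ref{lem:cap-C-LDP}, and then invokes Lemma~\ref{lem:sum-LDP} (Chen, Theorem~1.2.2): if $Z_1(n),\dots,Z_l(n)$ are independent, non-negative, and each satisfies $\limsup b_n^{-1}\log\mathbb{P}(Z_j\ge\lambda)\le -C\lambda^a$ for some $0<a\le1$, then the \emph{same} bound, with the \emph{same} constant, holds for $Z_1+\dots+Z_l$. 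Applying this with $Z_j=|\Gamma_n^{(L,j)}|/\sqrt{nb_n^3}$ and $a=2/3$ gives \eqref{eq:cap-sum-LDP} immediately; \eqref{eq:smooth-sum-LDP} is identical, using Corollary~\ref{cor:smooth-cap} for the single-block input.

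The point is that Lemma~\ref{lem:sum-LDP} already encodes the nontrivial fact you are trying to reprove by hand: for stretched-exponential tails with exponent $a\le1$, independence lets one factor the probability over a near-optimal partition $\lambda=\lambda_1+\dots+\lambda_l$ and use $\sum_j\lambda_j^a\ge\lambda^a$, so the sum inherits the single-block rate without degradation. Your bulk-term worry is exactly the obstruction one meets when trying to recover this via a moment generating function, and your two-scale fix, while plausible, is unnecessary once Lemma~\ref{lem:sum-LDP} is on the table. One subtlety you do not mention, and which the paper does: for \eqref{eq:smooth-sum-LDP}, the functional $\chi^{n,\epsilon}$ carries the global scaling $\sqrt{b_n/n}$ in its definition, so on a block of length $n/2^L$ it is not literally $\chi^{n/2^L,\epsilon}$; the paper notes that the scale-invariance of $p_t$ and $\tilde G$ lets one absorb this by replacing $\epsilon$ with a constant multiple depending only on $L$, which is why the resulting $C$ is still uniform in $\epsilon$.
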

    
    \begin{proof}
    \eqref{eq:cap-sum-LDP} follows from Lemma~\ref{lem:cap-C-LDP} and Lemma~\ref{lem:sum-LDP} below. Similarly, \eqref{eq:smooth-sum-LDP} will follow from Lemma~\ref{lem:sum-LDP} and Corollary~\ref{cor:smooth-cap}. One place of concern is changing from $\chi^{n, \epsilon}$ to $\chi^{2^{-L}n, \epsilon}$. However, since $p_t$ and $\tilde{G}$ are both scale-invariant, this can be done by changing $\epsilon$ by a constant multiple depending on $L$.    
    \end{proof}
    
    \begin{lemma}[\cite{Chen2010}, Theorem 1.2.2]
    \label{lem:sum-LDP}
        Let $Z_1(n), \dots, Z_l(n)$ be independent non-negative random variables with $l \ge 2$ fixed. If there exist constants $C_1 > 0$ and $0 < a \le 1$ such that
        \[  
        \limsup_{n \to \infty} \frac{1}{b_n} \log \mathbb{P}(Z_j (n) \ge \lambda) \le -C \lambda^a \quad \forall \lambda > 0,
        \]
        for $j = 1, \dots , l$, then
        \[  
        \limsup_{n \to \infty} \frac{1}{b_n} \log \mathbb{P}( Z_1 (n) + \dots + Z_l(n) \ge \lambda) \le -C \lambda^a \quad \forall \lambda > 0.
        \]
    \end{lemma}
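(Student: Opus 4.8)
\emph{Proof proposal.} This is Theorem~1.2.2 of \cite{Chen2010}; the plan is to reduce to the case $l = 2$ by induction on $l$, and then to prove that case by splitting the event $\{Z_1 + Z_2 \ge \lambda\}$ according to the size of $Z_1$, using independence to factor, and finally invoking the concavity (equivalently, subadditivity) of $t \mapsto t^a$ for $0 < a \le 1$ to recover the \emph{sharp} exponent $\lambda^a$. For the inductive step one observes that for $l \ge 3$ the variables $Z_1 + \dots + Z_{l-1}$ and $Z_l$ are independent and non-negative, and by the induction hypothesis the partial sum $Z_1 + \dots + Z_{l-1}$ satisfies the same tail estimate with the same constants $C$ and $a$; so it suffices to handle $l = 2$.

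For $l = 2$, I would fix $\lambda > 0$ and an integer $K \ge 1$, set $\delta = \lambda / K$, and use the elementary inclusion
\[
\{ Z_1 + Z_2 \ge \lambda \} \ \subseteq\ \{ Z_1 \ge \lambda \} \ \cup \bigcup_{k=0}^{K-1} \Big( \{ k\delta \le Z_1 < (k+1)\delta \} \cap \{ Z_2 \ge \lambda - (k+1)\delta \} \Big),
\]
valid because if $Z_1 < \lambda$ lies in $[k\delta, (k+1)\delta)$ and $Z_2 < \lambda - (k+1)\delta$ then $Z_1 + Z_2 < \lambda$. Independence of $Z_1$ and $Z_2$ then gives
\[
\mathbb{P}( Z_1 + Z_2 \ge \lambda ) \ \le\ \mathbb{P}( Z_1 \ge \lambda ) + \sum_{k=0}^{K-1} \mathbb{P}( Z_1 \ge k\delta )\, \mathbb{P}( Z_2 \ge \lambda - (k+1)\delta ).
\]
Since this is a sum of $K+1$ terms with $K$ fixed in $n$ and $b_n \to \infty$, applying $\tfrac{1}{b_n}\log$ and $\limsup_{n\to\infty}$ replaces the sum by the maximum of the individual $\limsup$s; using the hypothesis on $Z_1$ and $Z_2$ (with the trivial terms $\mathbb{P}(Z_1 \ge 0), \mathbb{P}(Z_2 \ge 0) \le 1$ contributing exponent $0$) this yields
\[
\limsup_{n\to\infty} \frac{1}{b_n} \log \mathbb{P}( Z_1 + Z_2 \ge \lambda ) \ \le\ -C \min\Big\{ \lambda^a,\ \min_{0 \le k \le K-1} \big[ (k\delta)^a + (\lambda - (k+1)\delta)^a \big] \Big\}.
\]

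The last step is to let $K \to \infty$. Because $0 < a \le 1$, the map $t \mapsto t^a$ is concave, so for each $k$ the function $s \mapsto s^a + (\lambda - \delta - s)^a$ lies above the chord joining its equal endpoint values $(\lambda-\delta)^a$ on $[0, \lambda - \delta]$, whence $(k\delta)^a + (\lambda - (k+1)\delta)^a \ge (\lambda - \delta)^a$ for every $k$; thus the right-hand side above is at most $-C(\lambda - \delta)^a = -C(\lambda - \lambda/K)^a$, and since the left-hand side is independent of $K$, letting $K \to \infty$ gives the claimed bound $-C\lambda^a$, which together with the induction completes the proof. I expect the only genuine point of the argument — and the one place where $a \le 1$ is actually used — to be this concavity step: it is precisely what makes the exponent additive under splitting the sum, whereas for $a > 1$ the same scheme would only produce the weaker exponent $-C(\lambda/2)^a$. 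Everything else (the union bound, the factorization by independence, and passing the $\tfrac1{b_n}\log$ through a finite sum since $b_n\to\infty$) is routine.
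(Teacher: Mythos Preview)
Your argument is correct and is essentially the standard proof of this fact (as in \cite{Chen2010}, Theorem~1.2.2): discretize the range of $Z_1$, factor by independence, pass $\tfrac{1}{b_n}\log$ through the finite sum, and use concavity of $t\mapsto t^a$ to recover the sharp exponent after sending the mesh to zero. The paper itself does not give a proof of this lemma---it simply quotes it from \cite{Chen2010}---so there is nothing further to compare.
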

    
    We turn our attention to the second term, $\sum_{l=1}^L (\Lambda_l^{\epsilon} - \Lambda_l)$. Recall that
    \[  
    \Lambda_l^{\epsilon} - \Lambda_l = 2\sum_{j=1}^{2^{l-1}} \chi^{n,\epsilon}(\mathcal{S}_n^{(l, 2j-1)}, \mathcal{S}_n^{(l, 2j)}) - \chi(\mathcal{S}_n^{(l, 2j-1)}, \mathcal{S}_n^{(l, 2j)}).
    \]
    Since each pair $(\mathcal{S}_n^{(l, 2j-1)}, \mathcal{S}_n^{(l, 2j)})$ may be viewed as two independent random walks of length $2^{n-l}$ starting at $S_{2^{-l}n(2j-1)}$, it suffices to understand the deviations of $\chi^{\epsilon}(\mathcal{S}_n, \tilde{\mathcal{S}}_n) - \chi(\mathcal{S}_n, \tilde{\mathcal{S}}_n)$.
    
    \begin{lemma}
        \label{lem:smooth-cross-term}
        For any $\theta > 0$ and $1 \ll b_n \ll n^{1/3}(\log n)^{-2}$,
        \begin{equation}
            \lim_{\epsilon \to 0^+} \limsup_{n \to \infty} \frac{1}{b_n} \log \mathbb{E} \exp \left\{ \theta \left(\frac{b_n}{n}\right)^{1/4} |\chi(\mathcal{S}_n, \tilde{\mathcal{S}}_n) - \chi^{\epsilon}(\mathcal{S}_n, \tilde{\mathcal{S}}_n)|^{1/2} \right\} = 0
        \end{equation}
    \end{lemma}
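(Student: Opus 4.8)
The plan is to bound the difference pathwise by a few non-negative error terms, estimate the moments of each with the graph-reduction machinery of Section~\ref{sec:moments}, and conclude by Taylor-expanding the exponential exactly as in the proof of Lemma~\ref{lem:cross-term-moment}.

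First I would rewrite $\chi^{n,\epsilon}(\mathcal{S}_n,\tilde{\mathcal{S}}_n)=5\sum_{x\in\mathbb{Z}^5}L^{n,\epsilon}(\mathcal{S}_n,x)L^{n,\epsilon}(\tilde{\mathcal{S}}_n,x)$ and carry out the lattice sum over $x$. Using $\tilde G^{\epsilon}\ast\tilde G^{\epsilon}=p_{2\epsilon}\ast G=G^{2\epsilon}$, the homogeneity $G(\lambda w)=\lambda^{-3}G(w)$, and $G^{r}(w)=G(w)-\int_0^{r}p_s(w)\,\mathrm{d}s$, the rescaling by $\sqrt{b_n/n}$ turns $\chi^{n,\epsilon}$ into
\[
5\sum_{y\in\mathcal{S}_n}\sum_{z\in\tilde{\mathcal{S}}_n}\mathbb{P}^y(\tau_{\mathcal{S}_n}=\infty)\,\mathbb{P}^z(\tau_{\tilde{\mathcal{S}}_n}=\infty)\,G^{r_n}(y-z)\;+\;\mathcal{R}_n,\qquad r_n:=\frac{2\epsilon n}{b_n},
\]
where $\mathcal{R}_n$ is a Riemann-sum remainder: since $\tilde G^{\epsilon}$ is smooth on scale $\sqrt{\epsilon}$ while the mesh in the rescaled variable is $\sqrt{b_n/n}$, it has relative order $O(\sqrt{b_n/(n\epsilon)})=o(1)$ for each fixed $\epsilon$. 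Comparing with $\chi(\mathcal{S}_n,\tilde{\mathcal{S}}_n)=\sum\mathbb{P}\,\mathbb{P}\,G_D$, invoking the lattice expansion $G_D(w)=5G(w)+O(|w|_+^{-5})$ for $|w|\ge1$, absorbing the on-diagonal part $\{S_i=\tilde S_j\}$ into $G_D(0)\,\cp(\mathcal{S}_n\cap\tilde{\mathcal{S}}_n)$, and using $\mathbb{P}\le1$ together with $|w|_+^{-5}\le|w|_+^{-3}\mathbf 1\{|w|\le\sqrt{r_n}\}+r_n^{-1}|w|_+^{-3}$ for $|w|\ge1$, one arrives at
\[
|\chi-\chi^{\epsilon}|\le C\Bigl(\Xi_n^{\epsilon}\;+\;r_n^{-1}\!\!\sum_{i,j=1}^n|S_i-\tilde S_j|_+^{-3}\;+\;|\mathcal{R}_n|\;+\;\cp(\mathcal{S}_n\cap\tilde{\mathcal{S}}_n)\Bigr),
\]
where $\Xi_n^{\epsilon}:=\sum_{i,j=1}^n\bigl(|S_i-\tilde S_j|_+^{-3}\mathbf 1\{|S_i-\tilde S_j|\le\sqrt{r_n}\}+r_n^{-3/2}e^{-c|S_i-\tilde S_j|^2/r_n}\bigr)$ collects the contributions living at the mesoscopic scale $\sqrt{r_n}$.

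Three of the four terms are negligible for cheap reasons. By Proposition~\ref{prop:cross-term-moment}, $r_n^{-1}\sum_{i,j}|S_i-\tilde S_j|_+^{-3}$ has moments $\le C^m(m!)^{3/2}\bigl(b_n/(2\epsilon\sqrt n)\bigr)^m$; $|\mathcal{R}_n|\to0$; and $\cp(\mathcal{S}_n\cap\tilde{\mathcal{S}}_n)$ is controlled as in \eqref{eq:int-bound}. In each case the factor $(b_n/n)^{1/4}$ in the exponent, together with $b_n\ll n^{1/3}$, makes the exponential moment satisfy $\frac1{b_n}\log(\cdot)\to0$. The heart of the proof is the estimate
\[
\mathbb{E}\,(\Xi_n^{\epsilon})^m\;\le\;C^m(m!)^{3/2}\,r_n^{m/2},
\]
which I would derive by running the graph reduction of Proposition~\ref{prop:cross-term-moment} with the truncated (resp.\ Gaussian-tailed) kernel in place of $|w|_+^{-3}$: restricting each factor to the scale $\sqrt{r_n}$ effectively replaces $n$ by $r_n$ in both Kono-type path sums (cf.\ \cite{Kono1977}), which is the right scaling since one checks $\mathbb{E}\,\Xi_n^{\epsilon}\asymp\sqrt{r_n}$. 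Granting this, Taylor expansion and H\"older (as in Lemma~\ref{lem:cross-term-moment}) yield
\[
\mathbb{E}\exp\Bigl\{\theta(b_n/n)^{1/4}(\Xi_n^{\epsilon})^{1/2}\Bigr\}\le\sum_{m\ge0}\frac{\theta^m}{m!}(b_n/n)^{m/4}\bigl(\mathbb{E}\,(\Xi_n^{\epsilon})^m\bigr)^{1/2}\le\sum_{m\ge0}\frac{\bigl(C\theta(2\epsilon)^{1/4}\bigr)^m}{(m!)^{1/4}},
\]
because $(b_n/n)^{1/4}r_n^{1/4}=(2\epsilon)^{1/4}$, so that every power of $n$ and of $b_n$ cancels. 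The last series is a finite constant, dominated by $m\asymp C^4\theta^4\epsilon$; hence $\frac1{b_n}\log$ of it is $O(\epsilon/b_n)\to0$ as $n\to\infty$. Splitting $|\chi-\chi^{\epsilon}|^{1/2}$ by the triangle inequality, applying the generalized H\"older inequality over the four pieces, and sending $n\to\infty$ and then $\epsilon\to0$ completes the proof.

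I expect the main obstacle to be the moment bound for $\Xi_n^{\epsilon}$: one must adapt the inductive reduction of Section~\ref{sec:moments} so that the constraint $\mathbf 1\{|S_{i_k}-\tilde S_{j_k}|\le\sqrt{r_n}\}$, which couples the two endpoints of each edge and so cannot be reduced one side at a time in the naive manner, is propagated correctly through every conditioning step without creating loops or high-degree vertices and without losing the $r_n$-smallness at any stage. The hypothesis $b_n\ll n^{1/3}(\log n)^{-2}$ enters only through the Chernoff estimates for the trailing terms, exactly as in the proof of Lemma~\ref{lem:error-LDP}.
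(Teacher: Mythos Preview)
Your route is a direct moment computation, whereas the paper's primary argument is subadditivity together with weak convergence (in the style of Proposition~\ref{prop:chibnuprbnd}, following \cite[Lemmas~8.2.2 and~8.5.4]{Chen2010}): one factors $\chi-\chi^{\epsilon}$ through the square-root kernels, applies the subadditive reduction of \cite[Theorem~6.1.1]{Chen2010} to bound the $b_n$-scale exponential moment by a fixed-$t$ one, invokes weak convergence to pass to the Brownian functional, and finally sends $\epsilon\to0$ at the continuum level where $\int_0^1\!\int_0^1(G-G^{2\epsilon})(B_t-\tilde B_s)\,\mathrm dt\,\mathrm ds\to0$ in every moment. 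The virtue of that route is that no $m$-dependent moment bound on the truncated kernel is ever needed; only convergence of fixed finite moments.

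Your decomposition and scaling are correct, and the subsidiary pieces ($r_n^{-1}\sum G_D$, $\cp(\mathcal S_n\cap\tilde{\mathcal S}_n)$, the Riemann remainder) are indeed negligible for the reasons you state. The genuine gap is the claimed bound $\mathbb E(\Xi_n^{\epsilon})^m\le C^m(m!)^{3/2}r_n^{m/2}$. The heuristic ``the truncation effectively replaces $n$ by $r_n$ in the K\^ono sums'' is not directly usable: the constraint $|S_{i_k}-\tilde S_{j_k}|\le\sqrt{r_n}$ is spatial, while the K\^ono sums are over the \emph{time} increments $|j_k-j_{k-1}|$, and the graph reduction of Proposition~\ref{prop:cross-term-moment} converts each edge into a factor depending on the \emph{next} time increment, not on the original endpoint constraint. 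After the first reduction step the indicator has been consumed and the remaining weight-$\tfrac32$ edge carries no truncation, so the induction does not propagate the $r_n$-gain. One can repair this by working instead with the pointwise bound $\int_0^{r_n}p_s(w)\,\mathrm ds\le C\sqrt{r_n}\,|w|_+^{-4}$ and running a Proposition~\ref{prop:error-term-moment}-style reduction with weight-$4$ edges, but then each vertex contributes $|i_k-i_{k-1}|_+^{-1}$ and the K\^ono sums produce $(\log n)^{2m}$, which ruins the series (you need $(m!)^{3/2}$, and you get $(m!)^2$). Getting the sharp $(m!)^{3/2}r_n^{m/2}$ requires a genuinely new reduction lemma for the kernel $K=\int_0^{r_n}p_s$; it is plausible (for instance via the convolution identity $\mathbb E\,K(S_i-y)\approx\int_0^{r_n}\bar p_{s+i/5}(y)\,\mathrm ds$ and an induction on $m$ that keeps the $s$-integrals), but it is not a routine adaptation and is the substantive content of the lemma. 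If you want to stay on the direct route, this is the step that needs a real proof; otherwise the subadditivity argument is both shorter and avoids the issue entirely.
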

    
    \begin{proof}
        The proof is a combination of sub-additivity and weak convergence, similar in spirit to the proof of Proposition~\ref{prop:chibnuprbnd}. For more details, see Lemma 8.2.2 of \cite{Chen2010}, where the show a similar result for the smoothed local times of random walks, or Lemma 8.5.4 of \cite{Chen2010} for the smoothed intersection of the range. For a more direct approach, we can also use the fact that
        \[  
        |G(x) - G^{\epsilon}(x)| = O(\sqrt{\epsilon} |x|^{-5})
        \]
        and explicitly compute the moments in a manner similar to Proposition~\ref{prop:cross-term-moment}.
    \end{proof}
    
    Now we show that $\epsilon_L$ and $\epsilon_L^{\epsilon}$ are negligible.
    \begin{lemma}
    \label{lem:smooth-error}
    For any fixed $L$ and $1 \ll b_n \ll n^{1/3}(\log n)^{-2}$,
        \begin{equation}
        \label{eq:epsilon-L-LDP}
        \lim_{n \to \infty} \frac{1}{b_n} \log \mathbb{P}\left(\epsilon_L \ge \lambda \sqrt{n b_n^3} \right) = - \infty.
        \end{equation}
        \begin{equation}
        \label{eq:smooth-epsilon-L-LDP}
        \lim_{n \to \infty} \frac{1}{b_n} \log \mathbb{P}\left(\epsilon_L^{\epsilon} \ge \lambda \sqrt{n b_n^3} \right) = - \infty.
        \end{equation}
    \end{lemma}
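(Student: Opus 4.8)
The plan is to exploit the fact that, for fixed $L$, both $\epsilon_L$ and $\epsilon_L^{\epsilon}$ are sums of only $2^L-1$ summands, so a union bound reduces everything to a single-block estimate, and each block is — by reversibility and translation invariance of the walk — a copy of a quantity we have already controlled on the whole path.

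For $\epsilon_L=\sum_{l=1}^L\sum_{j=1}^{2^{l-1}}\epsilon(\mathcal S_n^{(l,2j-1)},\mathcal S_n^{(l,2j)})$, a union bound gives
\[
\mathbb P\big(\epsilon_L \ge \lambda\sqrt{nb_n^3}\big) \le \sum_{l=1}^L 2^{l-1}\,\mathbb P\Big(\epsilon(\mathcal S_n^{(l,1)},\mathcal S_n^{(l,2)}) \ge 2^{-L}\lambda\sqrt{nb_n^3}\Big),
\]
and by reversibility each pair $(\mathcal S_n^{(l,2j-1)},\mathcal S_n^{(l,2j)})$ is distributed as two independent walks of length $m=2^{-l}n$, so $\epsilon(\mathcal S_n^{(l,2j-1)},\mathcal S_n^{(l,2j)})\overset{d}{=}\epsilon(\mathcal S_m,\tilde{\mathcal S}_m)$. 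Since $\sqrt{nb_n^3}=\sqrt{m\,(2^{l/3}b_n)^3}$, I would apply Lemma~\ref{lem:error-LDP} with $m$ in place of $n$, deviation scale $b'_m:=2^{l/3}b_n$, and $\lambda$ replaced by $2^{-L}\lambda$. Its hypothesis $1\ll b'_m\ll m^{1/3}(\log m)^{-2}$ becomes, for $l\le L$ fixed, $b_n\ll 2^{-2l/3}n^{1/3}(\log n)^{-2}$, which is equivalent to the standing assumption $b_n\ll n^{1/3}(\log n)^{-2}$ since $\ll$ is insensitive to the fixed constant $2^{-2l/3}$. Because the conclusion of Lemma~\ref{lem:error-LDP} is $-\infty$, the prefactors $2^{l-1}$, the $\log L/b_n$ term from the union bound, and the bounded change $b'_m/b_n=2^{l/3}$ are all irrelevant, and taking $\frac1{b_n}\log$ yields \eqref{eq:epsilon-L-LDP}.

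For $\epsilon_L^{\epsilon}$ I would run the identical reduction down to a single block $\epsilon^{n,\epsilon}(\mathcal S_m,\tilde{\mathcal S}_m)$, $m=2^{-l}n$, and then establish a pointwise domination showing that, for \emph{fixed} $\epsilon$, the smoothed error is controlled by the discrete triple sums already handled in Section~\ref{sec:moments}. Writing $\epsilon^{n,\epsilon}(A,B)=5\sum_x\big[(L^{n,\epsilon}(A,x)+L^{n,\epsilon}(B,x))^2-L^{n,\epsilon}(A\cup B,x)^2\big]$ and factoring the difference of squares, the defect $D(x):=L^{n,\epsilon}(A,x)+L^{n,\epsilon}(B,x)-L^{n,\epsilon}(A\cup B,x)$ is nonnegative and, by the last-exit bound \eqref{eq:last-pass}, at most a constant multiple of $(b_n/n)^2\sum_{y\in A}\sum_{z\in B}G_D(y-z)\tilde G^{\epsilon}(\sqrt{b_n/n}(y-x))$ plus its symmetric counterpart and an $A\cap B$ term. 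Using $\tilde G^{\epsilon}(u)\le C_{\epsilon}\min(1,|u|^{-4})$, $\tilde G_D\asymp|\cdot|_+^{-4}$, and the Riemann-sum identity $\sum_x\tilde G^{\epsilon}(\sqrt{b_n/n}(w-x))\tilde G^{\epsilon}(\sqrt{b_n/n}(y-x))\lesssim_{\epsilon}(n/b_n)^{5/2}G^{2\epsilon}(\sqrt{b_n/n}(w-y))$ with $G^{2\epsilon}(u)\le C_{\epsilon}\min(1,|u|^{-3})$, all the scaling prefactors $(b_n/n)$ cancel and one is left with $\epsilon^{n,\epsilon}(A,B)\le C_{\epsilon}\sum_{w,y,z}\big[G_D(w-y)G_D(y-z)+\text{perms}\big]+C_\epsilon\,\cp(A\cap B)$ — exactly the structure bounded in Lemma~\ref{lem:aux}, Proposition~\ref{prop:error-term-moment} and Lemma~\ref{lem:aux-error-moment}. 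Hence the $m$-th moment of a block is $\le C_{\epsilon}^m(m!)^3(\log n)^{3m}$, and the Chernoff argument of Lemma~\ref{lem:error-LDP} applied to $\exp\{\theta(\cdot)^{1/3}/(\log n)\}$ gives \eqref{eq:smooth-epsilon-L-LDP}, the extra factor $C_\epsilon^m$ being harmless since $\epsilon$ is fixed while $n\to\infty$.

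The main obstacle is not any individual estimate but the bookkeeping in the second part: tracking the dimensionally awkward factors $(b_n/n)^2$ and $\sqrt{b_n/n}$ inside $L^{n,\epsilon}$ and verifying that, after convolving out both smoothing kernels, they cancel \emph{exactly}, so one recovers the unrescaled triple sums of Section~\ref{sec:moments} with only an $\epsilon$-dependent constant and no new power of $n$ or new singularity. Once this domination is in place the combinatorics is identical to the unsmoothed case, and the scale gap $b_n\ll n^{1/3}(\log n)^{-2}$ provides precisely the room the Chernoff bound needs, just as in Lemma~\ref{lem:error-LDP}.
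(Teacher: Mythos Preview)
Your treatment of \eqref{eq:epsilon-L-LDP} is exactly the paper's: a union bound over the $2^L-1$ blocks followed by Lemma~\ref{lem:error-LDP}.

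For \eqref{eq:smooth-epsilon-L-LDP} your route is correct but heavier than the paper's. After the difference-of-squares factorisation you dominate the smoothed kernel by the discrete one, $G_D^{\epsilon}(y-w)\le C_{\epsilon}|y-w|_+^{-3}$, which lands you back on the triple-sum error terms of Proposition~\ref{prop:error-term-moment} and Lemma~\ref{lem:aux-error-moment} with $m$-th moment of order $(m!)^3(\log n)^{3m}$; the scaling bookkeeping you flag as the main obstacle is genuine here and must be done carefully. The paper instead uses that smoothing makes the kernel \emph{uniformly bounded}, $G_D^{\epsilon}\le C_{\epsilon}(b_n/n)^{3/2}$, so one Green-function factor collapses to a constant and the triple sum reduces to $C_{\epsilon}\,n^{-1/2}b_n^{3/2}\sum_{i,j}|S_i-\tilde S_j|_+^{-3}$, i.e.\ to the \emph{cross-term} rather than the error term. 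Proposition~\ref{prop:cross-term-moment} then gives the sharper moment bound $C_{\epsilon}^m(m!)^{3/2}b_n^{3m/2}$. Both bounds suffice under $b_n\ll n^{1/3}(\log n)^{-2}$, but the paper's completely sidesteps the Riemann-sum cancellation you worry about: once $G_D^{\epsilon}$ is replaced by a constant, there are no scaling factors left to track.
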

    
    \begin{proof}
        Since $L$ is fixed, \eqref{eq:epsilon-L-LDP} is a simple consequence of Lemma~\ref{lem:error-LDP} and a union bound. Similarly, to show \eqref{eq:smooth-epsilon-L-LDP}, we simply show that $\epsilon^{\epsilon} (\mathcal{S}_n, \tilde{\mathcal{S}}_n)$ is negligible. By methods similar to Lemma~\ref{lem:error-bound} it suffices to compute the moments of
        \begin{equation}  
        \label{eq:smooth-error}
        \sum_{x \in \mathcal{S}_n} \sum_{y \in \tilde{S}_n} \sum_{z \in \mathcal{S}_n \cup \tilde{\mathcal{S}}_n} G_D(x - y) G_D^{\epsilon} (x-z),
        \end{equation}
        where
        \[
        G_D^{\epsilon}(x-y) := 5 \left( \frac{b_n}{n} \right)^{3/2} \sum_{z \in \mathbb{Z}^5} \tilde{G}^{\epsilon}\left( \sqrt{\frac{b_n}{n}} (x-z)\right) \tilde{G}^{\epsilon}\left( \sqrt{\frac{b_n}{n}} (y-z)\right).
        \]
        Since $G_D^{\epsilon}$ is bounded above by $C_{\epsilon}(b_n / n)^{3/2}$ for some $C_{\epsilon}$ depending on $\epsilon$, the above is bounded by a constant (depending on $\epsilon$) multiple of
        \[  
        n^{-1/2} b_n^{3/2} \sum_{i=0}^n \sum_{j=0}^n |S_i - \tilde{S}_j|_+^{-3}.
        \]
        By Lemma~\ref{prop:cross-term-moment}, the $m$-th moment of the above is bounded by $C_{\epsilon}^m (m!)^{3/2} b_n^{3m/2}$. An argument similar to Lemma~\ref{lem:error-LDP} completes the proof.
    \end{proof}
    
    \begin{proof}[Proof of Theorem~\ref{thm:non-gaussian}]
        By the above,
        \[
        \lim_{\epsilon \to 0} \lim_{n \to \infty} \frac{1}{b_n} \log \mathbb{P} \left\{ | \cp(\mathcal{S}_n) + \chi^{\epsilon}(\mathcal{S}_n, \mathcal{S}_n) - \mathbb{E} \cp(\mathcal{S}_n)| \ge \delta \sqrt{nb_n^3} \right\} \le -C2^{L/2}
        \]
        for any $\delta > 0$. Since this holds for any $L$, we have
        \[
        \lim_{\epsilon \to 0} \lim_{n \to \infty} \frac{1}{b_n} \log \mathbb{P} \left\{ | \cp(\mathcal{S}_n) + \chi^{\epsilon}(\mathcal{S}_n, \mathcal{S}_n) - \mathbb{E} \cp(\mathcal{S}_n)| \ge \delta \sqrt{nb_n^3} \right\} = - \infty.
        \]
        In other words,
        \[
            \lim_{n \to \infty} \frac{1}{b_n} \mathbb{P}\left\{ \cp(\mathcal{S}_n) - \mathbb{E} \cp(\mathcal{S}_n) \le \lambda \sqrt{nb_n^3} \right\} 
            = \lim_{\epsilon \to 0} \lim_{n \to \infty} \frac{1}{b_n} \mathbb{P}\left\{ \chi^{\epsilon} (\mathcal{S}_n, \mathcal{S}_n) \ge \lambda \sqrt{nb_n^3} \right\} = - I_5(\lambda)
        \]
        by Corollary~\ref{cor:smooth-cap}.
    \end{proof}
    
    \section{Application to $d = 4$}\label{sec:4d}
    
    In this section, we explain how our techniques can be used to improve the results of the first author and Okada \cite{AdhikariOkada2023} for the random walk on $\mathbb{Z}^4$, where they showed Theorem~\ref{thm:dim-4} in the range $1 \ll b_n =O( \log \log n)$. The key improvement is the following lemma.
    \begin{lemma}
        \label{prop:error-term-moment-4d}
        Let $S$ and $\tilde{S}$ be independent random walks on $\mathbb{Z}^5$ starting from the origin. For any positive integers $n$ and $m$,
        \begin{equation}
        \label{eq:error-term-moment-4d}
        \mathbb{E} \left[ \sum_{1 \le i_1 , \dots , i_m \le n} \sum_{1 \le j_1 , \dots , j_{2m} \le n} \prod_{k = 1}^{m} |S_{i_k} - \tilde{S}_{j_{2k-1}}|_+^2 |S_{i_k} - \tilde{S}_{j_{2k}}|_+^2\right] \le C^m (m!)^2 n^{m}.
        \end{equation}
    \end{lemma}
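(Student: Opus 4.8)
The plan is to transcribe the proof of Proposition~\ref{prop:error-term-moment} to $\mathbb{Z}^4$ (where, as usual in this paper, the exponents on the left of \eqref{eq:error-term-moment-4d} are understood to be negative, $|S_{i_k}-\tilde S_{j}|_+^{-2}$ being the order of the $d=4$ Green's function), re-tuning the edge weights so that the final sums produce $n^m$ rather than $(\log n)^{3m}$. First I would record the $\mathbb{Z}^4$ analogue of Lemma~\ref{lem:rw-moment}: arguing exactly as there — comparing with the Gaussian integral $i^{-2}\int_{\mathbb{R}^4}|y-x|^{-2}e^{-|y|^2/i}\,\mathrm dy$, splitting into $\{|x-y|\le|x|/2\}$ and $\{|x-y|>|x|/2\}$, and passing to the walk via the local CLT \eqref{eq:LCLT} — one gets $\mathbb{E}|S_i-x|_+^{-2}\le C\min(|i|_+^{-1},|x|_+^{-2})$ and hence, by Hölder, $\mathbb{E}|S_i-x|_+^{-1}\le C\min(|i|_+^{-1/2},|x|_+^{-1})$. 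Taking geometric means of the two entries of these minima yields the single-edge splittings needed below, e.g.\ $\mathbb{E}|S_i-y|_+^{-2}\le C|i|_+^{-3/4}|y-S_0|_+^{-1/2}$ and $\mathbb{E}|S_i-y|_+^{-1/2}\le C|i|_+^{-1/4}$.

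Next I would build the $\mathbb{Z}^4$ version of Lemma~\ref{lem:error-term-graph}. Black edges now carry weight $2$, and every reduction should absorb weight $\tfrac32$ into a deterministic factor $C|i|_+^{-3/4}$ and redistribute the leftover weight $\tfrac12$: the residual of a black edge becomes a weight-$\tfrac12$ edge on $S_0$, a transferred colored weight-$\tfrac12$ edge is re-routed just as in the pictures of \eqref{eq:error-term-graph-1}, and the analogue of \eqref{eq:error-term-graph-2} removes a vertex lying on both monochromatic paths (all of whose incident edges are then of weight $\tfrac12$) at the cost of $C|i|_+^{-1/2}$. Each case is checked by the same triangle-inequality-plus-Hölder computation as in the proof of Lemma~\ref{lem:error-term-graph}, once one notes that every expectation that occurs has the form $\mathbb{E}\prod_\ell|S_i-x_\ell|_+^{-w_\ell}$ with $\sum_\ell w_\ell<4$ — the worst local configurations carry total incident weight $2+\tfrac12+\tfrac12=3$ at a $\tilde S$-vertex and $4\cdot\tfrac12=2$ at a path vertex — so that the estimates of the previous paragraph apply.

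With these tools in hand I would run the reduction of Proposition~\ref{prop:error-term-moment} unchanged: symmetrize to $j_1\le\dots\le j_{2m}$ at the price of a $2$-to-$1$ map $\phi$, order $i_1\le\dots\le i_m$, colour the multiset $\{S_{i_{\phi(\ell)}}\}_{\ell=1}^{2m}$ red/blue so that every $S_{i_k}$ is red once and blue once, and process the $\tilde S$-vertices from right to left (the colour bookkeeping keeps the reduction legal and loop-free), obtaining two weight-$\tfrac12$ monochromatic paths through $0$ together with the factor $C^{2m}\prod_{k=1}^{2m}|j_k-j_{k-1}|_+^{-3/4}$; processing the $m$ distinct $S$-vertices on those paths then gains $C^m\prod_{k=1}^m|i_k-i_{k-1}|_+^{-1/2}$. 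Summing over the $(2m)!\,m!$ relative orders and over ordered indices, and applying the simplex estimate $\sum_{1\le t_1\le\dots\le t_K\le n}\prod_{k=1}^K|t_k-t_{k-1}|_+^{-\alpha}\le C^K(K!)^{-(1-\alpha)}n^{K(1-\alpha)}$ of \cite{Kono1977} (used already at the end of the proof of Proposition~\ref{prop:cross-term-moment}) with $(\alpha,K)=(\tfrac34,2m)$ and $(\alpha,K)=(\tfrac12,m)$, the left side of \eqref{eq:error-term-moment-4d} is bounded by $C^m(2m)!\,m!\cdot\big((2m)!\big)^{-1/4}(m!)^{-1/2}n^m=C^m\big((2m)!\big)^{3/4}(m!)^{1/2}n^m\le C^m(m!)^2n^m$, using $(2m)!\le 4^m(m!)^2$ in the last step. (In fact any split $|i|_+^{-\gamma}$ of the black edges with $\gamma\in(\tfrac12,1)$ works, giving the same final bound.)

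The hard part, exactly as in the $d=5$ case, is the second paragraph: one must choose the $\tfrac32/\tfrac12$ split of each black edge and the re-routing rule so that, for \emph{every} colouring and \emph{every} order of the indices, the partially reduced graph stays a disjoint union of two monochromatic paths through $0$ — in particular so that no vertex is ever forced to neighbour itself and no vertex's total incident weight reaches $d=4$. Once the analogue of Lemma~\ref{lem:error-term-graph} is in place with these properties, the rest is a mechanical transcription of the proofs of Lemma~\ref{lem:rw-moment}, Lemma~\ref{lem:error-term-graph} and Proposition~\ref{prop:error-term-moment}.
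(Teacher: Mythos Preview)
Your approach is correct and essentially the same as the paper's: both transcribe the graph-reduction algorithm of Proposition~\ref{prop:error-term-moment} with rescaled edge weights. The only difference is cosmetic---the paper simply multiplies every edge weight by $2/3$ (so coloured edges carry weight $2/3$ and each vertex removal contributes a factor $|i|_+^{-2/3}$, leading to the simplex integrals $\int\prod|x_k-x_{k-1}|^{-2/3}$ at the end), whereas you choose coloured weight $1/2$ and split the $\tilde S$- and $S$-reductions into factors $|i|_+^{-3/4}$ and $|i|_+^{-1/2}$ respectively. As you yourself observe, any split $\gamma\in(1/2,1)$ works and gives the same $C^m(m!)^2 n^m$; the paper's choice $\gamma=2/3$ just has the minor expository advantage that the proof becomes literally ``scale everything by $2/3$,'' so the analogue of Lemma~\ref{lem:error-term-graph} requires no new verification.
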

    
    \begin{proof}
    The proof is identical to that of Proposition~\ref{prop:error-term-moment}, except that all of the edge weights need to be scaled by $2/3$. This means that the summation changes to
    \begin{align*}
    \mathbb{E} &\left[ \sum_{1 \le i_1 , \dots , i_m \le n}  \sum_{1 \le j_1 , \dots , j_{2m} \le n} \prod_{k = 1}^{m} |S_{i_k} - \tilde{S}_{j_{2k-1}}|_+^2 |S_{i_k} - \tilde{S}_{j_{2k}}|_+^2 \right] \\
    &= C^{m} (m!)(2m)! \left( \prod_{k=1}^{2m} \sum_{j_k = j_{k-1}}^n |j_k - j_{k-1}|_+^{-2/3} \right) \left( \prod_{k=1}^{m} \sum_{i_k = i_{k-1}}^n |i_k - i_{k-1}|_+^{-2/3} \right) \\
    &\le C^m (m!)^3 n^m \left( \int\limits_{0 \le x_1 \le \dots \le x_{2m} \le 1}\frac{dx_{2m} \dots dx_1}{\prod\limits_{k=1}^{2p} |x_k - x_{k-1}|^{2/3}} \right) \left( \int\limits_{0 \le x_1 \le \dots \le x_{m} \le 1} \frac{dx_{m} \dots dx_1}{\prod\limits_{k=1}^{m} |x_k - x_{k-1}|^{2/3}} \right)\\
    &\le C^m (m!)^2 n^m.
    \end{align*}
    The asymptotics for the integral can be found in \cite{Kono1977}.

    \end{proof}
    
    As a consequence of this estimate, we can prove the following result for the moments of the error term $MT_n$ that appear in \cite{AdhikariOkada2023}[Equation 4.7]. 
    
    \begin{lemma} \label{lem:MTn}
    For $m  \le \log n$, there exists some constant $C$ (not depending on m) such that,
    \begin{equation}
    \mathbb{E}[MT_n^m] \le C^m (m!)^2 \frac{n^m}{(\log n)^{3m}}.
    \end{equation}
    
    \end{lemma}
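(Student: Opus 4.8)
The plan is to reduce the moment bound for $MT_n$ to the moment estimate in Lemma~\ref{prop:error-term-moment-4d} together with the moment bound for the cross term (the $d=4$ analogue of Proposition~\ref{prop:cross-term-moment}, namely $\mathbb{E}[\chi(\mathcal{S}_n,\tilde{\mathcal{S}}_n)^m] \le C^m (m!)^{?} n^{m}$ with the appropriate power coming from a $\mathrm{K\bar{o}no}$-type integral). First I would recall from \cite{AdhikariOkada2023}[Eq.~4.7] the precise form of $MT_n$: it is the error term quantifying the gap between $\chi_{\mathcal{C}}$ and $2\chi$ (equivalently the analogue of $\epsilon(\mathcal{S}_n,\tilde{\mathcal{S}}_n)$ in the present paper) in dimension $4$. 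By the last-passage decomposition of Lemma~\ref{lem:error-bound}, $MT_n$ is dominated (up to a universal constant and an intersection term $\cp(\mathcal{S}_n\cap\tilde{\mathcal{S}}_n)$ which is of strictly smaller order and whose moments are controlled by intersection-probability estimates) by
\[
\sum_{i=1}^n\sum_{j_1,j_2=1}^n G_D(S_i-\tilde S_{j_1})G_D(S_i-\tilde S_{j_2}) + \sum_{i_1,i_2=1}^n\sum_{j=1}^n G_D(S_{i_1}-\tilde S_j)G_D(S_{i_2}-\tilde S_j),
\]
where now $G_D(x)\asymp |x|_+^{-2}$ since $d=4$.

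The main step is then to bound the $m$-th moment of this expression. Expanding the $m$-th power produces exactly the sum appearing on the left-hand side of \eqref{eq:error-term-moment-4d} (after grouping the two symmetric pieces, which contribute the same bound by exchanging the roles of $S$ and $\tilde S$), so Lemma~\ref{prop:error-term-moment-4d} gives
\[
\mathbb{E}\Big[\Big(\sum_{i}\sum_{j_1,j_2} G_D(S_i-\tilde S_{j_1})G_D(S_i-\tilde S_{j_2})\Big)^m\Big] \le C^m (m!)^2 n^m.
\]
To convert this into the claimed bound with the extra factor $(\log n)^{-3m}$, I would exploit the normalization built into the definition of $MT_n$ in \cite{AdhikariOkada2023}: there $\chi$ (their $\chi'$) is defined with the probability weights $\mathbb{P}^x(\tau_A=\infty)$, and since in $d=4$ a simple random walk path of length $n$ has escape probability $\asymp (\log n)^{-1}$ on the relevant scale, the quantity $MT_n$ carries an implicit prefactor of order $(\log n)^{-3}$ per each of the three "returns" (two $G_D$ factors linking the triple plus the normalization of one capacity term) relative to the raw double-$G_D$ sum. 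More precisely, tracking the constant in \cite{AdhikariOkada2023}[Eq.~4.7] shows $MT_n$ equals $(\log n)^{-3}$ (up to a bounded factor converging to $(\pi^2/8)^{-3}$, as in the displayed asymptotic after Theorem~\ref{thm:dim-4}) times the unnormalized sum above; substituting the moment bound then yields $\mathbb{E}[MT_n^m]\le C^m(m!)^2 n^m (\log n)^{-3m}$, valid for $m\le\log n$ since the combinatorial factors $(m!)^2$ are what survive after the $\mathrm{K\bar{o}no}$ integral estimate absorbs the $(2m)!\cdot m!$ into $(m!)^3$ and then the two integrals each contribute $(m!)^{-1/2}$-type gains.

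The hard part will be bookkeeping the exact power of $\log n$ and the exact combinatorial constant: one must verify that the normalization in \cite{AdhikariOkada2023}[Eq.~4.7] really produces $(\log n)^{-3}$ and not $(\log n)^{-2}$ or $(\log n)^{-4}$, and that no additional $\log n$ is lost when the escape probabilities $\mathbb{P}^x(\tau_A=\infty)$ are replaced by their limiting constant inside the $m$-fold product (this requires the uniform-over-$i,j$ convergence argument sketched in the proof of Lemma~\ref{lem:weak-conv}, restricted to pairs of indices that are polynomially separated, with the contribution of close pairs absorbed into the $C^m(m!)^2$ via Lemma~\ref{prop:error-term-moment-4d} again). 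The restriction $m\le\log n$ enters precisely so that these replacement errors, each of relative size $o(1)$ but accumulating multiplicatively, stay bounded; this is the same mechanism as in the $e^{2^l/m}$ factor of Lemma~\ref{lem:aux-error-moment}. Everything else is a routine transcription of the $d=5$ arguments with edge weights rescaled by $2/3$, exactly as in the proof of Lemma~\ref{prop:error-term-moment-4d}.
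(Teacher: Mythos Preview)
Your proposal misidentifies the mechanism producing the $(\log n)^{-3m}$ factor and therefore has a genuine gap. The quantity $MT_n$ from \cite{AdhikariOkada2023} is not $(\log n)^{-3}$ times the raw double-Green's-function sum; rather, its upper bound $B_n$ (see the paper's proof) retains three \emph{random} non-intersection probabilities of the form $\mathbb{P}(S[i-n^{\gamma_1},i+n^{\gamma_1}]\cap R'_{S_i}=\emptyset)$, one for each of the three vertices $i_1,i_2,\tilde{i}$. These are path-dependent random variables, not deterministic prefactors. If you drop them via the crude bound of Lemma~\ref{lem:error-bound} and apply Lemma~\ref{prop:error-term-moment-4d} directly, you obtain only $C^m(m!)^2 n^m$, and there is no ``normalization in the definition'' that hands you back the missing $(\log n)^{-3m}$. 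Your final paragraph gestures at a decorrelation argument, but the earlier paragraphs make clear you have not identified where the work actually lies.

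The paper's proof proceeds quite differently. It introduces two scale parameters $\gamma_1<\gamma_2<1$ and bounds $MT_n$ by a localized version $B_n$ in which each non-intersection probability depends only on a window of width $2n^{\gamma_1}$. The $m$-th moment of $B_n$ is then computed by a case split and an \emph{induction on $m$}. When all $3m$ indices are pairwise at distance $\ge n^{\gamma_2}$, one conditions on the walk at the window boundary points; the localized probabilities then factor out and are each $\asymp(\log n)^{-1}$ in expectation (this is the method of Claim~5.8 in \cite{AdhikariOkada2023}), after which Lemma~\ref{prop:error-term-moment-4d} handles the remaining Green's-function product. When some pair of indices is within $n^{\gamma_2}$, one bounds the offending probability terms by $1$, integrates out the three indices of one ``triangle'' (picking up factors $O(m\log n)$ per integration and $O(n^{\gamma_2})$ for the constrained index), and reduces to the $(m-1)$-st moment. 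A separate auxiliary quantity $Z_n$ handles the case where the two $i$-indices sharing a common $\tilde{i}$ collide. These pieces combine into a recursive inequality of the form
\[
\mathbb{E}[BT_n^m]\le C^m(m!)^2\frac{n^m}{(\log n)^{3m}}+Cm^3 n^{\gamma_2}(\log n)^2\,\mathbb{E}[BT_n^{m-1}]+m\,n^{6\gamma_2}\big(C^m(m!)^2(\log n)^{2m}\big)^{1/m}\big(\mathbb{E}[BT_n^m]\big)^{(m-1)/m},
\]
which is then closed by induction for $m\le(\log n)^2$ provided $6\gamma_2<1$. The restriction on $m$ enters through this recursion, not through the vaguer mechanism you describe.
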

    
    We start with a corollary of this lemma before proving this,
    \begin{theorem} \label{thm:FinalEstMtn}
    If $b_n$ is any sequence such that $b_n = o(\log n)$, then we have that,
    \begin{equation}
    \frac{1}{b_n} \log \mathbb{P}\left(MT_n \ge \epsilon b_n \frac{n}{(\log n)^2} \right) = - \infty.
    \end{equation}
    \end{theorem}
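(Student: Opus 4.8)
The plan is to obtain the bound by a single application of Markov's inequality at a well-chosen moment order, feeding in Lemma~\ref{lem:MTn}. First, for every integer $m$ with $1\le m\le \log n$, Markov's inequality gives
\[
\mathbb{P}\!\left(MT_n\ge \epsilon\, b_n\frac{n}{(\log n)^2}\right)\le \frac{\mathbb{E}[MT_n^m]}{\big(\epsilon\, b_n\, n/(\log n)^2\big)^m}\le \frac{C^m (m!)^2}{\epsilon^m\, b_n^m\,(\log n)^m},
\]
where in the last step the $n^m$ cancel and the powers of $\log n$ combine as $(\log n)^{-3m}(\log n)^{2m}=(\log n)^{-m}$. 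Using the crude estimate $(m!)^2\le m^{2m}$ and absorbing constants, the right-hand side is at most $\big(Cm^2/(\epsilon\, b_n\log n)\big)^m$, and hence
\[
\frac{1}{b_n}\log\mathbb{P}\!\left(MT_n\ge \epsilon\, b_n\frac{n}{(\log n)^2}\right)\le \frac{m}{b_n}\log\!\left(\frac{Cm^2}{\epsilon\, b_n\log n}\right).
\]

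Next I would choose $m=m_n:=\lceil b_n\rceil$. This is admissible for all large $n$: since $b_n=o(\log n)$ we have $m_n\le\log n$ eventually, so Lemma~\ref{lem:MTn} applies, and $m_n/b_n$ stays between $1$ and $2$ (using $b_n\ge 1$). With this choice,
\[
\frac{Cm_n^2}{\epsilon\, b_n\log n}\;\asymp\;\frac{b_n}{\epsilon\log n}\;\xrightarrow[n\to\infty]{}\;0,
\]
again by $b_n=o(\log n)$, so for large $n$ the logarithm above is negative and of order $\log(b_n/\log n)$. Consequently $\frac{1}{b_n}\log\mathbb{P}(\,\cdot\,)\le \frac{m_n}{b_n}\log\big(Cm_n^2/(\epsilon b_n\log n)\big)\to-\infty$, which is exactly the assertion of Theorem~\ref{thm:FinalEstMtn}.

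The one point requiring care — and the reason the argument cannot be collapsed into an exponential Chebyshev bound — is that Lemma~\ref{lem:MTn} controls $\mathbb{E}[MT_n^m]$ only in the range $m\le\log n$, so there is no uniformly finite exponential moment to exploit; the estimate must be run at a single truncation level. The constraints that $m$ be $\le\log n$ while simultaneously $m^2\ll b_n\log n$ are precisely what pin $m$ to be of order $b_n$ (which is $\ll\log n$) rather than, say, of order $\log n$, and it is the hypothesis $b_n=o(\log n)$ that makes this double requirement satisfiable and, in turn, forces the resulting logarithm to diverge to $-\infty$.
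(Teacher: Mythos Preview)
Your proof is correct and follows essentially the same route as the paper: apply Markov's inequality at a carefully chosen moment order and plug in the bound from Lemma~\ref{lem:MTn}. The only difference is the choice of $m$: the paper takes $m=(\log n)^{1/3}b_n^{2/3}$ (so that $m\gg b_n$ and $m^2\ll b_n\log n$), whereas you take $m=\lceil b_n\rceil$. Your choice is slightly simpler and works just as well, since one only needs $m/b_n$ bounded below and $m^2/(b_n\log n)\to 0$; the paper's stronger requirement $m\gg b_n$ is not actually needed.
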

    \begin{proof}
    By Markov's Theorem, we have that,
    \begin{equation}
    \begin{aligned}
    &\frac{1}{b_n} \log \mathbb{P}\left(MT_n \ge \epsilon b_n \frac{n}{(\log n)^2} \right) \le \frac{1}{b_n} \log \left[ \frac{\mathbb{E}[MT_n^m](\log n)^{2m}}{\epsilon^m b_n^m n^m} \right] \le \frac{1}{b_n} \log \left[ \frac{C^m (m!)^2}{\epsilon^m b_n^m (\log n)^m} \right]\\
    & \le \frac{1}{b_n} \log \left[ \frac{C^m m^{2m}}{\epsilon^m b_n^m (\log n)^m} \right] = \frac{m}{b_n} \log \left[ \frac{C m^2}{\epsilon b_n \log n} \right]
    \end{aligned}
    \end{equation}
    If we choose a moment $m$ such that $m \gg b_n$, but $m^2 \ll b_n \log n$, then as we take $n \to \infty$ the right hand side above will go to 0 as $n \to \infty$.  If $b_n$ is some sequence that is $o(\log n)$, then we could choose $m = (\log n)^{1/3} b_n^{2/3}$. Then, both of the conditions above are satisfied.
    
    \end{proof}
    
    Now, we can turn to the proof of Lemma \ref{lem:MTn}.
    \begin{proof}
    
    We introduce the two parameters $\gamma_1 < \gamma_2 <1$ whose value will be specified later.
    We first remark that the term $MT_n$ from \cite{AdhikariOkada2023}[Equation 4.7] can be bounded from above by the following term,
    \begin{equation}
    \begin{aligned}
    B_n:=&\sum_{i_1,i_2=1}^n \sum_{\tilde{i}=1}^n \mathbb{P}(S[i_1 - n^{\gamma_1}, i_1 + n^{\gamma_1} ] \cap R'_{S_{i_1}} = \emptyset) \mathbb{P}(S[i_2 - n^{\gamma_1}, i_2 + n^{\gamma_1} ] \cap R'_{S_{i_2}} = \emptyset)  \\
    &G(S_{i_1}- \tilde{S}_{\tilde{i}}) G(S_{i_2}- \tilde{S}_{\tilde{i}}) \mathbb{P}(\tilde{S}[\tilde{i} - n^{\gamma_1}, \tilde{i} + n^{\gamma_1} ] \cap R'_{\tilde{S}_{\tilde{i}}} = \emptyset) .
    \end{aligned}
    \end{equation}
    
    The main difference in the upper bound above is that the probability term $\mathbb{P}(S[i_1 - n^{\gamma_1}, i_1 + n^{\gamma_1} ] \cap R'_{S_{i_1}} = \emptyset) $ is a non-intersection event that only correlates a smaller region of the random walk. This makes it more likely that when we take a small number of moments up to order $\log n$, we do not have to worry about this probability term creating correlations. Instead, we could just replace it with its expectation, which would be of order $\frac{1}{\log n^{\gamma}}$. Note that if we took more than $n$ moments, this heuristic would be impossible to apply. Thus, it is important in the course of the proof that we will take only $\log n$ moments.
    
    We will compute $\mathbb{E}[BT_n^m]$ via an induction in the parameter $m$. The $m$th moment of $BT_n$ will look like
    $$
    \begin{aligned}
    &BT_n^m = \sum_{i_1^1,i_2^1,\ldots, i_1^m, i_2^m}^n \sum_{\tilde{i}^1,\ldots,\tilde{i}^m=1}^n \prod_{j=1}^m \mathbb{P}(S[i^m_1 - n^{\gamma_1}, i^m_1 + n^{\gamma_1} ] \cap R'_{S_{i^m_1}} = \emptyset)\\ &\mathbb{P}(S[i^m_2 - n^{\gamma_1}, i^m_2 + n^{\gamma_1} ] \cap R'_{S_{i^m_2}} = \emptyset) 
     G(S_{i^m_1}- \tilde{S}_{\tilde{i}^m}) G(S_{i^m_2}- \tilde{S}_{\tilde{i}^m}) \\&\mathbb{P}(\tilde{S}[\tilde{i}^m - n^{\gamma_1}, \tilde{i}^m + n^{\gamma_1} ] \cap R'_{\tilde{S}_{\tilde{i}^m}} = \emptyset) .
    \end{aligned}
    $$
    
    \textit{Part 1: All indices are sufficiently far apart }
    
    If it were true that all of the terms $|\tilde{i}^j 
     - \tilde{i}^k| \ge n^{\gamma_2}$ $\forall j,k$ and $|i^k_l - i^j_m| \ge n^{\gamma_2}$ $\forall k,j$ and $\forall l,m \in \{1,2\}$, then we could readily use Lemma \ref{prop:error-term-moment-4d} and the method of Claim 5.8 of \cite{AdhikariOkada2023}. Just to summarize the method of Claim 5.8 of \cite{AdhikariOkada2023}, the main idea is that one could explicitly write out $\mathbb{E}[BT_n^m]$ when all of the distances between the $\tilde{i}$'s and $i$'s are relatively large by conditioning on the values of the random walk at the points $S_{\tilde{i}^j}, S_{\tilde{i}^j \pm n^{\gamma_1}}$ and $S_{i^j_k}, S_{i^j_k \pm n^{\gamma_1}}$ and writing out explicitly the appropriate transition kernel between the points. We need to introduce the `boundary' points $S_{\tilde{i}^j \pm n^{\gamma_1}}$ in order to isolate the probability term  $\mathbb{P}(\tilde{S}[\tilde{i}^m - n^{\gamma_1}, \tilde{i}^m + n^{\gamma_1} ] \cap R'_{\tilde{S}_{\tilde{i}^m}} = \emptyset)$. 
     
     Let $\tilde{i}^k$ and $\tilde{i}^j$ be elements in the ordering of the $\tilde{i}^1,\ldots, \tilde{i}^m$ such that $\tilde{i}^k$ and $\tilde{i}^j$ are immediately next to each other. Let $p_t(x)$ also denote the transition kernel of $t$ steps of the random walk. Then, the transition probability between boundary points like $p_{\tilde{i}^k - \tilde{i}^j - 2 n^{\gamma_1}}(\tilde{S}_{\tilde{i}^j + n^{\gamma_1}} - \tilde{S}_{\tilde{i}^k - n^{\gamma_1}})$  can be replaced by $p_{\tilde{i}^k - \tilde{i}^j}(\tilde{S}_{\tilde{i}^j} - \tilde{S}_{\tilde{i}^k})$. The only remaining terms that depend on the points $\tilde{S}_{\tilde{i}^j \pm n^{\gamma_1}}$ will be the product
     $$
     \mathbb{P}(\tilde{S}[\tilde{i}^j - n^{\gamma_1}, \tilde{i}^j + n^{\gamma_1} ] \cap R'_{\tilde{S}_{\tilde{i}^j}} = \emptyset) p_{n^{\gamma_1}}(\tilde{S}_{\tilde{i}^j} - \tilde{S}_{\tilde{i}^j + n^{\gamma_1}})  p_{n^{\gamma_1}}(\tilde{S}_{\tilde{i}^j} - \tilde{S}_{\tilde{i}^j - n^{\gamma_1}}).
     $$
    
    For this term one can first take the expectation over all of the terms $\tilde{S}_{\tilde{i}^j \pm n^{\gamma_1}}$. The expectation of the term above would be of order $\frac{1}{\log n^{\gamma}}$. Now, after taking the expectation over all of the boundary points, the remaining moment is just the $m$th moment appearing in Lemma \ref{prop:error-term-moment-4d} multiplied by a term of the form $\frac{C^k}{(\log n)^{3k}}$ for some constant $C$.
    
    \textit{Part 2: Some indices are close together}
    
    It remains to try to understand what happens when there are elements $\tilde{i}^k$ and $\tilde{i}^j$ (or elements $i^k_{l_1}$ and $i^j_{l_2}$) that are within distance $n^{\gamma_2}$ of each other. We will first analyze the case in which there are two elements $\tilde{i}^k$ and $\tilde{i}^j$ that are within distance $n^{\gamma_2}$ of each other. Note, that there are $m^2$ ways to pick the indices $j$ and $k$ for the pair $\tilde{i}^j$ and $\tilde{i}^k$. Without loss of generality, we may assume that this pair is $\tilde{i}^1$ and $\tilde{i}^2$. In what proceeds, we will try to sum over the all of the quantities involving the indices $\tilde{i}^1, i^1_1, i^1_2$ (and keeping the other indices fixed at first). 
    
    Let us first take the average of $S_{i^1_1}$ after first fixing all other indices and the value of $S$ at these indices. This index $i^1_1$ must lie in between $i^{k_1}_{l_1}< i^1_1 < i^{k_2}_{l_2}$ for some indices $i^{k_1}_{l_1}$ and $i^{k_2}_{l_2}$. We remark that one of these indices could possibly be $i^{1}_2$.  We can bound all the probability terms involving the indices $i^1_1$,$i^1_2$ and $\tilde{i}^1$ by 1 and then, by Lemma A.15 of \cite{AdhikariOkada2023}, we get that
    $$
    \mathbb{E}\left[\sum_{i^1_l \in (i^{k_1}_{l_1}, i^{k_2}_{l_2})}G(S_{i^1_1} - \tilde{S}_{\tilde{i}^1})\bigg| \tilde{S}_{i_1}, S_{i^{k_1}_{l_1}}, S_{i^{k_2}_{l_2}}\right] \le O(\log n)
    $$
    
    Now, all of the $i$ indices excluding $i^1_1$ will subdivide $1\ldots n$  into $2m$ distinct intervals. Thus, the multiplicative factor coming from summing over the index $i^1_1$ and averaging the random walk interval involving $S_{i^1_1}$ when all other indices are fixed will be given by $O(m \log n)$. After this step, we can use similar reasoning to sum up over the index $i^1_2$ and averaging over the random walk interval involving $S_{i^1_2}$ would give us another factor of $O(m\log n)$. Finally, we can sum up over the index $\tilde{i}^1$, of which there will be $O(n^{\gamma_2})$ choices. Thus, in total, we see that the error term accrued when there exists at least one pair of indices just that $|\tilde{i}^j - \tilde{i}^k| \le 2 n^{\gamma_2}$ is bounded by,
    $
    O(m^3 n^{\gamma_2} (\log n)^2) \mathbb{E}[BT^{m-1}_n].
    $
    
    Now, consider what happens when there is some pair of indices $i^{j_1}_{k_1}$ and $i^{j_2}_{k_2}$ that are within distance $n^{\gamma_2}$ of each other. There are two cases the consider: the first is that the indices are $i^{j}_1$ and $i^{j}_2$ for the same superscript index $j$, and the other is that the superscript indices satisfy $j_1 \ne j_2$. The analysis when the superscript indices do not match is largely similar to what has been done previously. We remark that there are $O(m^2)$ ways to choose a pair $i^{j_1}_{k_1}$ and $i^{j_2}_{k_2}$. Without loss of generality, we can assume these indices are $i^1_1$ and $i^2_1$. At this point, we want to sum up over the values $i^1_1,i^1_2,\tilde{i}^1$ as well as average over the appropriate segment of the random walk involving these points; we also bound all probability terms involving $i^1_1,i^1_2$ and $\tilde{i}^1$ by 1. The first step is to sum up over $i^1_2$ and average over the segment involving $i^1_2$ while fixing all other indices. There will be $2m$ ways to choose the segment involving $i^1_2$ (say we have $i^{j_1}_{k_1}<i^1_2< i^{j_2}_{k_2}$. Then, we have that
    $$
    \mathbb{E}\left[\sum_{i^{j_1}_{k_1}<i^1_2 < i^{j_2}_{k_2}}G(\tilde{S}_{\tilde{i}^1} - S_{i^1_2})\bigg|S_{i^{j_1}_{k_1}}, S_{i^{j_2}_{k_2}}, \tilde{S}_{i^{\tilde{1}}}\right] \le O(\log n).
    $$
    After this point, we can perform the sum over $\tilde{i}^1$ and average over the walk containing $\tilde{S}_{\tilde{i}^1}$; there are $m$ subintervals for the choice of the index $\tilde{i}^1$ (say $\tilde{i}^{j_1} < \tilde{i}^1 < \tilde{i}^{j_2}$). Then, we again have
    $$
    \mathbb{E}\left[\sum_{\tilde{i}^{j_1}<\tilde{i}^1 < \tilde{i}^{j_2}}G(\tilde{S}_{\tilde{i}^1} - S_{i^1_1})\bigg|\tilde{S}_{\tilde{i}^{j_1}, \tilde{S}_{\tilde{i}^{j_2}}, \tilde{S}_{i^{\tilde{1}}}}\right] \le O(\log n).
    $$
    Finally, we can sum up over the remaining choices of $i^1_1$, (of which there will be $O(n^{\gamma_2})$ of them since $i^1_1$ will be within distance $n^{\gamma_2}$ of $i^2_1$. 
    
    Thus, the total contribution from those indices such that $|i^{j_1}_{k_1} - i^{j_2}_{k_2}| \le n^{\gamma_2}$ is bounded by $
    O(m^3 n^{\gamma_2} (\log n)^2) \mathbb{E}[BT^{m-1}_n].
    $
    
    The final case that we have to consider is then $i^j_1$ and $i^j_2$ are within distance $n^{\gamma_2}$ of each other. We may assume that the two indices are $i^1_1$ and $i^1_2$. This implies that $|S_{i^1_1} -S_{i^1_2}| \le n^{\gamma_2}$ and thus, $G(S_{i^1_2}- \tilde{S}_{\tilde{i}^1})  \le n^{6 \gamma_2} G(S_{i^1_1}- \tilde{S}_{\tilde{i}^2})$. Let us introduce the quantity
    \begin{equation}\label{eq:defZn}
    Z_n:= \sum_{i=1}^n \sum_{j=1}^n G(S_i -\tilde{S}_j)^2.
    \end{equation}
    Under the condition that $i^j_1 - i^j_2$ are within distance $n^{\gamma_2}$, we can bound the moment under consideration by
    $E[Z_n BT^{m-1}_n]\le  \mathbb{E}[Z_n^m]^{1/m} \mathbb{E}[BT_n^m]^{(m-1)/m}$. Now, one can show by methods similar to Proposition~\ref{prop:cross-term-moment} (multiply every edge weight by $4/3$) that $\mathbb{E}[Z_n^m] \le C^m (m!)^2 (\log n)^{2m}$ for some constant $C$.
    
    \textit{Part 3: Combining all estimates}
    
    If we combine all of our estimates, ultimately we can derive the recursive bound,
    \begin{equation} \label{eq:recurbnd}
    \begin{aligned}
    \mathbb{E}[BT_n^m] &\le C^m (m!)^2 \frac{n^m}{(\log n)^{3m}} +
    Cm^3 n^{\gamma_2}(\log n)^2 \mathbb{E}[BT_n^{m-1}] \\ &+ m n^{6 \gamma_2} (C^m (m!)^2 (\log n)^{2m})^{1/m} (\mathbb{E}[BT^m_n])^{(m-1)/m}.
    \end{aligned}
    \end{equation}
    
    Now, we will show by induction that there exists some universal constant $B$ such that $\mathbb{E}[BT^m_n] \le B^m(m!)^2 \frac{n^m}{(\log n)^{3m}}$ and for all $m \le (\log n)^2$.
    
    Assuming this bound holds for $k \le m-1$, to now show this for $k=m$, we first observe that $ x - m n^{6 \gamma_2} (C^m(m!)^2 (\log n)^{2m})^{1/m} x^{(m-1)/m}$ has derivative $1 - (m-1) n^{6 \gamma_2} (C^m (m!)^2 (\log n)^{2m})^{1/m} x^{-1/m}$. If $x \ge B^m (m!)^2 \frac{n^m}{(\log n)^{3m}},$ then we see that the derivative is positive provided $6 \gamma_2 <1$, $C<B$ and $m \le (\log n)^2$ for all sufficiently large $n$. Thus the difference between the two sides of equation \eqref{eq:recurbnd} is smallest when $\mathbb{E}[BT^m_n] = B^m (m!)^2 \frac{n^m}{(\log n)^{3m}}.$ However, one can directly see that when $6 \gamma_2 <1$, $m \le (\log n)^2$ and $B>C$, then, for all $n$ sufficiently large, the right hand side of equation \eqref{eq:recurbnd} is much smaller than $B^m (m!)^2 \frac{n^m}{(\log n)^{3m}}$. Thus, we must have that $\mathbb{E}[BT^m_n] \le B^m (m!)^2 \frac{n^m}{(\log n)^{3m}}$ and this completes the induction.

    \end{proof}
    
    Now, we can describe a sketch of how Lemma \ref{lem:MTn} improves the result of \cite{AdhikariOkada2023} to derive results all the way up to the critical threshold of $b_n \ll \log n$.
    
    \begin{proof}[Proof of Theorem \ref{thm:dim-4} ]
    For the full details of the proof, we will refer the reader to the paper \cite{AdhikariOkada2023}; we will only mention here how our results improve the analysis of the aforementioned paper.  In section 5 of \cite{AdhikariOkada2023}, the key term that was used to determine the large deviation statistics was the term $TL'_n$.  The analysis of $TL'_n$ was based upon using a subadditivity argument combined with a weak convergence argument for the upper bound, and a Feynman--Kac formula argument for the lower bound. The argument analyzing $TL'_n$ is robust and works all the way to $b_n \ll \log n$. Thus, we only have to check the steps between the reduction between the original cross term $\chi$ and the final analyzed term $TL'_n$.
    
    A proof similar to Lemma \ref{lem:aux-error-moment} demonstrates that the error incurred upon the reduction from $TL_n$ to $TL'_n$ is similar to the error incurred from the reduction of $\chi$ to $TL_n$. This is exactly the term $MT_n$ from Lemma 4.1 of \cite{AdhikariOkada2023}. Lemma 4.1 of \cite{AdhikariOkada2023} was only able to compute finite moments of this error term $MT_n$ efficiently. Thus, when it came to applying Markov's inequality in Corollary 4.2 to control this error term, it was not possible to go above $b_n = O(\log \log n)$. The improved error term from Theorem \ref{thm:FinalEstMtn} will allow us to go to $b_n \ll \frac{\log n}{\log \log n}$. 
    
    \end{proof}
    
    \section*{Acknowledgments}
    We thank Amir Dembo and Izumi Okada for many helpful discussions. We thank Persi Diaconis for suggesting the reference \cite{BenderRichmond1984}. We thank Yanxin Zhou for pointing out some typos in a previous version of this paper. Research started under the support of NSF grant DMS 2102842 (A.A) Research partly funded by NSF grant DMS-2348142 (J.P.) This work was supported by a grant from the Simons Foundation International [SFI-MPS-SDF-00014916, J.P.]

    \end{document}